\newtheorem{thm}{Theorem}[section]
\newtheorem{prop}[thm]{Proposition}
\newtheorem{lem}[thm]{Lemma}
\newtheorem{cor}[thm]{Corollary}
\newtheorem{conj}[thm]{Conjecture}
\theoremstyle{definition}
\newtheorem{defn}[thm]{Definition}
\theoremstyle{remark}
\newtheorem{remk}[thm]{Remark}
\newtheorem{remks}[thm]{Remarks}
\newtheorem{exm}[thm]{Example}
\newtheorem{exms}[thm]{Examples}
\newtheorem{notat}[thm]{Notation}
\numberwithin{equation}{section}
\newcommand{\sC}{{\mathcal C}}
\newcommand{\sD}{{\mathcal D}}
\newcommand{\sE}{{\mathcal E}}
\newcommand{\sF}{{\mathcal F}}
\newcommand{\sG}{{\mathcal G}}
\newcommand{\sH}{{\mathcal H}}
\newcommand{\sI}{{\mathcal I}}
\newcommand{\sK}{{\mathcal K}}
\newcommand{\sO}{{\mathcal O}}
\newcommand{\A}{{\mathbb A}}
\newcommand{\C}{{\mathbb C}}
\renewcommand{\H}{{\mathbb H}}
\renewcommand{\P}{{\mathbb P}}
\newcommand{\Q}{{\mathbb Q}}
\newcommand{\R}{{\mathbb R}}
\newcommand{\Z}{{\mathbb Z}}
\newcommand{\surj}{\twoheadrightarrow}
\newcommand{\inj}{\hookrightarrow}
\newcommand{\Zar}{{\text{\rm Zar}}}
\newcommand{\Sch}{{\operatorname{\mathbf{Sch}}}}
\newcommand{\wt}{\widetilde}
\newcommand{\ds}{{/\kern-3pt/}}
\newcommand{\un}{\underline}
\newcommand{\ov}{\overline}
\begin{document}
\title{A cdh approach to zero-cycles on singular varieties}
\author{Amalendu Krishna}
\address{School of Mathematics, Tata Institute of Fundamental Research,  
1 Homi Bhabha Road, Colaba, Mumbai, India}
\email{amal@math.tifr.res.in}

\baselineskip=10pt 
  
\keywords{Chow groups, K-theory, Singular varieties}        

\subjclass[2010]{Primary 14C15, 14C25; Secondary 14C35}
\maketitle

\begin{abstract}
We study the Chow group of zero-cycles on singular varieties from
the perspective of the $cdh$ topology and $KH$-theory. We define the $cdh$ 
versions of the Chow groups of zero-cycles and albanese maps and formulate
conjectures about the Roitman torsion theorem and ``finite-dimensionality''
of the Chow group in this set up. We prove these conjectures for surfaces. 
We compare the $cdh$ analogue of the Chow group of zero-cycles on
a projective variety with the Chow group of its desingularization. 
This is used to prove a finite-dimensionality result for this version of the 
Chow group for a normal projective threefold.

In order to apply the $cdh$ techniques to study the known Chow group of 
zero-cycles $CH^d(X)$ on a singular scheme $X$ of dimension $d$, we show 
under certain cohomological conditions that 
there is a natural injection $CH^d(X) \inj F^dKH_0(X)$ up to torsion, if $X$ 
has only isolated singularities. This reduces the finite-dimensionality
problem for $CH^d(X)$ to its $cdh$ analogue.  

As a byproduct of this, we prove some finite-dimensionality result
for the Chow group of zero-cycles $CH^3(X)$ on a threefold $X$ with isolated 
singularities, assuming the same for smooth threefolds.   
\end{abstract}


\section{Introduction}
The purpose of this work is to understand the Chow groups of zero-dimensional
algebraic cycles on singular varieties from the perspective of
$cdh$ topology on schemes, which was developed by Voevodsky \cite{SuslinV}
in his study of the triangulated category of motives and motivic cohomology.
The Chow groups of algebraic cycles on smooth varieties have been studied for
a long time and they were first used in a nontrivial way by
Grothendieck, when he proved the Riemann-Roch theorem for the $K$-group
of algebraic vector bundles on such varieties. Ever since, these groups of
algebraic cycles on smooth varieties and the Abel-Jacobi maps from
them into the intermediate Jacobians have have gone through a phenomenal
developments which one hopes, would culminate in the proof of the famous
general Hodge conjecture regarding them. One also knows now that the Chow groups
of algebraic cycles and their generalization by Bloch \cite{Bloch} in terms 
of the higher Chow groups provide a theory of motivic cohomology, which
describes the algebraic $K$-theory of vector bundles on smooth varieties.  

On the other hand, almost nothing is known about the
existence of such a theory of the Chow groups of algebraic cycles on 
singular varieties. It still looks a difficult question whether there is a
motivic cohomology theory which could describe the algebraic $K$-theory
of vector bundles on such schemes. In order to describe the lowest
part of ``coniveau filtration'' on the Grothendieck group of vector bundles on
a singular scheme $X$ of dimension $d$, Levine and Weibel \cite{LevineW}
invented the Chow group of zero cycles, often denoted by $CH^d(X)$. 

This group has since been extensively studied by Levine, Srinivas and many 
others. However, except for the case of curves ({\sl cf.} \cite{LevineW})
and normal surfaces ({\sl cf.} \cite{KrishnaS}), this Chow group 
is still far from being fully understood, although many properties, 
such as the singular analogue of the Roitman's torsion theorem and
Mumford's infinite-dimensionality theorem have been now been proven
({\sl cf.} \cite{Levine2}, \cite{BiswasS}, \cite{Srinivas2},
\cite{Srinivas1}, \cite{ESV}).
In higher dimensions, a formula for Chow groups of zero cycles on
a variety $X$ with only Cohen-Macaulay isolated singularities in 
terms of the analogous group for a resolution of singularities of $X$ was
recently obtained in \cite{Krishna2} (see also \cite{Krishna3}).
The main motivation behind the lookout for such a formula is the hope that 
many questions about the Chow groups of zero cycles on such singular varieties
could be reduced to answering the similar questions about the zero cycles
on smooth varieties, which are supposedly better understood.

One such question, which is the prime focus of this paper on the application
side, is the singular analogue of the generalized Bloch conjecture. To state 
this, recall that if $X$ is a smooth projective variety of dimension $d$ over
an algebraically closed field $k$, 
then the natural map from the variety to its {\sl albanese} variety
defines a surjective group homomorphism
\begin{equation}\label{eqn:SALB}
c_{0,X}: CH^d(X)_{{\rm deg} 0} \to J^d(X),
\end{equation}
where $J^d(X)$ is the albanese variety of $X$. In fact, one knows that
$J^d(X)$ is a {\sl universal regular quotient} of $CH^d(X)_{{\rm deg} 0}$.
One says that $CH^d(X)$ is {\sl finite-dimensional}, if the map
$c_{0,X}$ is an isomorphism, which means that the ``connected component''
of $CH^d(X)$ is parameterized by a projective variety.
It was conjectured by Bloch \cite{Bloch*} that a smooth projective surface $X$ 
with $H^2_{Zar}(X, \sO_X) = 0$, is finite-dimensional. This conjecture is open
till date though it has been proven for surfaces which are not of 
general type \cite{BKL}. The following is the generalized form of Bloch's
conjecture in higher dimension.
\begin{conj}{$({\bf GBC})$}\label{conj:GBC}
Let $X$ be a smooth projective variety of dimension $d$ over $k$ such
that $H^d_{Zar}(X, \Omega^i_X) = 0$ for $0 \le i \le d-2$. Then
$CH^d(X)$ is finite-dimensional.
\end{conj}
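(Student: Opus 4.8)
This is the generalized Bloch conjecture, which to my knowledge is open beyond special cases, so I will give a plan and flag the decisive gap rather than a complete argument. The framework is the Bloch--Srinivas decomposition of the diagonal. First I would reinterpret ``finite-dimensional'': passing to $\Q$-coefficients is harmless here, since by Roitman's theorem $\ker c_{0,X}$ is torsion-free, so finite-dimensionality of $CH^d(X)$ amounts to injectivity of $c_{0,X}\otimes\Q$, which in turn is equivalent --- Bloch--Srinivas, up to the bookkeeping identifying the target with $\Alb(X)$ --- to the existence of a divisor $D\subset X$, a proper closed subvariety $T\subset X$ through which the albanese map of $X$ factors, an integer $N>0$, and a decomposition
\[
N\cdot\Delta_X \;=\; \Gamma_1+\Gamma_2 \quad\text{in }CH^d(X\times X),
\qquad \Supp(\Gamma_1)\subset D\times X,\quad \Supp(\Gamma_2)\subset X\times T .
\]
That such a decomposition forces finite-dimensionality is the easy direction: one lets $\Delta_X$ act on $CH^d(X)_{{\rm deg}\,0}$ through the correspondence $\tfrac1N(\Gamma_1+\Gamma_2)$; $\Gamma_1$ kills degree-$0$ cycles because they can be moved off $D$, while $\Gamma_2$ factors a degree-$0$ cycle through $CH_0(T)_{\Q}$, which by the choice of $T$ lands in $\Alb(X)_{\Q}$. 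So the entire burden is to produce the decomposition from the cohomological hypothesis.

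\textbf{The decisive step.} Next I would convert the hypothesis into Hodge-theoretic smallness. By Serre duality and Hodge symmetry, $H^d(X,\Omega^i_X)$ has the same dimension as $H^{d-i}(X,\sO_X)$, so ``$H^d_{Zar}(X,\Omega^i_X)=0$ for $0\le i\le d-2$'' says precisely that $H^j(X,\sO_X)=0$ for $2\le j\le d$ --- equivalently, $X$ carries no nonzero holomorphic $p$-forms for $p\ge2$, so that in the holomorphic column of the Hodge diamond only $H^0$ and $H^1$, hence only the albanese, survive. I would then want to promote this vanishing to a motivic statement --- that the summand of the Chow motive of $X$ complementary to $\mathbf{1}$, $h^1(X)$ and the Lefschetz motives carries no information visible to zero-cycles --- and feed that into the Bloch--Srinivas/Jannsen machinery to extract the diagonal decomposition above. \emph{This is the main obstacle.} The implication ``holomorphic cohomology vanishes $\Longrightarrow$ the transcendental part of the motive is Chow-trivial'' is exactly an instance of the generalized Hodge conjecture, and would also follow from Kimura--O'Sullivan finite-dimensionality of the motive of $X$ together with the given vanishing; no general technique for it is known, which is why the conjecture is open already for $d=2$ and $p_g=0$ when $X$ is of general type.

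\textbf{What is within reach.} In low dimension and away from general type one can substitute classification for the missing step. For $d=2$ the hypothesis reads $H^2(X,\sO_X)=0$, i.e. $p_g=0$; if $\kappa(X)\le1$ the Enriques--Kodaira classification leaves only rational and ruled surfaces, bielliptic surfaces, properly elliptic surfaces, and Enriques surfaces (with $2$-torsion handled separately), and in each case $CH^2(X)$ is controlled directly --- via an explicit dominant rational map from a product, via the elliptic fibration, or via an automorphism argument --- as in Bloch--Kas--Lieberman. In higher dimension I would try to stratify $X$ according to its albanese map or a maximal rationally connected fibration and bootstrap from the surface case, but the general-type fibers reinstate the unresolved step. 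In short, the single genuine obstacle is the passage from the Hodge-theoretic vanishing to a cycle-level decomposition of the diagonal.
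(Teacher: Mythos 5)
You were right not to claim a proof: the statement you were given is Conjecture~\ref{conj:GBC} of the paper, the generalized Bloch conjecture, and the paper itself offers no proof of it — it is quoted (with a pointer to Voisin) precisely as an open problem, unresolved already for surfaces of general type with $p_g=0$, and the paper only proves statements \emph{conditional} on it (e.g.\ the threefold results assuming $GBC(2)$ together with a case of the general Hodge conjecture) or its $cdh$ analogues for surfaces. So there is nothing in the paper to compare your argument against, and your decision to present a strategy while flagging the decisive gap is the correct reading of the situation.

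As for the content of your sketch: the reformulation of the hypothesis via Serre duality and Hodge symmetry ($H^d(X,\Omega^i_X)=0$ for $i\le d-2$ iff $H^j(X,\sO_X)=0$ for $2\le j\le d$, i.e.\ no holomorphic $p$-forms for $p\ge 2$), the reduction modulo Roitman's theorem to injectivity of $c_{0,X}\otimes\Q$, and the Bloch--Srinivas dictionary between that injectivity and a decomposition of the diagonal are all standard and correctly stated; and you have located the genuine obstruction in the right place, namely the passage from Hodge-theoretic vanishing to a cycle-theoretic (motivic) statement, which is exactly why the conjecture remains open and why the paper assumes it as a hypothesis rather than proving it. One small caution: the classification-based cases you cite for $d=2$ (Bloch--Kas--Lieberman) are genuinely theorems, but they cover only $\kappa(X)\le 1$, so even the surface case of the statement as given is not within reach by your outline — which you acknowledge, and which matches the paper's own framing.
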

We refer the reader to \cite{Voisin} for more 
detail on this conjecture and its relations with several outstanding 
conjectures on algebraic cycles. We shall say that $GBC(X)$ holds if 
the generalized Bloch conjecture holds for the variety $X$. For $d \ge 2$,
we shall say that $GBC(d)$ holds, if this conjecture holds for all
smooth projective varieties of dimension up to $d$.

Now suppose $X$ is a singular projective variety.
It is shown in \cite{ESV} that there is a smooth commutative algebraic group 
$J^d_*(X)$ over $k$ and a surjective group homomorphism 
\begin{equation}\label{eqn:SALBS}
c_{0,X} : {CH^d(X)}_{{\rm deg} 0} \to J^d_*(X),
\end{equation}
which is a universal regular quotient in the category of {\sl regular}
maps from $CH^d(X)_{{\rm deg} 0}$ to smooth commutative algebraic groups.
The map $c_{0,X}$ is called the {\sl albanese} map. 
As in the case of smooth varieties, one says that $CH^d(X)$ is
{\sl finite-dimensional} if the albanese map is an isomorphism.
One has then the following singular analogue of the generalized Bloch
conjecture.
\begin{conj}\label{conj:GBCS}
Let $X$ be a projective variety of dimension $d$ over $k$ such
that $H^d_{Zar}(X, \Omega^i_X) = 0$ for $0 \le i \le d-2$. Then
$CH^d(X)$ is finite-dimensional.
\end{conj}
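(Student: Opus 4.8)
The plan is to reduce Conjecture~\ref{conj:GBCS} to its smooth counterpart, Conjecture~\ref{conj:GBC} (equivalently $GBC(d)$), by routing the problem through homotopy $K$-theory and a resolution of singularities, since $KH$ — unlike $CH^d(-)$ — satisfies $cdh$-descent and hence behaves well under blow-ups. I would organize this in two stages. First, replace the Levine--Weibel group $CH^d(X)$ by a $cdh$ analogue $CH^d_{cdh}(X)$, realized as the bottom piece $F^d KH_0(X)$ of a coniveau-type filtration on $KH_0$. Under the stated vanishing $H^d_{Zar}(X,\Omega^i_X)=0$ for $0\le i\le d-2$ and the assumption that $X$ has isolated singularities, I expect a natural map $CH^d(X)\to F^d KH_0(X)$ that is injective up to torsion; together with a singular Roitman-type torsion theorem (in the spirit of \cite{ESV}) and the compatibility of the respective universal regular quotients, this shows that finite-dimensionality of $CH^d(X)$ follows from that of its $cdh$ analogue.

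Second, analyze $CH^d_{cdh}(X)$ via a resolution $\pi\colon \wt{X}\to X$ that is an isomorphism away from the finite singular locus $Z$, with reduced exceptional fibre $E$. The abstract blow-up square $(E,\wt{X},Z,X)$ and $cdh$-descent for $KH$ give a Mayer--Vietoris long exact sequence relating the $KH$-groups of $X$, $\wt{X}$, $Z$, and $E$; passing to the relevant graded pieces of the coniveau filtration expresses $CH^d_{cdh}(X)$ in terms of $CH^d(\wt{X})=CH_0(\wt{X})$, up to contributions supported on $E$ and $Z$. Since $\wt{X}$ is smooth projective of dimension $d$, $GBC(d)$ makes $CH^d(\wt{X})$ finite-dimensional, while the correction terms involve cycles only on smooth varieties of dimension $\le d-1$, where finite-dimensionality is either automatic (points, curves) or provided by $GBC(d-1)$. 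One then checks that the universal regular quotients assemble into a morphism between the corresponding exact sequences, so that finite-dimensionality descends to $CH^d_{cdh}(X)$ and, via the first stage, to $CH^d(X)$.

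The main obstacle — and the reason the conjecture cannot be proved unconditionally this way — is twofold. The vanishing hypothesis is imposed on $X$, but $GBC$ needs it (or enough of it) on $\wt{X}$ and on the components of $E$; showing that $H^d_{Zar}(X,\Omega^i_X)=0$ propagates to the Hodge numbers of the resolution and to the Chow groups of the exceptional locus seems to require some control of the singularities, e.g. that they are Cohen--Macaulay as in \cite{Krishna2}, or rational. Moreover $GBC(d)$ itself is open for $d\ge 2$, so in the end the argument yields a \emph{reduction} of the singular conjecture to the smooth one rather than an unconditional proof. The strategy does become (essentially) unconditional in low dimensions: for surfaces the sole condition is $H^2(X,\sO_X)=0$, the exceptional curves contribute nothing new, and one recovers the classical picture, while for a threefold with isolated singularities $E$ is a surface, so one needs $GBC$ in dimension two for those surfaces, and the conclusion for $CH^3(X)$ holds assuming $GBC(3)$ for smooth threefolds.
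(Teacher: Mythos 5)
First, note that the statement you were asked about is stated in the paper as a \emph{conjecture} and is not proved there; the paper only establishes conditional results in its direction (Theorem~\ref{thm:main1}, Theorem~\ref{thm:Main22}, Theorem~\ref{thm:3fold-main}, Corollary~\ref{cor:3fold-main2}, Theorem~\ref{thm:GENERAL} and the surface results of Section~\ref{section:Surf}). Your proposal is likewise not a proof but a reduction program, and to your credit it reconstructs the paper's actual program quite faithfully: embed $CH^d(X)$ into $F^d_BKH_0(X)$ up to torsion under the vanishing hypothesis and isolated singularities (this is Theorem~\ref{thm:main1}), compare the $cdh$ group $F^dKH^M_0(X)$ with $CH^d(\wt{X})$ via the abstract blow-up square and $cdh$-descent (Theorem~\ref{thm:Main22}, Theorem~\ref{thm:GENERAL}), and then invoke $GBC$ for the smooth resolution. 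So as a strategy sketch you are on the paper's track.

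There are, however, two concrete gaps between your outline and what actually has to be done. First, the descent step from $\wt{X}$ to $X$ is not controlled merely by ``correction terms in dimension $\le d-1$ where $GBC(d-1)$ suffices'': the key hypothesis of Theorem~\ref{thm:Main22} is the surjectivity of ${CH^{d-1}(\wt{X})}_{hom}\otimes\C^* \to {CH^{d-1}(E)}_{hom}\otimes\C^*$, and in the paper this is obtained (Lemma~\ref{lem:GHC0}, Corollary~\ref{cor:GHC*1}) only by assuming the \emph{general Hodge conjecture} $GHC(d-2,2d-3,\wt{X})$, so that $J^{d-1}_a(\wt{X})=J^{d-1}(\wt{X})$ and the surjection of intermediate Jacobians $J^{d-1}(\wt{X})\surj J^{d-1}(E)$ can be lifted to algebraic cycles; $GBC$ in lower dimension alone does not give this, and your outline omits this input entirely (it also needs $E$ to be a strict normal crossing, indeed smooth in the general-dimension Theorem~\ref{thm:GENERAL}). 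Second, the first stage that you ``expect'' — the injection $CH^d(X)\inj F^d_BKH_0(X)$ up to torsion — is the technical heart of the paper, resting on the Riemann--Roch theorem for higher $K$-theory of singular schemes (Theorem~\ref{thm:GRR}, Corollary~\ref{cor:compare}) to identify $CH^d(X)$ with $F^d_\gamma K_0(X)$, and on the vanishing $\wt{K}^{(d)}_0(X)=0$ proved through the cyclic homology and Zariski-versus-$cdh$ cohomology computations of Sections~\ref{section:SNC*}--\ref{section:ZCSS}; it cannot be taken as a formal expectation. Relatedly, the ``$cdh$ Roitman theorem'' you appeal to is itself only a conjecture in the paper (Conjecture~\ref{conj:FDCCR}), proved there just for surfaces (Theorem~\ref{thm:RTC}); in higher dimensions the paper's comparison results hold only rationally, with torsion handled for the Levine--Weibel group by the known singular Roitman theorem of \cite{BPW}, \cite{BiswasS}. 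With these caveats, your conclusion is accurate: the method yields a reduction of the singular conjecture to $GBC$ for smooth varieties plus extra hypotheses (isolated singularities, $GHC$ for the resolution), not a proof of the conjecture as stated.
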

 
In the current work, we propose a new direction for studying the Chow group
of zero-cycles on singular varieties. This involves approaching the
problem from the perspective of the $cdh$ topology on singular schemes.
This topology was invented by Voevodsky \cite{SuslinV} in his study of
motives over a field of characteristic zero. This topology although
does not do anything extraordinary on smooth schemes, it has
proven to be of immense help in solving many $K$-theoretic problems
about singular schemes which are otherwise known for smooth schemes.
This has been ubiquitously used for example, in \cite{CAnnals}, \cite{CJams} 
and \cite{C-cone}, to solve some very important problems on the algebraic
$K$-theory of singular schemes. 

This work is an attempt to study the questions about algebraic cycles on
singular schemes by relating them with the homotopy invariant $K$-theory
and the $cdh$ cohomology of the $K$-theory sheaves. One of the main
tools for making this program work is the recent result 
\cite[Theorem~1.6]{CJams}, which roughly speaking,
estimates the difference between the algebraic $K$-theory and the homotopy
$K$-theory of singular schemes in terms of their Hochschild and cyclic
homology. Needless to say, these Hochschild and cyclic homology
are much simpler objects to deal with. The hope is that this,
together with many known results about the $KH$-theory, will
give a new direction in constructing the motivic cohomology
of singular schemes.

We now describe the various results of this paper in some detail.
Our results are based on exploiting the relation between the
Zariski and the $cdh$ cohomology of quasi-coherent and $\sK$-sheaves.
Towards this, we describe the Zariski cohomology of the 
K{\"a}hler differentials on normal crossing schemes in terms of their 
$cdh$ cohomology in Section~\ref{section:SNC*}. As mentioned above, the 
difference between the algebraic and the homotopy $K$-groups can be measured 
in terms of the cyclic homology of schemes. In Section~\ref{section:ZCSS}, we 
describe the relevant part of the cyclic homology of schemes with isolated
singularities, in terms of the $cdh$ cohomology of the K{\"a}hler 
differentials.
This is done by using the resolution of singularities, the cyclic
homology of smooth schemes and the computations of these groups
for normal crossing schemes.

In Section~\ref{section:KKH}, we study various filtrations on the
algebraic $K$-theory of singular schemes. In order to understand
the behavior of these filtrations under the push-forward map
on the algebraic $K$-theory, we establish a Riemann-Roch theorem 
({\sl cf.} Theorem~\ref{thm:GRR}) for
the local complete intersection morphism of singular schemes. This is
an extension of the similar result of Grothendieck et al. for $K_0$ to
higher $K$-theory. The proof involves exploiting the axiomatic approach of
Fulton and Lang \cite{FultonL} to the Grothendieck's Riemann-Roch theorem,
and the study of filtrations on higher $K$-theory by Soul{\'e} \cite{Soule}. 
This is then
used to show in Corollary~\ref{cor:compare} that on a scheme with only
isolated singularities, the Chow group of zero-cycles coincides with
the lowest level of the gamma filtration on $K_0$.
The main result here is Theorem~\ref{thm:main1}, where we show that
under certain vanishing of the cohomology of differential forms, 
the Chow group of zero-cycles can be embedded inside the lowest
level of the Brown filtration on $KH_0$. This brings the study of the
known Chow group of zero-cycles to the $cdh$ world.
     
The Chow group of algebraic cycles on schemes are often studied
via the Abel-Jacobi maps from these groups to the various intermediate
Jacobians. In order to facilitate this technique to work through
in the $cdh$ approach, we study these objects in Section~\ref{section:HODGET}.
We interpret the Hodge filtration on the
analytic cohomology of singular complex varieties in terms of the $cdh$
cohomology of differential forms. This follows from a $cdh$-descent theorem for
the Du Bois complex on these schemes. 

In Section~\ref{section:CHERNS},
we construct the theory of Chern classes on the $KH$-theory with values in
the Deligne cohomology. These Chern classes are compatible with the
similar Chern classes on the algebraic $K$-theory. This construction is
achieved by comparing the $KH$-theory with the descent $K$-theory of
\cite{PP}. We use these Chern classes to define the $cdh$ version of the
{\sl albanese} maps of ~\eqref{eqn:SALBS}. This facilitates the formulation 
of two conjectures about the $cdh$ analogues of the Roitman torsion theorem and
the finite-dimensionality problem for the Chow group of zero-cycles.

We prove our main result about the $cdh$ version of the Chow group of 
zero-cycles in Theorem~\ref{thm:Main22}. 
As an application, we prove Conjectures~\ref{conj:FDCCR}
and ~\ref{conj:FDCC} for surfaces in Section~\ref{section:Surf}.
As further applications, we prove the finite-dimensionality results  
Theorem~\ref{thm:3fold-main} and Corollary~\ref{cor:3fold-main2}
for the Chow groups of zero-cycles on normal projective three-folds
in Section~\ref{section:3fold}. 

We end this section with few comments. As said before, this work is an 
attempt to study the Chow group of algebraic cycles on singular schemes using
the $cdh$ topology on such schemes. In most of the proofs,
we have put extra conditions on the nature of the singularity,
which are seemingly needless. The results here lead one to many
other related questions on the Chow groups of singular schemes. 
The applications in this paper are mostly focussed on the finite-dimensionality
question for the Chow group of zero-cycles and its $cdh$ analogue.
In the sequel \cite{Krishna4}, we exploit and refine the techniques of this
paper to obtain generalizations of Theorem~\ref{thm:main1} for 
varieties with arbitrary singularities. This in turn will be used to 
obtain formulas for the known Chow group of zero-cycles 
in terms of the Chow group of a resolution of singularities.

\section{Recollections and preliminary results}\label{section:prelims}
In this paper, we fix once and for all, a ground
field $k$ of characteristic zero. We shall assume $k$ to be algebraically
closed although many of the results do not need this extra assumption,
as the reader will observe. We make this assumption mainly because our
applications will be over the field $\C$ of complex numbers.
Let $\Sch/k$ denote the category of all separated schemes of finite type 
over $k$ and let $Sm/k$ denote the category of smooth schemes of finite type 
over $k$. For $X \in \Sch/k$, $X^N$ will often denote the normalization of
$X_{\rm red}$. Recall from \cite{Haes} that an {\sl abstract blow-up} is a
Cartesian square 
\begin{equation}\label{eqn:ABLP}
\xymatrix{
Y' \ar[r]^{i'} \ar[d]_{f'} & X' \ar[d]^{f} \\
Y \ar[r]_{i} & X}
\end{equation}
in $\Sch/k$ where $i$ is a closed immersion, $f$ is proper and 
$(X' - Y')_{\rm red} \to (X-Y)_{\rm red}$ is an isomorphism. An 
{\sl elementary Nisnevich square} is a Cartesian square 
~\eqref{eqn:ABLP} where $f$ is
{\'e}tale, $i$ is an open immersion and $(X'-Y') \to (X-Y)$ is an
isomorphism. The {\sl cdh} topology on $\Sch/k$ is generated by
covers $\{X' \to X\}$ given by the above abstract blow-ups and elementary 
Nisnevich squares. 

For a presheaf of spectra $\sE$ on $\Sch/k$, let $\H_{cdh}(-, \sE)$
denote its fibrant replacement in the $cdh$ topology. We shall denote the
nonconnective $K$-theory spectra and its homotopy invariant counterpart
({\sl cf.} \cite{WeibelH}) on $\Sch/k$ by $\sK$ and ${\sK}{\sH}$ respectively.
For a subfield $F \subseteq k$, let ${\sH}{\sH}(/F)$ and ${\sH}{\sC}(/F)$ 
denote the presheaves of Eilenberg-Mac Lane spectra of Hochschild and cyclic 
homology respectively on $\Sch/k$ ({\sl cf.} \cite[Section~2]{CAnnals}) over 
$F$. If the field $F$ is not indicated, we shall assume these homology
to be taken over the field $\Q$ of rational numbers. For any
$X$ in $\Sch/k$, we let $HH_{i}(X/F)$ to be ${\pi}_i {\sH}{\sH}(X/F)$. One
defines the cyclic homology of schemes in a similar way. 
We shall also write ${\pi}_i \sK(X)$ (resp ${\pi}_i {\sK}{\sH}(X)$)
as $K_i(X)$ (resp $KH_i(X)$). We recall from 
\cite[Proposition~2.1]{CJams} that the Hochschild, cyclic homology and their
$cdh$-fibrant replacements have a gamma filtrations (Hodge filtration)
which give the canonical $\lambda$-decompositions
\begin{equation}\label{eqn:lambda0}
HH_n(X/F) \cong \ \stackrel{}{\underset i{\oplus}} \ HH^{(i)}_n(X/F),
\end{equation}
\[
HC_n(X/F) \cong \ \stackrel{}{\underset i{\oplus}} \ HC^{(i)}_n(X/F),  
\]
\[
\H^n_{cdh}(X, \mathcal{HH}) \ \cong \
\stackrel{}{\underset i{\oplus}} \ \H^n_{cdh}(X, \mathcal{HH}^{(i)})
\ {\rm and} 
\]
\[
\H^n_{cdh}(X, \mathcal{HC}) \ \cong \
\stackrel{}{\underset i{\oplus}} \ \H^n_{cdh}(X, \mathcal{HC}^{(i)}).
\]
Moreover, the natural map $\mathcal{HH} \to \H_{cdh}(-, \mathcal{HH})$
preserves these decompositions. The same holds for the cyclic homology
and negative cyclic homology.

For a presheaf of spectra $\sE$, let $\widetilde{\sE}$ denote the homotopy 
fiber
of the natural map of spectra $\sE \to \H_{cdh}(-, \sE)$. 
\begin{thm}{(\cite[Theorem~1.6]{CJams})}\label{thm:fiber0}
For any scheme $X$ in $\Sch/k$, the Chern character map $\sK \to 
\mathcal{HN}$ from the $K$-theory to the negative cyclic homology spectra
induces a natural weak equivalence
\[
\widetilde{\sK}(X) \xrightarrow{\cong} \widetilde{\mathcal{HN}}(X) 
\overset{\cong}{\leftarrow}
{\Omega}^{-1} \widetilde{\mathcal{HC}}(X).
\]
\end{thm}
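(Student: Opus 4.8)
\emph{Strategy.} The plan is to reduce the assertion to two substantial descent theorems --- $cdh$-descent for periodic cyclic homology and $cdh$-descent for infinitesimal $K$-theory --- with everything else being formal manipulation of standard exact triangles. For a presheaf of spectra $\sE$ on $\Sch/k$ write $\widetilde{\sE} = \hofib\bigl(\sE \to \H_{cdh}(-,\sE)\bigr)$; since $cdh$-fibrant replacement preserves homotopy fibre sequences of presheaves of spectra, so does the operation $\sE \mapsto \widetilde{\sE}$. (By Haesemeyer's theorem $\H_{cdh}(-,\sK)$ is the homotopy $K$-theory presheaf, so $\widetilde{\sK}(X) = \hofib\bigl(\sK(X) \to {\sK}{\sH}(X)\bigr)$ measures the failure of $K$-regularity; this is not logically needed below, but it explains the shape of the statement. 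Note too that $\widetilde{\sK}(X)$ is automatically a rational spectrum, the relative $K$-groups being uniquely divisible in characteristic zero, so rational input such as Goodwillie's theorem applies.)

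\emph{The second equivalence.} First I would record the natural homotopy fibre sequence of presheaves of spectra
\begin{equation*}
\Omega^{-1}\mathcal{HC} \xrightarrow{B} \mathcal{HN} \longrightarrow \mathcal{HP}
\end{equation*}
coming from Connes' $S$, $B$, $I$ operators, $\mathcal{HP}$ being periodic cyclic homology; on homotopy groups it is the exact sequence $\cdots \to HC_{n-1} \xrightarrow{B} HN_n \to HP_n \xrightarrow{S} HC_{n-2} \to \cdots$. Applying $\widetilde{(-)}$ yields the homotopy fibre sequence $\Omega^{-1}\widetilde{\mathcal{HC}} \to \widetilde{\mathcal{HN}} \to \widetilde{\mathcal{HP}}$, so the second equivalence reduces to $\widetilde{\mathcal{HP}}(X)\simeq *$, i.e. to $cdh$-descent for periodic cyclic homology. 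To see the latter, I would use that over a field of characteristic zero $\mathcal{HP}(-/\Q)$ computes $2$-periodic infinitesimal cohomology --- by the Hochschild--Kostant--Rosenberg theorem together with the theorem of Feigin--Tsygan (and Weibel, for schemes), patched from the affine case --- and that infinitesimal cohomology, being algebraic de Rham cohomology in characteristic zero, satisfies descent for abstract blow-ups by proper descent through cubical hyperresolutions (the Du Bois complex), as well as Nisnevich descent; hence $cdh$-descent. This step depends on Hironaka's resolution of singularities, which is the real source of the characteristic-zero hypothesis.

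\emph{The first equivalence.} Set $\sK^{\mathrm{inf}} = \hofib\bigl(\sK \xrightarrow{\mathrm{ch}} \mathcal{HN}\bigr)$, the infinitesimal $K$-theory presheaf. Applying $\widetilde{(-)}$ to this fibre sequence gives $\widetilde{\sK^{\mathrm{inf}}} \to \widetilde{\sK} \to \widetilde{\mathcal{HN}}$, so the first equivalence is equivalent to $\widetilde{\sK^{\mathrm{inf}}}(X)\simeq *$, i.e. to $cdh$-descent for $\sK^{\mathrm{inf}}$. For this I would invoke Haesemeyer's descent criterion: a presheaf of spectra on $\Sch/k$ with the Mayer--Vietoris property for elementary Nisnevich squares satisfies $cdh$-descent as soon as it has the Mayer--Vietoris property for abstract blow-up squares, and --- by resolution of singularities and induction on dimension --- it suffices to check the latter for (a) blow-ups of a smooth scheme along a smooth centre and (b) the closed-cover squares attached to a reduced strict normal crossing divisor in a smooth scheme. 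Now $\sK^{\mathrm{inf}}$ inherits Zariski, hence Nisnevich, descent from $\sK$ and $\mathcal{HN}$; case (a) follows because both $\sK$ (Thomason's blow-up formula) and negative cyclic homology over $\Q$ (its analogous blow-up formula) behave well under blowing up a smooth centre, so their fibre $\sK^{\mathrm{inf}}$ does too; and case (b) follows from Goodwillie's theorem --- the relative Chern character is an equivalence on nilpotent extensions, so $\sK^{\mathrm{inf}}$ is nilinvariant --- which reduces to reduced normal crossing schemes, whereupon an induction on the number of components reduces Mayer--Vietoris to situations already covered by (a) and by Nisnevich descent. (Alternatively, Cortiñas' excision theorem for $\sK^{\mathrm{inf}}$ settles (b) directly.) Naturality in $X$ is automatic, every construction being functorial.

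\emph{The main obstacle.} The hard part will be precisely the two descent statements above: $cdh$-descent for $\mathcal{HP}$ and for $\sK^{\mathrm{inf}}$. Each is a deep theorem on its own, resting on Hironaka's resolution, on the comparison of periodic cyclic homology with de Rham/infinitesimal cohomology, and --- for $\sK^{\mathrm{inf}}$ --- on Goodwillie's nilinvariance theorem (and on excision-type input for the normal crossing squares). Once these are granted, the remainder is bookkeeping with the exact triangle $\Omega^{-1}\mathcal{HC} \to \mathcal{HN} \to \mathcal{HP}$, the fibre sequence defining $\sK^{\mathrm{inf}}$, and the exactness of $cdh$-fibrant replacement.
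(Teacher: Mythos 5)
The paper offers no proof of this statement: Theorem~3.1 is quoted verbatim from Corti\~nas--Haesemeyer--Weibel \cite[Theorem~1.6]{CJams}, the only ``argument'' here being the citation (it is then combined with Haesemeyer's equivalence $\H_{cdh}(-,\sK)\simeq {\sK}{\sH}$ to yield Corollary~3.2), so there is nothing internal to compare your attempt against. Your outline in fact reconstructs the strategy of the cited proof itself: split the assertion into $cdh$-descent for infinitesimal $K$-theory $\sK^{\mathrm{inf}}=\hofib(\sK\to\mathcal{HN})$ (Goodwillie's nilinvariance, Corti\~nas' excision, Thomason's blow-up formula, Nisnevich descent, resolution of singularities) and $cdh$-descent for periodic cyclic homology (Feigin--Tsygan comparison with infinitesimal/de~Rham cohomology), with the $SBI$ triangle $\Omega^{-1}\mathcal{HC}\to\mathcal{HN}\to\mathcal{HP}$ giving the second equivalence and exactness of $cdh$-fibrant replacement doing the bookkeeping; this is precisely how the theorem is established in the Corti\~nas--Haesemeyer--(Schlichting)--Weibel papers. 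The only caveat is the one you acknowledge yourself: the two descent theorems you defer to are the entire mathematical content, so your text is an accurate roadmap of the published proof rather than a self-contained argument, and your first suggested treatment of the normal-crossing squares genuinely needs the excision input (your parenthetical alternative), not just smooth blow-ups and Nisnevich descent.
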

\begin{cor}\label{cor:fiber1}
There is a natural fibration sequence of spectra
\[
{\Omega}^{-1}\widetilde{\mathcal{HC}}(X) \to \sK(X) \to {\sK}{\sH}(X).
\]
\end{cor}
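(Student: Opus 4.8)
The plan is to deduce Corollary~\ref{cor:fiber1} directly from Theorem~\ref{thm:fiber0} by unwinding the definition of the tilde construction. Recall that for a presheaf of spectra $\sE$, the spectrum $\widetilde{\sE}(X)$ is defined as the homotopy fiber of the natural map $\sE(X) \to \H_{cdh}(X,\sE)$, so that there is a canonical homotopy fibration sequence
\[
\widetilde{\sE}(X) \to \sE(X) \to \H_{cdh}(X,\sE).
\]
First I would apply this with $\sE = \sK$: since $\sK\sH$ is, by construction (\cite{WeibelH}), the $cdh$-fibrant replacement of $\sK$ in the sense that $\sK\sH(X) \simeq \H_{cdh}(X,\sK)$ — or at least the comparison map $\sK \to \sK\sH$ factors through and agrees with $\sK \to \H_{cdh}(-,\sK)$ up to the needed weak equivalence — the fibration sequence above reads
\[
\widetilde{\sK}(X) \to \sK(X) \to {\sK}{\sH}(X).
\]
The one point that needs a word of justification here is precisely this identification $\H_{cdh}(X,\sK) \simeq \sK\sH(X)$; it rests on the facts that $KH$ satisfies $cdh$-descent and that on a smooth scheme $\sK$ and $\sK\sH$ agree, which together force the natural map $\sK\sH \to \H_{cdh}(-,\sK\sH) \leftarrow \H_{cdh}(-,\sK)$ to be an equivalence. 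This is standard and I would simply cite it.

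Next I would invoke Theorem~\ref{thm:fiber0}, which furnishes the natural weak equivalences
\[
\widetilde{\sK}(X) \xrightarrow{\;\cong\;} \widetilde{\mathcal{HN}}(X)
\xleftarrow{\;\cong\;} {\Omega}^{-1}\widetilde{\mathcal{HC}}(X).
\]
Composing the fibration sequence for $\sK$ with the inverse of this chain of equivalences on the fiber term replaces $\widetilde{\sK}(X)$ by ${\Omega}^{-1}\widetilde{\mathcal{HC}}(X)$, yielding exactly the asserted fibration sequence
\[
{\Omega}^{-1}\widetilde{\mathcal{HC}}(X) \to \sK(X) \to {\sK}{\sH}(X).
\]
Naturality in $X$ is inherited from the naturality of the tilde construction, of the Chern character, and of the equivalences in Theorem~\ref{thm:fiber0}.

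There is essentially no obstacle here: the corollary is a formal consequence of Theorem~\ref{thm:fiber0} together with the bookkeeping of homotopy fibers. The only mildly delicate point — and the one I would be careful to state rather than gloss over — is the identification of $\sK\sH(X)$ with the $cdh$-fibrant replacement $\H_{cdh}(X,\sK)$, so that the cofiber of $\widetilde{\sK}(X) \to \sK(X)$ is genuinely $\sK\sH(X)$ and not merely some other $cdh$-local approximation; everything else is diagram chasing in the stable homotopy category.
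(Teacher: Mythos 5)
Your argument is correct and is essentially the paper's own proof: take the defining fibration $\widetilde{\sK}(X) \to \sK(X) \to \H_{cdh}(X,\sK)$, identify the base with ${\sK}{\sH}(X)$, and replace the fiber via Theorem~\ref{thm:fiber0}. The one identification you flag as ``standard'' is exactly what the paper cites, namely the weak equivalence $\H_{cdh}(-,\sK) \xrightarrow{\cong} {\sK}{\sH}$ of \cite[Theorem~6.4]{Haes}, so nothing further is needed.
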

\begin{proof} This follows from Theorem~\ref{thm:fiber0} and the weak
equivalence $\H_{cdh}(-, \sK) \xrightarrow{\cong} {\sK}{\sH}$
as shown in \cite[Theorem~6.4]{Haes}.
\end{proof}

We shall write $a : {\left(\Sch/k\right)}_{cdh} \to 
{\left(\Sch/k\right)}_{Zar}$ for the natural morphism from the $cdh$ to
the Zariski site of $\Sch/k$. For a Zariski sheaf $\sF$, let ${\sF}_{cdh}$
denote its associated $cdh$-sheaf $a^*\sF$ and let $H^*_{cdh}(X, \sF)$
denote the $cdh$-cohomology of ${\sF}_{cdh}$. For any subfield $F \subseteq k$
and for $i \ge 0$, let ${\Omega}^i_{/F}$ denote the presheaf $X \mapsto
{\Omega}^i_{X/F}$ of K{\"a}hler differentials. Let ${\Omega}^{\ge i}_{X/F}$
denote the brutal truncations of the algebraic de Rham complex 
${\Omega}^{\bullet}_{X/F}$ of $X$.  It is known ({\sl cf.} \cite{SuslinV},
\cite[Exp. VI, 2.11, 5.2]{Grothendieck}) that the $cdh$-cohomology commutes 
the filtered direct limits of sheaves. In particular, for a sheaf of 
$k$-modules $\sF$
and a $k$-vector space $V$, one has $H^n_{cdh}(X, \sF {\otimes}_k V)
\cong H^n_{cdh}(X, \sF) {\otimes}_k V$. We shall often use this fact for the 
sheaves of the form ${\Omega}^i_{k/{\Q}} {\otimes}_k {\Omega}^j_{X/k}$.

We also recollect here a few preliminary results that will be used often.
The case $i = 0$ of the following result was proven in
\cite[Lemma~6.5]{CAnnals}. The general case is a simple consequence of
\cite[Theorem~2.4, Lemma~2.8]{CJams}.  
We shall use such a result for comparing the
Zariski and the $cdh$-cohomology of sheaves of differential forms.
\begin{lem}\label{lem:direct-image}
For a $k$-scheme (i.e., a scheme in $\Sch/k$), the complex of Zariski
sheaves $Ra_*a^*{\Omega}^i_{X/F}$ (resp  $Ra_*a^*{\Omega}^i_{X/k}$)
has quasi-coherent (resp coherent) cohomology sheaves
for any subfield $F \subseteq k$ and any $i \ge 0$. 
\end{lem}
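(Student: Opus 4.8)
The plan is to reduce everything to the case $i=0$, which is already known (\cite[Lemma~6.5]{CAnnals} for the absolute case over $\Q$, and the version over $F$ that follows from \cite[Theorem~2.4, Lemma~2.8]{CJams}), by exploiting the multiplicative structure of the de Rham complex. The key observation is that, as a Zariski sheaf, $\Omega^i_{X/k}$ admits (locally) a description in terms of $\Omega^1_{X/k}$, but more usefully for $cdh$-descent purposes one has the exact sequences relating the various $\Omega^i$. So first I would recall that $cdh$-cohomology commutes with filtered direct limits (cited in the excerpt), so it suffices to treat $X$ of finite type and one may argue locally; and that $Ra_*$ commutes with finite direct sums, so the statement is compatible with the $\lambda$-decompositions of \eqref{eqn:lambda0}.

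The main step is an induction on $i$. For $i=0$ one has $\Omega^0_{X/F}=\sO_X$ (or the structure sheaf of $X$ as an $F$-scheme, which is the same underlying sheaf), and $Ra_*a^*\sO_X$ has quasi-coherent cohomology by \cite[Lemma~6.5]{CAnnals}; the coherent version over $k$ follows since on a finite-type $k$-scheme these cohomology sheaves are finitely generated, which is part of what \cite[Theorem~2.4, Lemma~2.8]{CJams} provides. For the inductive step I would use that $cdh$-locally (indeed, after passing to a resolution via an abstract blow-up, using $cdh$-descent) one can build $\Omega^i$ from lower differentials. Concretely, the cleanest route is: the presheaf $\Omega^i_{/F}$ is a quotient (as a sheaf of $\sO$-modules, or of abelian groups) of $\Omega^1_{/F}\otimes_{\sO}\Omega^{i-1}_{/F}$, and by the projection-type formula $Ra_*a^*(\sF\otimes_{\sO_X}\sG)$ behaves well when one of the factors is coherent — or, more robustly, one uses the fact established in \cite[Theorem~2.4, Lemma~2.8]{CJams} that $Ra_*$ of a $cdh$-sheaf associated to a coherent sheaf on all of $\Sch/k$ has coherent cohomology, applied to the coherent sheaves $\Omega^i$. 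In other words, the statement for $\Omega^i$ with $i\ge 1$ really is just an instance of the general principle that $Ra_*a^*\sF$ has (quasi-)coherent cohomology for any (quasi-)coherent $\sF$, and the point of the lemma is to record this for the specific sheaves of differential forms; so I would invoke \cite[Theorem~2.4, Lemma~2.8]{CJams} directly once I have observed that $\Omega^i_{X/F}$ and $\Omega^i_{X/k}$ are quasi-coherent, respectively coherent, $\sO_X$-modules, functorial in $X$.

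The hard part, and where genuine care is needed, is the relative case over an arbitrary subfield $F\subseteq k$: here $\Omega^i_{X/F}$ need not be coherent (it is only quasi-coherent, since $\Omega^1_{k/F}$ is an infinite-dimensional $k$-vector space in general), so one cannot simply cite coherence-preservation. The fix is the decomposition $\Omega^n_{X/F}\cong\bigoplus_{p+q=n}\Omega^p_{k/F}\otimes_k\Omega^q_{X/k}$ (valid since $k/F$ is a field extension hence formally smooth, so the de Rham complex over $F$ splits), together with the fact quoted in the excerpt that $cdh$-cohomology, and hence $Ra_*a^*$, commutes with tensoring by a fixed $k$-vector space: $Ra_*a^*(\Omega^q_{X/k}\otimes_k V)\cong (Ra_*a^*\Omega^q_{X/k})\otimes_k V$ for any $k$-vector space $V$. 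Applying this with $V=\Omega^p_{k/F}$ and summing over $p+q=n$ reduces the quasi-coherence of $Ra_*a^*\Omega^n_{X/F}$ to the coherence of $Ra_*a^*\Omega^q_{X/k}$ for $q\le n$, which is the absolute case already handled. The only subtlety to watch is that these are filtered colimits of coherent sheaves, so a direct limit of coherent is quasi-coherent — exactly what is claimed — and that the colimit commutes with $Ra_*a^*$ by the limit-compatibility of $cdh$-cohomology cited above.
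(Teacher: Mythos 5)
Your base case and the overall strategy of reducing the quasi-coherent statement over $F$ to the coherent statement over $k$ are reasonable, and indeed the paper itself offers no argument beyond citing [CAnnals, Lemma~6.5] for $i=0$ and [CJams, Theorem~2.4, Lemma~2.8] in general. The genuine problem is the step you add to handle the relative case: the asserted decomposition $\Omega^n_{X/F}\cong\bigoplus_{p+q=n}\Omega^p_{k/F}\otimes_k\Omega^q_{X/k}$ is false for singular $X$, and the reason you give for it is the wrong criterion. Split exactness of the fundamental sequence $0\to\Omega^1_{k/F}\otimes_k\sO_X\to\Omega^1_{X/F}\to\Omega^1_{X/k}\to 0$ is guaranteed by (formal) smoothness of $X$ over $k$, not by formal smoothness of $k$ over $F$. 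Indeed, an $\sO_X$-linear retraction of the canonical map would let every $F$-derivation $\delta$ of $k$ extend to an $F$-derivation of $\sO_X$ (compose $d_{X/F}$ with the retraction and with $\delta\otimes\id$), and this already fails for $A=k[x,y]/(f)$ with $F=\Q$, $k=\Q(t)$, $f=xy(x+y)(x+ty)$: by Euler's relation, extending $\partial/\partial t$ forces $\partial f/\partial t=xy^2(x+y)$ to lie in the Jacobian ideal $(\partial f/\partial x,\partial f/\partial y)$, and a direct check in degree $4$ shows this happens only when $t=1$. This is the classical nonvanishing Kodaira--Spencer class of the cross-ratio family of four lines, and the same example works over $k=\C$ with $t$ transcendental. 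Consistently, the paper never uses a direct sum: in the proofs of Lemma~\ref{lem:SNC-curve} and Corollary~\ref{cor:ZariskiC*} it only invokes a finite filtration of $\Omega^i_{X/F}$ with natural surjections $\Omega^j_{k/F}\otimes_k\Omega^{i-j}_{X/k}\surj F^j/F^{j+1}$ that are isomorphisms on the smooth locus.

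The reduction can be repaired along the lines the paper itself suggests. Use that natural filtration (it is functorial in $X$, being the image of the wedge map), and note that after applying $a^*$ the comparison surjections become isomorphisms: in characteristic zero a morphism of presheaves on $\Sch/k$ which is an isomorphism on smooth schemes induces an isomorphism of $cdh$ sheaves, since by resolution of singularities every scheme admits a $cdh$ cover by smooth ones. Together with the filtered-colimit compatibility you quote, $a^*$ of each graded piece is $\Omega^j_{k/F}\otimes_k a^*\Omega^{i-j}_{/k}$, and induction on the finite filtration plus the coherent case over $k$ gives quasi-coherence of the cohomology of $Ra_*a^*\Omega^i_{X/F}$. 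Two further cautions: the ``general principle that $Ra_*a^*\sF$ has quasi-coherent cohomology for any functorial quasi-coherent $\sF$'' is not something you can simply quote --- the cited results concern $\sO$ and the sheaves of differential forms specifically and are proved via resolution of singularities and blow-up descent --- and $a^*\Omega^i_{X/F}$ here means the $cdh$ sheafification of the presheaf $U\mapsto\Omega^i_{U/F}$, not the pullback of the single quasi-coherent sheaf $\Omega^i_{X/F}$, so quasi-coherence of the latter does not by itself place you within the scope of any such principle.
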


\begin{lem}\label{lem:folklore1}
Let $\sF \xrightarrow{\phi} \sG$ be a morphism of Zariski sheaves on a 
$k$-scheme $X$ such that $\phi$ is an isomorphism outside a closed subscheme
$Z \overset{i}{\inj} X$ of dimension $d$. Then $H^{d+1}\left(X, \sF\right) 
\surj H^{d+1}\left(X, \sG\right)$ and $H^i(X, \sF) \xrightarrow{\cong}
H^i(X. \sG)$ for $i \ge d+2$.
\end{lem}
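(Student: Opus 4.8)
The plan is to exploit the hypercohomology spectral sequence (or the long exact sequence of a short exact sequence of sheaves) associated with the morphism $\phi$, reducing everything to the vanishing of the cohomology of a sheaf supported on the $d$-dimensional closed subscheme $Z$. Concretely, let $\sK = \ker(\phi)$ and $\sC = \cok(\phi)$, so that $\phi$ factors as $\sF \surj \im(\phi) \inj \sG$, with $\sK$ and $\sC$ both supported on $Z$ (since $\phi$ is an isomorphism on the open complement $U = X \setminus Z$). First I would record the standard dimension bound: for a sheaf $\sH$ of abelian groups whose support is contained in a closed subscheme of dimension $d$, one has $H^j(X, \sH) = H^j(Z', \sH|_{Z'}) = 0$ for all $j > d$, where $Z'$ is the support with its reduced (or any) subscheme structure of dimension $d$ — this is Grothendieck's vanishing theorem for noetherian topological spaces of dimension $\le d$ (\cite[Exp. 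VI]{Grothendieck}), which applies since $X$ is noetherian and the cohomology of a sheaf only depends on its restriction to the closure of its support.

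With that in hand, the argument is a diagram chase through two long exact sequences. From $0 \to \sK \to \sF \to \im(\phi) \to 0$ we get, for each $i$, an exact piece $H^i(X,\sK) \to H^i(X,\sF) \to H^i(X, \im(\phi)) \to H^{i+1}(X,\sK)$; since $H^j(X,\sK) = 0$ for $j \ge d+1$, the map $H^i(X,\sF) \to H^i(X,\im(\phi))$ is surjective for $i = d$ (as $H^{d+1}(X,\sK)=0$... actually we need $i \ge d$ here only for surjectivity, and we also get it is an isomorphism for $i \ge d+1$ because then $H^i(X,\sK) = H^{i+1}(X,\sK) = 0$). Wait, for surjectivity at $i = d+1$ we need $H^{d+2}(X,\sK) = 0$, which holds; for injectivity at $i = d+1$ we need $H^{d+1}(X,\sK) = 0$, which holds. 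So $H^i(X,\sF) \xrightarrow{\cong} H^i(X, \im(\phi))$ for $i \ge d+1$, and it is surjective for $i = d$ ... but we want the statement at $d+1$, so $i \ge d+1$ suffices here. Similarly, from $0 \to \im(\phi) \to \sG \to \sC \to 0$ we get $H^{i-1}(X,\sC) \to H^i(X,\im(\phi)) \to H^i(X,\sG) \to H^i(X,\sC)$; since $H^j(X,\sC) = 0$ for $j \ge d+1$, the map $H^i(X,\im(\phi)) \to H^i(X,\sG)$ is an isomorphism for $i \ge d+2$ and is surjective for $i = d+1$ (because $H^{d+1}(X,\sC) = 0$). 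Composing: $H^{d+1}(X,\sF) \xrightarrow{\cong} H^{d+1}(X,\im(\phi)) \surj H^{d+1}(X,\sG)$ gives the desired surjection, and $H^i(X,\sF) \xrightarrow{\cong} H^i(X,\im(\phi)) \xrightarrow{\cong} H^i(X,\sG)$ for $i \ge d+2$ gives the isomorphism.

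The only genuine point requiring care — and the one I'd flag as the main (mild) obstacle — is justifying that $\sK$ and $\sC$, being supported set-theoretically on $Z$, really do have vanishing cohomology above degree $d$. This is not automatic from ``$\phi$ is an isomorphism outside $Z$'' at the level of stalks unless one knows $\sK$ and $\sC$ are honestly set-theoretically supported on $Z$; but that is exactly what ``$\phi$ isomorphism outside $Z$'' means (the stalks of $\sK$ and $\sC$ at points of $U$ vanish), so the support is contained in $Z$, and then $R\Gamma(X, \sK) = R\Gamma(Z, i^*\!\sK)$ for the closed immersion $i$ (more precisely, $\sK = i_* i^* \sK$ since $\sK$ vanishes on $U$ and $i_*$ is exact), and $Z$ has dimension $d$, so Grothendieck vanishing gives $H^j = 0$ for $j > d$. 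No assumption on the nature of $X$ beyond finite type over $k$ (hence noetherian of finite Krull dimension) is needed. I would present this cleanly by stating the dimension-vanishing lemma for $i_*$-type sheaves once and then doing the two-step long-exact-sequence chase in a couple of lines.
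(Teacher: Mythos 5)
Your proposal is correct and follows essentially the same route as the paper's proof: reduce via the kernel/cokernel long exact sequences to the vanishing $H^j(X,\sH)=0$ for $j>d$ when $\sH$ is supported on $Z$, which follows from $\sH\cong i_*(\sH|_Z)$, the identification $H^j(X,i_*(\sH|_Z))\cong H^j(Z,\sH|_Z)$, and Grothendieck vanishing on $Z$. The only difference is cosmetic: you carry out the diagram chase explicitly through $\im(\phi)$, while the paper compresses it into ``standard long exact cohomology sequences'' and justifies the support statement via the extension-by-zero sequence $0\to j_!j^*\sF\to\sF\to i_*(\sF|_Z)\to 0$.
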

\begin{proof}
It suffices using the standard long exact cohomology sequences, to show that
if $\sF$ is a Zariski sheaf on $X$ such that $\sF|_U = 0$, where 
$U \overset{j}{\inj} X$ is the complement of $Z$, then $H^i(X, \sF) = 0$ for 
$i \ge d+1$. We have the short exact sequence ({\sl cf.} \cite[Exercise~1.19,
Chapter II]{Hart})
\[
0 \to j_{!} j^* \sF \to \sF \to i_*(\sF|_Z) \to 0.
\]
Our assumption then implies that 
$H^i(X, \sF) \xrightarrow{\cong} H^i\left(X, i_*(\sF|_Z)\right)$ for all $i$.
But the last term is same as $H^i(Z, \sF|_Z)$ by \cite[Lemma~III.2.10]{Hart}.
The proof now follows from the Grothendieck vanishing theorem on $Z$.
\end{proof}  

\begin{lem}\label{lem:folklore2}
Let $X$ be a $k$-scheme of dimension $d$. Then for either of Zariski and
$cdh$-sites, the following hold for any subfield $F \subseteq k$ and any 
$j \ge 0$. \\
$(i)$ $H^d(X, \Omega^{j-1}_{X/F}) \to H^d(X, \Omega^j_{X/F}) \to
\H^{d+j}(X, \Omega^{\le j}_{X/F}) \to 0$ \\
is exact. \\
$(ii)$ $\H^{i}(X, \Omega^{\le j}_{X/F}) = 0$ for  $i\ge j+d+1$.
\end{lem}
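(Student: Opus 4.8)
The plan is to prove both parts by induction on $d = \dim X$, using the brutal truncation triangle that relates $\Omega^{\le j}_{X/F}$, $\Omega^{\le j-1}_{X/F}$, and the single sheaf $\Omega^j_{X/F}$ placed in cohomological degree $j$. Precisely, for each $j\ge 1$ there is a distinguished triangle
\[
\Omega^{\le j-1}_{X/F} \to \Omega^{\le j}_{X/F} \to \Omega^j_{X/F}[-j] \to \Omega^{\le j-1}_{X/F}[1],
\]
valid in either the Zariski or the $cdh$ derived category, and for $j=0$ we simply have $\Omega^{\le 0}_{X/F} = \sO_X$ (or its $cdh$-sheafification). Taking hypercohomology gives a long exact sequence whose relevant stretch reads
\[
\cdots \to \H^{i-1}(X, \Omega^j_{X/F}[-j]) \to \H^i(X, \Omega^{\le j-1}_{X/F}) \to \H^i(X, \Omega^{\le j}_{X/F}) \to \H^i(X, \Omega^j_{X/F}[-j]) \to \cdots,
\]
and $\H^i(X, \Omega^j_{X/F}[-j]) = H^{i-j}(X, \Omega^j_{X/F})$.

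For part $(ii)$ I would argue as follows. The base case $j=0$ is the statement that $H^i(X,\sO_X) = 0$ (resp.\ $H^i_{cdh}(X,\sO_X)=0$) for $i \ge d+1$; in the Zariski case this is Grothendieck vanishing, and in the $cdh$ case it follows because $Ra_*a^*\sO_X$ has coherent cohomology sheaves by \lemref{lem:direct-image}, so its hypercohomology on a $d$-dimensional scheme vanishes above degree $d$ by Grothendieck vanishing applied to the corresponding Zariski hypercohomology spectral sequence. Now assume $(ii)$ holds for $j-1$. In the long exact sequence above, for $i \ge j+d+1$ we have $\H^i(X, \Omega^{\le j-1}_{X/F}) = 0$ by the inductive hypothesis (since $i \ge (j-1)+d+2 > (j-1)+d+1$), and $H^{i-j}(X, \Omega^j_{X/F}) = 0$ because $i - j \ge d+1$ (Grothendieck vanishing, resp.\ \lemref{lem:direct-image} plus Grothendieck vanishing, noting $Ra_*a^*\Omega^j_{X/F}$ has quasi-coherent cohomology). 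Hence $\H^i(X, \Omega^{\le j}_{X/F}) = 0$, completing the induction.

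For part $(i)$, I would take $i = d+j$ in the same long exact sequence:
\[
H^{d}(X, \Omega^{j-1}_{X/F}) \to H^{d+j}(X,\Omega^{\le j}_{X/F}) \quad\text{(wrong end)}
\]
— more carefully, the exact stretch is
\[
H^{d-1}(X,\Omega^j_{X/F}) \to \H^{d+j}(X,\Omega^{\le j-1}_{X/F}) \to \H^{d+j}(X,\Omega^{\le j}_{X/F}) \to H^d(X,\Omega^j_{X/F}) \to \H^{d+j+1}(X,\Omega^{\le j-1}_{X/F}).
\]
The last term vanishes by part $(ii)$ applied to $j-1$ (since $d+j+1 = (j-1)+d+2 \ge (j-1)+d+1$), so the map $\H^{d+j}(X,\Omega^{\le j}_{X/F}) \to H^d(X,\Omega^j_{X/F})$ is surjective. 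It remains to identify the domain $\H^{d+j}(X, \Omega^{\le j-1}_{X/F})$ receiving a map from $H^d(X, \Omega^{j-1}_{X/F})$; iterating the truncation triangle one step further, or invoking part $(ii)$ for $j-2$ to kill $\H^{d+j}(X,\Omega^{\le j-2}_{X/F})$ (which is zero since $d+j = (j-2)+d+2$), gives a surjection $H^{d+1-(j-1)} \to \H^{d+j}(X,\Omega^{\le j-1}_{X/F})$ from a top-degree sheaf cohomology; chaining the appropriate maps and again using top-degree sheaf vanishing for the intermediate terms $H^{>d}$ produces the three-term exact sequence claimed. The bookkeeping reduces to repeatedly applying Grothendieck vanishing ($H^{>d}$ of any sheaf vanishes) together with $(ii)$ for smaller $j$.

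The main obstacle is the $cdh$ case: unlike the Zariski setting, $\Omega^{\le j}_{X/F}$ is a complex of $cdh$-sheaves whose individual cohomology objects are the $cdh$-sheafifications $a^*\Omega^i_{X/F}$, and I need the derived pushforward $Ra_*$ of these to have quasi-coherent (bounded, supported in degrees $\le d$) cohomology so that the Zariski-to-$cdh$ hypercohomology comparison and Grothendieck vanishing both apply. This is exactly what \lemref{lem:direct-image} supplies, so once that input is in hand the argument is purely formal homological algebra with the truncation triangle; the only care needed is to run the two inductions (on $j$ for fixed $d$, interleaved correctly) so that at each stage the terms being annihilated have already been shown to vanish.
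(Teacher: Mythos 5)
Your overall strategy --- induct on $j$ using the brutal truncation of the de~Rham complex together with cohomological dimension $\le d$ --- is the paper's strategy, but as written there are two genuine gaps. The first is that your triangle is mis-oriented. For brutal truncations the short exact sequence of complexes is $0 \to \Omega^j_{X/F}[-j] \to \Omega^{\le j}_{X/F} \to \Omega^{\le j-1}_{X/F} \to 0$: the degree-$j$ piece is the subcomplex and the shorter truncation is the quotient; there is no natural map $\Omega^{\le j-1}_{X/F} \to \Omega^{\le j}_{X/F}$, hence no triangle of the shape you wrote. For part (ii) the slip is harmless, since either way $\H^i(X,\Omega^{\le j}_{X/F})$ is caught between two vanishing groups. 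For part (i) it derails the argument: the correct long exact sequence reads $\H^{d+j-1}(X,\Omega^{\le j-1}_{X/F}) \to H^d(X,\Omega^j_{X/F}) \to \H^{d+j}(X,\Omega^{\le j}_{X/F}) \to \H^{d+j}(X,\Omega^{\le j-1}_{X/F})$, so the natural map goes \emph{from} $H^d(X,\Omega^j_{X/F})$ \emph{to} $\H^{d+j}(X,\Omega^{\le j}_{X/F})$, which is the map in the statement; you instead argue surjectivity of a map $\H^{d+j}(X,\Omega^{\le j}_{X/F}) \to H^d(X,\Omega^j_{X/F})$ that is not the one asserted. Moreover the exactness at the middle term --- that the kernel is the image of $H^d(X,\Omega^{j-1}_{X/F})$ --- is exactly where part (i) of the induction must be fed back in: vanishing of $\H^{d+j}(X,\Omega^{\le j-1}_{X/F})$ (part (ii) for $j-1$) gives the surjectivity, and the surjectivity of $H^d(X,\Omega^{j-1}_{X/F}) \to \H^{d+j-1}(X,\Omega^{\le j-1}_{X/F})$ (part (i) for $j-1$) gives the exactness. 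Your sketch never invokes this; indeed at that point you look for a map from $H^d(X,\Omega^{j-1}_{X/F})$ into $\H^{d+j}(X,\Omega^{\le j-1}_{X/F})$, which is a degree mismatch (the relevant group is $\H^{d+(j-1)}$), and the rest is deferred to ``bookkeeping''.

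The second gap is in the $cdh$ case. Lemma~\ref{lem:direct-image} only says that $Ra_*a^*\Omega^i_{X/F}$ has quasi-coherent cohomology sheaves; it says nothing about the range of degrees in which those sheaves live, so quasi-coherence plus Grothendieck vanishing does not yield $H^i_{cdh}(X,\sF)=0$ for $i>d$: in the spectral sequence $H^p_{Zar}(X,R^qa_*a^*\sF)\Rightarrow H^{p+q}_{cdh}(X,\sF)$ you can only kill the range $p>d$, while the terms with $p\le d$ and $q$ large remain uncontrolled (and controlling $R^qa_*a^*\sF$ for $q>d$ is essentially the statement you are trying to prove). The input actually needed --- that the $cdh$ cohomological dimension of a $d$-dimensional scheme is at most $d$ for arbitrary $cdh$ sheaves --- is a theorem of Suslin--Voevodsky, and this is precisely what the paper quotes; it is the one nonformal ingredient, used both in the base case $j=0$ and to kill $H^{i-j}_{cdh}(X,\Omega^j_{X/F})$ when $i-j\ge d+1$. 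With the triangle re-oriented and this citation in place, your argument becomes the paper's proof.
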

\begin{proof}
This is a simple exercise in the hypercohomology using the fact that
the Zariski or the $cdh$ cohomological dimension of $X$ is $d$ ({\sl cf.}
\cite{SuslinV}). We give a sketch. For $j = 0$, this follows
from the above fact. So assume that the result holds for $0 \le j' \le j-1$ and
$j \ge 1$. The long exact cohomology sequence for the exact sequence
\[
0 \to \Omega^j_{X/F}[-j] \to \Omega^{\le j}_{X/F} \to \Omega^{\le j-1}_{X/F}
\to 0
\]
gives a diagram
\[
\xymatrix@C.4pc{
H^d(X, \Omega^{j-1}_{X/F}) \ar[d] \ar[dr] & & & \\ 
\H^{d+j-1}(X, \Omega^{\le j-1}_{X/F}) \ar[r] & 
H^d(X, \Omega^j_{X/F}) \ar[r] & \H^{d+j}(X, \Omega^{\le j}_{X/F}) \ar[r] &
\H^{d+j}(X, \Omega^{\le j-1}_{X/F}).}
\]
The vertical map is surjective and the last term on the bottom exact
sequence is zero by induction. This proves the first exact sequence of the
lemma. Finally, the exact sequence 
\[
H^{i-j}(X, \Omega^j_{X/F}) \to \H^{i}(X, \Omega^{\le j}_{X/F}) \to 
\H^{i}(X, \Omega^{\le j-1}_{X/F})
\]
for $i \ge d+j+1$ and the induction prove the second part of the lemma.
\end{proof} 

\section{Zariski and cdh cohomology on normal crossing schemes}
\label{section:SNC*}
In this section, we compare the Zariski and the $cdh$ cohomology of
sheaves of differential forms on normal crossing schemes. We begin with the
following local result.

For a finite morphism $f : Y \to X$ of $k$-schemes
and for a subfield $F \subseteq k$, let 
\[
{\Omega}^i_{{(X,Y)}/F} := {\rm Ker}\left({\Omega}^i_{X/F} \to
f_*\left({\Omega}^i_{Y/F}\right)\right) \ \ {\rm for} \ i \ge 0.
\]
If $X = {\rm Spec}(A)$ and $Y$ is the closed subscheme of $X$ defined by an
ideal $I$, we shall often write the above as ${\Omega}^i_{{(A,I)}/F}$.

\begin{prop}\label{prop:local}
Let $A$ be a reduced and essentially of finite type $k$-algebra. Assume that 
the normalization map $f : A \to B$ of $A$ is unramified. Let $I \subset A$
be a conducting ideal for the normalization. Then the 
natural map ${\Omega}^i_{{(A,I)}/F} \to {\Omega}^i_{{(B,I)}/F}$ is
surjective for any $i \ge 0$ and any subfield $F \subseteq k$.
\end{prop}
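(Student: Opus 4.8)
The goal is to show that the restriction map $\Omega^i_{(A,I)/F} \to \Omega^i_{(B,I)/F}$ is surjective, where $I$ is a conducting ideal so that $IB \subseteq A$ (hence $I = IB$ as an ideal of both rings, since $I \subseteq A$ and $IB \subseteq I$ forces equality after noting $I \subseteq IB$ trivially). The plan is to work at the level of modules and to reduce everything to the conductor square. First I would recall the setup: since $I$ is a conductor, we have $IB = I$, and the square with corners $A/I, B/I, A, B$ is Cartesian, with $A/I \to B/I$ finite. Set $\bar A = A/I$ and $\bar B = B/I$. The key observation is that $\Omega^i_{(A,I)/F}$ sits in the exact sequence $0 \to \Omega^i_{(A,I)/F} \to \Omega^i_{A/F} \to \Omega^i_{B/F}$ (the last map need not be surjective), and similarly $\Omega^i_{(B,I)/F} = \ker(\Omega^i_{B/F} \to \Omega^i_{(B/I)/F})$. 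So I need: given a form $\omega \in \Omega^i_{B/F}$ that dies in $\Omega^i_{(B/I)/F}$, produce a form in $\Omega^i_{A/F}$ dying in $\Omega^i_{B/F}$... wait — that is not quite the statement. Let me restate: I need every element of $\Omega^i_{(B,I)/F} \subseteq \Omega^i_{B/F}$ to be the image of an element of $\Omega^i_{(A,I)/F}$ under the natural map $\Omega^i_{A/F} \to \Omega^i_{B/F}$.

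The main structural input is the unramifiedness of $A \to B$. Since $f$ is unramified and $B$ is module-finite over $A$ (normalization of an excellent, here essentially-finite-type, reduced ring), the module $\Omega^1_{B/A} = 0$, so the cotangent sequence gives a surjection $B \otimes_A \Omega^1_{A/F} \surj \Omega^1_{B/F}$. Taking exterior powers, $B \otimes_A \Omega^i_{A/F} \surj \Omega^i_{B/F}$ as well. Thus every element of $\Omega^i_{B/F}$ is a $B$-linear combination $\sum b_k \cdot da_{k,1}\wedge\cdots\wedge da_{k,i}$ with $a_{k,j} \in A$. The second key point is the conductor: $I \cdot B = I \subseteq A$, so after multiplying by elements of $I$ the $B$-coefficients become $A$-coefficients. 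The strategy is therefore: (1) Reduce to showing that $I \cdot \Omega^i_{(B,I)/F}$ lies in the image — no wait, I want the full module, not $I$ times it. Let me reconsider: an element $\omega \in \Omega^i_{(B,I)/F}$ maps to $0$ in $\Omega^i_{(B/I)/F}$. I'll lift $\omega = \sum b_k\, d\underline a_k$ with $b_k \in B$; the condition that its image in $\Omega^i_{(B/I)/F} = \Omega^i_{\bar B/F}$ vanishes means, unwinding, that $\omega \in I\Omega^i_{B/F} + (\text{terms } d(I)\wedge\cdots)$, i.e. $\omega$ is congruent mod $I$-related forms to something in the image of $\Omega^i_{A/F}$. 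More precisely, $\Omega^i_{(B,I)/F}$ is generated as a $B$-module by $I\Omega^i_{B/F}$ together with forms $dc \wedge \eta$ for $c \in I$ — and for each such generator I need to exhibit a preimage in $\Omega^i_{(A,I)/F}$. For $c\,\omega'$ with $c \in I$, $\omega' \in \Omega^i_{B/F}$: write $\omega' = \sum b_k\, d\underline a_k$, then $c\,\omega' = \sum (cb_k)\, d\underline a_k$ and $cb_k \in IB = I \subseteq A$, so this is the image of $\sum(cb_k)\,d\underline a_k \in \Omega^i_{A/F}$, which manifestly lies in $\Omega^i_{(A,I)/F}$ since all its coefficients are in $I$ and so it dies in $\Omega^i_{(\bar A)/F}$... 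I should double-check that "coefficients in $I$" implies killed in $\Omega^i_{B/F}$'s quotient — actually I need it killed in $f_*\Omega^i_{Y/F}$, i.e. in $\Omega^i_{B/F}$ directly? No: $\Omega^i_{(A,I)/F} = \ker(\Omega^i_{A/F} \to f_*\Omega^i_{B/F})$ where $Y = \Spec(B)$... but then the target here would be $\Omega^i_{B/F}$, not $\Omega^i_{\bar B/F}$, and that kernel is typically just torsion. I think the intended reading (matching the notation $\Omega^i_{(B,I)/F}$ with the pair $(B, B/I)$, i.e. $Y$ = the non-normal locus $\Spec(A/I) \hookrightarrow \Spec B$) is that $\Omega^i_{(B,I)/F} = \ker(\Omega^i_{B/F} \to \Omega^i_{(B/I)/F})$ and $\Omega^i_{(A,I)/F} = \ker(\Omega^i_{A/F}\to\Omega^i_{(A/I)/F})$. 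I will adopt that and note it explicitly at the start.

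With that reading, here is the clean argument. First, reduce to $i=1$-style generators: $\Omega^i_{(B,I)/F}$ is spanned over $B$ by $I\cdot\Omega^i_{B/F}$ and by $dc\wedge\Omega^{i-1}_{B/F}$ for $c$ ranging over generators of $I$ (this is a standard identification of the kernel of $\Omega^i_{B/F}\to\Omega^i_{(B/I)/F}$). For the first type, a generator $c\cdot\omega'$ with $c\in I$, $\omega'\in\Omega^i_{B/F}$: using $B\otimes_A\Omega^i_{A/F}\surj\Omega^i_{B/F}$ (from unramifiedness) write $\omega'=\sum_k b_k\,da_{k,1}\wedge\cdots\wedge da_{k,i}$, $a_{k,j}\in A$, $b_k\in B$; then $c\omega'=\sum_k(cb_k)\,da_{k,1}\wedge\cdots\wedge da_{k,i}$ with $cb_k\in IB=I\subseteq A$, so it is the image of the evident element of $\Omega^i_{A/F}$, which has all coefficients in $I$ and hence lies in $\Omega^i_{(A,I)/F}$. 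For the second type, $dc\wedge\eta$ with $c\in I$, $\eta\in\Omega^{i-1}_{B/F}$: write $\eta=\sum_k b_k\,da_{k,1}\wedge\cdots\wedge da_{k,i-1}$; then $dc\wedge\eta=\sum_k b_k\,dc\wedge da_{k,1}\wedge\cdots$ — here the coefficient $b_k$ is only in $B$, so I cannot directly land in $\Omega^i_{A/F}$. This is the step I expect to be the main obstacle. To handle it, rewrite $b_k\,dc = d(b_k c) - c\,db_k$; now $b_k c\in IB=I\subseteq A$ and $c\in I\subseteq A$, and $b_k c\cdot$ is harmless, but $db_k$ still has $b_k\in B\setminus A$. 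The fix is to use that $I^2\subseteq I$ and iterate, or — better — to exploit that $I$ being a conductor ideal means $I\cdot\Omega^1_{B/F}$ and $dI\cdot B$ both lie in the $A$-submodule generated by $\Omega^1_{A/F}$ plus $dI$: precisely, for $b\in B$ and $c\in I$, $bc\in A$ so $d(bc)\in\Omega^1_{A/F}$, giving $b\,dc = d(bc)-c\,db$, and then $c\,db\in I\,\Omega^1_{B/F}\subseteq$ (by the first-type analysis applied with $i=1$) the image of $I\Omega^1_{A/F}+dI\cdot A\subseteq$ image of $\Omega^1_{(A,I)/F}$. Assembling: $dc\wedge\eta$ decomposes into a sum of a form pulled back from $\Omega^i_{(A,I)/F}$ plus lower-complexity terms $c\,db_k\wedge(\cdots)$ which are of the first type and already handled. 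Collecting constants and iterating the Leibniz rewriting finitely many times (the wedge has bounded length, and each rewriting moves a $B$-coefficient into an $A$-coefficient at the cost of producing a strictly-first-type term) shows every generator of $\Omega^i_{(B,I)/F}$ is in the image, completing the proof. I'd organize the write-up around (a) the $B\otimes_A\Omega^\bullet_{A/F}\surj\Omega^\bullet_{B/F}$ surjectivity from $\Omega_{B/A}=0$, (b) the generator description of the relative kernel, (c) the Leibniz/conductor rewriting for the $dc\wedge(\cdots)$ terms, with (c) being the crux.
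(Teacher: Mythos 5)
Your reading of the notation is the paper's own: $\Omega^i_{(A,I)/F}=\ker\bigl(\Omega^i_{A/F}\to\Omega^i_{(A/I)/F}\bigr)$ and likewise for $(B,I)$, and your proof is correct. It does, however, take a genuinely different (and more streamlined) route than the paper. The paper argues by induction on $i$: the case $i=1$ is settled via the double relative Hochschild homology group $HH^F_0(A,B,I)\cong I\otimes_A\Omega^1_{B/A}=0$ (the Cortinas--Geller--Weibel theorem) and a diagram chase, and the inductive step for $i\ge 2$ manipulates generators $b_0\,db_1\wedge\cdots\wedge db_i$ (some $b_p\in I$) using $\Omega^1_{B/F}=B\,\Omega^1_{A/F}$ and the cases $i-1$ and $i=1$. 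You instead give a single uniform argument: identify $\ker\bigl(\Omega^i_{B/F}\to\Omega^i_{(B/I)/F}\bigr)=I\,\Omega^i_{B/F}+dI\wedge\Omega^{i-1}_{B/F}$, use unramifiedness to get $B\otimes_A\Omega^i_{A/F}\twoheadrightarrow\Omega^i_{B/F}$, use the conductor property $IB=I\subseteq A$ to absorb $B$-coefficients (handling the first type of generator), and dispose of the second type with one Leibniz rewriting $b\,dc=d(bc)-c\,db$, whose leftover term $c\,db\wedge\eta$ is already of the first type. This buys you a proof with no Hochschild-homology input and no induction on $i$ (your worry about "iterating" is unneeded: one rewriting terminates the argument), at the cost of nothing beyond the standard description of the kernel of $\Omega^i_{B/F}\to\Omega^i_{(B/I)/F}$, which is the same kind of presentation the paper itself invokes (its equation for $\Omega^i_{(A,I)/F}$ as the span of forms with some entry in $I$); the underlying mechanism — unramifiedness generates $\Omega^\bullet_{B/F}$ by forms from $A$, and the conductor moves $B$-coefficients into $I\subseteq A$ — is shared with the paper's inductive step.
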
  
\begin{proof} 
Let $A' = A/I$ and $B' = B/I$. We prove the proposition by the induction on 
$i \ge 0$. For $i = 0$, we have 
${\Omega}^0_{{(A,I)}/F} = I = {\Omega}^0_{{(B,I)}/F}$. For $i = 1$, we use the
commutative diagram of exact sequences
\[
\xymatrix@C.5pc{
HH^F_1(A,I) \ar[r] \ar[d] & \Omega^1_{A/F} \ar[r] \ar[d] & \Omega^1_{A'/F} 
\ar[d] \ar[r] & 0 \\
HH^F_1(B,I) \ar[r] \ar[d] & \Omega^1_{B/F} \ar[r] & \Omega^1_{A'/F}   
\ar[r] & 0 \\
HH^F_0(A,B,I), & & &}
\]
where $HH^F_i(A,B,I)$ is the double relative Hochschild homology
({\sl cf.} \cite{Loday}). Since $HH^F_0(A,B,I) \cong I {\otimes}_A
{\Omega}^1_{B/A}$ by \cite[Theorem~3.4]{CGW} and since $A \to B$ is 
unramified, we see that $HH^F_0(A,B,I) = 0$ and in particular, the top 
vertical map on the extreme left in the above diagram is surjective.
A diagram chase shows that ${\Omega}^1_{{(A,I)}/F} \surj
{\Omega}^1_{{(B,I)}/F}$.

Suppose now that $i \ge 2$ and the result holds for all $0 \le j \le i-1$.
Since $\Omega^1_{B/A} = 0$, the first fundamental exact sequence of
K{\"a}hler differentials implies that 
\begin{equation}\label{eqn:local1}
\Omega^1_{B/F} = B\Omega^1_{A/F}.
\end{equation}
Moreover, ${\Omega}^i_{{(A,I)}/F} \subseteq \Omega^i_{A/F}$ is generated
as a $A$-module by the exterior forms of the type $a_0 da_1 \wedge \cdots
\wedge da_i$, where $a_p \in A$ for all $p$ and $a_p \in I$ for some $p$.
This can be directly checked by the universal property of K{\"a}hler
differentials ({\sl cf.} \cite[Lemma~4.1]{Krishna0}). The same holds for
${\Omega}^i_{{(B,I)}/F}$. Thus
\begin{equation}\label{eqn:local2}
{\Omega}^i_{{(A,I)}/F} =   \stackrel{}{\underset {{\rm some} \ 
a_p \in I}{\sum}} A a_0 da_1 \wedge \cdots \wedge da_i.
\end{equation}
\[
{\Omega}^i_{{(B,I)}/F} =  \stackrel{}{\underset {{\rm some} \ 
b_p \in I}{\sum}} B b_0 db_1 \wedge \cdots \wedge db_i
\]
\[
\hspace*{2cm} = \stackrel{}{\underset {{\rm some} \ 
b_p \in I}{\sum}} A b_0 db_1 \wedge \cdots \wedge db_i.
\]
Hence, it suffices to show that 
\begin{equation}\label{eqn:local3}
\beta = b_0 db_1 \wedge \cdots \wedge db_i \in 
{\rm Image}\left({\Omega}^i_{{(A,I)}/F} \to {\Omega}^i_{{(B,I)}/F}\right).
\end{equation}
By permuting the orders of differentials (which only changes the sign), can
assume that $ b_p \in I$ for some $p \le i-1$. Then we have 
${\beta}' = b_0 db_1 \wedge \cdots \wedge db_{i-1} \in 
{\Omega}^{i-1}_{{(B,I)}/F}$ by \eqref{eqn:local2}, and $\beta = {\beta}' 
\wedge db_i$.

By induction, we see that ${\beta}' \in 
{\rm Image}\left({\Omega}^{i-1}_{{(A,I)}/F} \to 
{\Omega}^{i-1}_{{(B,I)}/F}\right)$.
This implies from \eqref{eqn:local2} again that 
\[
{\beta}' = \stackrel{}{\underset {{\rm some} \ 
a_p \in I}{\sum}} a_0 da_1 \wedge \cdots \wedge da_{i-1},
\]
with $a_p \in A$ for $0 \le p \le i-1$. So can assume that 
${\beta}' = a_0 da_1 \wedge \cdots \wedge da_{i-1}$. We then have 
\[
\beta = {\beta}' \wedge db_i = a_0 da_1 \wedge \cdots \wedge da_{i-1}
\wedge db_i.
\]
If $a_0 \in I$, then the case $i = 1$ implies that $a_0 db_i \in
{\Omega}^{1}_{{(A,I)}/F}$ and hence $\beta \in {\Omega}^{i}_{{(A,I)}/F}$.

So we suppose that $a_p \in I$ for some $1 \le p \le i-1$. We can again
assume that $a_1 \in I$. It follows from ~\eqref{eqn:local1} that 
$da_1 \wedge db_i$ is of the form $\stackrel{}{\underset p {\sum}} 
 b'_pda_1 \wedge d{\alpha}_p$, where $\alpha_p \in A$ and $b'_p \in B$ for 
all $p$. Hence we can assume that
\[
\beta = \left(bda_1 \wedge d{\alpha}_i\right) \wedge \left(a_0 da_2 \wedge
\cdots \wedge da_{i-1}\right),
\]
where $\alpha_i \in A$ and $b \in B$. In particular, we have
$bda_1 \in {\Omega}^{1}_{{(B,I)}/F}$ and hence in the image of
${\Omega}^{1}_{{(A,I)}/F}$ by the case $i =1$. This in turn implies that
up to sign
\[
\beta = bda_1 \wedge \left(da_2 \wedge \cdots \wedge da_{i-1}
\wedge d{\alpha}_i\right)
\]
\[
\hspace*{3cm} \in {\Omega}^{1}_{{(A,I)}/F} \wedge 
{\Omega}^{i-1}_{A/F} \subseteq {\Omega}^{i}_{{(A,I)}/F}.
\]
This proves ~\eqref{eqn:local3} and finishes the proof of the proposition.
\end{proof}

\begin{lem}\label{lem:SNC-curve}
Let $E$ be a reduced seminormal curve which is affine and essentially of finite
type over $k$. Then the following hold for any subfield $F \subseteq k$
and any $i \ge 0$.
\[
H^0_{Zar}(E, \sO_E) \xrightarrow{\cong} H^0_{cdh}(E, \sO_E)
\]
\[
H^0_{Zar}(X, \Omega^i_{E/F}) \surj H^0_{cdh}(X, \Omega^i_{E/F}) \ {\rm and}
\]
\[
H^j_{cdh}(E, \Omega^i_{E/F}) = 0 \ {\rm for} \ j > 0.
\]
\end{lem}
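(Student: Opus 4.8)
The plan is to exploit the conductor square of the seminormal curve $E$ as an abstract blow-up and run the Mayer--Vietoris long exact sequence for $cdh$-cohomology against it, computing the contributions of the normalisation and of the (zero-dimensional) conductor loci from the known agreement of Zariski and $cdh$ cohomology on smooth and on $0$-dimensional schemes, and controlling the one remaining map by \propref{prop:local}. Concretely, write $A = H^0_{Zar}(E,\sO_E)$, let $E^N = \Spec B$ be the normalisation, $I\subset A$ the conductor ideal, and put $D = \Spec(A/I)$, $D' = D\times_E E^N = \Spec(B/I)$, so that
\[
\xymatrix{
D' \ar[r] \ar[d] & E^N \ar[d] \\
D \ar[r] & E
}
\]
is Cartesian with $D\inj E$ a closed immersion, $E^N\to E$ finite, and an isomorphism over $E\setminus D$; thus it is an abstract blow-up as in \eqref{eqn:ABLP}. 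Since $E$ is seminormal its normalisation $A\to B$ is unramified and $I$ is radical in $B$, so $A/I$ and $B/I$ are reduced, hence (as $k$ is algebraically closed of characteristic zero) finite products of copies of $k$; in particular \propref{prop:local} applies and yields surjections $\Omega^i_{(A,I)/F}\surj\Omega^i_{(B,I)/F}$ for all $i\ge 0$ and all $F\subseteq k$. A routine limit argument reduces us to $E$ of finite type over $k$, so that the $cdh$-topology machinery applies literally.

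Abstract blow-ups are $cdh$-covers, so the square above is $cdh$-distinguished, and for the $cdh$-sheafification of the presheaf $\Omega^i_{/F}$ on $\Sch/k$ one gets the Mayer--Vietoris sequence
\[
\cdots \to H^n_{cdh}(E,\Omega^i_{E/F}) \to H^n_{cdh}(E^N,\Omega^i_{E^N/F})\oplus H^n_{cdh}(D,\Omega^i_{D/F}) \to H^n_{cdh}(D',\Omega^i_{D'/F}) \to H^{n+1}_{cdh}(E,\Omega^i_{E/F}) \to \cdots
\]
Here $E^N$ is a smooth affine curve, so by the agreement of Zariski and $cdh$ cohomology of K\"ahler differentials on smooth $k$-schemes --- reduced to $F = k$ since each $\Omega^i_{E^N/F}$ has a finite filtration with graded pieces $\Omega^a_{k/F}\otimes_k\Omega^{i-a}_{E^N/k}$, using that $cdh$-cohomology commutes with $-\otimes_k V$ for $k$-vector spaces $V$ --- we get $H^0_{cdh}(E^N,\Omega^i_{E^N/F}) = \Omega^i_{B/F}$ and $H^n_{cdh}(E^N,\Omega^i_{E^N/F}) = 0$ for $n\ge 1$; similarly $D,D'$ are reduced and $0$-dimensional, hence smooth over $k$, so $H^0_{cdh}(D,\Omega^i_{D/F}) = \Omega^i_{(A/I)/F}$, $H^0_{cdh}(D',\Omega^i_{D'/F}) = \Omega^i_{(B/I)/F}$, and their higher $cdh$-cohomology vanishes. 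Hence the sequence forces $H^n_{cdh}(E,\Omega^i_{E/F}) = 0$ for $n\ge 2$ and collapses to
\[
0 \to H^0_{cdh}(E,\Omega^i_{E/F}) \to \Omega^i_{B/F}\oplus\Omega^i_{(A/I)/F} \xrightarrow{\ \psi\ } \Omega^i_{(B/I)/F} \to H^1_{cdh}(E,\Omega^i_{E/F}) \to 0,
\]
with $\psi(\omega,\eta) = \bar\omega - \eta|_{B/I}$, $\bar\omega$ being the reduction of $\omega$ modulo $I$.

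From here the three assertions drop out. Since $B\surj B/I$ is a ring surjection, $\Omega^i_{B/F}\surj\Omega^i_{(B/I)/F}$, so taking $\eta = 0$ shows $\psi$ is surjective; thus $H^1_{cdh}(E,\Omega^i_{E/F}) = 0$, which together with the vanishing in degrees $\ge 2$ is the third assertion. For the second, the natural map $H^0_{Zar}(E,\Omega^i_{E/F}) = \Omega^i_{A/F} \to H^0_{cdh}(E,\Omega^i_{E/F}) = \ker\psi$ sends $\alpha$ to $(\alpha|_B,\alpha|_{A/I})$; given $(\omega,\eta)\in\ker\psi$, I lift $\eta$ to $\tilde\eta\in\Omega^i_{A/F}$ (possible since $A\surj A/I$ gives $\Omega^i_{A/F}\surj\Omega^i_{(A/I)/F}$), subtract to reduce to the case $\eta = 0$, observe that then $\omega\in\Omega^i_{(B,I)/F}$, and use \propref{prop:local} to write $\omega$ as the image of some $\gamma\in\Omega^i_{(A,I)/F}\subseteq\Omega^i_{A/F}$; since $\gamma|_{A/I} = 0$, this $\gamma$ maps to $(\omega,0)$. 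So $H^0_{Zar}(E,\Omega^i_{E/F})\surj H^0_{cdh}(E,\Omega^i_{E/F})$. When $i=0$ the same map is also injective because $A\inj B$, which gives the first (isomorphism) assertion.

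The only non-formal ingredient is packaged into \propref{prop:local}: the surjectivity $\Omega^i_{(A,I)/F}\surj\Omega^i_{(B,I)/F}$ is the single step that is not a diagram chase, and it is exactly where unramifiedness of the normalisation --- i.e. seminormality --- enters. Within the present argument the steps demanding care are the identification of the $cdh$-cohomology of $E^N$ and of $D,D'$ with the stated modules of differentials, i.e. the correct use of the smooth comparison theorem for $\Omega^i_{/F}$ over an arbitrary subfield $F\subseteq k$, and keeping the two relative conventions $\Omega^i_{(A,I)/F}$ (a subsheaf of $\Omega^i_{A/F}$) and $\Omega^i_{(A/I)/F}$ (the differentials of the quotient $A/I$) apart throughout the Mayer--Vietoris bookkeeping.
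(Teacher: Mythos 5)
Your proposal is correct and follows essentially the paper's own route: the conductor square (which, by seminormality, is exactly the square on the normalization and the reduced singular locus) viewed as an abstract blow-up, the cdh Mayer--Vietoris sequence combined with the smooth comparison on $E^N$, $D$, $D'$, and Proposition~\ref{prop:local} as the single non-formal input for the surjectivity on $H^0$. The only real difference is one of bookkeeping: you prove exactness of $\Omega^i_{A/F}\to\Omega^i_{B/F}\oplus\Omega^i_{(A/I)/F}\to\Omega^i_{(B/I)/F}\to 0$ uniformly in $i$ directly from Proposition~\ref{prop:local}, whereas the paper invokes its Zariski sequence only for $i\le 1$ and handles $i\ge 2$ by a separate filtration and induction argument.
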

\begin{proof}
Let $\ov{E} \xrightarrow{f} E$ denote the normalization of $E$. Let 
$S = E_{\rm sing}$ be the reduced singular locus of $E$ and let $\ov{S} =
f^{-1}(S)$. Since $E$ is a seminormal curve, the map $f$ is unramified and 
hence the proof of Proposition~\ref{prop:local} implies that there is an 
exact sequence
\begin{equation}\label{eqn:SNC-curve2}
H^0_{Zar}(E, \Omega^i_{E/F}) \to H^0_{Zar}(\ov{E}, \Omega^i_{{\ov{E}}/F})
\oplus H^0_{Zar}(S, \Omega^i_{S/F}) \to 
H^0_{Zar}(\ov{S}, \Omega^i_{{\ov{S}}/F}) \to 0.
\end{equation}
for $0 \le i \le 1$ and the first arrow from the left is injective for $i =0$. 
We thus have the following commutative diagram of exact sequences for 
$0 \le i \le 1$.
\begin{equation}\label{eqn:SNC-curve1}
\xymatrix@C.3pc{
& H^0_{Zar}(E, \Omega^i_{E/F}) \ar[d] \ar[r] & 
{H^0_{Zar}(\ov{E}, \Omega^i_{{\ov{E}}/F})
\oplus H^0_{Zar}(S, \Omega^i_{S/F})} \ar[r] \ar[d] & 
H^0_{Zar}(\ov{S}, \Omega^i_{{\ov{S}}/F})
\ar[d] \ar[r] & 0 \\
0 \ar[r] & H^0_{cdh}(E, \Omega^i_{E/F}) \ar[r] & 
{H^0_{cdh}(\ov{E}, \Omega^i_{{\ov{E}}/F})
\oplus H^0_{cdh}(S, \Omega^i_{S/F})} \ar[r] & 
H^0_{cdh}(\ov{S}, \Omega^i_{{\ov{S}}/F}) \ar[r] &  
H^1_{cdh}(E, \Omega^i_{E/F}).}
\end{equation}
The top sequence is exact by ~\eqref{eqn:SNC-curve2}
and the bottom sequence is the Mayer-Vietoris exact sequence for 
the $cdh$ cohomology ({\sl cf.} \cite[Theorem~2.7]{CJams}). The smoothness of
$\ov{E}, S$ and $\ov{S}$ and \cite[Corollary~2.5]{CJams} imply that the
middle and the right vertical maps are isomorphisms. For the same reason,
the assertion of the lemma holds for these schemes. 
A diagram chase now proves the lemma for $0 \le i \le 1$.
Furthermore, it also follows that for $i \ge 2$,
\[
\Omega^i_{k/F} {\otimes}_k
H^0_{cdh}(\ov{E}, \sO_{{\ov{E}}/F}) \surj  \Omega^i_{k/F} {\otimes}_k
H^0_{cdh}(\ov{S}, \sO_{{\ov{S}}/F}) \xrightarrow{\cong}
H^0_{cdh}(\ov{S}, \Omega^i_{{\ov{S}}/F}).
\]
In particular, $H^j_{cdh}(E, \Omega^i_{E/F}) = 0$ for $i \ge 0$ and $j \ge 1$.

We now only need to show that 
\begin{equation}\label{eqn:higher}
H^0_{Zar}(E, \Omega^i_{E/F}) \surj
H^0_{cdh}(E, \Omega^i_{E/F}) \ {\rm for} \ i \ge 2.
\end{equation} 
For this, we first assume that $F = k$. Then $\Omega^1_{S/k}$ and 
$\Omega^1_{{\ov{S}}/k}$ vanish and hence the top exact sequence in the above
diagram shows that $\Omega^1_{E/k} \surj \Omega^1_{{\ov{E}}/k}$. This in turn
gives a commutative diagram
\[
\xymatrix@C.5pc{
\Omega^i_{E/k} \ar@{->>}[r] \ar[d] & \Omega^i_{{\ov{E}}/k} \ar[d]^{\cong} \\
H^0_{cdh}(E, \Omega^i_{E/k}) \ar[r]^{\cong} &
H^0_{cdh}(\ov{E}, \Omega^i_{{\ov{E}}/k}),}
\]
where the the isomorphism of the right vertical map and the bottom horizontal
map follows from the smoothness of $\ov{E}, S$ and $\ov{S}$ and the
Mayer-Vietoris exact sequence for the $cdh$ cohomology. This shows the
desired surjectivity of the left vertical map. For any $F \subseteq k$,
we note that $\Omega^i_{E/F}$ has a finite decreasing filtration
${\{F^j\Omega^i_{E/F}\}}_{0 \le j \le i}$ such that there is a surjection
\[
\Omega^j_{k/F} \otimes_k \Omega^{i-j}_{E/k} \surj \frac {F^j\Omega^i_{E/F}}
{F^{j+1}\Omega^i_{E/F}}
\]
which is an isomorphism on the smooth locus of $E$. Now ~\eqref{eqn:higher}
follows by an easy induction on $i$ and $j$.
\end{proof}
The above lemma is a special case of the following more general result.

\begin{prop}\label{prop:SNC-local}
Let $E \overset{i}{\inj} X$ be a strict normal crossing divisor on a smooth
$k$-scheme $X$ such that $X$ is affine and essentially of finite type 
over $k$ and its dimension is $d+1$. Then the 
following hold for any subfield $F \subseteq k$ and any $i \ge 0$.
\[
H^0_{Zar}(E, \sO_E) \xrightarrow{\cong} H^0_{cdh}(E, \sO_E)
\]
\[
H^0_{Zar}(X, \Omega^i_{E/F}) \surj H^0_{cdh}(X, \Omega^i_{E/F}) \ {\rm and}
\]
\[
H^j_{cdh}(E, \Omega^i_{E/F}) = 0 \ {\rm for} \ j > 0.
\]
\end{prop}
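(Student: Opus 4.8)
The plan is to reduce \propref{prop:SNC-local} to \lemref{lem:SNC-curve} by induction on the number of irreducible components of $E$, using the Mayer--Vietoris square associated to writing $E = E_1 \cup E'$, where $E_1$ is one component and $E'$ is the union of the rest. The base case of a single component is immediate: then $E$ is a smooth divisor, so the first and second statements are trivial equalities and the third is \cite[Corollary~2.5]{CJams}. For the inductive step, observe that for a strict normal crossing divisor, $E_1$, $E'$ and the scheme-theoretic intersection $E_1 \cap E'$ are again strict normal crossing divisors on $X$ (of the same or smaller number of components), with $E_1$ smooth and $E_1 \cap E'$ a strict normal crossing divisor on the smooth scheme $E_1$. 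So each of $E_1$, $E'$, $E_1 \cap E'$ satisfies the conclusion of the proposition by induction. Note also that $E_1 \to E \leftarrow E'$ together with $E_1 \cap E' \to E_1$ form an abstract blow-up square (indeed a Zariski-closed cover), so we get a Mayer--Vietoris sequence in $cdh$ cohomology for any $cdh$ sheaf (\cite[Theorem~2.7]{CJams}), in particular for $\Omega^i_{-/F}$.

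The key algebraic input replacing \eqref{eqn:SNC-curve2} is a Zariski-level exact sequence
\[
0 \to H^0_{Zar}(E, \Omega^i_{E/F}) \to H^0_{Zar}(E_1, \Omega^i_{E_1/F}) \oplus H^0_{Zar}(E', \Omega^i_{E'/F}) \to H^0_{Zar}(E_1 \cap E', \Omega^i_{E_1 \cap E'/F}).
\]
Exactness on the left is the definition of $\Omega^i_{E/F}$ via the conormal sequence / the fact that $\sO_E \hookrightarrow \sO_{E_1} \oplus \sO_{E'}$. For exactness in the middle, I would invoke \propref{prop:local}: the normalization-type map $E_1 \sqcup E' \to E$ is finite and unramified (this is exactly where strict normal crossings is used — the components meet transversally, so the partial normalization is unramified), and \propref{prop:local} gives surjectivity of $\Omega^i_{(E, J)/F} \to \Omega^i_{(E_1 \sqcup E', J)/F}$ for a conducting ideal $J$, which unwinds to the desired exactness of the Zariski Mayer--Vietoris sequence for $\Omega^i$. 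Then I would write down the commutative ladder comparing this Zariski sequence with the $cdh$ Mayer--Vietoris sequence, exactly as in \eqref{eqn:SNC-curve1}: the middle and right vertical comparison maps are isomorphisms by the inductive hypothesis applied to $E_1$, $E'$, $E_1 \cap E'$, and a diagram chase then yields both the surjectivity $H^0_{Zar}(E, \Omega^i_{E/F}) \surj H^0_{cdh}(E, \Omega^i_{E/F})$ and the vanishing $H^1_{cdh}(E, \Omega^i_{E/F}) = 0$. Higher vanishing $H^j_{cdh}(E, \Omega^i_{E/F}) = 0$ for $j \ge 2$ follows by chasing further along the same long exact sequence, since the terms for $E_1$, $E'$, $E_1 \cap E'$ vanish in those degrees. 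The case $i = 0$ for structure sheaves is the same argument with $\Omega^0 = \sO$, using seminormality of $E$ (automatic for strict normal crossings) so that $\sO_E$ is the fiber product $\sO_{E_1} \times_{\sO_{E_1 \cap E'}} \sO_{E'}$.

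The main obstacle I anticipate is establishing the exactness of the Zariski-level sequence for $\Omega^i$ — specifically surjectivity of $H^0_{Zar}(E_1, \Omega^i_{E_1/F}) \oplus H^0_{Zar}(E', \Omega^i_{E'/F}) \to H^0_{Zar}(E_1 \cap E', \Omega^i_{E_1 \cap E'/F})$ and exactness in the middle. \propref{prop:local} is stated for the full normalization of a reduced algebra with unramified normalization; here I need its evident relative/partial-normalization analogue for the finite unramified map $E_1 \sqcup E' \to E$, which is not literally a normalization but whose proof is identical (the proof of \propref{prop:local} only uses finiteness, unramifiedness, and the presence of a conducting ideal). I would either note this explicitly or, to keep the dimension induction clean, run a simultaneous induction on $(\dim X, \#\text{components of } E)$, handling the $F = k$ case first and then deducing general $F$ by the finite filtration of $\Omega^i_{E/F}$ with graded pieces surjected onto by $\Omega^j_{k/F} \otimes_k \Omega^{i-j}_{E/k}$, exactly as at the end of the proof of \lemref{lem:SNC-curve}.
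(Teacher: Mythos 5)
Your reduction to a Zariski/cdh Mayer--Vietoris comparison for $E = E_1 \cup E'$, with the right exactness of the Zariski sequence extracted from a partial-normalization variant of \propref{prop:local} and an induction on dimension and number of components, is indeed the skeleton of the paper's argument. But there is a genuine gap at the decisive step: you assert that the middle and right vertical comparison maps $\Omega^i_{E_1/F}\oplus\Omega^i_{E'/F} \to H^0_{cdh}(E_1,\Omega^i)\oplus H^0_{cdh}(E',\Omega^i)$ and $\Omega^i_{D/F} \to H^0_{cdh}(D,\Omega^i)$ are \emph{isomorphisms} by induction. The inductive statement (the proposition itself) only gives an isomorphism for $\sO$ and a \emph{surjection} for $\Omega^i$ with $i\ge 1$, and these maps really are not injective: for $A = k[x,y]/(xy)$ the class $x\,dy$ is nonzero in $\Omega^1_{A/F}$ but restricts to zero on both branches, hence dies in $H^0_{cdh}$. (The same example shows your claimed left exactness of the Zariski sequence fails for $i\ge 1$, though that part is not actually needed.) With only surjective vertical maps your chase still yields $H^j_{cdh}(E,\Omega^i_{E/F})=0$ for $j\ge 1$ and the $i=0$ isomorphism, but not the surjectivity $H^0_{Zar}(E,\Omega^i_{E/F})\surj H^0_{cdh}(E,\Omega^i_{E/F})$: lifting a cdh class you obtain a Zariski section on $E_1$ and $E'$ whose mismatch on $D$ lies only in the kernel of $\Omega^i_{D/F}\to H^0_{cdh}(D,\Omega^i_{D/F})$, and to correct it you must know that this kernel is hit by the kernel of the middle map.

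That kernel-surjectivity is precisely the technical heart of the paper's proof and is absent from your proposal. The paper identifies the two kernels, using smoothness of $E_1$, $E'^N$ and $D^N$ together with \cite[Remark~5.6.1]{BassNK}, with the relative modules $\Omega^i_{(E',E'^N)/F}$ and $\Omega^i_{(D,D^N)/F}$, and then proves the surjectivity $\Omega^i_{(E',E'^N)/F}\surj\Omega^i_{(D,D^N)/F}$ (claim ~\eqref{eqn:SNC-local2}) by a local computation: after completing at a closed point one has $A'\cong S[[x_1]]$, so the restrictions $A'\surj S$ and $A'^N\surj S^N$ admit ring sections, whence the map on relative differentials is split surjective. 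Unless you add this step (or some substitute controlling the kernels of the comparison maps), the second assertion of the proposition is not established by your argument; the rest of your plan, including the use of \propref{prop:local} for the right-exact Zariski sequence and the reduction from general $F\subseteq k$ to $F=k$ via the filtration with graded pieces $\Omega^j_{k/F}\otimes_k\Omega^{i-j}_{E/k}$, matches the paper.
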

\begin{proof} Let $A$ be the coordinate ring of $E$ and let $A \xrightarrow{f}
B$ be the normalization of $A$. Let $I \subset A$ be the reduced conducting
ideal for the normalization. In particular, $I$ is the ideal of the closed
subscheme $E_{\rm sing}$. We prove the proposition by induction on $d$.
The case $d =1$ is shown in Lemma~\ref{lem:SNC-curve}. So we assume 
$d \ge 2$ and that the result holds for the normal crossing divisors of
dimension up to $d-1$.

We observe that since $E$ is a strict normal crossing divisor on $X$
which is smooth, the normalization is simply the disjoint union of the
irreducible components of $E$. Moreover, the map $f$ is unramified, as can 
be easily checked by local calculations. That is, 
\begin{equation}\label{eqn:SNC-local0} 
\Omega^1_{{\ov{E}}/E} = 0.
\end{equation} 
If $E$ is irreducible, then it is
smooth and the result is known ({\sl cf.} \cite[Corollary~2.5]{CJams}).
So we assume that $E$ is not irreducible and prove the assertion by induction 
on the number of components of $E$. Let $E = E_1 \cup \cdots \cup E_r$
be the union of all its irreducible components. Let $E' = E_2 \cup \cdots 
\cup E_r$ and $D = E' \cap E_1$. Then $E'$ is also a strict normal crossing
divisor on $X$ and $D$ is a strict normal crossing divisor on $E_1$.
We first claim that the sequence
\begin{equation}\label{eqn:SNC-local1}
\Omega^i_{E/F} \to \Omega^i_{E_1/F} \oplus \Omega^i_{E'/F} \to \Omega^i_{D/F}
\to 0
\end{equation}
is exact. To show this, it suffices to prove that the map
$\Omega^i_{(E, E')/F} \to \Omega^i_{(E_1, D)/F}$ is surjective.

Let $I'$ denote the ideal of $E'$ as a closed subscheme of $E$. Then the fact
that $E$ is a snc, implies that $J = I' \sO_{E_1}$ is the ideal of $D$ as
a closed subscheme of $E_1$. In other words, one has $I' \surj J$.
The desired surjectivity now follows from the presentations of
$\Omega^i_{(E, E')/F}$ and $\Omega^i_{(E_1, D)/F}$ in ~\eqref{eqn:local2}.
This proves the claim. 

It is easy to see from the definitions of $E'$ and $D$ that the diagram
\[
\xymatrix@C.7pc{
D^N \ar[r] \ar[d] & E'^N \ar[d] \\
D \ar[r] & E'}
\]
is Cartesian. We now claim that the map
\begin{equation}\label{eqn:SNC-local2}
\Omega^i_{(E', E'^N)/F} \to \Omega^i_{(D,D^N)/F} \ {\rm is \ surjective}.
\end{equation}

To prove this claim, we can work locally on $X$ at points of $E$. Thus 
we can let $A = R/(a)$, where $R = (R, \mathfrak{m}, k)$ is the regular local 
ring of a closed point on $X$ with maximal ideal $\mathfrak{m} = (x_1, \cdots ,
x_{d+1})$ and residue field $k$, and $a = x_{i_1}\cdots x_{i_r}$ with
$1 \le i_1 < \cdots < i_r \le d+1$. We prove the claim in the
case when $r = d+1$. The case $r < d+1$ is simpler and follows in the same 
way.      

In this case, $E_1$, $E'$ and $D$ can be described by the local rings
$A_1 = R/(x_1), A' = R/(b)$, and $S = {A_1}/(b)$ respectively, where 
$b = x_2 \cdots x_{d+1}$. In particular, we have $A'^N =
\stackrel{}{\underset {2 \le i \le d+1}{\prod}} R/{x_i}$ and
$S^N = \stackrel{}{\underset {2 \le i \le d+1}{\prod}} {A_1}/{x_i}$.
Since $\Omega^i_{(A', A'^N)/F} \to \Omega^i_{(S, S^N)/F}$ is $A'$-linear,
it is enough to show that the map
$\Omega^i_{(A',A'^N)/F} {\otimes}_{A'} \widehat {A'} \to
\Omega^i_{(S,S^N)/F} {\otimes}_{A'} \widehat {A'}$ is surjective. Hence we
can replace $R$, $A'$ and $S$ by their completions, and assume
that $A' = {k[[x_1, \cdots , x_{d+1}]]}/{(b)}$ and $S =
{k[[x_2, \cdots , x_{d+1}]]}/{(b)}$. In particular, we have 
$A' \cong S[[x_1]]$ and hence the maps $A' \surj S$ and $A'^N \surj S^N$
have sections and hence the map
$\Omega^i_{(A', A'^N)/F} \to \Omega^i_{(S, S^N)/F}$ is surjective. This 
proves \eqref{eqn:SNC-local2}.    

To prove the proposition for $E$, we consider the diagram
\begin{equation}\label{eqn:SNC-local3}
\xymatrix@C.5pc{
& \Omega^i_{E/F} \ar[r] \ar[d] & {\Omega^i_{E'/F} \oplus \Omega^i_{{E_1}/F}}
\ar[d]^{\alpha} \ar[r] & \Omega^i_{D/F} \ar[r] \ar[d]^{\beta} & 0 \\
0 \ar[r] & H^0_{cdh}(E, \Omega^i_{E/F}) \ar[r] &
{\left\{ \begin{array}{l} 
{H^0_{cdh}(E', \Omega^i_{E'/F})}  \\ \ \ \ \ \ \ \ \ \oplus \\
{H^0_{cdh}(E_1, \Omega^i_{{E_1}/F})} \end{array}\right\}}
\ar[r] & H^0_{cdh}(D, \Omega^i_{D/F}) \ar@{->>}[r] & 
H^1_{cdh}(E, \Omega^i_{E/F}),}
\end{equation}
where the top sequence is exact by ~\eqref{eqn:SNC-local1} and the bottom
sequence is the Mayer-Vietoris exact sequence for the $cdh$ cohomology.
The middle and the right vertical maps are surjective by induction on the
dimension and number of components. The last map on the bottom row is 
surjective as the next term in the long exact sequence is zero by induction
again. This shows in particular that $H^j_{cdh}(E, \Omega^i_{E/F}) = 0$
for $j \ge 1$. We now only need to show that ${\rm Ker}(\alpha) \to
{\rm Ker}(\beta)$ is surjective to finish the proof of the proposition.
However, since $E'^N$, $E_1$ and $D^N$ are smooth, ${\rm Ker}(\alpha)$ and 
${\rm Ker}(\beta)$ are same as $\Omega^i_{(E', E'^N)/F}$ and 
$\Omega^i_{(D, D^N)/F}$ respectively by \cite[Remark~5.6.1]{BassNK}.
The required surjectivity now follows from ~\eqref{eqn:SNC-local2}.
\end{proof}

\begin{cor}\label{cor:SNC-main1}
Let $E \overset{i}{\inj} X$ be a strict normal crossing divisor on a smooth
$k$-scheme $X$ of dimension $d+1$. Then the natural map
\[
H^j_{Zar}(E, \Omega^i_{E/F}) \to H^j_{cdh}(E, \Omega^i_{E/F})
\]
is surjective for $j = d-1$ and isomorphism for $j = d$ for any subfield 
$F \subseteq k$ and any $i \ge 0$.
\end{cor}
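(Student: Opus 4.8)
The plan is to compare the two cohomologies through the complex of Zariski sheaves $Ra_*a^*\Omega^i_{E/F}$ on $E$, where $a$ denotes the morphism from the $cdh$ to the Zariski site. Recall that $H^j_{cdh}(E,\Omega^i_{E/F})=\H^j_{Zar}(E,Ra_*a^*\Omega^i_{E/F})$ and that the map under study is the one induced by the unit $\phi:\Omega^i_{E/F}\to Ra_*a^*\Omega^i_{E/F}$. The first step is to pin down this complex. Its cohomology sheaf $R^qa_*a^*\Omega^i_{E/F}$ is the Zariski sheafification of $U\mapsto H^q_{cdh}(U,\Omega^i_{E/F})$. Every point of $E$ has a cofinal system of affine open neighbourhoods of the form $U=E\cap W$ with $W\subseteq X$ affine open; such a $U$ is a strict normal crossing divisor on the smooth, affine, finite-type $k$-scheme $W$, and $\Omega^i_{U/F}=\Omega^i_{E/F}|_U$. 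Hence Proposition~\ref{prop:SNC-local}, applied to $U\inj W$, gives $H^q_{cdh}(U,\Omega^i_{E/F})=0$ for $q>0$ and $\Omega^i_{E/F}(U)=H^0_{Zar}(U,\Omega^i_{E/F})\surj H^0_{cdh}(U,\Omega^i_{E/F})$. Sheafifying, I conclude that $Ra_*a^*\Omega^i_{E/F}$ is concentrated in degree $0$, equal to $\sG:=a_*a^*\Omega^i_{E/F}$; that $H^j_{cdh}(E,\Omega^i_{E/F})=H^j_{Zar}(E,\sG)$ for all $j$; and that $\phi:\Omega^i_{E/F}\to\sG$ is surjective.

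Next I would identify the kernel $\mathcal{K}:=\ker(\phi)$. Over the smooth locus $E\setminus E_{\rm sing}$ the map $\phi$ is an isomorphism by \cite[Corollary~2.5]{CJams}, so $\mathcal{K}$ is supported on $E_{\rm sing}$. Since $E$ is a strict normal crossing divisor on the smooth $(d+1)$-dimensional $X$, writing $E=E_1\cup\cdots\cup E_r$ one has $E_{\rm sing}=\bigcup_{a\ne b}(E_a\cap E_b)$, a closed subset of dimension $\le d-1$ (each nonempty $E_a\cap E_b$ has codimension $2$ in $X$). As in the proof of Lemma~\ref{lem:folklore1}, $\mathcal{K}|_{E\setminus E_{\rm sing}}=0$ forces $\mathcal{K}\cong i_*(\mathcal{K}|_{E_{\rm sing}})$, whence $H^j_{Zar}(E,\mathcal{K})=H^j_{Zar}(E_{\rm sing},\mathcal{K}|_{E_{\rm sing}})=0$ for all $j\ge d$ by Grothendieck's vanishing theorem.

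Finally, I would feed the short exact sequence $0\to\mathcal{K}\to\Omega^i_{E/F}\xrightarrow{\phi}\sG\to 0$ into the long exact cohomology sequence. At $j=d$ the terms $H^d_{Zar}(E,\mathcal{K})$ and $H^{d+1}_{Zar}(E,\mathcal{K})$ both vanish, giving $H^d_{Zar}(E,\Omega^i_{E/F})\xrightarrow{\cong}H^d_{Zar}(E,\sG)=H^d_{cdh}(E,\Omega^i_{E/F})$; at $j=d-1$, vanishing of $H^d_{Zar}(E,\mathcal{K})$ gives the surjection $H^{d-1}_{Zar}(E,\Omega^i_{E/F})\surj H^{d-1}_{cdh}(E,\Omega^i_{E/F})$. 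This is exactly the assertion. Note that, compared with Lemma~\ref{lem:folklore1}, obtaining an isomorphism already at $j=d$ uses the surjectivity of $\phi$, i.e.\ the $H^0$-part of Proposition~\ref{prop:SNC-local}, not merely that $\phi$ is an isomorphism off $E_{\rm sing}$.

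The argument is mostly formal once Proposition~\ref{prop:SNC-local} is in hand; the point needing care is the localization step — checking that restricting to an affine open of $X$, or localizing at a point of $E$, keeps one within the hypotheses of that Proposition (a smooth, affine, essentially of finite type $k$-scheme carrying $E$ as a strict normal crossing divisor), and that the resulting local vanishing and surjectivity statements globalize correctly to the sheaf $Ra_*a^*\Omega^i_{E/F}$ (Lemma~\ref{lem:direct-image}, giving quasi-coherence of its cohomology sheaves, is available here). No genuinely new difficulty arises beyond this bookkeeping.
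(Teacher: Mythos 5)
Your proof is correct and follows essentially the same route as the paper: Proposition~\ref{prop:SNC-local} kills the higher direct images $R^ja_*a^*\Omega^i_{E/F}$ and gives surjectivity of $\Omega^i_{E/F}\to a_*a^*\Omega^i_{E/F}$ with kernel supported on $E_{\rm sing}$, and one concludes by the vanishing argument of Lemma~\ref{lem:folklore1}. Your added remarks (the affine-local reduction justifying the sheaf-level vanishing, and the observation that the isomorphism already at $j=d$ uses the surjectivity of $\phi$ together with $\dim E_{\rm sing}\le d-1$, not Lemma~\ref{lem:folklore1} as literally stated) are exactly the details the paper leaves implicit.
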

\begin{proof} For the morphism $a: E_{cdh} \to E_{Zar}$ of sites, it follows
from Proposition~\ref{prop:SNC-local} that $R^ja_* a^* \Omega^i_{E/F} =
0$ for $j \ge 1$ and $i \ge 0$. The Leray spectral sequence implies that
$Ra_*a^*\Omega^i_{E/F} \cong a_*a^*\Omega^i_{E/F}$.
On the other hand, it also follows from Lemma~\ref{lem:direct-image} and
Proposition~\ref{prop:SNC-local} that $\Omega^i_{E/F} \to a_*a^*
\Omega^i_{E/F}$ is a surjective map of quasi-coherent sheaves with kernel
supported on $E_{\rm sing}$. The corollary now follows from 
Lemma~\ref{lem:folklore1}.
\end{proof} 

\begin{cor}\label{cor:SNC-main1*}
Let $\wt{X} \xrightarrow{f} X$ be a resolution of singularities of a normal 
quasi-projective $k$-variety $X$ of dimension $d+1$ with the reduced 
strict normal crossing exceptional divisor $E = f^{-1}(X_{\rm sing})$.
Then the map $\H^{2d-1}_{Zar}\left(E, \Omega^{< d}_{E/F}\right) \to
\H^{2d-1}_{cdh}\left(E, \Omega^{< d}_{E/F}\right)$ is an isomorphism.
Furthermore, the map 
$\H^{2d-1}\left(\wt{X}, \Omega^{< d}_{{\wt{X}}/F}\right)
\to \H^{2d-1}\left(E, \Omega^{< d}_{E/F}\right)$ is surjective in the
Zariski and the $cdh$ topology if the map
$H^{d+1}_{cdh}\left(X, \Omega^{d-1}_{X/F}\right) \to
H^{d+1}_{cdh}\left(\wt{X}, \Omega^{d-1}_{{\wt{X}}/F}\right)$ is an 
isomorphism.
\end{cor}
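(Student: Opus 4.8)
The plan is to deduce both assertions from the sheaf-level results already in hand, namely Proposition~\ref{prop:SNC-local} and Corollary~\ref{cor:SNC-main1}, combined with standard comparison of brutal truncations and a hypercohomology dimension count as in Lemma~\ref{lem:folklore2}. First I would establish the isomorphism $\H^{2d-1}_{Zar}(E,\Omega^{<d}_{E/F}) \xrightarrow{\cong} \H^{2d-1}_{cdh}(E,\Omega^{<d}_{E/F})$. Since $E$ has dimension $d$, each term $\Omega^j_{E/F}$ with $j \le d-1$ contributes to the hypercohomology of the complex $\Omega^{<d}_{E/F} = \Omega^{\le d-1}_{E/F}$ in the relevant degrees; by the hypercohomology spectral sequence $E_1^{p,q} = H^q(E,\Omega^p_{E/F}) \Rightarrow \H^{p+q}(E,\Omega^{\le d-1}_{E/F})$, the degree $2d-1$ part is governed by the pairs $(p,q)$ with $p+q = 2d-1$, $0 \le p \le d-1$, $0 \le q \le d$; the only surviving such pair is $(p,q) = (d-1,d)$ (since $q \le d$ forces $p \ge d-1$, and $p \le d-1$), so on both sites $\H^{2d-1}$ is a quotient of $H^d(E,\Omega^{d-1}_{E/F})$, and more precisely Lemma~\ref{lem:folklore2}(i) identifies $\H^{2d-1}$ with the cokernel of $H^d(E,\Omega^{d-2}_{E/F}) \to H^d(E,\Omega^{d-1}_{E/F})$ (here using $d-1 = j$ in the notation of that lemma with the scheme $E$ of dimension $d$). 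Now Corollary~\ref{cor:SNC-main1} gives that $H^d_{Zar}(E,\Omega^i_{E/F}) \to H^d_{cdh}(E,\Omega^i_{E/F})$ is an isomorphism for all $i \ge 0$ and that $H^{d-1}_{Zar} \to H^{d-1}_{cdh}$ is surjective; a short diagram chase on the two cokernel presentations then yields the desired isomorphism in degree $2d-1$. Alternatively, and more cleanly, I would invoke Corollary~\ref{cor:SNC-main1}'s proof directly: $Ra_*a^*\Omega^i_{E/F} = a_*a^*\Omega^i_{E/F}$, so $Ra_*a^*\Omega^{<d}_{E/F}$ agrees with $\Omega^{<d}_{E/F}$ outside $E_{\rm sing}$, which has dimension $\le d-1$, and Lemma~\ref{lem:folklore1} applied to the complex (componentwise, using that the cohomology of each $R^0a_*a^*\Omega^i$ differs from $\Omega^i$ only on $E_{\rm sing}$) forces an isomorphism on $\H^{2d-1}$ and a surjection on $\H^{2d-2}$.

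Second, for the surjectivity of $\H^{2d-1}(\wt X,\Omega^{<d}_{\wt X/F}) \to \H^{2d-1}(E,\Omega^{<d}_{E/F})$, I would work in the $cdh$ topology (the Zariski case will then follow since $\wt X$ and $E$ are both covered by the above sheaf identifications, or by the analogous argument). The natural restriction $\Omega^{\bullet}_{\wt X/F} \to i_*\Omega^{\bullet}_{E/F}$ of de Rham complexes induces, on brutal truncations $\Omega^{\le d-1}$, a map of complexes; taking the cone, one reduces to controlling the relative hypercohomology. Here I would use the dimension count again: $\wt X$ has dimension $d+1$, so by Lemma~\ref{lem:folklore2}(ii) $\H^i(\wt X,\Omega^{\le d-1}_{\wt X/F}) = 0$ for $i \ge (d-1)+(d+1)+1 = 2d+1$; the obstruction to surjectivity onto $\H^{2d-1}(E,\Omega^{\le d-1}_{E/F})$ lies in $\H^{2d}$ of the kernel complex, which I would relate to $H^{d+1}(\wt X,\Omega^{d-1})$ using the hypercohomology spectral sequence: the kernel of $\Omega^j_{\wt X/F} \to i_*\Omega^j_{E/F}$ is (at least up to the relevant cohomology) the ideal-sheaf-twisted differentials supported near $E$, and for $j \le d-1$ its only contribution to $\H^{2d}$ of the truncated kernel complex comes from $(p,q) = (d-1,d+1)$, i.e. from $H^{d+1}(\wt X,\Omega^{d-1}_{\wt X/F})$. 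The hypothesized isomorphism $H^{d+1}_{cdh}(X,\Omega^{d-1}_{X/F}) \xrightarrow{\cong} H^{d+1}_{cdh}(\wt X,\Omega^{d-1}_{\wt X/F})$ is exactly what kills this obstruction, via the abstract blow-up $cdh$-descent sequence relating $X$, $\wt X$, $X_{\rm sing}$ and $E$: that descent sequence reads $\cdots \to H^{d+1}_{cdh}(X,\Omega^{d-1}) \to H^{d+1}_{cdh}(\wt X,\Omega^{d-1}) \oplus H^{d+1}_{cdh}(X_{\rm sing},\Omega^{d-1}) \to H^{d+1}_{cdh}(E,\Omega^{d-1}) \to H^{d+2}_{cdh}(X,\Omega^{d-1}) \to \cdots$, and since $\dim X_{\rm sing} \le d-1 < d-1 + 2$ the term $H^{d+1}_{cdh}(X_{\rm sing},\Omega^{d-1})$ vanishes by the $cdh$-analogue of Grothendieck vanishing (cohomological dimension $= \dim$), so the assumed isomorphism forces $H^{d+1}_{cdh}(E,\Omega^{d-1}_{E/F})$ to receive the map surjectively — this is what propagates to surjectivity on $\H^{2d-1}$ of the truncated complexes.

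The main obstacle I anticipate is bookkeeping the spectral-sequence degrees and keeping track of exactly which differentials and edge maps are in play, so that the single hypothesis on $H^{d+1}_{cdh}(X,\Omega^{d-1})$ really does control the unique potentially-nonzero obstruction group; in particular one must be careful that the brutal (not stupid-in-the-other-direction) truncation $\Omega^{<d}$ is the one for which the top-degree contribution is $(d-1, \bullet)$ and that no lower-$p$ term $\Omega^j$ with $j < d-1$ can contribute in degree $2d-1$ on $E$ (dimension $d$) or in degree $2d$ to the kernel on $\wt X$ (dimension $d+1$) — this is where the precise dimension hypotheses are used essentially. A secondary subtlety is the passage between the Zariski and $cdh$ statements for the second assertion: I would handle this by noting that the first part of the corollary plus Corollary~\ref{cor:SNC-main1} make $\H^{2d-1}_{Zar}$ and $\H^{2d-1}_{cdh}$ of $\Omega^{<d}_{E/F}$ agree, and that for the smooth scheme $\wt X$ the Zariski and $cdh$ cohomology of $\Omega^i_{\wt X/F}$ coincide (\cite[Corollary~2.5]{CJams}), so the Zariski surjectivity is equivalent to the $cdh$ one. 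Once these degree counts are pinned down, the proof is a formal diagram chase through the abstract blow-up descent triangle and Lemmas~\ref{lem:folklore1} and~\ref{lem:folklore2}.
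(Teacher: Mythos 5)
Your route is in substance the paper's: the first isomorphism from Lemma~\ref{lem:folklore2}(i) together with the degree-$d$ isomorphism of Corollary~\ref{cor:SNC-main1}, the reduction of the Zariski surjectivity to the $cdh$ one via smoothness of $\wt{X}$ and the first assertion, and then the abstract blow-up (Mayer--Vietoris) sequence for $\Omega^{d-1}_{/F}$ together with normality of $X$. Two points in your second paragraph need repair, however. The obstruction you isolate is misidentified: the hypothesis does not kill $H^{d+1}_{cdh}\left(\wt{X}, \Omega^{d-1}_{{\wt{X}}/F}\right)$ (this group is in general nonzero), and $H^{d+1}$ of the kernel of $\Omega^{d-1}_{{\wt{X}}/F} \to i_*\Omega^{d-1}_{E/F}$ differs from it exactly by the cokernel of $H^{d}_{cdh}\left(\wt{X}, \Omega^{d-1}_{{\wt{X}}/F}\right) \to H^{d}_{cdh}\left(E, \Omega^{d-1}_{E/F}\right)$, which is precisely the piece your identification erases and which the hypothesis is there to control. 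What the blow-up sequence actually gives is this: injectivity of $H^{d+1}_{cdh}\left(X, \Omega^{d-1}_{X/F}\right) \to H^{d+1}_{cdh}\left(\wt{X}, \Omega^{d-1}_{{\wt{X}}/F}\right)$ forces the connecting map out of $H^{d}_{cdh}\left(E, \Omega^{d-1}_{E/F}\right)$ to vanish, hence $H^{d}_{cdh}\left(\wt{X}, \Omega^{d-1}\right) \oplus H^{d}_{cdh}\left(X_{\rm sing}, \Omega^{d-1}\right) \surj H^{d}_{cdh}\left(E, \Omega^{d-1}\right)$; to drop the second summand you need $H^{d}_{cdh}\left(X_{\rm sing}, \Omega^{d-1}\right) = 0$ (true because $\dim X_{\rm sing} \le d-1$ by normality), not the vanishing of $H^{d+1}_{cdh}\left(X_{\rm sing}, \Omega^{d-1}\right)$ that you quote, and your sentence about $H^{d+1}_{cdh}\left(E, \Omega^{d-1}_{E/F}\right)$ receiving the map surjectively is vacuous since that group is zero --- the degree should be $d$.

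Once these indices are fixed, the cone machinery is unnecessary: having $H^{d}_{cdh}\left(\wt{X}, \Omega^{d-1}_{{\wt{X}}/F}\right) \surj H^{d}_{cdh}\left(E, \Omega^{d-1}_{E/F}\right)$, compose with the surjection $H^{d}_{cdh}\left(E, \Omega^{d-1}_{E/F}\right) \surj \H^{2d-1}_{cdh}\left(E, \Omega^{<d}_{E/F}\right)$ of Lemma~\ref{lem:folklore2}(i) and use naturality of the map $H^{d}\left(-, \Omega^{d-1}\right) \to \H^{2d-1}\left(-, \Omega^{<d}\right)$; this is exactly the paper's one-line conclusion, and your obstruction-theoretic detour, if retained, would anyway have to be completed by this same surjectivity statement.
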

\begin{proof}
The first isomorphism follows at once from Lemma~\ref{lem:folklore2}
and Corollary~\ref{cor:SNC-main1}. For the other assertion, we can use the
first assertion to reduce to the case of $cdh$ topology. Now, the
surjectivity $H^d_{cdh}\left(E, \Omega^{d-1}_{E/F}\right) \surj
\H^{2d-1}_{cdh}\left(E, \Omega^{< d}_{E/F}\right)$ ({\sl cf.} 
Lemma~\ref{lem:folklore2}) implies that we only have to show that the
map $H^{d}_{cdh}\left(\wt{X}, \Omega^{d-1}_{{\wt{X}}/F}\right) \to
H^d_{cdh}\left(E, \Omega^{d-1}_{E/F}\right)$ is surjective. But this follows 
directly from our assumption and the Mayer-Vietoris exact sequence for the 
$cdh$ cohomology of $\Omega^{d-1}_{/F}$ for the resolution map once we observe
that $H^d_{cdh}\left(X_{\rm sing}, \Omega^{d-1}_{{X_{\rm sing}}/F}\right)$
vanishes as $X$ is normal.
\end{proof} 

The following result about the Zariski and the $cdh$ cohomology of the
K{\"a}hler differentials on seminormal varieties is of independent interest.
This is a weaker form of the previous result.

\begin{prop}\label{prop:semi-normal}
Let $X$ be a reduced seminormal $k$-scheme of dimension $d$. Then the natural
map
\[
H^d_{Zar}\left(X, \Omega^i_{X/F}\right) \to 
H^d_{Zar}\left(X, a_*a^*\Omega^i_{X/F}\right)
\]
is an isomorphism for any subfield $F \subseteq k$ and any $i \ge 0$.
\end{prop}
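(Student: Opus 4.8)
The plan is to reduce the statement to the local picture already handled in Proposition~\ref{prop:SNC-local} via the analysis of the direct-image sheaves $R^ja_*a^*\Omega^i_{X/F}$ and an application of Lemma~\ref{lem:folklore1}. First I would recall from Lemma~\ref{lem:direct-image} that the complex $Ra_*a^*\Omega^i_{X/F}$ has quasi-coherent cohomology sheaves. Since $X$ is seminormal, its normalization map $f:X^N\to X$ is unramified (this is one of the standard characterizations of seminormality in characteristic zero). The singular locus $Z = X_{\rm sing}$ has dimension $\le d-1$. The key claim to establish is that the natural map $\Omega^i_{X/F}\to a_*a^*\Omega^i_{X/F}$ is surjective with kernel supported on $Z$, and that $R^ja_*a^*\Omega^i_{X/F}$ is supported on $Z$ for $j\ge 1$; then Lemma~\ref{lem:folklore1}, applied with $\dim Z\le d-1$, forces the map on $H^d$ to be an isomorphism after passing through the hypercohomology spectral sequence.

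The main work is therefore verifying these properties of the direct-image sheaves, and this is where I would reduce to Proposition~\ref{prop:SNC-local} (or rather its curve case Lemma~\ref{lem:SNC-curve} when needed). Away from $Z$ the scheme $X$ is smooth, so $\Omega^i_{X/F}\to a_*a^*\Omega^i_{X/F}$ is an isomorphism there and the higher direct images vanish there by Corollary~2.5 of \cite{CJams}; this handles the support claim. For the stalk at a point of $Z$, I would pass to an affine (or local, then completed) neighborhood and use the exact sequence relating $\Omega^i_{X/F}$ to $\Omega^i_{X^N/F}\oplus\Omega^i_{Z/F}$ modulo $\Omega^i_{\bar Z/F}$, exactly as in the normal-crossing arguments above; the surjectivity $H^0_{Zar}(\Omega^i_X)\surj H^0_{cdh}(\Omega^i_X)$ on affine seminormal schemes follows from such a Mayer--Vietoris comparison together with the smooth case for $X^N$, $Z$, and $\bar Z=f^{-1}(Z)$. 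The point is that seminormality gives unramifiedness of $f$, which via Proposition~\ref{prop:local} produces the needed surjection of conductor-square differentials, and this is precisely what makes $\Omega^i_{X/F}\to a_*a^*\Omega^i_{X/F}$ surjective with kernel on $Z$ and kills the higher $R^ja_*$.

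The expected obstacle is that $X$ is only seminormal, not a normal crossing scheme, so I cannot literally invoke Proposition~\ref{prop:SNC-local}; instead I must argue that seminormality is exactly the hypothesis under which the conductor-ideal surjection $I'\surj I'\sO_{X^N}$ (used implicitly in the presentations \eqref{eqn:local2}) holds étale-locally, and that the iterated Mayer--Vietoris comparison for $cdh$-cohomology of $\Omega^i_{/F}$ still applies since $X^N$, $Z$, $\bar Z$ are again seminormal of smaller dimension (allowing an induction on dimension parallel to the proof of Proposition~\ref{prop:SNC-local}). Once the support and surjectivity statements for the direct-image sheaves are in hand, the conclusion is formal: the Leray/hypercohomology spectral sequence degenerates enough that $H^d_{Zar}(X,\Omega^i_{X/F})\to H^d_{Zar}(X,Ra_*a^*\Omega^i_{X/F})=H^d_{cdh}(X,\Omega^i_{X/F})$ factors as $H^d_{Zar}(X,\Omega^i_{X/F})\surj H^d_{Zar}(X,a_*a^*\Omega^i_{X/F})$ followed by the isomorphisms coming from the vanishing of $R^ja_*$ in the relevant range, and Lemma~\ref{lem:folklore1} upgrades the first map to an isomorphism on $H^d$ since the kernel and cokernel live on a $(d-1)$-dimensional subscheme.
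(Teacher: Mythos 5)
The decisive gap is in the support estimate for the cokernel of $\theta^i:\Omega^i_{X/F}\to a_*a^*\Omega^i_{X/F}$. You place both kernel and cokernel on a closed subset of dimension $\le d-1$ and then invoke Lemma~\ref{lem:folklore1} to get an isomorphism on $H^d$; but that lemma (and the Grothendieck-vanishing argument behind it) only yields \emph{surjectivity} of $H^d_{Zar}(X,\Omega^i_{X/F})\to H^d_{Zar}(X,a_*a^*\Omega^i_{X/F})$ when the map is an isomorphism off a $(d-1)$-dimensional subset. Injectivity on $H^d$ additionally requires $H^{d-1}$ of the cokernel to vanish, i.e.\ the cokernel must be supported in dimension $\le d-2$ (or else $\theta^i$ must be genuinely surjective, which you do not establish). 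Producing this codimension-two bound is the whole content of the paper's proof: it combines Proposition~\ref{prop:local} (seminormality makes the normalization unramified \emph{in codimension one}, so the conductor-square sequence for $\Omega^i$ is exact off a codimension-two set $Z_1$) with the theorem of Leahy--Vitulli \cite{LeahyV}, which supplies a codimension-two set $Z_2$ off which $\widetilde{X}$, the conductor locus $S$ and its preimage $\widetilde{S}$ are all smooth, so that \cite[Corollary~2.5]{CJams} applies there; a diagram chase against the $cdh$ Mayer--Vietoris sequence then shows $\theta^i$ is surjective off $Z_1\cup Z_2$, while the kernel, supported on $X_{\rm sing}$, is harmless. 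Your proposal never produces any codimension-two statement, so the argument as written only proves surjectivity in top degree.

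The local step you propose would also not go through as stated: for a general seminormal $X$ the normalization $X^N$, the conductor locus and its preimage are \emph{not} smooth (and $X^N$ has the same dimension as $X$, so no induction on dimension is available for it), so the ``smooth case'' Mayer--Vietoris comparison in the style of Proposition~\ref{prop:SNC-local} does not apply at arbitrary points of $X_{\rm sing}$. Moreover, seminormality forces the normalization to be unramified only in codimension one, not everywhere: the Whitney umbrella $x^2=y^2z$ is seminormal, yet its normalization is ramified over the pinch point. Likewise, the expectation that the higher $R^ja_*a^*\Omega^i_{X/F}$ are killed fails in general (already a normal affine variety with non-rational singularities has $H^1_{cdh}(X,\sO_X)\neq 0$); fortunately this is irrelevant here, since the target of the proposition is $H^d_{Zar}\left(X, a_*a^*\Omega^i_{X/F}\right)$ rather than $H^d_{cdh}\left(X,\Omega^i_{X/F}\right)$, so no hypercohomology spectral sequence is needed at all --- what is needed, and missing, is the codimension-two control of the cokernel.
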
 
\begin{proof}
It follows from Lemma~\ref{lem:direct-image} that the map
\[
\Omega^i_{X/F} \xrightarrow{\theta^i} a_*a^*\Omega^i_{X/F}
\]
is a morphism of quasi-coherent sheaves. In particular, the cokernel sheaf
${\sF}^i_{X/F}$ is also quasi-coherent. \\
{\bf Claim.} ${\sF}^i_{X/F}$ is supported on a closed subscheme $Z$ which
has codimension at least two in $X$. \\
{\sl Proof of the claim}. Let $\widetilde{X} \xrightarrow{\pi} X$ be the 
normalization
map. Let $S \overset{i}{\inj} X$ be the reduced conductor subscheme and let 
$\widetilde{S} = S {\times}_X \widetilde{X}$. Let $\widetilde{S} 
\overset{i'}{\inj} \widetilde{X}$
be the inclusion map. Let ${\sH}^i_{\left(X, \widetilde{X}, S\right)/F}$ 
denote the cokernel of the map $\Omega^i_{(X, S)/F} \to 
{\pi}_*\left(\Omega^i_{(\widetilde{X},
\widetilde{S})/F}\right)$. The seminormality of $X$ implies that $\pi$ is 
unramified in codimension one. Hence we conclude from 
Proposition~\ref{prop:local}
that ${\rm support}\left({\sH}^i_{\left(X, \widetilde{X}, S\right)/F}\right) =
Z_1$, where $Z_1$ is a closed subscheme of $X$ of codimension at least two.
In particular, the sequence
\begin{equation}\label{eqn:SN0}
\Omega^i_{X/F} \to \pi_* \Omega^i_{\widetilde{X}/F} \oplus \Omega^i_{S/F}
\to \pi_* \Omega^i_{\widetilde{S}/F} \to 0
\end{equation}
is exact on the open subscheme $U_1 = X - Z_1$. Thus we get a commutative
diagram
\begin{equation}\label{eqn:SN1}
\xymatrix@C.5pc{
& \Omega^i_{X/F} \ar[r] \ar[d]^{\theta^i} & 
{\Omega^i_{{\widetilde{X}}/F} \oplus
\Omega^i_{S/F}} \ar[r] \ar[d] & \Omega^i_{{\widetilde{S}}/F} \ar[r] \ar[d] &
0 \\
0 \ar[r] & a_*a^*\Omega^i_{X/F} \ar[r] & 
{a_*a^*\Omega^i_{{\widetilde{X}}/F} \oplus a_*a^*\Omega^i_{S/F}} \ar[r] & 
a_*a^*\Omega^i_{{\widetilde{S}/F}} \ar[r] & \cdots ,}
\end{equation}
where the bottom sequence is the Mayer-Vietoris exact sequence
for $cdh$-cohomology because $S$ and $\widetilde{S}$ are reduced.

Since $X$ is seminormal, it follows from \cite[Theorems~3.5, 3.8]{LeahyV}
that there is a dense open subscheme $U_2 \subset X$ such that for $Z_2 =
X - U_2$, we have \\
$(i)$ ${\rm Codim}_X(Z_2) \ge 2$ \ {\rm and} \\
$(ii)$ $\widetilde{X} \cap U_2$, $S \cap U_2$ and $\widetilde{S} \cap
\pi^{-1}(U_2)$ are all smooth. \\
In particular, the restriction of $R^ja_*a^*\Omega^i_{{\widetilde{S}/F}}$
on $U_2$ is zero. Moreover, the middle as well as the right vertical maps are 
isomorphisms
by \cite[Corollary~2.5]{CJams} on $U_2$. A diagram chase in ~\eqref{eqn:SN1}
shows that $\theta^i$ is surjective on $U = U_1 \cap U_2$ and the codimension
of $Z = X - U$ is at least two. This proves the claim.

It also follows from \cite[Corollary~2.5]{CJams} that the kernel of 
$\theta^i$ is a quasi-coherent sheaf supported on $X_{\rm sing}$.   The proof
of Lemma~\ref{lem:folklore1} now proves the desired isomorphism of the top
cohomology groups.
\end{proof}

\section{Zariski and cdh cohomology of some singular schemes}
\label{section:ZCSS}
Our aim in this section is to compare some Zariski and $cdh$ cohomology of
projective varieties with isolated singularities using the results of 
Section~\ref{section:SNC*} and the resolution of singularities. Let
$X$ be a normal projective $k$-variety of dimension $d+1 \ge 2$ with 
only isolated singularities. Let $S = X_{\rm sing}$ denote the singular locus
of $X$ with the reduced induced structure. Let $f : \widetilde{X} \to X$
be a resolution of singularities of $X$ such that the reduced exceptional 
divisor $E \overset{\wt{i}}{\inj} \wt{X}$ has smooth components with strict 
normal crossings. For $n \ge 1$, let $nE$ denote the $n$th infinitesimal
thickening of $E$ defined by the sheaf of ideals ${\sI}^n$ on $\wt{X}$,
where $\sI$ is the sheaf of ideals defining $E$. Let 
$\ov{E} \xrightarrow{g} E$ be the normalization map of $E$. Note that
$\ov{E}$ is simply the disjoint union of irreducible components
of $E$.  
\begin{lem}\label{lem:EXC1}
The natural map 
\[
H^{d}_{Zar}\left(E, \frac{\Omega^d_{E/k}}{\Omega^{d-1}_{E/k}}\right)
\to H^{d}_{Zar}\left(\ov{E}, \frac{\Omega^d_{{\ov{E}}/k}}
{\Omega^{d-1}_{{\ov{E}}/k}}\right)
\]
is an isomorphism.
\end{lem}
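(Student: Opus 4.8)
The plan is to recast the statement as a comparison of top de Rham hypercohomology. For a $k$-scheme $Z$ of dimension $\le d$, write $C^{\bullet}_Z := \left[\Omega^{d-1}_{Z/k} \xrightarrow{d} \Omega^d_{Z/k}\right]$ for the two-term complex of sheaves of $k$-vector spaces placed in degrees $d-1$ and $d$. The brutal-truncation short exact sequence of complexes
\[
0 \to \Omega^d_{Z/k}[-d] \to C^{\bullet}_Z \to \Omega^{d-1}_{Z/k}[-(d-1)] \to 0
\]
together with Grothendieck vanishing $H^{d+1}_{Zar}(Z, \Omega^{d-1}_{Z/k}) = 0$ (as $\dim Z \le d$) shows that $\H^{2d}_{Zar}(Z, C^{\bullet}_Z) \cong \cok\left(H^d_{Zar}(Z,\Omega^{d-1}_{Z/k}) \xrightarrow{d} H^d_{Zar}(Z,\Omega^d_{Z/k})\right)$, which is exactly $H^d_{Zar}\left(Z, \Omega^d_{Z/k}/\Omega^{d-1}_{Z/k}\right)$ (the quotient being by the image of the de Rham differential); this identification is natural in $Z$. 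So it is enough to prove that the restriction map $\H^{2d}_{Zar}(E, C^{\bullet}_E) \to \H^{2d}_{Zar}(\ov{E}, C^{\bullet}_{\ov{E}})$ is an isomorphism.

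First I would induct on the number $r$ of irreducible components of $E$; the case $r = 1$ is trivial since then $E = \ov{E}$ is smooth. For $r \ge 2$, write $E = E_1 \cup E'$ with $E' = E_2 \cup \cdots \cup E_r$ and $D = E_1 \cap E'$, so that $E'$ is a strict normal crossing divisor on $\wt{X}$ of dimension $d$ with $r-1$ components and $D$ is a strict normal crossing divisor on $E_1$ of dimension $d-1$. The exact sequence of Zariski sheaves $\Omega^i_{E/k} \to \Omega^i_{E_1/k} \oplus \Omega^i_{E'/k} \to \Omega^i_{D/k} \to 0$ of \eqref{eqn:SNC-local1} (its exactness was verified locally in the proof of \propref{prop:SNC-local}, hence holds here as well) is compatible with the de Rham differential, and therefore yields a sequence of complexes of $k$-sheaves on $E$
\[
C^{\bullet}_E \to C^{\bullet}_{E_1} \oplus C^{\bullet}_{E'} \to C^{\bullet}_D \to 0
\]
that is exact at the middle and the right term. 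Letting $\ov{C}^{\bullet}_E$ be the image of the first map, one gets a genuine short exact sequence $0 \to \ov{C}^{\bullet}_E \to C^{\bullet}_{E_1} \oplus C^{\bullet}_{E'} \to C^{\bullet}_D \to 0$, together with a surjection $C^{\bullet}_E \twoheadrightarrow \ov{C}^{\bullet}_E$ whose kernel $K^{\bullet}$ has both terms supported on $D$, since $\Omega^i_{E/k} \to \Omega^i_{E_1/k} \oplus \Omega^i_{E'/k}$ is an isomorphism away from $E_1 \cap E'$.

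What remains is to check that the auxiliary complexes $K^{\bullet}$ and $C^{\bullet}_D$ are invisible in degree $2d$; this is the only delicate point, and it is purely a matter of keeping track of supports and dimensions. Since $K^{d-1}$ and $K^{d}$ are supported in dimension $\le d-1$, Grothendieck vanishing gives $\H^{2d}_{Zar}(E, K^{\bullet}) = \H^{2d+1}_{Zar}(E, K^{\bullet}) = 0$, so the long exact sequence of $0 \to K^{\bullet} \to C^{\bullet}_E \to \ov{C}^{\bullet}_E \to 0$ gives $\H^{2d}_{Zar}(E, C^{\bullet}_E) \xrightarrow{\cong} \H^{2d}_{Zar}(E, \ov{C}^{\bullet}_E)$. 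On the other hand $\dim D = d-1$, and $\Omega^d_{D/k}$ is generically zero on $D$ and hence supported in dimension $\le d-2$; consequently $H^j_{Zar}(D, \Omega^{d-1}_{D/k}) = 0$ for $j \ge d$ and $H^j_{Zar}(D, \Omega^d_{D/k}) = 0$ for $j \ge d-1$, which forces $\H^{2d-1}_{Zar}(D, C^{\bullet}_D) = \H^{2d}_{Zar}(D, C^{\bullet}_D) = 0$. Feeding these into the long exact hypercohomology sequence of the short exact sequence of complexes above produces an isomorphism $\H^{2d}_{Zar}(E, \ov{C}^{\bullet}_E) \xrightarrow{\cong} \H^{2d}_{Zar}(E_1, C^{\bullet}_{E_1}) \oplus \H^{2d}_{Zar}(E', C^{\bullet}_{E'})$ of restriction maps. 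Since $E_1$ is smooth and $\ov{E} = E_1 \sqcup \ov{E'}$, applying the inductive hypothesis to $E'$ identifies the right-hand side with $\H^{2d}_{Zar}(\ov{E}, C^{\bullet}_{\ov{E}})$, and unwinding the maps shows the composite is the natural restriction. In particular the only ingredients used are Grothendieck vanishing and the local exact sequence \eqref{eqn:SNC-local1}.
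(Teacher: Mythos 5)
Your proof is correct, but it takes a genuinely different route from the one in the paper. The paper argues entirely at the level of sheaves: by the argument of Lemma~\ref{lem:folklore1} it suffices to show that the map $\Omega^d_{E/k}/\Omega^{d-1}_{E/k} \to g_*\left(\Omega^d_{{\ov{E}}/k}/\Omega^{d-1}_{{\ov{E}}/k}\right)$ is surjective, because its kernel is then supported on $E_{\rm sing}$, of dimension $\le d-1$, and Grothendieck vanishing kills that kernel in degree $d$; the sheaf surjectivity in turn reduces to $\Omega^d_{E/k} \surj g_*\Omega^d_{{\ov{E}}/k}$, which is verified by the explicit local computation (cf.\ the proof of Lemma~\ref{lem:EXC-local}) that even the composite $\Omega^d_{{\wt{X}}/k} \to g_*\Omega^d_{{\ov{E}}/k}$ is onto. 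You never establish this sheaf-level surjectivity; instead you reinterpret both sides as $\H^{2d}$ of the two-term complex $\left[\Omega^{d-1} \to \Omega^d\right]$ and run an induction on the number of components of $E$ using the sequence \eqref{eqn:SNC-local1} from the proof of Proposition~\ref{prop:SNC-local}, killing the error terms $K^{\bullet}$ and $C^{\bullet}_D$ by the same kind of support-and-dimension estimates (Grothendieck vanishing on loci of dimension $\le d-1$, together with the observation that $\Omega^d_{D/k}$ is supported on $D_{\rm sing}$). Both proofs ultimately rest on Grothendieck vanishing; the paper's is shorter and reuses a local computation that is needed for Lemma~\ref{lem:EXC-local} anyway, while yours is purely formal once \eqref{eqn:SNC-local1} is granted (its exactness is indeed a local statement, so it holds in the non-affine setting) and proves the statement for an arbitrary strict normal crossing divisor of dimension $d$ on a smooth scheme, which your induction requires in any case. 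Two minor points you should make explicit, though neither is a gap: the identification $\H^{2d}_{Zar}\left(Z, C^{\bullet}_Z\right) \cong H^d_{Zar}\left(Z, \Omega^d_{Z/k}/\Omega^{d-1}_{Z/k}\right)$ uses one further application of Grothendieck vanishing (for the image sheaf $d\Omega^{d-1}_{Z/k}$, in order to identify the cokernel of the connecting map with $H^d$ of the quotient sheaf), and the maps in \eqref{eqn:SNC-local1} are pullbacks of K{\"a}hler forms, hence commute with the de Rham differential, so they really do define maps of the two-term complexes.
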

\begin{proof}
It suffices to show by Lemma~\ref{lem:folklore1} that the map 
\[
\frac{\Omega^d_{E/k}}{\Omega^{d-1}_{E/k}} \to 
\frac{\Omega^d_{{\ov{E}}/k}}{\Omega^{d-1}_{{\ov{E}}/k}}
\]
is surjective. For this, it is enough to show that the map
$\Omega^d_{E/k} \to \Omega^d_{{\ov{E}}/k}$ is surjective. One can now check 
by an easy local calculation ({\sl cf.}
proof of Lemma~\ref{lem:EXC-local}) that, in fact the composite map 
$\Omega^d_{{\wt{X}}/k} \to \Omega^d_{{\ov{E}}/k}$ is surjective. This
completes the proof.  
\end{proof} 

Let $\omega_{{\wt{X}}/k}$ denote the canonical line bundle on $\wt{X}$.
For $n \ge 0$, the differential map $\Omega^{d}_{(n+1)E/k} \to
\Omega^{d+1}_{(n+1)E/k}$ induces the map
\[
\frac{\Omega^d_{{\left((n+1)E, \ov{E}/k\right)}/k}}
{\Omega^{d-1}_{{\left((n+1)E, \ov{E}/k\right)}/k}} \to
\Omega^{d+1}_{(n+1)E/k}.
\]
Also, we have the following natural surjections. 
\[
\xymatrix@C.5pc{
\omega_{{\wt{X}}/k} \ar[r] \ar[d] & \Omega^{d+1}_{(n+1)E/k} \\
\omega_{{\wt{X}}/k} {\otimes}_{\sO_{\wt{X}}} \sO_{nE} & }
\]

\begin{lem}\label{lem:EXC-local}
For any $n \ge 1$, the above maps induce the surjective maps
\[
\frac{\Omega^d_{{\left((n+1)E, \ov{E}/k\right)}/k}}
{\Omega^{d-1}_{{\left((n+1)E, \ov{E}/k\right)}/k}} \overset{\partial}{\surj}
\Omega^{d+1}_{(n+1)E/k} \surj 
\omega_{{\wt{X}}/k} {\otimes}_{\sO_{\wt{X}}} \sO_{nE}  
\]
which are isomorphisms on the smooth locus of $E$.
\end{lem}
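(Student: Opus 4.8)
The plan is to reduce everything to a purely local computation at a closed point of $E$, using the explicit description of a strict normal crossing divisor in local coordinates, and then to check surjectivity of the two composite maps directly on stalks. First I would pass to the completed local ring $R = k[[x_1,\dots,x_{d+1}]]$ of $\wt{X}$ at a closed point of $E$, with $E$ cut out by $a = x_1 \cdots x_r$ for some $1 \le r \le d+1$ (and the reduced exceptional divisor $E$ corresponds to this radical equation, while $(n+1)E$ corresponds to $a^{n+1}$). The normalization $\ov{E}$ of $E$ is then $\coprod_{j=1}^r R/(x_j)$. Since all the sheaves involved are quasi-coherent and the maps are $\sO_{\wt{X}}$-linear, it suffices to verify surjectivity after completion, so this reduction is harmless.

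The key computation is for the first map $\partial$. On the smooth locus of $E$ — i.e.\ where at most one $x_j$ vanishes — both sides are identified with $\omega_{\wt{X}/k} \otimes \sO_{nE}$ (the $d$-forms on a smooth divisor thickened $n$ times), and one checks $\partial$ is an isomorphism there by the classical fact that on a smooth variety $d : \Omega^d \to \Omega^{d+1}$ is surjective after the appropriate twist; this handles the ``isomorphism on the smooth locus'' clause for both composites simultaneously. For global surjectivity of $\partial$ at a general point of $E$, I would write a general element of $\Omega^{d+1}_{(n+1)E/k} = \Omega^{d+1}_{R/k} \otimes R/(a^{n+1})$ as $h\, dx_1 \wedge \cdots \wedge dx_{d+1}$ with $h \in R/(a^{n+1})$, and exhibit a preimage: the element $h\, x_1\, dx_2 \wedge \cdots \wedge dx_{d+1}$ (say) lies in $\Omega^d_{(n+1)E/k}$, maps to $0$ in $\Omega^d_{\ov{E}/k}$ since $x_1$ generates a conducting ideal component, hence descends to the relative group $\Omega^d_{((n+1)E,\ov{E}/k)/k}$, and its image under $d$ is $h\, dx_1 \wedge dx_2 \wedge \cdots \wedge dx_{d+1}$ up to terms involving $dh$ which are absorbed because $a^{n+1}$ annihilates them appropriately modulo lower-degree forms — this is where the quotient by $\Omega^{d-1}_{((n+1)E,\ov{E}/k)/k}$ is used to kill the ambiguity. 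The second map $\Omega^{d+1}_{(n+1)E/k} \surj \omega_{\wt{X}/k} \otimes \sO_{nE}$ is the more elementary of the two: locally it is the natural surjection $\Omega^{d+1}_{R/k}\otimes R/(a^{n+1}) \to \Omega^{d+1}_{R/k} \otimes R/(a^n)$ composed with the identification $\Omega^{d+1}_{R/k} = \omega_{\wt{X}/k}$, which is visibly surjective because $R/(a^{n+1}) \surj R/(a^n)$.

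The main obstacle I anticipate is the bookkeeping in the surjectivity of $\partial$: one must be careful that the ``wrong'' terms produced by the Leibniz rule when applying $d$ — the terms $x_1\, dh \wedge dx_2 \wedge \cdots \wedge dx_{d+1}$ — genuinely land in the image of $\Omega^{d-1}_{((n+1)E,\ov{E}/k)/k}$ under $d$ (or vanish in the quotient), and that this works uniformly over all the local patterns $a = x_{i_1}\cdots x_{i_r}$. I would organize this by induction on $r$, the $r=1$ case being the smooth case already disposed of, and the inductive step using that $\Omega^d_{((n+1)E,\ov{E}/k)/k}$ is generated as in \eqref{eqn:local2} by forms $c_0\, dc_1 \wedge \cdots \wedge dc_d$ with some $c_p$ in the conductor, together with the relation \eqref{eqn:local1} identifying the differentials on $\ov{E}$ with those pushed from $E$. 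Since the statement is local and the question is insensitive to the global geometry of $\wt{X}$, no further input beyond Proposition~\ref{prop:SNC-local} and the explicit coordinate description is needed.
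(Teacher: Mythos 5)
Your overall strategy (reduce to the completed local ring with $E$ given by $a=x_{i_1}\cdots x_{i_r}$, treat the smooth case separately, and check the two surjections on stalks) is the same as the paper's, which also proceeds by an explicit computation at the local ring. But there is a genuine gap at the one step that carries the content, the surjectivity of $\partial$. First, a smaller point: $\Omega^{d+1}_{(n+1)E/k}$ is not $\Omega^{d+1}_{R/k}\otimes R/(a^{n+1})$; locally it is $R\,dw/\bigl(a^{n+1},\,a^n a_{i_1},\dots,a^n a_{i_r}\bigr)$ with $a_{i_j}=a/x_{i_j}$, and it is this presentation (whose ideal lies in $(a^n)$) that makes the map to $\omega_{\wt{X}/k}\otimes\sO_{nE}$ well defined. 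More seriously, your candidate preimage $h\,x_1\,dx_2\wedge\cdots\wedge dx_{d+1}$ does lie in the relative module (its image in each component $\Omega^d_{(R/x_j)/k}$ of $\Omega^d_{\ov{E}/k}$ vanishes), but its image under $d$ is $(h+x_1\,\partial h/\partial x_1)\,dw$, not $h\,dw$. The correction term $x_1(\partial h/\partial x_1)\,dw$ is in general a nonzero element of the target: it is not annihilated by $a^{n+1}$ (take $h=x_1$), and the quotient by $\Omega^{d-1}_{\left((n+1)E,\ov{E}\right)/k}$ — which means modulo $d\Omega^{d-1}$ and is taken in the \emph{source} of $\partial$ — cannot absorb an error term sitting in the target; nor can a degree-$(d+1)$ error ``land in the image of $\Omega^{d-1}$ under $d$'', which has degree $d$. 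So the sentence by which you discard the Leibniz terms does not hold up.

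The route is salvageable, but it needs a replacement idea: rather than killing the error, solve for the coefficient. In characteristic zero the operator $f\mapsto f+x_1\,\partial f/\partial x_1=\partial(x_1f)/\partial x_1$ is bijective on $k[[x_1,\dots,x_{d+1}]]$ (it multiplies a monomial $x^\alpha$ by $1+\alpha_1$) and preserves the monomial ideal above, so for a given $h$ one can choose $f$ with $\partial(x_1f)/\partial x_1\equiv h$ and take $f\,x_1\,dx_2\wedge\cdots\wedge dx_{d+1}$ as the preimage. The paper instead writes out full generator-and-relation presentations of $\Omega^d_{A_{n+1}/k}$, $\Omega^d_{A^N/k}$ and $\Omega^{d+1}_{A_{n+1}/k}$ and reads off the surjectivity of $\partial$ and of the map onto $R\,dw/(a^n)$, with the smooth-locus isomorphism coming from the separate computation $A_{n+1}\cong A[x]/(x^{n+1})$ (which you assert but do not carry out). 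One of these arguments must replace your absorption claim.
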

\begin{proof}
This is a local calculation and can be checked at the local ring of
closed points of $\wt{X}$. So as in the proof of 
Proposition~\ref{prop:SNC-local}, let
$R = (R, \mathfrak{m}, k)$ be the regular local 
ring of a closed point on $X$ with maximal ideal $\mathfrak{m} = (x_1, \cdots ,
x_{d+1})$ and residue field $k$. For $n \ge 1$, let $A_n$ denote
the local ring of $nE$ at the chosen point.
 
We first consider the case when $A = R/{x_1}$ is smooth. In that case, 
$A_{n+1}$ is in fact of the form $A[x]/{(x^{n+1})}$. On can then explicitly 
calculate that
\[
\Omega^1_{(A_{n+1}, A)/k} = \left(\Omega^1_{A/k} {\otimes}_A xA_{n+1}\right)
\oplus \left({A_{n+1}}/{x^{n}}\right)dx,
\]
\[
\Omega^i_{(A_{n+1}, A)/k} = \left(\Omega^{i}_{A/k}{\otimes}_A xA_{n+1}\right) 
\oplus \left(\Omega^{i-1}_{A/k}{\otimes}_A \left(A_{n+1}/{x^n}\right)dx\right)
\ {\rm for} \ {2 \le i \le d} \ {\rm and}
\]
\[
\Omega^{d+1}_{{A_{n+1}}/k} = \Omega^{d}_{A/k}{\otimes}_A  
\left({A_{n+1}}/{x^n}\right)dx.
\]
The desired isomorphism $\frac{\Omega^d_{(A_{n+1}, A)/k}}
{\Omega^{d-1}_{(A_{n+1}, A)/k}} \xrightarrow{\cong} \Omega^{d+1}_{{A_{n+1}}/k}
\xrightarrow{\cong} \frac{\Omega^{d+1}_{R/k}}{(x^n)}$
can be now directly checked. 

We now assume that $A$ is not smooth. Let $A_n = R/{(a^n)}$, where 
$a = x_{i_1}\cdots x_{i_r}$ with
$1 \le i_1 < \cdots < i_r \le d+1$. We prove the assertion in the
case when $r = d+1$. The case $r < d+1$ is simpler and follows in the same 
way. The normalization of $A$ in this case is the ring $A^N =
\stackrel{}{\underset {1 \le i \le d+1}{\prod}} R/{x_i}$.  
We describe the various K{\"a}hler differentials in terms of generators and
relations. We fix a few notations.

For $1 \le i \le d+1$, let 
\[
a_i = {\underset{j \neq i}{\prod}} x_j \ {\rm and} \
dX_i = dx_1 \wedge \cdots dx_{i-1} \wedge dx_{i+1} \wedge
\cdots \wedge dx_{d+1}.
\]
For $1 \le i < j \le d+1$, we let 
\[
dY_{ij} = dx_1 \wedge \cdots dx_{i-1} \wedge dx_{i+1} \wedge
\cdots \wedge dx_{j-1} \wedge dx_{j+1} \wedge \cdots \wedge dx_{d+1}.
\] 
Finally, we put $dw = dx_1 \wedge \cdots \wedge dx_{d+1}$.
We then have
\[
\Omega^1_{R/k} = \stackrel{}{\underset {1 \le i \le d+1}{\oplus}}Rdx_i
\]
\[
\Omega^1_{{A_{n+1}}/k} = \frac{\stackrel{}{\underset {1 \le i \le d+1}{\oplus}}
Rdx_i}
{\left(\stackrel{}{\underset {1 \le i \le d+1}{\oplus}}
a^{n+1}Rdx_i, 
\stackrel{}{\underset {1 \le i \le d+1}{\sum}}a^n a_i dx_i\right)}
\]
\[
\Omega^1_{A^N/k} = \stackrel{}{\underset {1 \le i \le d+1}{\bigoplus}}
\left( \stackrel{}{\underset {j \neq i}{\oplus}}
(R/{x_i}) dx_j\right).
\]
Taking the various exterior powers, we get
\[
\Omega^d_{{A_{n+1}}/k} = 
\frac{\stackrel{}{\underset {1 \le i \le d+1}{\oplus}} 
RdX_i}{\left(\stackrel{}{\underset {1 \le i \le d+1}{\oplus}}a^{n+1}RdX_i,  
\stackrel{}{\underset {1 \le i < j \le d+1}{\oplus}}
\stackrel{}{\underset {l \neq i, j}
{\stackrel{}{\underset {1 \le l \le d+1}{\sum}}}}
a^na_l dx_l \wedge dY_{ij}\right)} 
\]
\[
\Omega^d_{{A^N}/k} = \stackrel{}{\underset {1 \le i \le d+1}{\oplus}}
R/{x_i} dX_i \ {\rm and}
\]
\[
\Omega^{d+1}_{{A_{n+1}}/k} = 
\frac{Rdw}{\left(a^{n+1}Rdw, \stackrel{}{\underset {1 \le i \le d+1}{\oplus}}  
Ra^na_i dw\right)}.
\]
It can now be directly checked from the above description that the 
differential map
$\partial : \Omega^d_{(A_{n+1}, A^N)/k} \to
\Omega^{d+1}_{{A_{n+1}}/k}$ is surjective and there is a surjection
$\Omega^{d+1}_{{A_{n+1}}/k} \to \frac{Rdw}{a^{n}Rdw} = 
\frac{\Omega^{d+1}_{R/k}}{a^{n}\Omega^{d+1}_{R/k}}$. We omit more details.
This finishes the proof of the lemma.
\end{proof}
\begin{cor}\label{cor:CEXC-local}
For every $n \ge 1$, the natural map
\[
\frac{H^d_{Zar}\left(nE, \Omega^d_{{nE}/k}\right)}
{H^d_{Zar}\left(nE, {\Omega^{d-1}_{{nE}/k}}\right)}
\to H^d_{Zar}\left(E, \frac{\Omega^d_{E/k}}{\Omega^{d-1}_{E/k}}\right)
\]
is an isomorphism.
\end{cor}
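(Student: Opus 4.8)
The plan is to reduce the statement, by cohomological bookkeeping, to a single vanishing theorem for the canonical sheaf on the infinitesimal neighbourhoods of $E$, which is then supplied by Grauert--Riemenschneider vanishing. For a $k$-scheme $Z$ write $\ov\Omega^d_{Z/k}:=\Omega^d_{Z/k}/d\,\Omega^{d-1}_{Z/k}$ for the top term of the truncated de Rham complex $\Omega^{\le d}_{Z/k}$ (a sheaf of $k$-vector spaces on $Z$). Since $E$ and $nE$ have the same underlying space, of Zariski cohomological dimension $d$, one has $H^{d+1}_{Zar}(nE,-)=0$; chasing the long exact sequences attached to $0\to\ker d\to\Omega^{d-1}_{nE/k}\to\im d\to 0$ and $0\to\im d\to\Omega^d_{nE/k}\to\ov\Omega^d_{nE/k}\to 0$ then produces a natural isomorphism $H^d_{Zar}(nE,\Omega^d_{nE/k})/H^d_{Zar}(nE,\Omega^{d-1}_{nE/k})\xrightarrow{\ \sim\ }H^d_{Zar}(nE,\ov\Omega^d_{nE/k})$ (alternatively, combine \lemref{lem:folklore2}$(i)$ with the degeneration, in total degree $2d$, of the hypercohomology spectral sequence of $\Omega^{\le d}_{nE/k}$). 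Doing the same for $E$, the corollary becomes the assertion that the restriction map $H^d_{Zar}(nE,\ov\Omega^d_{nE/k})\to H^d_{Zar}(E,\ov\Omega^d_{E/k})$ is an isomorphism.

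For $n=1$ this is trivial, so assume $n\ge 2$, and factor the map through $\ov E$. Let $\nu\colon\ov E\to E$ be the normalization and consider the finite map $\ov E\to E\inj nE$. Restriction of forms gives surjections $\Omega^{j}_{nE/k}\surj\nu_*\Omega^{j}_{\ov E/k}$ for $j=d-1,d$ (for $j=d-1$ because each component of $E$ is a smooth divisor on $\wt X$, for $j=d$ exactly as in the proof of \lemref{lem:EXC1}), so, with $\Omega^{j}_{(nE,\ov E)/k}$ the kernels, one gets (applying $d$) a short exact sequence of two-term complexes with kernel term $\bigl[\Omega^{d-1}_{(nE,\ov E)/k}\to\Omega^{d}_{(nE,\ov E)/k}\bigr]$, middle term $\bigl[\Omega^{d-1}_{nE/k}\to\Omega^{d}_{nE/k}\bigr]$ and cokernel term $\nu_*\bigl[\Omega^{d-1}_{\ov E/k}\to\Omega^{d}_{\ov E/k}\bigr]$. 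Its long exact sequence of cohomology sheaves exhibits the (surjective) map $\ov\Omega^d_{nE/k}\to\nu_*\ov\Omega^d_{\ov E/k}$ as having kernel a quotient of $\ov\Omega^d_{(nE,\ov E)/k}:=\Omega^{d}_{(nE,\ov E)/k}/d\,\Omega^{d-1}_{(nE,\ov E)/k}$. Using $H^{d+1}_{Zar}(nE,-)=0$ once more, the induced map $H^d_{Zar}(nE,\ov\Omega^d_{nE/k})\to H^d_{Zar}(\ov E,\ov\Omega^d_{\ov E/k})$ is onto, and is an isomorphism once $H^d_{Zar}(nE,\ov\Omega^d_{(nE,\ov E)/k})=0$; since \lemref{lem:EXC1} identifies $H^d_{Zar}(E,\ov\Omega^d_{E/k})$ with $H^d_{Zar}(\ov E,\ov\Omega^d_{\ov E/k})$, this would complete the proof.

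To obtain $H^d_{Zar}(nE,\ov\Omega^d_{(nE,\ov E)/k})=0$ for $n\ge 2$, I would feed in \lemref{lem:EXC-local}, applied with its parameter ``$n$'' replaced by $n-1$: it gives surjections $\ov\Omega^d_{(nE,\ov E)/k}\surj\Omega^{d+1}_{nE/k}\surj\omega_{\wt X/k}\otimes_{\sO_{\wt X}}\sO_{(n-1)E}$ which are isomorphisms on the smooth locus of $E$, hence with kernels supported on $E_{\sing}$, a set of dimension $\le d-1$. Consequently $H^d$ and $H^{d+1}$ of those kernels vanish and $H^d_{Zar}(nE,\ov\Omega^d_{(nE,\ov E)/k})\cong H^d_{Zar}\bigl(\wt X,\omega_{\wt X/k}\otimes\sO_{(n-1)E}\bigr)$. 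Now apply $Rf_*$ to $0\to\omega_{\wt X/k}\otimes\sO_{\wt X}(-(n-1)E)\to\omega_{\wt X/k}\to\omega_{\wt X/k}\otimes\sO_{(n-1)E}\to 0$: one has $R^{d}f_*\omega_{\wt X/k}=0$ by Grauert--Riemenschneider vanishing, and $R^{d+1}f_*=0$ because every fibre of $f$ has dimension $\le d$, so $R^{d}f_*\!\bigl(\omega_{\wt X/k}\otimes\sO_{(n-1)E}\bigr)=0$; as this sheaf is supported on the finite set $X_{\sing}$, its $d$-th cohomology on $\wt X$ equals $H^0\bigl(X,\,R^{d}f_*(\,\cdot\,)\bigr)=0$.

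The main obstacle is precisely this last step: one must recognize that, after the two reductions above, the whole question collapses onto the single cohomological vanishing $H^d_{Zar}\bigl(\wt X,\,\omega_{\wt X/k}\otimes\sO_{mE}\bigr)=0$ for all $m\ge 1$, which is not formal and rests on Grauert--Riemenschneider vanishing together with the isolatedness of the singularities (so that $R^{d+1}f_*$ vanishes for dimension reasons and the Leray spectral sequence over $X$ degenerates). Everything preceding it is routine manipulation of long exact sequences, the one delicate point being the correct identification of $\ker\!\bigl(\ov\Omega^d_{nE/k}\to\nu_*\ov\Omega^d_{\ov E/k}\bigr)$ with a quotient of the relative object $\ov\Omega^d_{(nE,\ov E)/k}$ that \lemref{lem:EXC-local} controls.
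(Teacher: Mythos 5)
Your argument is correct and follows the same strategy as the paper's proof: reduce the statement to the isomorphism of $H^d_{Zar}$ of the quotient sheaves $\Omega^d/d\Omega^{d-1}$, replace $E$ by $\ov{E}$ via Lemma~\ref{lem:EXC1}, use the termwise-surjective restriction to $\ov{E}$ to identify the obstruction with $H^d_{Zar}$ of the relative quotient $\Omega^d_{(nE,\ov{E})/k}/d\Omega^{d-1}_{(nE,\ov{E})/k}$, and then use Lemma~\ref{lem:EXC-local} (kernels supported on $E_{\rm sing}$, of dimension $\le d-1$) to reduce everything to the vanishing $H^d_{Zar}\bigl(\wt{X},\omega_{\wt{X}/k}\otimes\sO_{mE}\bigr)=0$. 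The only place you diverge is in proving that vanishing: the paper runs the inverse system over $m$, invokes Mittag--Leffler surjectivity and the formal function theorem to identify $\varprojlim_m H^d$ with $H^0\bigl(X,R^df_*\omega_{\wt{X}/k}\bigr)$, and then applies Grauert--Riemenschneider; you instead twist by $\sO_{\wt{X}}(-mE)$, use $R^df_*\omega_{\wt{X}/k}=0$ (Grauert--Riemenschneider) together with $R^{d+1}f_*=0$ for fiber-dimension reasons to get $R^df_*\bigl(\omega_{\wt{X}/k}\otimes\sO_{mE}\bigr)=0$, and conclude by Leray, which degenerates because all the direct images are supported on the finite set $X_{\rm sing}$. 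Both routes rest on the same vanishing theorem; yours is slightly more direct in that it avoids the inverse-limit and formal-function apparatus, at the (mild) cost of using the isolatedness of the singularities one more time in the Leray step, which is available in the standing hypotheses of this section.
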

\begin{proof}
Using the exact sequence
\[
H^d_{Zar}\left(nE, \Omega^{d-1}_{nE/k}\right) \to
H^d_{Zar}\left(nE, \Omega^d_{nE/k}\right) \to
H^d_{Zar}\left(nE, \frac{\Omega^d_{{nE}/k}}{\Omega^{d-1}_{{nE}/k}}\right)
\to 0,
\]
it suffices to show that the map
\[
H^d_{Zar}\left(nE, \frac{\Omega^d_{{nE}/k}}{\Omega^{d-1}_{{nE}/k}}\right)
\to H^d_{Zar}\left(E, \frac{\Omega^d_{E/k}}{\Omega^{d-1}_{E/k}}\right)
\]
is an isomorphism.

By Lemma~\ref{lem:EXC1}, we can replace $E$ by $\ov{E}$. It is easily seen
from the above calculations that the map $\Omega^d_{nE/k} \to
\Omega^d_{{\ov{E}}/k}$ is surjective. Thus we have an exact sequence
of sheaves
\[
\frac{\Omega^d_{(nE, \ov{E})/k}}{\Omega^{d-1}_{(nE, \ov{E})/k}}
\to \frac{\Omega^d_{nE/k}}{\Omega^{d-1}_{nE/k}} \to
\frac{\Omega^d_{{\ov{E}}/k}}{\Omega^{d-1}_{{\ov{E}}/k}} \to 0.
\]
Hence, we only need to show that 
$H^d_{Zar}\left(nE, \frac{\Omega^d_{(nE, \ov{E})/k}}
{\Omega^{d-1}_{(nE, \ov{E})/k}}\right) = 0$.
By Lemma~\ref{lem:EXC-local}, it suffices to show that
$H^d_{Zar}\left(\wt{X}, \omega_{{\wt{X}}/k} \otimes \sO_{nE}\right) = 0$
for all $n \ge 1$. \\
Since ${\left\{H^d_{Zar}\left(\wt{X}, \omega_{{\wt{X}}/k}
\otimes \sO_{nE}\right)\right\}}_{n \ge 1}$ is an inverse system of 
surjective maps of finite-dimensional $k$-vector spaces, the map
\[
\stackrel{}{\underset{n}\varprojlim} 
H^d_{Zar}\left(\wt{X}, \omega_{{\wt{X}}/k} \otimes \sO_{nE}
\right) \to H^d_{Zar}\left(\wt{X}, \omega_{{\wt{X}}/k} \otimes \sO_{nE}\right)
\]
is surjective for each $n \ge 1$. Hence, it suffices to show that the 
inverse limit vanishes. However, this inverse limit is isomorphic to 
$H^0_{Zar}\left(X, R^df_*\omega_{{\wt{X}}/k}\right)$ by the formal function
theorem. On the other hand, $R^df_*\omega_{{\wt{X}}/k}$ is zero by
the Grauert-Riemenschneider vanishing theorem 
({\sl cf.} \cite[p. 59]{EsnaultV}).
This completes the proof of the corollary.
\end{proof} 
\begin{lem}\label{lem:CEXC-main}
Let $f: \wt{X} \to X$ be a resolution of singularities of a normal 
projective $k$-variety of dimension $d+1 \ge 2$ with only isolated
singularities as above, with reduced exceptional divisor $E$. Then the natural 
map
\[
{\frac{H^0_{Zar}\left(X, R^df_*\Omega^d_{{\wt{X}}/k}\right)}
{H^0_{Zar}\left(X, R^df_*\Omega^{d-1}_{{\wt{X}}/k}\right)}} \to
{\frac{H^d_{Zar}\left(E, \Omega^d_{E/k}\right)}{
H^d_{Zar}\left(E, \Omega^{d-1}_{E/k}\right)}}
\]
is an isomorphism.
\end{lem}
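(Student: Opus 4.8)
The plan is to compute both sides by passing to the formal completion of $\wt{X}$ along $E$, and then to reduce to Corollary~\ref{cor:CEXC-local}.

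\emph{Step 1 (reduction to infinitesimal thickenings).} Since $X$ has only isolated singularities and $f$ is an isomorphism over $X\setminus S$ with $S=X_{\rm sing}$, each sheaf $R^jf_*\Omega^i_{\wt{X}/k}$ with $j\ge 1$ is coherent and supported on the finite set $S$; hence $H^0_{Zar}(X,R^df_*\Omega^i_{\wt{X}/k})$ is a finite-dimensional $k$-vector space equal to the sum of its stalks at the points of $S$, and therefore to its own completion along $S$. I would invoke the theorem on formal functions (for the cofinal system of thickenings $\{nE\}_{n\ge 1}$) to get
\[
H^0_{Zar}\bigl(X,R^df_*\Omega^i_{\wt{X}/k}\bigr)\;\cong\;\varprojlim_n H^d_{Zar}\bigl(nE,\ \Omega^i_{\wt{X}/k}\otimes_{\sO_{\wt{X}}}\sO_{nE}\bigr),
\]
naturally in $i$ and compatibly with the de Rham differential $\Omega^{d-1}_{\wt{X}/k}\to\Omega^d_{\wt{X}/k}$ (which makes sense on the pro-system and corresponds to $R^df_*$ of that differential). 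All these groups are finite-dimensional, so the systems are Mittag--Leffler and $\varprojlim_n$ is exact; passing to cokernels identifies the left-hand side of the lemma with $\varprojlim_n\cok\bigl(H^d_{Zar}(nE,\Omega^{d-1}_{\wt{X}/k}\otimes\sO_{nE})\to H^d_{Zar}(nE,\Omega^d_{\wt{X}/k}\otimes\sO_{nE})\bigr)$.

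\emph{Step 2 (replacing $\Omega^i_{\wt{X}/k}\otimes\sO_{nE}$ by $\Omega^i_{nE/k}$).} The canonical surjections $\Omega^i_{\wt{X}/k}\otimes\sO_{nE}\twoheadrightarrow\Omega^i_{nE/k}$ are compatible with the transition maps and with the de Rham differentials, and their kernel $K^i_n$ is the image in $\Omega^i_{\wt{X}/k}\otimes\sO_{nE}$ of the subsheaf of $\Omega^i_{\wt{X}/k}$ generated by $d(\sI^n)\wedge\Omega^{i-1}_{\wt{X}/k}$. Since $d(\sI^{n+1})\subseteq\sI^n\Omega^1_{\wt{X}/k}$, the transition maps $K^i_{n+1}\to K^i_n$ induced by $nE\hookrightarrow(n+1)E$ all vanish; hence $\{K^i_n\}_n$ is pro-zero, $\varprojlim_n H^q_{Zar}(nE,K^i_n)=0$ for every $q$, and the long exact cohomology sequences give isomorphisms $\varprojlim_n H^q_{Zar}(nE,\Omega^i_{\wt{X}/k}\otimes\sO_{nE})\xrightarrow{\ \cong\ }\varprojlim_n H^q_{Zar}(nE,\Omega^i_{nE/k})$ for all $q$, compatible with the de Rham differentials. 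Combining with Step 1, the left-hand side of the lemma becomes
\[
\varprojlim_n\ \cok\Bigl(H^d_{Zar}(nE,\Omega^{d-1}_{nE/k})\to H^d_{Zar}(nE,\Omega^d_{nE/k})\Bigr)\;=\;\varprojlim_n\ \frac{H^d_{Zar}(nE,\Omega^d_{nE/k})}{H^d_{Zar}(nE,\Omega^{d-1}_{nE/k})}.
\]

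\emph{Step 3 (conclusion).} By Corollary~\ref{cor:CEXC-local}, for every $n\ge 1$ the natural restriction map $\frac{H^d_{Zar}(nE,\Omega^d_{nE/k})}{H^d_{Zar}(nE,\Omega^{d-1}_{nE/k})}\to H^d_{Zar}\left(E,\frac{\Omega^d_{E/k}}{\Omega^{d-1}_{E/k}}\right)$ is an isomorphism, and these isomorphisms commute with the transition maps; so the inverse system above is essentially constant with limit $H^d_{Zar}\left(E,\frac{\Omega^d_{E/k}}{\Omega^{d-1}_{E/k}}\right)$. Since $\dim E=d$, we have $H^{d+1}_{Zar}(E,\Omega^{d-1}_{E/k})=0$, so the de Rham differential gives
\[
H^d_{Zar}\left(E,\frac{\Omega^d_{E/k}}{\Omega^{d-1}_{E/k}}\right)=\cok\Bigl(H^d_{Zar}(E,\Omega^{d-1}_{E/k})\to H^d_{Zar}(E,\Omega^d_{E/k})\Bigr)=\frac{H^d_{Zar}(E,\Omega^d_{E/k})}{H^d_{Zar}(E,\Omega^{d-1}_{E/k})},
\]
which is the right-hand side of the lemma; tracing through the construction shows the composite isomorphism is the natural map in the statement. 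I expect the main obstacle to be the formal-function bookkeeping in Steps 1--2: because the de Rham differential only lives on the pro-system $\{nE\}$ and not on any single thickening, one must work with the whole system and, in particular, verify that the transition maps on the kernels $K^i_n$ vanish. The essential geometric input --- the Grauert--Riemenschneider vanishing of $R^df_*\omega_{\wt{X}/k}$ --- is already packaged inside Corollary~\ref{cor:CEXC-local}, so nothing beyond it is needed here.
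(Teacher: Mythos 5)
Your proposal is correct and follows essentially the same route as the paper: the formal function theorem over the cofinal system $\{nE\}$, the replacement of $\Omega^i_{\wt{X}/k}\otimes\sO_{nE}$ by $\Omega^i_{nE/k}$ in the inverse limit, and then Corollary~\ref{cor:CEXC-local} applied termwise. The only difference is cosmetic: where you kill the kernels $K^i_n$ by observing that $d(\sI^{n+1})\subseteq\sI^n\Omega^1_{\wt{X}/k}$ makes the pro-system of kernels pro-zero, the paper achieves the same effect through an explicit diagram comparing the thickenings $nE$ and $2nE$ together with the Mittag--Leffler property, which rests on the same Leibniz-rule inclusion.
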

\begin{proof}
By the formal function theorem, it suffices to show that
\[
\frac{{\underset{n}\varprojlim}
{H^d_{Zar}\left(\wt{X}, \Omega^d_{{\wt{X}}/k} \otimes \sO_{nE}\right)}}
{{\underset{n}\varprojlim}
{H^d_{Zar}\left(\wt{X}, \Omega^{d-1}_{{\wt{X}}/k} \otimes \sO_{nE}\right)}}
\to H^d_{Zar}\left(E, \frac{\Omega^d_{E/k}}{\Omega^{d-1}_{E/k}}\right)
\]
is an isomorphism.

By Corollary~\ref{cor:CEXC-local}, we only need to show that the map
\begin{equation}\label{eqn:C-main1}
\frac{{\underset{n}\varprojlim}
{H^d_{Zar}\left(\wt{X}, \Omega^d_{{\wt{X}}/k} \otimes \sO_{nE}\right)}}
{{\underset{n}\varprojlim}
{H^d_{Zar}\left(\wt{X}, \Omega^{d-1}_{{\wt{X}}/k} \otimes \sO_{nE}\right)}}
\to \
{\underset{n}\varprojlim}
\frac{H^d_{Zar}\left(nE, \Omega^d_{{nE}/k}\right)}
{H^d_{Zar}\left(nE, \Omega^{d-1}_{{nE}/k}\right)}
\end{equation}
is an isomorphism.
We consider the following diagram of exact sequences.
\[
\xymatrix@C.5pc{
{\left\{ \begin{array}{l} 
{H^d_{Zar}\left(\wt{X}, \frac{\sI_{nE}\Omega^{d-1}_{{\wt{X}}/k}}
{\sI_{2nE}}\right)} \\ \ \ \ \ \ \ \ \ \ \ \ \ \oplus \\  
{H^d_{Zar}\left(\wt{X}, \frac{\sI_{nE}}{\sI_{2nE}} \otimes 
\Omega^{d-2}_{nE/k}\right)}  \end{array}\right\}} \ar[r]  \ar[d] &
{H^d_{Zar}\left(\wt{X}, \frac{\Omega^{d-1}_{{\wt{X}}/k}}
{\sI_{{2nE}/k}}\right)} \ar[r] \ar[d] &
{H^d_{Zar}\left(nE, \Omega^{d-1}_{{nE}/k}\right)} \ar[r] \ar[d] & 0 \\
{H^d_{Zar}\left(\wt{X}, \frac{\sI_{nE}\Omega^{d-1}_{{nE}/k}}
{\sI_{2nE}}\right)} \ar[r] &
{H^d_{Zar}\left(\wt{X}, \frac{\Omega^{d}_{{\wt{X}}/k}}{\sI_{nE}}\right)} 
\ar[r] & {H^d_{Zar}\left(nE, \Omega^d_{nE/k}\right)} \ar[r] & 0.}
\]
The exactness of these sequences follows from the right exactness
of corresponding sequences of sheaves and the fact that
all the underlying sheaves are supported on $E$.
The left vertical map from the first factor of the direct sum is the
quotient map and hence surjective.
Taking the inverse limit over $n$, using the Mittag-Lefler property of
these cohomology groups, and using the isomorphism
${\underset{n}{\varprojlim}}
H^d_{Zar}\left(\wt{X}, \Omega^{d-1}_{{\wt{X}}/k} \otimes \sO_{2nE}\right)
\xrightarrow{\cong} 
{\underset{n}{\varprojlim}}
H^d_{Zar}\left(\wt{X}, \Omega^{d-1}_{{\wt{X}}/k} \otimes \sO_{nE}\right)$,
we obtain
\[
\begin{array}{lll}
{\frac{{\underset{n}\varprojlim}
{H^d_{Zar}\left(\wt{X}, \Omega^d_{{\wt{X}}/k} \otimes \sO_{nE}\right)}}
{{\underset{n}\varprojlim}
{H^d_{Zar}\left(\wt{X}, \Omega^{d-1}_{{\wt{X}}/k} \otimes \sO_{nE}\right)}}}
& \cong & 
{\frac{{\underset{n}\varprojlim}
{H^d_{Zar}\left(\wt{X}, \Omega^d_{{\wt{X}}/k} \otimes \sO_{nE}\right)}}
{{\underset{n}\varprojlim}
{H^d_{Zar}\left(\wt{X}, \Omega^{d-1}_{{\wt{X}}/k} \otimes \sO_{2nE}\right)}}}
\\
& \xrightarrow{\cong} &
{\underset{n}{\varprojlim}}
{\frac{{H^d_{Zar}\left(\wt{X}, \Omega^d_{{\wt{X}}/k} \otimes \sO_{nE}\right)}}
{H^d_{Zar}\left(\wt{X}, \Omega^{d-1}_{{\wt{X}}/k} \otimes \sO_{2nE}\right)}} \\
& \xrightarrow{\cong} &
{\underset{n}{\varprojlim}}
\frac{H^d_{Zar}\left(nE, \Omega^d_{nE/k}\right)}
{H^d_{Zar}\left(nE, \Omega^{d-1}_{nE/k}\right)}.
\end{array}
\]
This completes the proof.
\end{proof} 

\begin{lem}\label{lem:ZariskiC}
Let $f:\wt{X} \to X$ be as in Lemma~\ref{lem:CEXC-main}. Assume that
$H^{d+1}_{Zar}\left(X, \Omega^{d-1}_{X/k}\right) \xrightarrow{\cong}
H^{d+1}_{Zar}\left(\wt{X}, \Omega^{d-1}_{{\wt{X}}/k}\right)$. 
Then the natural map $H^{d+1}_{Zar}\left(X, \Omega^{d}_{X/k}\right)
\to H^{d+1}_{cdh}\left(X, \Omega^d_{X/k}\right)$ is an isomorphism.
\end{lem}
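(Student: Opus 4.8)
The plan is to compare the Zariski and $cdh$ cohomology of $\Omega^d_{X/k}$ by relating both to cohomology on the resolution $\wt{X}$ and on the exceptional divisor $E$, exploiting that $X$ has only isolated singularities so that everything of interest sits in the top degree $d+1$. First I would observe that since $X$ is normal with isolated singularities, the natural map $\Omega^d_{X/k} \to a_*a^*\Omega^d_{X/k}$ has kernel and cokernel supported on the zero-dimensional set $X_{\rm sing}$; combined with Lemma~\ref{lem:folklore1} this already gives that $H^{d+1}_{Zar}(X, \Omega^d_{X/k}) \to H^{d+1}_{Zar}(X, a_*a^*\Omega^d_{X/k})$ is an isomorphism (this is the content of Proposition~\ref{prop:semi-normal} in the zero-dimensional-singular-locus case, which is even easier). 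So it remains to identify $H^{d+1}_{Zar}(X, a_*a^*\Omega^d_{X/k})$ with $H^{d+1}_{cdh}(X, \Omega^d_{X/k})$, and since $Ra_*a^*\Omega^d_{X/k}$ need not be concentrated in degree zero on $X$ (unlike the $E$ case handled in Corollary~\ref{cor:SNC-main1}), I would instead run the comparison through the abstract blow-up square relating $X$, $\wt{X}$, $S = X_{\rm sing}$ and $E = f^{-1}(S)$.

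The key step is the Mayer--Vietoris (descent) long exact sequence for $cdh$ cohomology of the sheaf $\Omega^d_{/k}$ attached to the abstract blow-up $\{\wt{X} \sqcup S \to X\}$ with $E = \wt{X}\times_X S$: one gets
\[
\cdots \to H^{d+1}_{cdh}(X, \Omega^d_{X/k}) \to H^{d+1}_{cdh}(\wt{X}, \Omega^d_{{\wt{X}}/k}) \oplus H^{d+1}_{cdh}(S, \Omega^d_{S/k}) \to H^{d+1}_{cdh}(E, \Omega^d_{E/k}) \to \cdots
\]
Since $S$ is a finite set of points, $H^{d+1}_{cdh}(S, \Omega^d_{S/k}) = 0$, and since $E$ has dimension $d$ with $cdh$-cohomological dimension $d$, $H^{d+1}_{cdh}(E, \Omega^d_{E/k}) = 0$ as well, so the sequence degenerates to an isomorphism $H^{d+1}_{cdh}(X, \Omega^d_{X/k}) \xrightarrow{\cong} H^{d+1}_{cdh}(\wt{X}, \Omega^d_{{\wt{X}}/k})$, and the latter equals $H^{d+1}_{Zar}(\wt{X}, \Omega^d_{{\wt{X}}/k})$ because $\wt{X}$ is smooth (Corollary~2.5 of \cite{CJams}). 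On the Zariski side I would use the Leray spectral sequence for $f$ together with the fact that $S$ is zero-dimensional: $R^jf_*\Omega^d_{{\wt{X}}/k}$ is supported on $S$ for $j\ge 1$, so $H^{d+1}_{Zar}(\wt{X}, \Omega^d_{{\wt{X}}/k}) \cong H^{d+1}_{Zar}(X, Rf_*\Omega^d_{{\wt{X}}/k})$, and the edge map $H^{d+1}_{Zar}(X, \Omega^d_{X/k}) \to H^{d+1}_{Zar}(\wt{X}, \Omega^d_{{\wt{X}}/k})$ needs to be shown to be an isomorphism.

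The main obstacle—and where the hypothesis $H^{d+1}_{Zar}(X, \Omega^{d-1}_{X/k}) \xrightarrow{\cong} H^{d+1}_{Zar}(\wt{X}, \Omega^{d-1}_{{\wt{X}}/k})$ enters—is precisely controlling this edge map in degree $d$. Here I would invoke the preceding lemmas of this section: Lemma~\ref{lem:CEXC-main} identifies $H^0_{Zar}(X, R^df_*\Omega^d_{{\wt{X}}/k})/H^0_{Zar}(X, R^df_*\Omega^{d-1}_{{\wt{X}}/k})$ with $H^d_{Zar}(E,\Omega^d_{E/k})/H^d_{Zar}(E,\Omega^{d-1}_{E/k})$, and via Lemma~\ref{lem:folklore2}(i) the cokernel of $H^{d+1}_{Zar}(X,\Omega^{d-1}_{X/k}) \to H^{d+1}_{Zar}(X,\Omega^d_{X/k})$ maps to the corresponding cokernel on $\wt{X}$. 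Chasing the resulting commutative diagram of short exact sequences, with the $\Omega^{d-1}$-column an isomorphism by hypothesis and the quotients matching via Lemma~\ref{lem:CEXC-main} together with the $cdh$ identification $H^{d+1}_{cdh}(X,\Omega^d)\cong H^{d+1}_{cdh}(\wt X,\Omega^d)$ from the first paragraph and the analogous statement for $\Omega^{d-1}$ (using Corollary~\ref{cor:SNC-main1} on $E$), forces $H^{d+1}_{Zar}(X,\Omega^d_{X/k}) \to H^{d+1}_{Zar}(\wt X,\Omega^d_{{\wt X}/k})$ to be an isomorphism; composing with the isomorphisms already established yields $H^{d+1}_{Zar}(X, \Omega^d_{X/k}) \xrightarrow{\cong} H^{d+1}_{cdh}(X, \Omega^d_{X/k})$. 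The delicate point is bookkeeping the compatibility of the Zariski-to-$cdh$ comparison maps across the $X$, $\wt{X}$, $E$ triangle so that the diagram chase actually closes; the $\Omega^{d-1}$-hypothesis is what prevents the degree drop from obstructing surjectivity.
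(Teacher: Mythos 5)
The central step of your argument has a genuine gap. From the $cdh$ Mayer--Vietoris sequence for the abstract blow-up square, the vanishing of $H^{d+1}_{cdh}(S,\Omega^d_{S/k})$ and of $H^{d+1}_{cdh}(E,\Omega^d_{E/k})$ only gives \emph{surjectivity} of $H^{d+1}_{cdh}(X,\Omega^d_{X/k})\to H^{d+1}_{cdh}(\wt{X},\Omega^d_{{\wt{X}}/k})$; it does not make the sequence ``degenerate to an isomorphism''. Injectivity would require the connecting map $H^d_{cdh}(E,\Omega^d_{E/k})\to H^{d+1}_{cdh}(X,\Omega^d_{X/k})$ to vanish, i.e.\ the restriction $H^d_{cdh}(\wt{X},\Omega^d_{{\wt{X}}/k})\oplus H^d_{cdh}(S,\Omega^d_{S/k})\to H^d_{cdh}(E,\Omega^d_{E/k})$ to be surjective, and neither the dimension count nor the hypothesis on $\Omega^{d-1}$ provides this; if it holds at all it needs a separate argument (over $\C$ it amounts to linear independence of the classes of the exceptional components in $H^{1,1}(\wt{X})$), which is nowhere in your sketch. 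Since your final diagram chase takes this isomorphism as an input, the proof does not close; similarly, your intermediate goal that $H^{d+1}_{Zar}(X,\Omega^d_{X/k})\to H^{d+1}_{Zar}(\wt{X},\Omega^d_{{\wt{X}}/k})$ be an isomorphism is stronger than what is available: a priori both maps to $\wt{X}$ can have nontrivial kernel, and the lemma's conclusion does not require them to be injective.

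The paper's proof succeeds precisely because it never asserts these isomorphisms with $\wt{X}$: it keeps the possibly nonzero boundary terms, namely $H^0_{Zar}(X,R^df_*\Omega^d_{{\wt{X}}/k})/H^0_{Zar}(X,R^df_*\Omega^{d-1}_{{\wt{X}}/k})$ on the Zariski side (coming from the Leray sequence for $f$, where the hypothesis on $\Omega^{d-1}$ is what allows passing to this quotient) and $H^d_{cdh}(E,\Omega^d_{E/k})/H^d_{cdh}(E,\Omega^{d-1}_{E/k})$ on the $cdh$ Mayer--Vietoris side, identifies these two terms via Lemma~\ref{lem:CEXC-main} and Corollary~\ref{cor:SNC-main1}, and then applies the five lemma to the comparison of the two four-term exact sequences, the outer vertical maps being isomorphisms by the smoothness of $\wt{X}$. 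Your first-paragraph reduction $H^{d+1}_{Zar}(X,\Omega^d_{X/k})\cong H^{d+1}_{Zar}(X,a_*a^*\Omega^d_{X/k})$ is correct but is never used afterwards. To repair your route you would either have to prove the surjectivity of $H^d_{cdh}(\wt{X},\Omega^d_{{\wt{X}}/k})\to H^d_{cdh}(E,\Omega^d_{E/k})$ as an additional statement, or drop the claimed isomorphism and run the five-lemma comparison with the boundary terms retained, as the paper does.
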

\begin{proof}
We first observe that the map
\begin{equation}\label{eqn:ZariskiC1}
H^{d+1}_{Zar}\left(X, \Omega^i_{X/F}\right) \to
H^{d+1}_{cdh}\left(X, \Omega^i_{X/F}\right)
\end{equation}
is surjective for all $i \ge 0$ and all subfields $F \subseteq k$ 
by \cite[Proposition~2.6]{CJams}.
In particular, $H^{d+1}_{cdh}\left(X, \Omega^{d-1}_{X/k}\right) 
\xrightarrow{\cong}
H^{d+1}_{cdh}\left(\wt{X}, \Omega^{d-1}_{{\wt{X}}/k}\right)$.
The Leray spectral sequence for $f$, applied to the sheaves 
$\Omega^i_{{\wt{X}}/k}$ for the Zariski site, gives the 
exact sequences
\[
H^{d}_{Zar}\left(\wt{X}, \Omega^i_{{\wt{X}}/k}\right) \to
H^0_{Zar}\left(X, R^df_*\Omega^i_{{\wt{X}}/k}\right) \to
H^{d+1}_{Zar}\left(X, \Omega^i_{X/k}\right) \to 
H^{d+1}_{Zar}\left(\wt{X}, \Omega^i_{{\wt{X}}/k}\right) \to 0
\]
for all $i \ge 0$.
Applying this for $i \ge d-1$ and using our assumption, we then 
get an exact sequence
\[
H^d_{Zar}\left(\wt{X}, \Omega^d_{{\wt{X}}/k}\right) 
\to 
\frac{H^0_{Zar}\left(X, R^df_*\Omega^d_{{\wt{X}}/k}\right)}
{H^0_{Zar}\left(X, R^df_*\Omega^{d-1}_{{\wt{X}}/k}\right)}
\to
H^{d+1}_{Zar}\left(X, \Omega^d_{X/k}\right) \to
H^{d+1}_{Zar}\left(\wt{X}, \Omega^d_{{\wt{X}}/k}\right) \to 0.
\]
We now compare this with the similar Mayer-Vietoris exact sequence for
the $cdh$ cohomology to get a commutative diagram of exact sequences
\[
\xymatrix@C.4pc{
{H^d_{Zar}\left(\wt{X}, \Omega^d_{{\wt{X}}/k}\right)}
\ar[r] \ar[d] & 
{\frac{H^0_{Zar}\left(X, R^df_*\Omega^d_{{\wt{X}}/k}\right)}
{H^0_{Zar}\left(X, R^df_*\Omega^{d-1}_{{\wt{X}}/k}\right)}}
\ar[r] \ar[d] & 
{H^{d+1}_{Zar}\left(X, \Omega^d_{X/k}\right)}
\ar@{->>}[r] \ar[d] &
{H^{d+1}_{Zar}\left(\wt{X}, \Omega^d_{{\wt{X}}/k}\right)} \ar[d] \\
{H^d_{cdh}\left(\wt{X}, \Omega^d_{{\wt{X}}/k}\right)} 
\ar[r] &
{\frac{H^d_{cdh}\left(E, \Omega^d_{E/k}\right)}{
H^d_{cdh}\left(E, \Omega^{d-1}_{E/k}\right)}} \ar[r] &
H^{d+1}_{cdh}\left(X, \Omega^d_{X/k}\right) \ar@{->>}[r] &
H^{d+1}_{cdh}\left(\wt{X}, \Omega^d_{{\wt{X}}/k}\right).}
\]
The left and the right vertical maps on both ends are isomorphisms by
\cite[Corollary~2.5]{CJams}. The second vertical map from the left is
an isomorphism by Lemma~\ref{lem:CEXC-main} and Corollary~\ref{cor:SNC-main1}.
Hence the remaining vertical map is also an isomorphism by $5$-lemma.
\end{proof}

For the remaining part of this section, recall our convention that
a K{\"a}hler differential (or Hochschild and cyclic homology) without
the mention of the coefficient field means that the underlying field
is taken to be $\Q$.
\begin{cor}\label{cor:ZariskiC*}
Let $X$ be as in Lemma~\ref{lem:CEXC-main} such that
$H^{d+1}_{Zar}\left(X, \Omega^{i}_{X/k}\right) = 0$ for $0 \le i \le d-1$.
Then $H^{d+1}\left(X, \Omega^i_{X}\right) = 0$ for the Zariski or the
$cdh$-site and for any $i \le d-1$. Moreover, the map
$H^{d+1}_{Zar}\left(X, \Omega^d_X\right) \to H^{d+1}_{cdh}\left(X,
\Omega^d_X\right)$ is an isomorphism.
\end{cor}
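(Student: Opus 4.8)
The plan is to deduce both assertions from the corresponding statements for $\Omega^\bullet_{X/k}$ --- where the hypothesis and Lemma~\ref{lem:ZariskiC} apply --- by a d\'evissage along the ``Hodge'' filtration relating $\Omega^\bullet_{X/\Q}$, $\Omega^\bullet_{k/\Q}$ and $\Omega^\bullet_{X/k}$. Recall, as in the proof of Lemma~\ref{lem:SNC-curve}, that for every $i$ the sheaf $\Omega^i_X=\Omega^i_{X/\Q}$ carries a finite decreasing filtration $F^\bullet$ with $F^0\Omega^i_X=\Omega^i_X$, $F^{i+1}\Omega^i_X=0$ and $\Omega^i_X/F^1\Omega^i_X=\Omega^i_{X/k}$, together with surjections $\Omega^j_{k/\Q}\otimes_k\Omega^{i-j}_{X/k}\surj {\rm gr}^j_F\Omega^i_X$ for $1\le j\le i$; since $a^*$ is exact, the same data persists after $cdh$-sheafification. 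I will use throughout that the Zariski and the $cdh$ cohomological dimension of $X$ equals $d+1$ (cf. Lemma~\ref{lem:folklore2}), so that $H^{d+2}(X,-)$ vanishes on either site, and that cohomology commutes with $-\otimes_k V$ for a $k$-vector space $V$ (stated for the $cdh$ site in Section~\ref{section:prelims}, and equally valid on the Zariski site).

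First, by hypothesis and \cite[Proposition~2.6]{CJams} with $F=k$, one has $H^{d+1}_{Zar}(X,\Omega^m_{X/k})=H^{d+1}_{cdh}(X,\Omega^m_{X/k})=0$ for $0\le m\le d-1$. Hence, whenever $j\ge 1$ and $i-j\le d-1$, the short exact sequence $0\to\mathcal K_j\to\Omega^j_{k/\Q}\otimes_k\Omega^{i-j}_{X/k}\to {\rm gr}^j_F\Omega^i_X\to 0$ (with $\mathcal K_j$ the kernel of the surjection above) together with $H^{d+2}(X,\mathcal K_j)=0$ produces a surjection $H^{d+1}(X,\Omega^j_{k/\Q}\otimes_k\Omega^{i-j}_{X/k})\surj H^{d+1}(X,{\rm gr}^j_F\Omega^i_X)$ whose source is $\Omega^j_{k/\Q}\otimes_k H^{d+1}(X,\Omega^{i-j}_{X/k})=0$; this argument works on both sites. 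For $0\le i\le d-1$ it follows that every graded piece ${\rm gr}^j_F\Omega^i_X$ --- including $j=0$, where it is $\Omega^i_{X/k}$ --- is killed in degree $d+1$ on both sites, and a descending induction along the finite filtration gives $H^{d+1}(X,\Omega^i_X)=0$ for the Zariski and the $cdh$ site. This settles the first assertion.

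For the second assertion I would begin by checking the hypothesis of Lemma~\ref{lem:ZariskiC}: it requires $H^{d+1}_{Zar}(X,\Omega^{d-1}_{X/k})\to H^{d+1}_{Zar}(\wt X,\Omega^{d-1}_{\wt X/k})$ to be an isomorphism. The source vanishes by assumption, and the Leray exact sequence for $f$ displayed in the proof of Lemma~\ref{lem:ZariskiC} --- which uses that the singularities of $X$ are isolated, so that $R^qf_*\Omega^{d-1}_{\wt X/k}$ is supported in dimension $0$ for $q\ge 1$ --- exhibits the target as a quotient of the source; so both vanish and the map is an isomorphism for trivial reasons. Lemma~\ref{lem:ZariskiC} then yields that $H^{d+1}_{Zar}(X,\Omega^d_{X/k})\to H^{d+1}_{cdh}(X,\Omega^d_{X/k})$ is an isomorphism. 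Finally one transfers this to $\Omega^d_X$ via the exact sequence $0\to F^1\Omega^d_X\to\Omega^d_X\to\Omega^d_{X/k}\to 0$: the subsheaf $F^1\Omega^d_X$ is filtered with graded pieces ${\rm gr}^j_F\Omega^d_X$, $1\le j\le d$, each of which has $i-j=d-j\le d-1$, so by the argument of the previous paragraph its $H^{d+1}$ vanishes on both sites; descending induction gives $H^{d+1}(X,F^1\Omega^d_X)=0$, and $H^{d+2}(X,F^1\Omega^d_X)=0$ by the cohomological-dimension bound. Therefore $H^{d+1}(X,\Omega^d_X)\xrightarrow{\cong} H^{d+1}(X,\Omega^d_{X/k})$ on both the Zariski and the $cdh$ site, naturally with respect to the comparison map from Zariski to $cdh$ cohomology; combining this commutative square with the isomorphism $H^{d+1}_{Zar}(X,\Omega^d_{X/k})\xrightarrow{\cong}H^{d+1}_{cdh}(X,\Omega^d_{X/k})$ shows that $H^{d+1}_{Zar}(X,\Omega^d_X)\to H^{d+1}_{cdh}(X,\Omega^d_X)$ is an isomorphism.

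The one step that is not formal is the verification of the hypothesis of Lemma~\ref{lem:ZariskiC}, namely the vanishing of $H^{d+1}_{Zar}(\wt X,\Omega^{d-1}_{\wt X/k})$; this is exactly where the isolatedness of the singularities enters in an essential way, through the support of the higher direct images along the resolution $f$. Everything else is a routine d\'evissage over the Hodge filtration, organized so that the graded pieces involving $\Omega^{\ge 1}_{k/\Q}$ are controlled only by the vanishing of $H^{d+2}$ --- one degree above the cohomological dimension --- rather than by any finer behavior of the singular locus.
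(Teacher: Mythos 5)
Your proof is correct and follows essentially the same route as the paper: a d\'evissage along the filtration of $\Omega^i_X$ whose graded pieces are dominated by $\Omega^j_{k/\Q}\otimes_k\Omega^{i-j}_{X/k}$, and then reduction of the second assertion to Lemma~\ref{lem:ZariskiC} via the isomorphism $H^{d+1}(X,\Omega^d_X)\cong H^{d+1}(X,\Omega^d_{X/k})$ on both sites. The only (harmless) variations are that you kill the graded pieces using the cohomological-dimension bound $H^{d+2}=0$ instead of the paper's observation that the surjections are isomorphisms off the finite singular locus, and that you spell out the verification of the hypothesis of Lemma~\ref{lem:ZariskiC} (both groups vanish via the Leray sequence), which the paper leaves implicit.
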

\begin{proof}
For the first assertion, we only need to show the vanishing of the
Zariski cohomology by ~\eqref{eqn:ZariskiC1}.
The case $i \ge 0$ is part of the assumption. For any $i \ge 1$, there
is a filtration ${\left\{F^j\Omega^i_{X}\right\}}_{0 \le j \le i}$
such that there is a surjection
\[
\Omega^j_{k/{\Q}} \otimes_k \Omega^{i-j}_{X/k} \surj
\frac{F^j\Omega^i_{X}}{F^{j+1}\Omega^i_{X}}
\]
which is an isomorphism on $X_{\rm smooth}$ and this latter set is
finite. Hence the first assertion follows from our assumption and an easy
induction on $1 \le i \le d-1$ and $0 \le j \le i$.
Furthermore, this also implies that the map
$H^{d+1}\left(X, \Omega^d_X\right) \to H^{d+1}\left(X, \Omega^d_{X/k}\right)$
is an isomorphism for both Zariski and $cdh$-sites. The second assertion of
the corollary now follows from Lemma~\ref{lem:ZariskiC}.
\end{proof}

The following is the main result of this section. 
 
\begin{prop}\label{prop:HC-1}
Let $X$ be a normal projective $k$-variety of dimension 
$d+1 \ge 2$ with only isolated singularities such that
$H^{d+1}_{Zar}\left(X, \Omega^i_{X/k}\right) = 0$ for $0 \le i \le d-1$.
Then the natural maps
\[
HC^{(d+1)}_0(X) \to \H^0_{cdh}\left(X, {\sH}{\sC}^{(d+1)}\right) \ 
{\rm and}
\]
\[
HC^{(d)}_{-1}(X) \to \H^1_{cdh}\left(X, {\sH}{\sC}^{(d)}\right)
\]
are respectively surjective and isomorphism.
\end{prop}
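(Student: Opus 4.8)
The plan is to relate the Hodge-graded pieces of cyclic homology to the Zariski and $cdh$ cohomology of sheaves of truncated de Rham complexes, and then to feed in the comparison results of the previous two sections. Recall that for a $k$-scheme $X$ of dimension $e$ one has, rationally, the HKR-type identifications $HC^{(j)}_n(X) \cong \mathbb{H}^{2j-n}_{Zar}(X, \Omega^{\le j}_{X/\Q})$ (with the analogous statement for the $cdh$-site), and the natural map to the $cdh$-fibrant replacement is compatible with the $\lambda$-decompositions by the discussion following \eqref{eqn:lambda0}. Applying this with $e = d+1$, $j = d+1$, $n = 0$ gives $HC^{(d+1)}_0(X) \cong \mathbb{H}^{d+1}_{Zar}(X, \Omega^{\le d+1}_{X/\Q}) = \mathbb{H}^{d+1}_{Zar}(X, \Omega^{\le d}_{X/\Q})$ since $\Omega^{d+1}_{X/\Q}$ sits in cohomological degree $d+1 > d+1 - (d+1)$... more precisely I would use Lemma~\ref{lem:folklore2} to truncate. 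Similarly $HC^{(d)}_{-1}(X) \cong \mathbb{H}^{2d+1}_{Zar}(X, \Omega^{\le d}_{X/\Q})$, and by Lemma~\ref{lem:folklore2}(ii) this is just $\mathbb{H}^{2d+1}$ of a complex whose only possibly-nonzero contributing cohomology is $H^{d+1}_{Zar}(X, \Omega^d_{X/\Q})$ sitting in the right degree; chasing the hypercohomology spectral sequence and using the vanishing $H^{d+1}_{Zar}(X, \Omega^i_{X/\Q}) = 0$ for $i \le d-1$ supplied by Corollary~\ref{cor:ZariskiC*}, one identifies $HC^{(d)}_{-1}(X)$ with (a quotient or subquotient of) $H^{d+1}_{Zar}(X, \Omega^d_{X/\Q})$, and the same identification holds verbatim $cdh$-locally giving $\mathbb{H}^1_{cdh}(X, \mathcal{HC}^{(d)}) \cong H^{d+1}_{cdh}(X, \Omega^d_{X/\Q})$.

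With these identifications in hand, the two assertions of the proposition become statements purely about Zariski-versus-$cdh$ cohomology of $\Omega^i_{X/\Q}$. For the second (isomorphism) statement I would invoke Corollary~\ref{cor:ZariskiC*}: its hypotheses are exactly our hypotheses (after noting, as in the proof of that corollary, that the filtration on $\Omega^i_{X/\Q}$ with graded pieces dominated by $\Omega^j_{k/\Q} \otimes_k \Omega^{i-j}_{X/k}$ reduces the $\Q$-coefficient vanishing to the $k$-coefficient vanishing in the hypothesis, since $X_{\mathrm{smooth}}$ is finite), and it yields precisely that $H^{d+1}_{Zar}(X,\Omega^d_{X/\Q}) \to H^{d+1}_{cdh}(X,\Omega^d_{X/\Q})$ is an isomorphism. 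Transporting across the HKR identifications — which, again, are compatible with the map to the $cdh$-fibrant replacement — gives $HC^{(d)}_{-1}(X) \xrightarrow{\cong} \mathbb{H}^1_{cdh}(X,\mathcal{HC}^{(d)})$.

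For the first (surjectivity) statement, I would argue through the hypercohomology spectral sequence of $\Omega^{\le d}_{X/\Q}$ in degree $d+1$. Its contributions in total degree $d+1$ come from $H^{d+1-j}_{Zar}(X,\Omega^j_{X/\Q})$ for $0 \le j \le d$; of these, the ones with $j \le d-1$ vanish by Corollary~\ref{cor:ZariskiC*}, and only $H^{d+1}_{Zar}(X,\Omega^0_{X}) = H^{d+1}_{Zar}(X,\sO_X)$ in bidegree contributing at the bottom and $H^{d+1-j}$ with intermediate $j$ could matter — here I must also use that $X$ has only isolated singularities so that the relevant lower cohomology groups $H^{d+1-j}_{Zar}$ of coherent sheaves for $1 \le j \le d$ behave as on the resolution outside a finite set, reducing everything (via Lemma~\ref{lem:folklore1} and Lemma~\ref{lem:direct-image}) to the top cohomology $H^{d+1}$ and the comparison of Corollary~\ref{cor:ZariskiC*}. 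Since the map $H^{d+1}_{Zar}(X,\Omega^j_{X/\Q}) \to H^{d+1}_{cdh}(X,\Omega^j_{X/\Q})$ is surjective for every $j$ (by \cite[Proposition~2.6]{CJams}, cf. \eqref{eqn:ZariskiC1}), a comparison of the two spectral sequences shows the induced map on the abutments in degree $d+1$ is surjective, which is exactly $HC^{(d+1)}_0(X) \surj \mathbb{H}^0_{cdh}(X,\mathcal{HC}^{(d+1)})$ after re-indexing. The main obstacle I anticipate is the bookkeeping in this last spectral-sequence comparison: one must check that the differentials entering and leaving the relevant slots are controlled (they are, because the source terms vanish by Corollary~\ref{cor:ZariskiC*} and the isolated-singularity hypothesis forces the remaining coherent cohomology groups to match between Zariski and $cdh$), so that surjectivity of the graded pieces genuinely propagates to surjectivity of the abutment rather than being obstructed by a non-surjective differential. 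Everything else is a routine unwinding of HKR and the $\lambda$-decomposition.
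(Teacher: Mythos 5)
Your overall strategy --- rewrite both sides as top-degree hypercohomology of truncated de Rham complexes and then quote Corollary~\ref{cor:ZariskiC*} together with the top-degree surjectivity \eqref{eqn:ZariskiC1} --- is the paper's strategy, and your treatment of the second (isomorphism) assertion is essentially the paper's argument. The gap is the step you merely ``recall'' at the start: the identification $HC^{(j)}_n(X)\cong \H^{2j-n}_{Zar}\left(X,\Omega^{\le j}_{X/\Q}\right)$ is an HKR-type statement valid for smooth schemes (and, by \cite[Theorem~2.2]{CJams}, after cdh-sheafification), but on the Zariski site it is false for singular $X$ in general, so it cannot be invoked wholesale. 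The two instances you actually use are true, but not for free: the paper obtains them from the local facts that $HC^{(i)}_i(A)=\Omega^i_A/d\Omega^{i-1}_A$ and that the $\lambda$-piece $HC^{(i)}_n(A)$ vanishes when $n<i$ (Loday), fed into the local-to-global spectral sequence together with the bound on the Zariski cohomological dimension of $X$; this pins $HC^{(d+1)}_0(X)$ and $HC^{(d)}_{-1}(X)$ to the single corner terms $H^{d+1}_{Zar}\left(X,{\sH}{\sC}^{(d+1)}_{d+1}\right)$ and $H^{d+1}_{Zar}\left(X,{\sH}{\sC}^{(d)}_{d}\right)$ as in \eqref{eqn:FOLK*}, which is exactly the top-degree cokernel description you want. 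Without that input (or some substitute), your starting identification --- and hence both halves of your proof --- is unjustified. Note also that ``Lemma~\ref{lem:folklore2} to truncate'' does not repair this: that lemma relates $\H^{2d+1}(X,\Omega^{\le d})$ and $\H^{2d+2}(X,\Omega^{\le d+1})$ to top coherent cohomology, not cyclic homology to de Rham hypercohomology.

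The surjectivity part is also set up in the wrong degrees. The correct identification is $\H^0_{cdh}\left(X,{\sH}{\sC}^{(d+1)}\right)\cong \H^{2d+2}_{cdh}\left(X,\Omega^{\le d+1}_X\right)$ (total degree $2d+2$, truncation $\le d+1$); the top form cannot be discarded, since $\H^{2d+2}\left(X,\Omega^{\le d}_X\right)=0$ by Lemma~\ref{lem:folklore2}(ii), whereas by Lemma~\ref{lem:folklore2}(i) the group in question is the quotient of $H^{d+1}\left(X,\Omega^{d+1}_X\right)$ by the image of $H^{d+1}\left(X,\Omega^{d}_X\right)$, on either site. Once the indices are right there are no intermediate terms to control (cohomological dimension kills every slot except $j=d+1$), and the surjectivity of $HC^{(d+1)}_0(X)\to \H^0_{cdh}\left(X,{\sH}{\sC}^{(d+1)}\right)$ follows at once from \eqref{eqn:ZariskiC1}, with no use of the vanishing hypothesis or of the isolated singularities --- the paper points out this first assertion is unconditional. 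Your alternative route, comparing the spectral sequences of $\Omega^{\le d}$ in total degree $d+1$ and appealing to isolated singularities, Lemma~\ref{lem:folklore1} and Lemma~\ref{lem:direct-image} to claim that the intermediate Zariski and cdh cohomologies of $\Omega^j$ agree, computes the wrong group and rests on a comparison that is not available: passing from $H^p_{Zar}$ to $H^p_{cdh}$ involves the higher direct images $R^qa_*a^*\Omega^j$, which are supported on $X_{\rm sing}$ but need not vanish, so the intermediate-degree agreement you assert is neither needed nor justified.
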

\begin{proof}
One knows that $HC^{(i)}_i(A) = {\Omega^i_A}/{\Omega^{i-1}_A}$ and
$HC^{(i)}_j(A) = 0$ for $j > i$ for any local ring of $X$ 
({\sl cf.} \cite[Theorems~4.6.7, 4.6.8]{Loday}).     
Hence the spectral sequence
\[
E^{p,q}_2 = H^p\left(X, {\sH}{\sC}_{X, q}\right) \Rightarrow
HC_{-p-q}(X)
\]
gives isomorphisms 
\begin{equation}\label{eqn:FOLK*}
HC^{(d)}_{-1}(X) \cong H^{d+1}_{Zar}\left(X, {\sH}{\sC}^{(d)}_d\right) \
{\rm and} \
HC^{(d+1)}_{0}(X) \cong 
H^{d+1}_{Zar}\left(X, {\sH}{\sC}^{(d+1)}_{d+1}\right).
\end{equation}
On the other hand, it follows from \cite[Theorem~2.2]{CJams} that
\begin{equation}\label{eqn:FOLK*1}
\H^0_{cdh}\left(X, {\sH}{\sC}^{(d+1)}\right) \cong
\H^{2d+2}_{cdh}\left(X, \Omega^{\le d+1}_X\right) \ {\rm and} \
\H^1_{cdh}\left(X, {\sH}{\sC}^{(d)}\right) \cong
\H^{2d+1}_{cdh}\left(X, \Omega^{\le d}_X\right).
\end{equation}
Lemma~\ref{lem:folklore2} now implies that there are commutative diagrams
and exact sequences
\[
\xymatrix@C.5pc{
{H^{d+1}_{Zar}\left(X, \Omega^{d+1}_X\right)} \ar@{->>}[r] \ar[d] &
{HC^{(d+1)}_{0}(X)} \ar[d] \\
{H^{d+1}_{cdh}\left(X, \Omega^{d+1}_X\right)} \ar@{->>}[r] &
{\H^0_{cdh}\left(X, {\sH}{\sC}^{(d+1)}\right)}}
\]
\[
\xymatrix@C.5pc{
{H^{d+1}_{Zar}\left(X, \Omega^{d-1}_X\right)} \ar[r] \ar[d] &
{H^{d+1}_{Zar}\left(X, \Omega^{d}_X\right)} \ar[r] \ar[d] &
{HC^{(d)}_{-1}(X)} \ar[d] \ar[r] & 0 \\
{H^{d+1}_{cdh}\left(X, \Omega^{d-1}_X\right)} \ar[r] &
{H^{d+1}_{cdh}\left(X, \Omega^{d}_X\right)} \ar[r] &
{\H^1_{cdh}\left(X, {\sH}{\sC}^{(d)}\right)} \ar[r] & 0.}
\]
The left vertical map in the top square is surjective by
~\eqref{eqn:ZariskiC1}. This proves the first assertion of the proposition.
Observe that we have not used the conditions of the proposition until now.
In particular, the first surjectivity always holds. 

Now assume the given conditions.
In this case, the terms on the left ends of both the rows in the bottom 
diagram are zero by Corollary~\ref{cor:ZariskiC*}.
The middle vertical map is an isomorphism again by 
Corollary~\ref{cor:ZariskiC*}.
Hence the right vertical map is also an isomorphism, proving the second
assertion.
\end{proof}  

\section{$K$-theory and $KH$-theory of some singular schemes}
\label{section:KKH}
Recall from Theorem~\ref{thm:fiber0} that the Chern character from the
algebraic $K$-theory to the negative cyclic homology induces a natural
weak equivalence $\wt{\sK}(X) \cong \Omega^{-1}\wt{{\sH}{\sC}}(X)$
for any $k$-scheme $X$. In particular, there is a homotopy fibration sequence
\[
\wt{\sK}(X) \to {\sH}{\sC}(X)[-1] \xrightarrow{{\phi}_{HC}} 
\H_{cdh}\left(X, {\sH}{\sC}\right)[-1].
\]
In particular, the homotopy groups of $\wt{\sK}(X)$ have 
$\lambda$-decomposition such that the above gives a long exact sequence
of homotopy groups which preserves this decomposition
({\sl cf.} \cite{CMAnnalen}). Thus we have $\wt{K}_n(X) :=
{\pi}_n {\wt{\sK}}(X) = {\underset{i}{\bigoplus}} \wt{K}^{(i)}_n(X)$.

\begin{lem}\label{lem:K-fiber**}
Let $X$ be a $k$-scheme of dimension $d+1$. Then there is an exact sequence
\[
0 \to \wt{K}^{(d+1)}_0(X) \to 
\frac{H^{d+1}_{Zar}\left(X, \Omega^{d}_X\right)}
{H^{d+1}_{Zar}\left(X, \Omega^{d-1}_X\right)} \to
\frac{H^{d+1}_{cdh}\left(X, \Omega^{d}_X\right)}
{H^{d+1}_{cdh}\left(X, \Omega^{d-1}_X\right)} \to 0.
\]
\end{lem}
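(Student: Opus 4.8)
The plan is to transport the statement into cyclic homology via Goodwillie's theorem (Theorem~\ref{thm:fiber0}) and then into the brutally truncated de Rham complex, where it becomes a comparison of Zariski and $cdh$ cohomology controlled by Lemma~\ref{lem:folklore2}. So first I would start from the relative fibration
\[
\wt{\sK}(X) \to {\sH}{\sC}(X)[-1] \to \H_{cdh}(X, {\sH}{\sC})[-1]
\]
of Theorem~\ref{thm:fiber0}, which respects the $\lambda$-decompositions. Since the Chern character matches $\wt{K}^{(d+1)}_0(X)$ with the weight $(d)$ part of the relative cyclic homology $\wt{HC}_{-1}(X)$ (the Goodwillie weight shift, {\sl cf.}~\cite{CMAnnalen}, and the pairing of weights $(d)$ and $(d+1)$ in Proposition~\ref{prop:HC-1}), the relevant strand of the long exact sequence of homotopy groups reads
\[
HC^{(d)}_0(X) \xrightarrow{\alpha} \H^0_{cdh}(X, {\sH}{\sC}^{(d)}) \to \wt{K}^{(d+1)}_0(X) \to HC^{(d)}_{-1}(X) \xrightarrow{\beta} \H^1_{cdh}(X, {\sH}{\sC}^{(d)}),
\]
so that $\wt{K}^{(d+1)}_0(X)$ sits in a short exact sequence $0 \to \cok(\alpha) \to \wt{K}^{(d+1)}_0(X) \to \ker(\beta) \to 0$.

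Next I would identify the four cyclic-homology groups with hypercohomology of $\Omega^{\le d}_X$. By the Zariski and $cdh$ forms of \cite[Theorem~2.2]{CJams} (the identification already used in the proof of Proposition~\ref{prop:HC-1}) one has $HC^{(d)}_{-1}(X) \cong \H^{2d+1}_{Zar}(X, \Omega^{\le d}_X)$, $\H^1_{cdh}(X, {\sH}{\sC}^{(d)}) \cong \H^{2d+1}_{cdh}(X, \Omega^{\le d}_X)$, and similarly $HC^{(d)}_0(X) \cong \H^{2d}_{Zar}(X, \Omega^{\le d}_X)$ and $\H^0_{cdh}(X, {\sH}{\sC}^{(d)}) \cong \H^{2d}_{cdh}(X, \Omega^{\le d}_X)$, with $\alpha$ and $\beta$ the natural comparison maps. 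Applying Lemma~\ref{lem:folklore2}(i) with $j = d$ to the $(d+1)$-dimensional scheme $X$ in both topologies rewrites $\H^{2d+1}(X, \Omega^{\le d}_X) \cong H^{d+1}(X, \Omega^d_X)/H^{d+1}(X, \Omega^{d-1}_X)$; this identifies the last two terms of the strand with those in the statement and $\beta$ with the natural map between the quotients. That $\beta$ is surjective is routine: $H^{d+1}_{Zar}(X, \Omega^d_X) \to H^{d+1}_{cdh}(X, \Omega^d_X)$ is onto by \cite[Proposition~2.6]{CJams} ({\sl cf.}~\eqref{eqn:ZariskiC1}) and is compatible with the images of $H^{d+1}(X, \Omega^{d-1}_X)$, hence induces a surjection of the quotients. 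So $\ker(\beta)$ is exactly the kernel in the statement, and everything reduces to showing $\cok(\alpha) = 0$.

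Proving $\cok(\alpha) = 0$ — that is, surjectivity of $\alpha\colon \H^{2d}_{Zar}(X, \Omega^{\le d}_X) \to \H^{2d}_{cdh}(X, \Omega^{\le d}_X)$ — is the step I expect to be the real obstacle, and it is here that the hypothesis $\dim X = d+1$ is essential. Running the brutal-truncation spectral sequence, the only graded pieces of $\H^{2d}(X, \Omega^{\le d}_X)$ in either topology are subquotients of $H^{d}(X, \Omega^{d}_X)$ and $H^{d+1}(X, \Omega^{d-1}_X)$, so a diagram chase reduces $\cok(\alpha) = 0$ to comparison statements for the Zariski-to-$cdh$ maps on $H^{p}(X, \Omega^{q}_X)$ with $(p,q) \in \{(d,d),(d+1,d-1)\}$. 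The top-degree case $p = d+1$ is \cite[Proposition~2.6]{CJams} again; the delicate case is $p = d$, one below the top degree. For that I would invoke Lemma~\ref{lem:direct-image} — the discrepancy sheaves $R^{q}a_{*}a^{*}\Omega^{i}_X$ for $q \ge 1$ are coherent and, being trivial over the smooth locus, supported on $X_{\rm sing}$, so their higher Zariski cohomology is controlled by $\dim X_{\rm sing}$ — and feed this into the Leray spectral sequence for $a\colon X_{cdh} \to X_{Zar}$, together with the comparison theorem of \cite{CJams}. Granting $\cok(\alpha) = 0$, the five-term strand collapses to the asserted short exact sequence.
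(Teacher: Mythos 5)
Your overall architecture is the same as the paper's: take the $\lambda$-decomposed long exact sequence of the fibration from Theorem~\ref{thm:fiber0}, identify the outer cyclic homology terms with truncated de Rham hypercohomology, rewrite them via Lemma~\ref{lem:folklore2}, and get the surjectivity at the right end from the top-degree comparison result of \cite{CJams}. But your argument is incomplete exactly at the step you yourself flag: the vanishing of $\cok(\alpha)$, i.e.\ the surjectivity of the degree-zero comparison map, which is what yields the injectivity $0 \to \wt{K}^{(d+1)}_0(X) \to \cdots$. In your bookkeeping the two degree-zero terms carry weight $(d)$, so $\cok(\alpha)=0$ becomes a surjectivity statement one degree below the top, involving $H^{d}(X,\Omega^{d}_X)$; moreover your identification $HC^{(d)}_0(X)\cong\H^{2d}_{Zar}(X,\Omega^{\le d}_X)$ is not available for singular $X$ (\cite[Theorem~2.2]{CJams} is a cdh statement; on the Zariski side the sheaves ${\sH}{\sC}^{(d)}_{d+1}$ of a singular local ring are not truncated de Rham data, and only the top-degree identification used in ~\eqref{eqn:FOLK*} survives). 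The tools you propose cannot close the gap: Lemma~\ref{lem:direct-image} together with the fact that $R^qa_*a^*\Omega^i_X$ ($q\ge 1$) is supported on $X_{\rm sing}$ only kills cohomology in degrees above $\dim X_{\rm sing}$, and the lemma puts no hypothesis on $X_{\rm sing}$ (it may be a divisor), so the Leray argument gives nothing in degree $d$; a below-top-degree surjectivity of $H^d_{Zar}(X,\Omega^d_X)\to H^d_{cdh}(X,\Omega^d_X)$ is precisely the kind of statement that requires hypotheses (compare the second assertion of Proposition~\ref{prop:HC-1}).

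This left-end injectivity is where your treatment and the paper's genuinely diverge. In the paper's indexing, the strand containing $\wt{K}^{(d+1)}_0(X)$ has its degree-zero terms in weight $(d+1)$, so the obstruction to injectivity is $\cok\bigl(HC^{(d+1)}_0(X)\to\H^0_{cdh}(X,{\sH}{\sC}^{(d+1)})\bigr)$; by ~\eqref{eqn:FOLK*} and ~\eqref{eqn:FOLK*1} both groups are concentrated in the top cohomological degree $d+1$ (quotients of $H^{d+1}(X,\Omega^{d+1}_X)$ in the respective topologies), so the needed surjectivity is the formal top-degree statement already recorded in the proof of Proposition~\ref{prop:HC-1} (``the first surjectivity always holds''), with no hypotheses on $X$. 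Thus the right-end surjectivity is routine in both treatments, but the injectivity is formal in the paper and left unproved in yours: if you insist on your weight shift $\wt{K}^{(i)}_n\cong\wt{HC}^{(i-1)}_{n-1}$, you must either reconcile it with the weight indexing used in the paper's exact sequence or actually prove the weight-$(d)$, degree-$2d$ surjectivity; as written, neither is done, so the proposal does not prove the lemma.
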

\begin{proof}
We have the following long exact sequence coming from the above fibration
sequence.
\[
HC^{(d+1)}_0(X) \to \H^0_{cdh}\left(X, {\sH}{\sC}^{(d+1)}\right)
\to \wt{K}^{(d+1)}_0(X) \to HC^{(d)}_{-1}(X) \to 
\H^1_{cdh}\left(X, {\sH}{\sC}^{(d)}\right).
\]  
The lemma now follows from the identification of various terms in this 
exact sequence in ~\eqref{eqn:FOLK*} and ~\eqref{eqn:FOLK*1}, 
Lemma~\ref{lem:folklore2} and \cite[Theorem~2.6]{CJams}
({\sl cf.} proof of Proposition~\ref{prop:HC-1}).
\end{proof}

\begin{prop}\label{prop:K-fiber}
Let $X$ be a normal projective $k$-variety of dimension 
$d+1 \ge 2$ with only isolated singularities such that
$H^{d+1}_{Zar}\left(X, \Omega^i_{X/k}\right) = 0$ for $0 \le i \le d-1$.
Then $\wt{K}^{(d+1)}_0(X) = 0$.
\end{prop}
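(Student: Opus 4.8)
The plan is to combine Lemma~\ref{lem:K-fiber**} with the vanishing statements established in Section~\ref{section:ZCSS}. By Lemma~\ref{lem:K-fiber**} applied to our $X$ of dimension $d+1$, there is an exact sequence
\[
0 \to \wt{K}^{(d+1)}_0(X) \to
\frac{H^{d+1}_{Zar}\left(X, \Omega^{d}_X\right)}
{H^{d+1}_{Zar}\left(X, \Omega^{d-1}_X\right)} \to
\frac{H^{d+1}_{cdh}\left(X, \Omega^{d}_X\right)}
{H^{d+1}_{cdh}\left(X, \Omega^{d-1}_X\right)} \to 0,
\]
so it suffices to show that the middle arrow is injective. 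Since we are in characteristic zero and these are differentials over $\Q$ (per the running convention), the hypothesis $H^{d+1}_{Zar}(X,\Omega^i_{X/k}) = 0$ for $0\le i\le d-1$ propagates, by the argument in the proof of Corollary~\ref{cor:ZariskiC*}, to $H^{d+1}(X,\Omega^i_X) = 0$ for $0\le i\le d-1$ in both the Zariski and the $cdh$ topologies. In particular the denominators $H^{d+1}_{Zar}(X,\Omega^{d-1}_X)$ and $H^{d+1}_{cdh}(X,\Omega^{d-1}_X)$ both vanish, so the map in question is simply $H^{d+1}_{Zar}(X,\Omega^d_X) \to H^{d+1}_{cdh}(X,\Omega^d_X)$.

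Now I would invoke the second assertion of Corollary~\ref{cor:ZariskiC*}: under exactly these hypotheses, $H^{d+1}_{Zar}(X,\Omega^d_X) \to H^{d+1}_{cdh}(X,\Omega^d_X)$ is an isomorphism. (Tracing back, this rests on Lemma~\ref{lem:ZariskiC}, whose hypothesis $H^{d+1}_{Zar}(X,\Omega^{d-1}_{X/k}) \xrightarrow{\cong} H^{d+1}_{Zar}(\wt X,\Omega^{d-1}_{\wt X/k})$ is automatic here because both sides vanish — the left by assumption and the right because $\wt X$ is smooth projective of dimension $d+1$, so $H^{d+1}_{Zar}(\wt X,\Omega^{d-1}_{\wt X/k})$ is Serre-dual to $H^0(\wt X,\Omega^1_{\wt X/k})$... actually one should be slightly careful here and instead note the right-hand group injects into its $cdh$ counterpart and use the resolution comparison; but in any case the hypothesis of Lemma~\ref{lem:ZariskiC} is met since the target $H^{d+1}_{Zar}(\wt X,\Omega^{d-1}_{\wt X/k})$ receives a surjection from $H^{d+1}_{Zar}(X,\Omega^{d-1}_{X/k}) = 0$ via the Leray sequence for $f$, the kernel and cokernel of that map being controlled by $R^jf_*$ which in turn feed the same vanishing.) Granting the isomorphism of Corollary~\ref{cor:ZariskiC*}, the middle map in the displayed sequence is an isomorphism, hence in particular injective, forcing $\wt{K}^{(d+1)}_0(X) = 0$.

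The only genuinely nontrivial input is the isomorphism statement of Corollary~\ref{cor:ZariskiC*}, and the main obstacle — already absorbed into that corollary and into Lemma~\ref{lem:ZariskiC} — is verifying that the ``extra'' term $\frac{H^0_{Zar}(X, R^df_*\Omega^d_{\wt X/k})}{H^0_{Zar}(X, R^df_*\Omega^{d-1}_{\wt X/k})}$ maps isomorphically to its $cdh$ analogue $\frac{H^d_{cdh}(E,\Omega^d_{E/k})}{H^d_{cdh}(E,\Omega^{d-1}_{E/k})}$, which is where the normal-crossing computations of Section~\ref{section:SNC*} (Corollary~\ref{cor:SNC-main1}) and the Grauert--Riemenschneider-type formal-function arguments of Lemma~\ref{lem:CEXC-main} do the real work. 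At the level of this proposition, though, once those are in hand the proof is a two-line deduction: strip the vanishing denominators and quote Corollary~\ref{cor:ZariskiC*}.
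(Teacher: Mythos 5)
Your argument is correct and is essentially the paper's own proof: the paper deduces the vanishing from Lemma~\ref{lem:K-fiber**} together with Proposition~\ref{prop:HC-1}, and Proposition~\ref{prop:HC-1} is itself just the cyclic-homology repackaging of Corollary~\ref{cor:ZariskiC*}, which you invoke directly (and your observation that the hypothesis of Lemma~\ref{lem:ZariskiC} holds because $H^{d+1}_{Zar}(X,\Omega^{d-1}_{X/k})=0$ surjects onto $H^{d+1}_{Zar}(\wt X,\Omega^{d-1}_{\wt X/k})$ via the Leray sequence is the right justification, not the abandoned Serre-duality remark). So apart from bypassing the intermediate cyclic-homology statement, the route is the same.
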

\begin{proof}
Follows directly from Lemma~\ref{lem:K-fiber**} and Proposition~\ref{prop:HC-1}.
\end{proof}

\subsection{Gamma filtration of $K_*(X)$}
Recall from \cite{Soule} that for any $k$-scheme $X$ of dimension $d$,
there are natural $\gamma$-operations $\gamma^j$ on $K_*(X)$ which
naturally define the Adams operations $\psi^j : K_n(X) \to K_n(X)$
for each $n$ and for $j \in \Z$. All these operations 
commute with the pull-back maps on $K$-groups of schemes.
These $\gamma$-operations define a natural decreasing filtration
\[
0 = F^{n+1+d}_{\gamma}K_n(X) \subseteq F^{n+d}_{\gamma}K_n(X) \subseteq 
\cdots \subseteq F^0_{\gamma}K_n(X) = K_n(X)
\]
such that $F^1_{\gamma}K_0(X)$ is the subgroup of $K_0(X)$ generated
by vector bundles of virtual rank zero. Our purpose here is to 
describe $F^d_{\gamma}K_0(X)$ in terms of algebraic cycles for certain
singular schemes. This description will be later used in this work
to study the Chow group of zero-cycles on such schemes.

We further recall that $K_{n, \Q}(X) := K_n(X)\otimes_{\Z} {\Q}$
has a canonical decomposition $K_{n, \Q}(X) = \
\stackrel{}{\underset{i}{\bigoplus}} K^{(i)}_{n, \Q}(X)$ 
in terms of the eigenspaces of the Adams operators $\psi^j$ (which
does not depend on $j$). This Adams decomposition is related to the
$\gamma$-filtration by the natural isomorphism
\[
\frac{F^i_{\gamma}K_{n, \Q}(X)}{F^{i+1}_{\gamma}K_{n, \Q}(X)}
\xrightarrow{\cong} K^{(i)}_{n, \Q}(X).
\]
In particular, one has 
\begin{equation}\label{eqn:filt}
F^{n+d}_{\gamma}K_{n, \Q}(X) \cong K^{(n+d)}_{n, \Q}(X).
\end{equation}
The following is the generalization of the Grothendieck
Adams-Riemann-Roch theorem for $K_0$ ({\sl cf.} \cite{SGA6})
to the higher $K$-theory of singular schemes.

\begin{thm}\label{thm:GRR}
Let $Y \xrightarrow{f} X$ be a regular embedding of quasi-projective
$k$-varieties of codimension $d \ge 1$. Let $N_{Y/X}$ denote the normal
bundle of $Y$ in $X$. Then for any $n \ge 0$ and $j \in \Z$, the diagram
\[
\xymatrix@C3.5pc{
K_n(Y) \ar[r]^{\theta^j(N_{Y/X}) \psi^j} \ar[d]_{f_*} & K_n(Y) \ar[d]^{f_*} \\
K_n(X) \ar[r]_{\psi^j} & K_n(X)}
\]
is commutative, where $\theta^j$'s are the cannibalistic operators
({\sl cf.} \cite[Section~4.5]{Soule}).
\end{thm}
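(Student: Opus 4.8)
The plan is to follow the axiomatic framework of Fulton and Lang \cite{FultonL} for the Grothendieck Riemann--Roch theorem, applied now to the higher $K$-theory of singular quasi-projective varieties equipped with the $\gamma$-operations of Soul\'e \cite{Soule}. The key input is that a regular embedding $f: Y \inj X$ admits a deformation to the normal cone; this constructs a scheme $M$ together with a regular embedding $Y\times\A^1 \inj M$ whose fibre over $0$ is the zero-section embedding $Y \inj N_{Y/X}$ (or rather $Y \inj \P(N_{Y/X}\oplus 1)$ in the projective completion) and whose fibre over $1$ is the original $f$. Since the $\gamma$-operations, the Adams operations $\psi^j$, and the cannibalistic classes $\theta^j$ are all compatible with pullback along the inclusions of the two fibres, the homotopy invariance of $K$-theory is \emph{not} directly available for singular $X$, so instead I would use the specialization maps coming from the localization sequence for the regular divisors $M_0, M_1 \inj M$. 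This reduces the general assertion to the single case of the zero-section embedding into (the projective completion of) a vector bundle.

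The second step is therefore to verify the formula for the zero-section embedding $s: Y \inj E$ of a vector bundle $\pi : E \to Y$ of rank $d$, or more precisely for $s: Y \inj \bar E := \P(E\oplus 1)$. Here one uses the projective bundle formula for $K_*(\bar E)$ as a $K_*(Y)$-module, together with the self-intersection formula $s^* s_* (\alpha) = \lambda_{-1}(N^\vee)\cdot\alpha$ where $N = N_{Y/\bar E} = E$. Combining this with the multiplicativity of $\psi^j$ and the defining property of the cannibalistic class $\theta^j(E)$ — namely that $\psi^j(\lambda_{-1}(E^\vee)) = \theta^j(E^\vee)\,\lambda_{-1}(E^\vee)$ in the appropriate sense, and the interplay $\theta^j(E)\theta^j(E^\vee)$-type identities recorded in \cite[Section~4.5]{Soule} — one checks the commutativity of the square on the image of $s_*$. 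The point that $s_*$ together with $\pi^*$ generate enough of $K_*(\bar E)$, or rather that one only needs the identity on $\im(s_*)$, makes this a bookkeeping computation in the $\lambda$-ring $K_*(Y)$.

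The third step is to glue: the localization/specialization argument in step one shows that if the formula holds for the zero-section embedding, it holds for $f$, because the diagram in question is obtained by applying $f_*$ and $\psi^j$ which are compatible with the specialization isomorphisms, and the cannibalistic factor $\theta^j(N_{Y/X})$ transforms correctly since $N_{Y/X}$ is pulled back from the normal bundle of the central fibre. The one subtlety is that $\psi^j$ need not be additive before rationalization; but one circumvents this exactly as in \cite{Soule} by working with the $\gamma$-filtration quotients or, when necessary, after inverting suitable integers — and since the statement is about the honest integral $K$-groups, one must be careful to phrase $\theta^j(N_{Y/X})\psi^j$ as a genuine endomorphism using the fact that $\theta^j$ of a rank-$d$ bundle is a unit in the relevant completion.

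\textbf{Main obstacle.} I expect the crux to be the interaction between the lack of homotopy invariance for singular $X$ and the deformation-to-the-normal-cone argument: for smooth varieties one freely uses $\A^1$-invariance of $K$-theory to identify the fibres of the deformation, but here one must instead run the argument through the localization sequences for the two principal divisors $M_0$ and $M_1$ in the deformation space $M$ and check that all the operations $\psi^j$, $\gamma^j$, $\theta^j$ commute with the resulting boundary/specialization maps. A second, more technical, difficulty is handling the non-additivity of $\psi^j$ on integral $K$-theory when multiplying by the cannibalistic operator; this requires carefully invoking Soul\'e's constructions so that $\theta^j(N_{Y/X})\psi^j$ is a well-defined operator at the integral level, rather than only after tensoring with $\Q$.
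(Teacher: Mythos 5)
Your overall skeleton (deform to the normal cone, verify the zero-section case following Soul\'e, and run the Fulton--Lang axiomatics) is the same as the paper's, but the step you yourself flag as the main obstacle is a genuine gap, and it is precisely the step where the paper does something different. You propose to transfer the statement across the deformation space $M$ by ``specialization maps coming from the localization sequence'' for the two fibres. For singular quasi-projective schemes this mechanism is not available in the form you need: the localization fibre sequence for $K$-theory identifies the fibre term with the $K$-theory of perfect complexes on $M$ supported on the special fibre, not with the $K$-theory of the special fibre itself (d\'evissage fails outside the regular case), so there is no well-behaved specialization homomorphism landing in $K_n$ of the fibre, let alone one whose compatibility with $f_*$, $\psi^j$ and $\theta^j$ you could check. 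As written, the reduction to the zero-section case is therefore not established.

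The paper replaces this entire step by an elementary functorial observation (Lemma~\ref{lem:GRR2}): with the notation of the deformation diagram ~\eqref{eqn:GRR1}, the single class $z=F_*\left(p_Y^*(y)\right)\in K_n(M)$ satisfies $f_*(y)=h^*(z)$ and $f'_*(y)=i^*(z)$, the only inputs being the compatibility of proper push-forward with flat pull-back (Quillen) and the identities $p_Y\circ i_0=p_Y\circ i_{\infty}=\id_Y$, $\phi\circ h=\id_X$. Since $\psi^j$ and $\theta^j$ commute with all pull-backs, this lemma, together with $(i)$ the $K_0$-case of the theorem, $(ii)$ the zero-section case for higher $K$-theory (Soul\'e's argument, which does not use smoothness), and $(iii)$ the projection formula for lci morphisms, lets one run the Fulton--Lang argument verbatim, with no homotopy invariance, localization or specialization anywhere; this is the idea your proposal is missing. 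A secondary inaccuracy: your worry about ``non-additivity of $\psi^j$'' is misplaced --- the Adams operations are additive (it is the $\lambda^j$ and $\gamma^j$ that are not), and $\theta^j(N_{Y/X})$ is an honest class in $K_0(Y)$ for the vector bundle $N_{Y/X}$, so $\theta^j(N_{Y/X})\psi^j$ is a well-defined endomorphism of $K_n(Y)$ integrally, without any completion or inversion of integers.
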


Before we prove this theorem, we prove the following result on the
deformation to the normal cone of the regular embedding $Y \xrightarrow{f} X$.
Let $X' = {\P}(N_{Y/X} \oplus \sO_Y)$, 
$M = Bl_{Y \times \{\infty\}}(X \times \P^1)$
and consider the following deformation to the normal cone diagram.
\begin{equation}\label{eqn:GRR1}
\xymatrix{
Y \ar[rr]_{i_0} \ar[rd]^{u'} \ar[dd]^{f} & & {Y \times {\P}^1} 
\ar@/_/[ll]_{p_Y}
\ar[dd]^{F} & Y \ar[l]^{i_{\infty}} \ar[dd]^{f'} \\
& {Y \times {\A}^1} \ar[dd]_{F'} \ar[ru]^{j'} & & \\
X \ar[rr]_{h} \ar[dr]^{u} & & M \ar@/_/[ll]_{\phi} & X' \ar[l]_{i} \\
& {X \times {\A}^1} \ar[ru]^{j} & & }
\end{equation}   
In this diagram, all the vertical arrows are the closed regular
embeddings, $i_0$
and $i_{\infty}$ are the obvious inclusions of $Y$ in $Y \times {\P}^1$ along
the specified points, $i$ and $j$ are inclusions of the inverse
images of ${\A}^1$ and $\infty$ respectively under the map $\pi$, 
$u$ and $f'$ are are zero section embeddings and $p_Y$ is the projection
map. The map $\phi$ is the composite $M \to X \times \P^1 \to X$.
In particular, one has ${p_Y} \circ {i_0} = {p_Y} \circ {i_{\infty}}
= {id}_Y$ and $\phi \circ h = {id}_X$, .

\begin{lem}\label{lem:GRR2}
Consider the diagram ~\eqref{eqn:GRR1} and let $y \in K_n(Y)$. Then
there exists $z \in  K_n(M)$ such that $f_*(y) = h^*(z)$ and
${f'}_*(y) = i^*(z)$.
\end{lem}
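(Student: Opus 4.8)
The plan is to use the deformation-to-the-normal-cone diagram \eqref{eqn:GRR1} to interpolate between the two regular embeddings $f:Y\to X$ and $f':Y\to X'$, finding a single class on $M$ that restricts correctly to both ends. First I would push forward the class $y\in K_n(Y)$ along the proper map $F:Y\times\P^1\to M$ (which factors through the closed regular embedding $Y\times\P^1\hookrightarrow M$ of the diagram) to obtain a candidate $z' = F_*(y) \in K_n(M)$. The geometry of the deformation forces $z'$ to restrict appropriately on the two fibers: pulling back along $h:X\to M$ (the inclusion of the fiber over a general point of $\A^1$, i.e.\ the copy of $X\times\{0\}$) should give, by base change along the Cartesian square with $i_0:Y\hookrightarrow Y\times\P^1$ and the identity $p_Y\circ i_0 = \id_Y$, the class $f_*(y)$; and pulling back along $i:X'\hookrightarrow M$ (the inclusion of the fiber over $\infty$) should give, by base change along the square with $i_\infty$, the class $f'_*(y)$. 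I would take $z = z'$.

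The essential ingredient is a base-change (transverse intersection) formula for $K$-theory: given a Cartesian square of quasi-projective varieties in which one pair of opposite arrows are closed regular embeddings meeting transversally, the square commutes up to the expected sign/twist for push-forward followed by pull-back. Here the relevant squares are
\[
\xymatrix{
Y \ar[r] \ar[d] & Y\times\P^1 \ar[d]^{F} \\
X \ar[r]_{h} & M
}
\qquad\text{and}\qquad
\xymatrix{
Y \ar[r] \ar[d] & Y\times\P^1 \ar[d]^{F} \\
X' \ar[r]_{i} & M,
}
\]
both of which are Cartesian and transverse because $h$ and $i$ are the inclusions of the two fibers of the flat projection $M\to\P^1$ while $Y\times\P^1$ maps to $M$ transversally to these fibers (the total space of the deformation is regular over $\P^1$ along $Y\times\P^1$). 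Transversality gives $h^*F_* = f_*\, i_0^* = f_*$ and $i^*F_* = f'_*\, i_\infty^* = f'_*$ on $K_n(Y)$, using $p_Y\circ i_0 = p_Y\circ i_\infty = \id_Y$. This yields $f_*(y) = h^*(z)$ and $f'_*(y) = i^*(z)$ as required.

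The main obstacle is establishing the base-change formula at the level of higher $K$-theory with the correct compatibility. For $K_0$ this is classical (\cite{SGA6}), but in the higher case one must either invoke a known projection/base-change statement for perfect complexes (Thomason–Trobaugh) applied to the transverse squares above, or argue by factoring the regular embeddings into a composite whose pieces are handled by homotopy-invariance and the projective bundle formula. One must also check that the intersections are genuinely transverse — i.e.\ that $Y\times\P^1$ is not contained in, and meets properly, the fibers $X\times\{0\}$ and $X'$ of $M\to\P^1$ — which follows from the construction of $M$ as the blow-up $Bl_{Y\times\{\infty\}}(X\times\P^1)$: the strict transform of $X\times\{0\}$ is $X$ itself, the exceptional-side fiber is $X' = \P(N_{Y/X}\oplus\sO_Y)$, and the proper transform of $Y\times\P^1$ meets each transversally in a copy of $Y$. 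Once the base-change formula is in hand, the rest is the bookkeeping indicated above.
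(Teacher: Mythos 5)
Your proposal is correct and takes essentially the same route as the paper: the class is $z=F_*(p_Y^*(y))$ (your $z'=F_*(y)$ must be read this way, since $F_*$ needs a class on $Y\times\P^1$, and your use of $p_Y\circ i_0=p_Y\circ i_\infty=\id_Y$ shows this is what you intend), and both identities follow from proper-pushforward/flat- or Tor-independent-pullback base change applied to the Cartesian squares formed by the fibre inclusions $h$ and $i$ against $F$. The only (cosmetic) difference is that the paper checks the identity over $0$ by passing through the open subscheme $X\times\A^1$ and invoking Quillen's base-change result, whereas you apply base change directly to the closed-fibre square.
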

\begin{proof} Put $\tilde{y} = {p_Y}^*(y)$ and $z = F_*(\tilde{y})$.
Then
\[
\begin{array}{lllll}
f_*(y) & = & f_*\left({\left(p_Y \circ {i_0}\right)}^*(x)\right) & & \\
& = & f_* \circ {i_0}^* \circ {p_Y}^* \left(y\right) & = &
f_* \circ {i_0}^*\left(\tilde{y}\right) \\
& = & f_* \circ {u'}^* \circ {j'}^*\left(\tilde{y}\right) & &  \\
& = & u^* \circ {F'}_*\left({j'}^*\left(\tilde{y}\right)\right) & &
\\
& = & u^* \circ j^* \circ F_*\left(\tilde{y}\right) & &  
\hspace*{2cm}\left({\rm{since}} \ j \ 
{\rm{is \ an \ open \ immersion}}\right) \\ 
& = & h^* \circ F_*\left(\tilde{y}\right) & = & h^*(z). 
\end{array}
\]
Similarly, 
\[
\begin{array}{lllll}
{f'}_*(y) & = & {f'}_*\left({\left(p_Y \circ 
{i_{\infty}}\right)}^*(x)\right) & & \\
& = & {f'}_* \circ i_{\infty}^* \circ {p_Y}^* \left(y\right) & = &
{f'}_* \circ i_{\infty}^*\left(\tilde{y}\right) \\
& = & i^* \circ {F}_*\left(\tilde{y}\right) & & \\
& = & i^*(z).
\end{array}
\]
Here, the fifth equality in the first array and the fourth equality in the
second follow from \cite[Proposition~2.11]{VistoliV}.
\end{proof}
{\bf{Proof of Theorem~\ref{thm:GRR}}.}
This theorem for $K_0$ was proven in \cite{SGA6} and was also proven in an 
axiomatic way in \cite[Theorem~6.3]{FultonL} using the method of the 
deformation to the normal cone. The proof for the higher $K$-theory can also 
be given using this method. We only give a brief sketch and leave the
details to the readers. If $f$ is the zero-section embedding for a vector
bundle $X \to Y$, then this is proven by Soul{\'e} 
({\sl cf.} \cite[Theorem~3]{Soule}). Actually, Soul{\'e} assumes $X$ and
$Y$ to be smooth, but the proof of this particular case of the 
zero-section embedding goes through even in the singular case.

Following the axiomatic approach in \cite[Chapter II, Theorem~1.2]{FultonL} 
for $K_0$, the general case is deduced from the above using the above 
deformation to the normal cone diagram. The reader can check in {\sl loc. cit.}
that the general case for the higher $K$-theory follows directly from \\
$(i)$ the general case for $K_0$, \\
$(ii)$ the zero-section embedding case for higher $K$-groups and \\
$(iii)$ Lemma~\ref{lem:GRR2}, \\
 once we have the projection formula for the push-forward
and pull-back maps on the higher $K$-theory for a local complete intersection
morphism. But this is already shown in \cite{VistoliV}.
We refer to \cite[Theorem~II.1.2, Lemmas~V.6.1, 6.2]{FultonL}
for more detail.
$\hfill \square$
\\

\begin{cor}\label{cor:GRR*}
Let $Y \xrightarrow{f} X$ be as in Theorem~\ref{thm:GRR}. Then for 
any $n, i \ge 0$, one has 
\[
f_*\left(F^i_{\gamma}K_{n, \Q}(Y)\right) \subseteq F^{i+d}_{\gamma} 
K_{n, \Q}(X).
\]
\end{cor}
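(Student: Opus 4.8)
The plan is to deduce Corollary~\ref{cor:GRR*} directly from the Adams--Riemann--Roch Theorem~\ref{thm:GRR} by exploiting the relation between the $\gamma$-filtration and the Adams eigenspace decomposition recalled above. The key point is that after rationalizing, the $\gamma$-filtration is read off from the eigenvalues of the Adams operators $\psi^j$, and the cannibalistic operator $\theta^j(N_{Y/X})$ shifts these eigenvalues in a controlled way by the codimension $d$.

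First I would recall that for the normal bundle $N = N_{Y/X}$ of rank $d$, the total cannibalistic class $\theta^j(N)$ has the form $j^d(1 + \nu)$ where $\nu \in F^1_\gamma K_0(Y)_{\Q}$; more precisely, $\theta^j(N) = j^d \cdot \theta^{j\prime}(N)$ where $\theta^{j\prime}(N) \equiv 1 \pmod{F^1_\gamma}$, since $\theta^j$ of a rank-one bundle $L$ is $1 + (j-1) + \cdots + (j^{?})$ type expressions, and in general $\theta^j(N) \in j^d(1 + F^1_\gamma K_{0,\Q}(Y))$ (this is standard, {\sl cf.}~\cite[Section~4.5]{Soule}, \cite{SGA6}). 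Consequently, multiplication by $\theta^j(N)$ sends $F^i_\gamma K_{n,\Q}(Y)$ into itself with the leading term scaled by $j^d$: on the graded piece $K^{(i)}_{n,\Q}(Y)$, the operator $\theta^j(N)\cdot(-)$ acts as multiplication by $j^d$ modulo $F^{i+1}_\gamma$, because $F^1_\gamma \cdot F^i_\gamma \subseteq F^{i+1}_\gamma$.

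Next, let $y \in F^i_\gamma K_{n,\Q}(Y)$, so $\psi^j(y) = j^i y$ modulo $F^{i+1}_\gamma K_{n,\Q}(Y)$ for all $j$, by the compatibility of the $\gamma$-filtration with the Adams decomposition. Then $\theta^j(N)\psi^j(y) = j^{i+d} y \bmod F^{i+d+1}_\gamma K_{n,\Q}(Y)$ by the previous paragraph. Applying Theorem~\ref{thm:GRR}, $\psi^j(f_*(y)) = f_*(\theta^j(N)\psi^j(y))$, so $\psi^j(f_*(y))$ lies in $f_*\bigl(F^{i+d}_\gamma K_{n,\Q}(Y)\bigr) + f_*\bigl(F^{i+d+1}_\gamma K_{n,\Q}(Y)\bigr)$; but these are equal, so $\psi^j(f_* y) \in f_*(F^{i+d}_\gamma K_{n,\Q}(Y))$ for all $j$. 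To conclude that $f_*(y) \in F^{i+d}_\gamma K_{n,\Q}(X)$, I would use that $F^m_\gamma K_{n,\Q}(X) = \bigoplus_{i' \ge m} K^{(i')}_{n,\Q}(X)$ is precisely the sum of Adams eigenspaces for eigenvalues $j^{i'}$ with $i' \ge m$; one extracts $f_*(y)$ from its image under a suitable polynomial in the $\psi^j$ that projects onto $\bigoplus_{i' \ge i+d}$, and the computation above shows each such component of $f_* y$ already lies in the desired subgroup. A cleaner way: run the argument by descending induction on $i$ starting from the top, since the filtration is finite ($F^{n+d+1}_\gamma = 0$), using that $\psi^j$ acts on the top graded piece exactly by the scalar $j^{n+d}$.

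The main obstacle I anticipate is purely bookkeeping: making precise the statement that $\theta^j(N_{Y/X})$ differs from $j^d$ by an element of the first step of the $\gamma$-filtration, and tracking that multiplication by such an element raises the filtration level by one. Once that lemma is in hand, the rest is the standard ``eigenvalue-shift'' argument and requires only the finiteness of the $\gamma$-filtration and the equivalence~\eqref{eqn:filt} between its graded pieces and the rational Adams eigenspaces. No new geometric input beyond Theorem~\ref{thm:GRR} is needed.
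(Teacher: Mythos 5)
Your overall route is the one the paper itself takes: the paper's proof of this corollary simply says it follows from Theorem~\ref{thm:GRR} ``exactly in the same way as is proven for $K_0$'' in Fulton--Lang, Proposition~V.6.4, and that argument is precisely the eigenvalue-shift argument you outline, resting on $\theta^j(N_{Y/X})\in j^d+F^1_{\gamma}K_{0,\Q}(Y)$, the rational identification $F^m_{\gamma}K_{n,\Q}=\bigoplus_{i'\ge m}K^{(i')}_{n,\Q}$ from ~\eqref{eqn:filt}, and the finiteness of the $\gamma$-filtration. (The multiplicativity $F^1_{\gamma}K_{0,\Q}\cdot F^i_{\gamma}K_{n,\Q}\subseteq F^{i+1}_{\gamma}K_{n,\Q}$ that you invoke is indeed available rationally, as it follows from the eigenspace description and the fact that the $\psi^j$ act compatibly on the $K_0$-module $K_n$.)

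However, the middle of your write-up is incorrect as stated. From $\theta^j(N)\in j^d+F^1_{\gamma}K_{0,\Q}(Y)$ and $\psi^j(y)\equiv j^i y \pmod{F^{i+1}_{\gamma}K_{n,\Q}(Y)}$ one only gets $\theta^j(N)\psi^j(y)\equiv j^{i+d}y \pmod{F^{i+1}_{\gamma}K_{n,\Q}(Y)}$, \emph{not} modulo $F^{i+d+1}_{\gamma}$: error terms such as $j^d\left(\psi^j(y)-j^iy\right)$ lie only in $F^{i+1}_{\gamma}$, and $d\ge 1$ makes $F^{i+d+1}_{\gamma}$ strictly smaller. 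The next step is then a non sequitur: Theorem~\ref{thm:GRR} gives $\psi^j(f_*(y))=j^{i+d}f_*(y)+f_*(w)$ with $w\in F^{i+1}_{\gamma}K_{n,\Q}(Y)$, and the term $j^{i+d}f_*(y)$ is a priori only in $f_*\left(F^i_{\gamma}K_{n,\Q}(Y)\right)$, so you cannot conclude $\psi^j(f_*y)\in f_*\left(F^{i+d}_{\gamma}K_{n,\Q}(Y)\right)$; and even granted that membership, it would not by itself place $f_*y$ in $F^{i+d}_{\gamma}K_{n,\Q}(X)$ (that inference is essentially what is to be proved). The argument only closes when run as the descending induction you mention in your last sentence: assuming $f_*\left(F^{i+1}_{\gamma}K_{n,\Q}(Y)\right)\subseteq F^{i+d+1}_{\gamma}K_{n,\Q}(X)$ (the induction starts trivially since $F^m_{\gamma}K_{n,\Q}(Y)=0$ for $m$ large), one obtains $(\psi^j-j^{i+d})(f_*y)=f_*\left(\theta^j(N)\psi^j(y)-j^{i+d}y\right)\in F^{i+d+1}_{\gamma}K_{n,\Q}(X)$, and then the rational eigenspace decomposition with one fixed $j\ge 2$ forces the components of $f_*y$ in $K^{(m)}_{n,\Q}(X)$ with $m<i+d$ to vanish, i.e. $f_*y\in F^{i+d}_{\gamma}K_{n,\Q}(X)$. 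So the ``cleaner way'' is not an optional variant; it is the proof, and it must replace the faulty congruence and the deduction drawn from it.
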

\begin{proof} This follows directly from Theorem~\ref{thm:GRR} exactly
in the same way as is proven for $K_0$ in \cite[Proposition~V.6.4]{FultonL}.
\end{proof} 
  
For more applications, recall that ({\sl cf.} \cite{ThomasonT})
for a $k$-scheme $X$, there are Brown-Gersten spectral sequences 
\begin{equation}\label{eqn:SSZ}
E^{p,q}_2 = H^p_{Zar}\left(X, \sK_{q}\right) \Rightarrow K_{-p-q}(X).
\end{equation}
There are similar spectral sequences ({\sl cf.} \cite[Theorem~1]{Haes})  
\begin{equation}\label{eqn:SSC}
E^{p,q}_2 = H^p_{cdh}\left(X, \sK_{q}\right) \Rightarrow KH_{-p-q}(X)
\end{equation}
for the $KH$-theory, and there is a natural morphism from the first spectral
sequence to the second. This induces the associated {\sl Brown filtration}
(as called so in \cite{GilletS}) $F^{\bullet}_B$ on these $K$-theories.

Let $X$ be a quasi-projective $k$-variety of dimension $d$ and let 
$x \in X$ be a smooth closed point of $X$. Then $\{x\} \overset{i}{\inj} X$
is a regular embedding and hence there is a natural map 
$i_*: \Z = K_0(\{x\}) \to K_0(X)$. Let $F^dK_0(X)$ denote the subgroup
of $K_0(X)$ generated by the images of the classes of smooth points via these
maps.   
\begin{cor}\label{cor:compare*}
Let $X$ be a quasi-projective $k$-variety of dimension $d$. Then
\[
CH^d(X) \surj  F^dK_0(X) \inj  F^d_{\gamma}K_0(X) \inj 
F^d_BK_0(X).
\]
up to torsion.
\end{cor}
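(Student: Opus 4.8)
The plan is to establish the chain of maps left to right, taking each link in turn. The first surjection $CH^d(X)\surj F^dK_0(X)$ is essentially a definition: by construction $F^dK_0(X)$ is the subgroup generated by the classes $i_*[x]$ for smooth closed points $x\in X$, and the assignment $[x]\mapsto i_*[x]$ extends to the group of zero-cycles supported on the smooth locus; one checks that this map kills rational equivalence (using the standard moving lemma for zero-cycles, which lets one represent every class of $CH^d(X)$ by a cycle supported on $X_{\mathrm{sm}}$, together with the fact that rational equivalences can likewise be moved into the smooth locus), so it factors through $CH^d(X)$ and is surjective by definition of $F^dK_0(X)$. I would cite the relevant comparison for the Levine--Weibel Chow group here rather than reprove it.

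For the inclusion $F^dK_0(X)\inj F^d_\gamma K_0(X)$ up to torsion, the point is that a smooth closed point $x\hookrightarrow X$ is a regular embedding of codimension $d$, so Corollary~\ref{cor:GRR*} applied with $Y=\{x\}$ and $n=0$ gives $i_*\bigl(F^0_\gamma K_{0,\Q}(\{x\})\bigr)\subseteq F^d_\gamma K_{0,\Q}(X)$; since $F^0_\gamma K_0(\{x\})=K_0(\{x\})=\Z$ contains the generator $[x]$, every generator $i_*[x]$ of $F^dK_0(X)$ lands in $F^d_\gamma K_{0,\Q}(X)$, i.e.\ in $F^d_\gamma K_0(X)$ up to torsion. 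Hence $F^dK_0(X)\otimes\Q$ maps into $F^d_\gamma K_0(X)\otimes\Q$ compatibly with the maps to $K_0(X)\otimes\Q$, giving the claimed inclusion modulo torsion.

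The last inclusion $F^d_\gamma K_0(X)\inj F^d_B K_0(X)$ up to torsion is where the real work sits, and I expect it to be the main obstacle. The strategy is to compare the gamma filtration with the Brown filtration coming from the Brown--Gersten spectral sequence \eqref{eqn:SSZ}. Rationally, $F^d_\gamma K_{0,\Q}(X)\cong K^{(d)}_{0,\Q}(X)$ is the top Adams eigenspace (this is \eqref{eqn:filt} with $n=0$), and on the $K$-theory sheaves $\sK_q$ the Adams operations act on $\sK_d$ through weight $d$ on the relevant graded piece, while the Brown filtration step $F^d_B$ is exactly the image of the edge contributions from $H^p_{Zar}(X,\sK_q)$ with $p\ge d$. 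One then argues that the weight-$d$ eigenspace of $K_0$ is carried by $H^d_{Zar}(X,\sK_d)$ (the only column that can support weight $d$ in degree $0$, after inverting denominators), so that $K^{(d)}_{0,\Q}(X)$ lies in the deepest Brown layer; this is the singular analogue of the classical fact (going back to Soul\'e and Gillet--Soul\'e) that the coniveau and gamma filtrations agree rationally in the relevant range. The hard part will be making the comparison of the two filtrations work over a singular $X$ without Gersten resolution: one must either invoke the Brown--Gersten spectral sequence directly and track the Adams action through it, or reduce to the smooth locus where Soul\'e's results apply and control the complement by dimension reasons. Once all three links are assembled, composing them yields the asserted chain of maps up to torsion.
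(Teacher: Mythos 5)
Your first two links are correct and are exactly the paper's argument: the surjection $CH^d(X) \surj F^dK_0(X)$ is immediate from the definitions (the paper simply records it as known), and the inclusion $F^dK_0(X) \subseteq F^d_{\gamma}K_0(X)$ up to torsion follows, as you say, by applying Corollary~\ref{cor:GRR*} to the regular embeddings $\{x\} \inj X$ of smooth closed points, using $K_0(\{x\}) = F^0_{\gamma}K_0(\{x\}) = \Z$.

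The genuine gap is the third inclusion $F^d_{\gamma}K_0(X) \inj F^d_BK_0(X)$, which you correctly identify as the crux but do not prove: what you offer is a strategy (track the Adams action through the Brown--Gersten spectral sequence, or reduce to the smooth locus) together with an admission that the singular case is the hard part. The paper does not reprove this step at all; it quotes it, namely Gillet--Soul{\'e} \cite[(26), p.138]{GilletS}, where the rational comparison between the $\gamma$-filtration and the filtration induced by the hypercohomology (Brown--Gersten) spectral sequence is established for finite-dimensional noetherian schemes in general --- no smoothness and no Gersten resolution enters, so your worry on that score is misplaced, but so is your proposed fallback: passing to the smooth locus and controlling the complement ``by dimension reasons'' does not go through, since for singular $X$ the $K$-theory with supports in $X_{\rm sing}$ is not controlled by the dimension of $X_{\rm sing}$ in a way compatible with either filtration. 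Moreover, your key assertion that the weight-$d$ eigenspace of $K_0$ is ``carried by $H^d_{Zar}(X,\sK_d)$'' is not a formal consequence of the shape of the spectral sequence; it requires bounds on the Adams weights of all $E_2$-terms $H^p_{Zar}(X,\sK_q)$ and the compatibility of the Adams operations with the differentials and with the abutment filtration, which is precisely the content of the Gillet--Soul{\'e} result being cited. The correct completion of your argument is therefore to replace the sketch by that citation (or reproduce their argument in full), not to attempt a reduction to the smooth case.
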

\begin{proof}
The first surjectivity is already known and can be easily proved from
the definitions of the terms. The second inclusion follows from
Corollary~\ref{cor:GRR*} and third inclusion follows from
\cite[(26), p.138]{GilletS}.
\end{proof}

\begin{cor}\label{cor:compare}
Let $X$ be a normal quasi-projective $k$-variety of dimension $d$ with only 
isolated singularities. Then there are isomorphisms
\[
H^d_{Zar}\left(X, \sK_{X,d}\right) \cong CH^d(X) \cong F^dK_0(X) \cong
F^d_BK_0(X) \cong F^d_{\gamma}K_0(X)
\]
up to torsion.
\end{cor}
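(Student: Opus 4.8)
The plan is to deduce the corollary from Corollary~\ref{cor:compare*} by promoting the chain of maps there to isomorphisms up to torsion and adjoining the term $H^d_{Zar}(X,\sK_{X,d})$ at the top. By that corollary we already have, up to torsion,
\[
CH^d(X) \surj F^dK_0(X) \inj F^d_{\gamma}K_0(X) \inj F^d_BK_0(X),
\]
so it suffices to establish three facts: \textbf{(1)} a Bloch-type isomorphism $CH^d(X)\cong H^d_{Zar}(X,\sK_{X,d})$ up to torsion; \textbf{(2)} that the edge homomorphism from the lowest nonzero term $H^d_{Zar}(X,\sK_{X,d})$ of the Brown--Gersten spectral sequence \eqref{eqn:SSZ} onto $F^d_BK_0(X)$ is an isomorphism up to torsion; and \textbf{(3)} that the composite of (1) and (2) agrees, up to torsion, with the natural map $CH^d(X)\to F^d_BK_0(X)$ underlying Corollary~\ref{cor:compare*}. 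Granting (1)--(3), a short diagram chase closes the argument: by (2) and (3) the composite $CH^d(X)\to F^d_BK_0(X)$ is an isomorphism up to torsion, which forces the surjection $CH^d(X)\surj F^dK_0(X)$ to be injective up to torsion and each of the two inclusions to be surjective up to torsion; together with (1) this yields all five isomorphisms.

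For \textbf{(1)}, set $S=X_{\rm sing}$, a finite set since the singularities are isolated, and let $j:U=X\setminus S\inj X$. On the smooth variety $U$, Quillen's Gersten resolution of $\sK_{U,d}$ identifies $H^d_{Zar}(U,\sK_{U,d})$ with $CH^d(U)$. I would combine this with the local cohomology sequence
\[
H^d_S(X,\sK_{X,d}) \to H^d_{Zar}(X,\sK_{X,d}) \to H^d_{Zar}(U,\sK_{U,d}) \to H^{d+1}_S(X,\sK_{X,d}),
\]
in which the last term vanishes for cohomological dimension reasons, and compare the remaining terms with the definition of the Levine--Weibel Chow group (zero-cycles supported on $U$ modulo rational equivalences carried by curves meeting $S$ properly); normality (which forces $d\ge 2$, a normal curve being smooth) and the resulting control of the part of $H^d$ supported on $S$ then identify $H^d_{Zar}(X,\sK_{X,d})$ with $CH^d(X)$ up to torsion. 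Alternatively one simply invokes the known singular Bloch formula for varieties with isolated singularities.

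\textbf{(2)} is the heart of the matter, and the step I expect to be the main obstacle. Since $\dim X=d$, the cohomological dimension bound kills all differentials of \eqref{eqn:SSZ} leaving the lowest term, so $F^d_BK_0(X)$ is the quotient of $H^d_{Zar}(X,\sK_{X,d})$ by the images of the differentials $d_r$, $r\ge 2$, whose sources are subquotients of the groups $H^{d-r}_{Zar}(X,\sK_{X,d-r+1})$ (these abut to $K_1(X)$). To see these images are torsion I would use the Adams operations of Section~\ref{section:KKH}: the spectral sequence carries a $\psi^k$-action induced from the level of $K$-theory sheaves, its differentials are $\psi^k$-equivariant, and after $\otimes\Q$ each page splits into weight eigenspaces. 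On the smooth locus $U$ the weight-$i$ summand of $\sK_{U,d}$ is rationally the motivic cohomology sheaf $\mathcal{H}^{2i-d}(\Q(i))$, which vanishes for $i>d$ and admits a Cousin resolution of length $\le 2i-d$ for $i\le d$; hence $H^d_{Zar}(U,\sK_{U,d}^{(i)})_{\Q}=0$ unless $i=d$, and dually $H^{d-r}_{Zar}(U,\sK_{U,d-r+1}^{(i)})_{\Q}=0$ in weight $i=d$ for every $r\ge 2$, since there the relevant motivic sheaf is $\mathcal{H}^{d+r-1}(\Q(d))=0$. Thus, away from $S$, no nonzero rational differential reaches the lowest term, whose weight-$d$ part is $CH^d(X)_{\Q}$ by (1). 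The delicate point is to handle the contributions supported at the singular points: one must show that the ``wrong-weight'' pieces of $H^d_{Zar}(X,\sK_{X,d})$ and of the sources of the differentials that come from $S$ are, respectively, killed by the differentials and torsion, so that precisely $CH^d(X)$ (up to torsion) survives to $E_\infty$. This is where normality and $d\ge 2$ are really used.

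Finally, \textbf{(3)} is a compatibility checked on the generators of $F^dK_0(X)$: for a smooth closed point $x\inj X$, the class $i_{x*}(1)\in K_0(X)$ has as its image under the $\sK$-cohomology edge map exactly the class in $F^d_BK_0(X)$ corresponding to $[x]\in CH^d(X)$ under the isomorphism of (1); since $CH^d(X)\surj F^dK_0(X)$, the two maps $CH^d(X)\to F^d_BK_0(X)$ agree up to torsion. Assembling (1)--(3) with the chain of Corollary~\ref{cor:compare*} as in the first paragraph gives the asserted string of isomorphisms up to torsion, the term $H^d_{Zar}(X,\sK_{X,d})$ being supplied by (1).
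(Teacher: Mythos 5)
Your overall sandwich strategy is reasonable, and steps (1) and (3) are fine (for (1) the clean route is the one you mention in passing: quote the known Bloch formula for normal varieties with isolated singularities, which is what the paper does via \cite{Viale}). The genuine gap is your step (2), which you yourself identify as the heart of the matter and then do not close. Because you import from Corollary~\ref{cor:compare*} only the surjection $CH^d(X)\surj F^dK_0(X)$, you are forced to prove directly that the edge map $H^d_{Zar}(X,\sK_{X,d})\to F^d_BK_0(X)$ of the spectral sequence ~\eqref{eqn:SSZ} is injective up to torsion, i.e.\ that the images of the differentials $d_r\colon E_r^{d-r,-d+r-1}\to E_r^{d,-d}$ are torsion. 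Your weight/Adams argument only controls the relevant $\sK$-cohomology over the smooth locus; the contribution supported on $X_{\rm sing}$ is exactly the difficulty, and you explicitly leave it unresolved ("the delicate point"). You also do not justify that the Brown--Gersten differentials are Adams-equivariant in the form you need, nor that a sheaf-level weight decomposition is available near the singular points, where $\sK_{n,X}$ has no Gersten or motivic description. Note, moreover, that (1)+(2)+(3) would rationally reprove the theorem $CH^d(X)\cong F^dK_0(X)$ for such $X$; so your step (2) is secretly carrying a known but genuinely hard result, and it cannot be dispatched by the local weight count you sketch.

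The missing idea is that this isomorphism $CH^d(X)\cong F^dK_0(X)$ (Levine, \cite{Levine1}) and the Bloch formula $H^d_{Zar}(X,\sK_{X,d})\cong CH^d(X)$ (\cite{Viale}) should be taken as known inputs, not outputs. With them, no analysis of differentials is needed: the spectral sequence gives the surjection $H^d_{Zar}(X,\sK_{X,d})\surj F^d_BK_0(X)$, while Corollary~\ref{cor:compare*} combined with the known isomorphisms gives, up to torsion, an injection $H^d_{Zar}(X,\sK_{X,d})\cong CH^d(X)\cong F^dK_0(X)\inj F^d_{\gamma}K_0(X)\inj F^d_BK_0(X)$ which is compatible with the edge map (this is precisely your step (3), checked on classes of smooth points). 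A map that is simultaneously this injection and that surjection is an isomorphism up to torsion, and the sandwich then forces $F^dK_0(X)\cong F^d_{\gamma}K_0(X)\cong F^d_BK_0(X)$ up to torsion as well. Replace your step (2) by these citations and your assembly goes through; as written, (2) is where all the content lies and it is not proved.
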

\begin{proof} The first two isomorphisms are already known ({\sl cf.}
\cite{Viale} and \cite{Levine1}). The Brown-Gersten spectral sequence
implies that there is a surjection $H^d_{Zar}\left(X, \sK_{X, d}\right) 
\surj F^d_BK_0(X)$ and hence must be an isomorphism because of the first
two isomorphisms. The remaining isomorphisms now follow from 
Corollary~\ref{cor:compare*}. 
\end{proof} 

\begin{thm}\label{thm:main1}
Let $X$ be a normal projective $k$-variety of dimension $d$ with only
isolated singularities such that $H^d_{Zar}\left(X, \Omega^i_{X/k}\right) 
= 0$ for $i \le d-2$. Then the natural map $K_0(X) \to KH_0(X)$ induces
the inclusion
\[
CH^d(X) \xrightarrow{\cong} F^d_BK_0(X) \inj F^d_BKH_0(X).
\]
up to torsion.
\end{thm}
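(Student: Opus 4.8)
The proof assembles two earlier results: the identification $CH^d(X)\cong F^d_BK_0(X)$ from Corollary~\ref{cor:compare}, and the vanishing of $\wt{K}^{(d)}_0(X)$ from Proposition~\ref{prop:K-fiber}. The isomorphism $CH^d(X)\xrightarrow{\cong}F^d_BK_0(X)$ up to torsion is exactly Corollary~\ref{cor:compare}, which applies because $X$ is a normal quasi-projective $k$-variety of dimension $d$ with only isolated singularities; this step uses nothing about the cohomology of differential forms. Thus the real content is the injectivity, up to torsion, of the map $F^d_BK_0(X)\to F^d_BKH_0(X)$ induced by the morphism of Brown--Gersten spectral sequences from \eqref{eqn:SSZ} to \eqref{eqn:SSC}. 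Since $F^d_BKH_0(X)$ is a subgroup of $KH_0(X)$, it suffices to prove that $F^d_BK_0(X)\to KH_0(X)$ has torsion kernel.

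I would argue rationally. By Corollary~\ref{cor:compare}, $F^d_BK_0(X)$ is isomorphic up to torsion to $F^d_{\gamma}K_0(X)$, whose rationalization is $K^{(d)}_{0,\Q}(X)$ by \eqref{eqn:filt}; hence it is enough to show that $K^{(d)}_{0,\Q}(X)\to KH_0(X)\otimes\Q$ is injective. Because the comparison map $K_0(X)\to KH_0(X)$ is compatible with the $\lambda$-operations, after $\otimes\Q$ it respects the weight decompositions, so it carries $K^{(d)}_{0,\Q}(X)$ into $KH^{(d)}_{0,\Q}(X)$ and its kernel equals $\ker\big(K^{(d)}_{0,\Q}(X)\to KH^{(d)}_{0,\Q}(X)\big)$. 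Now I would invoke the fibration sequence ${\Omega}^{-1}\wt{{\sH}{\sC}}(X)\to\sK(X)\to{\sK}{\sH}(X)$ of Corollary~\ref{cor:fiber1}, which by Theorem~\ref{thm:fiber0} identifies the homotopy fiber with $\wt{\sK}(X)$ and whose long exact sequence of homotopy groups preserves the $\lambda$-decomposition; in weight $d$ it reads $\wt{K}^{(d)}_0(X)\to K^{(d)}_{0,\Q}(X)\to KH^{(d)}_{0,\Q}(X)$, so the kernel in question is the image of $\wt{K}^{(d)}_0(X)$. But $\wt{K}^{(d)}_0(X)=0$ by Proposition~\ref{prop:K-fiber}: for $d\ge 2$ its hypotheses are precisely those of the present theorem, and for $d\le 1$ a normal projective variety of that dimension is smooth, so $K_0(X)=KH_0(X)$ and there is nothing to prove. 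Therefore $\ker\big(F^d_BK_0(X)\to KH_0(X)\big)$ is torsion, which finishes the argument.

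The points that would still need routine verification are: that the morphism of Brown--Gersten spectral sequences does send $F^d_BK_0(X)$ into $F^d_BKH_0(X)$ (naturality of the Brown filtration under $\sK\to{\sK}{\sH}$); that the top step of the Brown filtration agrees rationally with the top step of the $\gamma$-filtration, which is already built into Corollary~\ref{cor:compare}; and that the long exact sequence $\wt{K}_\ast(X)\to K_\ast(X)\to KH_\ast(X)$ splits into weight eigenspaces after $\otimes\Q$, which holds because the Chern character $\sK\to\mathcal{HN}$ and the $cdh$-localization map $\sK\to{\sK}{\sH}$ both preserve the $\lambda$-decompositions. The only genuinely non-formal ingredient is Proposition~\ref{prop:K-fiber}, where the vanishing hypothesis on $H^d_{Zar}(X,\Omega^i_{X/k})$ is used and which rests on the $cdh$-cohomology computations of Sections~\ref{section:SNC*}--\ref{section:KKH}; granted that, there is no further obstacle, so the "hard part" of this particular statement is merely the careful matching of the Brown filtration with the weight-$d$ piece of the rationalized $K$-theory.
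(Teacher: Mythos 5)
Your proposal is correct and follows essentially the same route as the paper: reduce $F^d_BK_0(X)$ to $F^d_\gamma K_0(X)\cong_{\Q} K^{(d)}_{0,\Q}(X)$ via Corollary~\ref{cor:compare} and ~\eqref{eqn:filt}, then use the fibration sequence of Corollary~\ref{cor:fiber1} with its $\lambda$-compatible long exact sequence and the vanishing $\wt{K}^{(d)}_0(X)=0$ of Proposition~\ref{prop:K-fiber}. Your explicit handling of the weight-$d$ eigenspace of $KH_{0,\Q}$ and of the low-dimensional case is only a slightly more careful bookkeeping of steps the paper leaves implicit.
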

\begin{proof} If $d = 1$, this is well known, so we assume $d \ge 2$.
The natural map from the first to the second spectral sequence above
gives the map $F^d_BK_0(X) \to F^d_BKH_0(X)$. To prove its inclusion,
we can replace $F^d_BK_0(X)$ by $F^d{\gamma}K_0(X)$ by
Corollary~\ref{cor:compare}. This group can in turn be replaced by
$K^{(d)}_{0, \Q}(X)$ by ~\eqref{eqn:filt}. We have seen earlier that
there are $\lambda$-decompositions on $K_0(X)$ and $\wt{K}_0(X)$, and
there is a fibration sequence of
spectra $\wt{K}(X) \to K(X) \to KH(X)$ by Corollary~\ref{cor:fiber1}.
This fibration sequence in particular yields an exact sequence
of eigenspaces
\[
\wt{K}^{(d)}_0(X) \to K^{(d)}_{0, \Q}(X) \to F^d_B KH_0(X).
\] 
Thus we only need to show that $\wt{K}^{(d)}_0(X) = 0$. But this is
proven in Proposition~\ref{prop:K-fiber}.  
\end{proof}

\section{$cdh$ cohomology and Hodge theory of singular schemes}
\label{section:HODGET}
We shall assume the ground field $k$ to be the field of complex numbers
$\C$ for the rest of this paper. For a $\C$-scheme $X$, we shall denote its
associated analytic space by $X_{\rm an}$. 
For simplicity of presentation, we shall assume all $\C$-schemes to be
quasi-projective for the rest of this work.
If $A$ is an abelian group,
then the analytic singular cohomology $H^*(X_{\rm an}, A)$ will be
simply written as $H^*(X, A)$. For a chain complex $\sF^{\bullet}$ of
presheaves of abelian groups on the analytic, Zariski or the 
$cdh$-site, we shall consider 
$\sF^{\bullet}$ also as a presheaf of Eilenberg-Mac Lane spectra and
write $R\Gamma\left(-, \sF^{\bullet}\right)$ as 
$\H \left(-, \sF^{\bullet}\right)$.

The Hodge theory of singular schemes
was invented by Deligne in \cite{Hodge2} and \cite{Hodge3}, where he showed
using Hironaka's resolution of singularities that for every $\C$-scheme $X$,
the analytic cohomology $H^*(X, \Z)$ has a mixed Hodge structure and hence
is equipped with a natural weight and Hodge filtration. This was achieved
by showing the descent property of the singular cohomology: if 
$X_{\bullet} \xrightarrow{\pi} X$ is a smooth and proper simplicial 
hypercovering, then the map $\Z_X \to R{\pi}_*\left({\Z}_{X_{\bullet}}\right)$
is a weak equivalence. Our purpose here is to interpret the Hodge theory
and the Hodge cohomology of $X$ in terms of the $cdh$ cohomology of
the algebraic K{\"a}hler differentials. We show in particular that for a
singular projective $\C$-scheme $X$, there
is a natural isomorphism $\H^*_{cdh}\left(X, \Omega^{\bullet}_{X/{\C}}\right)
\cong H^*(X, \C)$ such that the Hodge filtration corresponds to the 
{\sl Betti} filtration on the $cdh$ cohomology. We deduce several
consequences which are later used to study the Chow groups of
zero-cycles on such varieties in terms of $cdh$ cohomology of differential
forms.
   
\subsection{$cdh$-descent for Du Bois complex}\label{subsection:DBois}
Let $X$ be a $\C$-scheme and let $X_{\bullet} \xrightarrow{\pi} X$ be a 
smooth proper hypercovering of $X$. Let
\begin{equation}\label{eqn:Db0}
{\un{\Omega}}^{\bullet}_X : = 
R{\pi}_*\left(\Omega^{\bullet}_{{X_{\bullet}}/{\C}}\right).
\end{equation}
This complex was invented by Du Bois in \cite{DB}. He showed
that ${\un{\Omega}}^{\bullet}_X$ is a filtered complex of sheaves with
quasi-coherent cohomology sheaves such that 
\begin{equation}\label{eqn:Db00}
F^i{\un{\Omega}}^{\bullet}_X = 
R{\pi}_*\left(\Omega^{\ge i}_{{X_{\bullet}}/{\C}}[-i]\right) \ {\rm and}
\end{equation}
\[
Gr^i_F\left({\un{\Omega}}^{\bullet}_X\right) =
\frac{F^i{\un{\Omega}}^{\bullet}_X}{F^{i+1}{\un{\Omega}}^{\bullet}_X}
\cong R{\pi}_*\left(\Omega^i_{{X_{\bullet}}/{\C}}\right)[-i].
\]
In particular, ${\un{\Omega}}^{\bullet}_X \in D(qc/X)$, where
$D(qc/X)$ is the derived category of quasi-coherent sheaves on $X$.
If $X$ is projective, then ${\un{\Omega}}^{\bullet}_X$ has
coherent cohomology sheaves. 
The exact triangle 
\begin{equation}\label{eqn:Db01}
R{\pi}_*\left(\Omega^{\ge i}_{{X_{\bullet}}/{\C}}[-i]\right) \to
R{\pi}_*\left(\Omega^{\bullet}_{{X_{\bullet}}/{\C}}\right) \to
R{\pi}_*\left(\Omega^{<i}_{{X_{\bullet}}/{\C}}\right)
\end{equation}
now shows that 
\begin{equation}\label{eqn:Db02}
\frac{{\un{\Omega}}^{\bullet}_X}{F^i{\un{\Omega}}^{\bullet}_X}
\cong R{\pi}_*\left(\Omega^{<i}_{{X_{\bullet}}/{\C}}\right).
\end{equation}
We let
\[
{\un{\Omega}}^i_X := Gr^i_F\left({\un{\Omega}}^{\bullet}_X\right)[i]
=  R{\pi}_*\left(\Omega^i_{{X_{\bullet}}/{\C}}\right).
\]
\begin{lem}\label{lem:DB1}
Let $X$ be a $\C$-scheme of dimension $d$. Then 
\[
F^d{\un{\Omega}}^{\bullet}_X \cong 
Rf_*\Omega^{\bullet}_{{\wt{X}}/{\C}} \ {\rm and} \
F^i{\un{\Omega}}^{\bullet}_X = 0 \ {\rm in} \ D(qc/X) \ {\rm for} \ i \ge d+1,
\]
where $\wt{X} \xrightarrow{f} X$ is any resolution of singularities of 
$X_{\rm red}$.
\end{lem}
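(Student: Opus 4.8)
The plan is to prove both assertions by using the Hodge-theoretic weight/Hodge filtration on the Du Bois complex together with the fact that $\un{\Omega}^{\bullet}_X$ computes $H^*(X_{\an},\C)$ and is independent of the chosen hypercovering. First I would recall from \cite{DB} (or \cite{Hodge3}) that $\un{\Omega}^{\bullet}_X$ is independent, in $D(qc/X)$, of the choice of smooth proper hypercovering $X_{\bullet} \to X$, and likewise its Hodge filtration $F^i$ from \eqref{eqn:Db00} is canonical. Consequently, to compute $F^i\un{\Omega}^{\bullet}_X$ I am free to choose a convenient hypercovering. Since $X_{\red}$ has the same underlying topological space and the same Du Bois complex as $X$, I may assume $X$ is reduced.

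Next I would produce a hypercovering adapted to a fixed resolution $f:\wt X \to X$ of $X_{\red}$: take $X_0 = \wt X$, and let $X_1, X_2, \dots$ be built (by Hironaka, using resolution and further blow-ups of the fibre product $\wt X \times_X \wt X$, etc.) as a smooth proper hypercovering of the simplicial scheme $\cosk_0(\wt X/X)$. The key geometric input is that $f$ is an isomorphism over a dense open $U \subseteq X$ (over which $X$ is already smooth, after shrinking), so the exceptional locus $Z = X \setminus U$ has $\dim Z \le d-1$, and all the higher terms $X_n$ for $n \ge 1$ of this hypercovering can be arranged to lie over $Z$ — their images in $X$ have dimension $\le d-1$. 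Then $\un{\Omega}^i_X = R\pi_*\Omega^i_{X_{\bullet}/\C}$ is, by the spectral sequence of the simplicial object, an iterated cone involving $Rf_*\Omega^i_{\wt X/\C}$ and complexes supported on $Z$; in particular $\un{\Omega}^i_X$ is concentrated in cohomological degrees $\le \dim(\text{image}) + (\text{relative dimension})$. For $i = d$ the only component of $\Omega^d_{X_n/\C}$ that can survive is the one from $X_0 = \wt X$, because $\Omega^d_{X_n/\C} = 0$ whenever $X_n \to X$ factors through a scheme of dimension $< d$ in a way that kills top forms — more precisely, each $X_n$ with $n\ge 1$ maps to $Z$, which has dimension $\le d-1$, and the smooth schemes $X_n$ mapping properly to $Z$ contribute only to $\un\Omega^i$ for $i$ up to $\dim X_n$, but the relevant sheaf $R\pi_*$ of those terms, after the alternating-sum/Dold–Kan identification, telescopes away from the $F^d$-graded piece. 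This is the step I expect to be the main obstacle: one must check carefully that the contributions of the $X_n$, $n\ge1$, to $F^d\un{\Omega}^{\bullet}_X$ and to $Gr^i_F$ for $i\ge d+1$ genuinely vanish in $D(qc/X)$, and not merely that they are supported on a lower-dimensional locus. The clean way to do this is: since $\un\Omega^{\bullet}_X$ depends only on $X$, compare the hypercovering $X_{\bullet}$ above with the trivial one for $\wt X$; the induced map $F^i\un{\Omega}^{\bullet}_{\wt X} \to F^i\un{\Omega}^{\bullet}_X$ (functoriality of Du Bois for $f$) becomes, for $i = d$, the map $Rf_*\Omega^{\ge d}_{\wt X/\C}[-d] = Rf_*\Omega^{\bullet}_{\wt X/\C} \cdot (\text{shift})$ — i.e. $Rf_*(\omega_{\wt X}[-d])$ after noting $\Omega^{\ge d}_{\wt X} = \Omega^d_{\wt X}[-d] = \omega_{\wt X}[-d]$ since $\dim \wt X = d$ — and one shows this map is an isomorphism by checking the cone is both supported on $Z$ and has vanishing cohomology there, using that $F^d$ of the Du Bois complex of a $(d-1)$-dimensional scheme already vanishes (that is, the statement of the lemma in dimension $d-1$, so an induction on $\dim X$ closes the loop).

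Finally, for the second assertion: by \eqref{eqn:Db00}, $F^i\un{\Omega}^{\bullet}_X = R\pi_*(\Omega^{\ge i}_{X_{\bullet}/\C}[-i])$, and $\Omega^{\ge i}_{X_n/\C} = 0$ for all $n$ once $i > \dim X_n$. Since every $X_n$ has $\dim X_n \le \dim X = d$ (the hypercovering terms are smooth but their dimension is bounded by $d$, as they map properly onto closed subschemes of $X$), we get $\Omega^{\ge i}_{X_{\bullet}/\C} = 0$ term-wise for $i \ge d+1$, hence $F^i\un{\Omega}^{\bullet}_X = 0$ in $D(qc/X)$ for $i \ge d+1$. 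The only subtlety is justifying $\dim X_n \le d$: the $X_n$ are smooth proper over $X$ via maps whose image is a closed subscheme of $X$, and by construction in Hironaka's hypercovering each $X_n$ is generically finite over an $n$-fold fibre product resolved, all of which have dimension $\le d$; this is where I would invoke that the hypercovering can be chosen so that $X_n \to X$ has finite fibres over the generic points of its image, forcing $\dim X_n \le \dim X = d$. With both pieces in hand, and with the inductive input from the lower-dimensional case feeding the isomorphism $F^d\un{\Omega}^{\bullet}_X \cong Rf_*\Omega^{\bullet}_{\wt X/\C}$, the lemma follows.
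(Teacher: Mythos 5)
The paper itself offers no argument here: it simply cites Du Bois (\cite{DB}, Proposition~4.1), so your sketch has to stand on its own, and as written it has two genuine gaps. The first concerns the vanishing $F^i{\un{\Omega}}^{\bullet}_X=0$ for $i\ge d+1$ and the auxiliary hypercovering. You claim one can choose a smooth proper simplicial hypercovering with $X_0=\wt{X}$, all higher terms lying over the non-isomorphism locus $Z$, and $\dim X_n\le d$ for every $n$. Neither claim holds for simplicial hypercoverings: $X_1$ must map properly and \emph{surjectively} onto $(\cosk_0(\wt{X}/X))_1=\wt{X}\times_X\wt{X}$, which contains the diagonal, so $X_1$ necessarily has a component dominating $X$, not $Z$; worse, if $X$ has an isolated singularity resolved with exceptional divisor $E$ of dimension $d-1$, then $E\times E$ is an irreducible component of $\wt{X}\times_X\wt{X}$ of dimension $2d-2>d$ (for $d\ge 3$), so \emph{every} hypercovering refining this has $\dim X_1\ge 2d-2$. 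The dimension bound you invoke ("finite fibres over generic points of the image") is exactly what simplicial hypercoverings fail to provide; it is the raison d'\^etre of the cubical hyperresolutions of Guill\'en--Navarro et al., where $\dim X_\alpha\le \dim X-|\alpha|+1$ does hold, or else one argues by induction on $\dim X$ using blow-up triangles, which is what Du Bois does. So the termwise vanishing of $\Omega^{\ge i}_{X_\bullet/\C}$ is not available by your choice of $X_\bullet$.

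The second gap is in your "clean way" for $F^d$. The functorial comparison map goes in the direction ${\un{\Omega}}^{\bullet}_X\to Rf_*{\un{\Omega}}^{\bullet}_{\wt{X}}$ (not $F^i{\un{\Omega}}^{\bullet}_{\wt X}\to F^i{\un{\Omega}}^{\bullet}_X$ as you wrote), and to show its cone vanishes in filtration level $d$ you need to identify that cone with something built from the filtered Du Bois complexes of the lower-dimensional schemes $Z$ and $f^{-1}(Z)$ --- i.e.\ the exact triangle expressing compatibility of $({\un{\Omega}}^{\bullet},F)$ with abstract blow-up squares. "Supported on $Z$" by itself gives nothing, and your inductive appeal to "$F^d$ of the Du Bois complex of a $(d-1)$-dimensional scheme vanishes" cannot be fed into the argument until that descent triangle is in place; but that triangle is precisely the nontrivial content underlying Du Bois's Proposition~4.1 (and the cdh-descent used later in Proposition~\ref{prop:DB3}), so assuming it without proof or citation makes the argument circular. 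In short: either cite \cite{DB} as the paper does, or run an honest induction on dimension through the blow-up descent property of the filtered Du Bois complex (or through cubical hyperresolutions); the hypercovering-dimension route you propose fails as stated.
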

\begin{proof} {\sl cf.} \cite[Proposition~4.1]{DB}.
\end{proof} 

\begin{lem}\label{lem:DB2}
For any $\C$-scheme $X$ and $i \ge 0$, the natural map
\[
\H_{Zar}\left(X, F^i{\un{\Omega}}^{\bullet}_X\right) \to
\H_{cdh}\left(X, F^i{\un{\Omega}}^{\bullet}_X\right) 
\]
is a weak equivalence.
\end{lem}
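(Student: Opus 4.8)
The plan is to reduce the statement to a $cdh$-descent statement for each graded piece $\un{\Omega}^i_X = R\pi_*(\Omega^i_{X_\bullet/\C})$ and then assemble these via the finite filtration $F^\bullet \un{\Omega}^\bullet_X$. First I would observe that by Lemma~\ref{lem:DB1} the filtration $F^\bullet \un{\Omega}^\bullet_X$ is finite: $F^i \un{\Omega}^\bullet_X = 0$ for $i \geq d+1$, so $F^d \un{\Omega}^\bullet_X$ is the top nonzero piece and equals $Rf_*\Omega^\bullet_{\wt X/\C}$ for a resolution $\wt X \to X_{\mathrm{red}}$. The key point is that each successive quotient $Gr^i_F(\un{\Omega}^\bullet_X) = \un{\Omega}^i_X[-i] = R\pi_*(\Omega^i_{X_\bullet/\C})[-i]$ satisfies $cdh$-descent, i.e. the map $\H_{Zar}(X, \un{\Omega}^i_X) \to \H_{cdh}(X, \un{\Omega}^i_X)$ is a weak equivalence; granting this, a descending induction on $i$ (starting from $i = d$ where $F^d = Gr^d_F$ and going down, or equivalently an ascending induction using the exact triangles $Gr^i_F[i] \to F^i/F^{i+1}$ type sequences) together with the five-lemma on the long exact cohomology sequences of the triangles $F^{i+1}\un{\Omega}^\bullet_X \to F^i\un{\Omega}^\bullet_X \to Gr^i_F(\un{\Omega}^\bullet_X)$ yields the lemma for all $i$, in particular for $i = 0$ which is the full Du Bois complex.

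The heart of the argument is therefore the $cdh$-descent for $\un{\Omega}^i_X$. Here I would use the fact that $cdh$-cohomology of the algebraic Kähler differential sheaf $\Omega^i_{/\C}$ is already understood: by Voevodsky's results (and the setup recalled in Section~\ref{section:prelims}), $\H_{cdh}(X, \Omega^i_{X/\C}) \simeq \H_{cdh}(X, a^*\Omega^i_{X/\C})$, and one knows that for a smooth proper hypercovering $X_\bullet \to X$ one has $a^*\Omega^i_{X/\C} \simeq$ the $cdh$-sheafification computed from the $X_\bullet$. More precisely, the comparison $\H_{cdh}(X, \Omega^i_{/\C}) \simeq R\pi_*(\Omega^i_{X_\bullet/\C})$ in the $cdh$ topology is essentially the statement that $\Omega^i_{/\C}$ (as a $cdh$-sheaf) has cohomological descent for smooth proper hypercoverings, which is built into the construction of the $cdh$ topology and the proper descent / blow-up excision available for Kähler differentials in characteristic zero. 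So I expect to argue: $\H_{cdh}(X, \un{\Omega}^i_X) = \H_{cdh}(X, R\pi_*\Omega^i_{X_\bullet/\C}) \simeq \H_{cdh}(X, \Omega^i_{X/\C})$ (using $cdh$-descent of $\Omega^i$ along $X_\bullet$), while on the Zariski side $\H_{Zar}(X, \un{\Omega}^i_X) = \H_{Zar}(X, R\pi_*\Omega^i_{X_\bullet/\C})$, and the comparison map factors through $\H_{Zar}(X, a_*a^*\Omega^i_{X/\C})$; Lemma~\ref{lem:direct-image} tells us $Ra_*a^*\Omega^i_{X/\C}$ has coherent (quasi-coherent) cohomology, so Zariski and $cdh$ hypercohomology of this complex agree, reducing everything to the identification $\un{\Omega}^i_X \simeq Ra_*a^*\Omega^i_{X/\C}$ in the derived category of quasi-coherent sheaves. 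That last identification is exactly the content of Du Bois's theory combined with $cdh$-descent of differential forms.

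The main obstacle I anticipate is justifying this derived-category identification $R\pi_*(\Omega^i_{X_\bullet/\C}) \simeq Ra_*a^*\Omega^i_{X/\C}$ cleanly — that is, showing that the naive Zariski pushforward from a smooth proper hypercovering computes the same object as the $cdh$-sheafified differential forms. This requires a descent spectral sequence argument: both sides can be computed by a spectral sequence whose $E_1$ (or $E_2$) terms involve $\Omega^i$ on the smooth pieces $X_n$, and one must check that the resolution of singularities used to build $X_\bullet$ and the abstract blow-up squares generating the $cdh$ topology produce compatible computations. The cleanest route is probably to invoke the already-established blow-up and proper descent properties for $cdh$-cohomology of $\Omega^i_{/\C}$ (as in \cite{CJams}, \cite{CAnnals}, \cite{Haes}), reduce to the case of a single abstract blow-up square by a standard induction on the hypercovering, and then cite the known $cdh$-descent of Kähler differentials for such squares; once that is in hand, everything assembles formally via the five-lemma on the finite $F^\bullet$-filtration as described above.
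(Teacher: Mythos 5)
Your overall skeleton is the same as the paper's: both proofs use the finiteness of the filtration (Lemma~\ref{lem:DB1}), a descending induction on $i$, and the five lemma applied to the triangles $F^{i+1}{\un{\Omega}}^{\bullet}_X \to F^{i}{\un{\Omega}}^{\bullet}_X \to {\un{\Omega}}^i_X[-i]$, so everything comes down to the graded pieces. Where you genuinely diverge is in how the graded piece is treated. The paper's argument is the short chain
$\H_{cdh}\left(X, {\un{\Omega}}^i_X\right) = \H_{cdh}\left(X, R\pi_*\Omega^i_{X_{\bullet}/\C}\right) = \H_{cdh}\left(X_{\bullet}, \Omega^i_{X_{\bullet}/\C}\right) = \H_{Zar}\left(X_{\bullet}, \Omega^i_{X_{\bullet}/\C}\right) = \H_{Zar}\left(X, {\un{\Omega}}^i_X\right)$,
whose only nonformal input is that Zariski and $cdh$ cohomology of $\Omega^i$ agree on the \emph{smooth} simplicial pieces (\cite[Corollary~2.5]{CJams}); the sheaf $\Omega^i_{X/\C}$ on the singular $X$ never enters. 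You instead route both sides through $\H_{cdh}\left(X, \Omega^i_{X/\C}\right)$, which forces you to establish ${\un{\Omega}}^i_X \simeq Ra_*a^*\Omega^i_{X/\C}$ --- precisely the point you flag as your main obstacle. That identification is true and can be imported from the literature (it is essentially the graded content of \cite[Lemma~7.1]{BassNK}, cf.\ \cite{CAnnals}), so your route can be made to work; but be aware that inside this paper that comparison is (the graded piece of) Proposition~\ref{prop:DB3}, which is proved \emph{using} Lemma~\ref{lem:DB2}, so deriving it from ``Du Bois theory plus $cdh$-descent'' without an external citation would be circular here, and re-proving it by induction over blow-up squares amounts to redoing the CHSW-type theorem --- a heavy detour that the direct argument sidesteps entirely. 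Two smaller corrections: the equality $\H_{Zar}\left(X, Ra_*a^*\Omega^i_{X/\C}\right) = \H_{cdh}\left(X, \Omega^i_{X/\C}\right)$ is immediate from the adjunction for the morphism of sites $a$, so Lemma~\ref{lem:direct-image} (quasi-coherence of the cohomology sheaves) is neither needed for it nor sufficient by itself to compare Zariski with $cdh$ hypercohomology; and the descent identification $\H_{cdh}\left(X, R\pi_*\Omega^i_{X_{\bullet}/\C}\right) \simeq \H_{cdh}\left(X_{\bullet}, \Omega^i_{X_{\bullet}/\C}\right)$ along the smooth proper hypercovering is used without comment both by you and by the paper, so no objection on that score.
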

\begin{proof}
We prove by a descending induction on $i \ge 0$. We know from 
Lemma~\ref{lem:DB1} that $F^{d+1}{\un{\Omega}}^{\bullet}_X = 0$,
where $d$ is the dimension of $X$. In particular, 
$F^{d}{\un{\Omega}}^{\bullet}_X \xrightarrow{\cong}
Gr^d_F{\un{\Omega}}^{\bullet}_X = {\un{\Omega}}^i_X [-d]$. On the other hand, 
we have for any $i \ge 0$,
\[
\begin{array}{lll}
\H_{cdh}\left(X, {\un{\Omega}}^i_X\right) & = &
\H_{cdh}\left(X, R{\pi}_*\Omega^i_{{X_{\bullet}}/{\C}}\right) \\
& = & \H_{cdh}\left(X_{\bullet}, \Omega^i_{{X_{\bullet}}/{\C}}\right) \\ 
& = & \H_{Zar}\left(X_{\bullet}, \Omega^i_{{X_{\bullet}}/{\C}}\right) \\
& = & \H_{Zar}\left(X, R{\pi}_*\Omega^i_{{X_{\bullet}}/{\C}}\right) \\
& = & \H_{Zar}\left(X, {\un{\Omega}}^i_X\right),
\end{array}
\]
where the third equality follows from \cite[Corollary~2.5]{CJams}
since $X_{\bullet}$ is smooth. This in particular proves the result for 
$F^{d}{\un{\Omega}}^{\bullet}_X$. Let us now assume that the lemma holds
for $F^{\ge i+1}{\un{\Omega}}^{\bullet}_X$ and consider the
commutative diagram of exact triangles 
\[
\xymatrix@C.5pc{
{\H_{Zar}\left(X, F^{i+1}{\un{\Omega}}^{\bullet}_X\right)} \ar[r] \ar[d] &
{\H_{Zar}\left(X, F^{i}{\un{\Omega}}^{\bullet}_X\right)} \ar[r] \ar[d] &
{\H_{Zar}\left(X, {\un{\Omega}}^i_X\right)[-i]} \ar[d] \\
{\H_{cdh}\left(X, F^{i+1}{\un{\Omega}}^{\bullet}_X\right)} \ar[r] &
{\H_{cdh}\left(X, F^{i}{\un{\Omega}}^{\bullet}_X\right)} \ar[r] & 
{\H_{cdh}\left(X, {\un{\Omega}}^i_X\right)[-i]}}
\]
The left vertical map is a weak equivalence by induction and we have just
shown that the right vertical map is also a weak equivalence. Hence so
is the middle vertical map.
\end{proof}  

For a $\C$-scheme $X$, let $\Omega^{\bullet}_{X/{\C}}$ denote the
filtered de Rham complex of $X$ with the Betti filtration
$F^i\Omega^{\bullet}_{X/{\C}} = \Omega^{\ge i}_{X/{\C}}[-i]$
and $Gr^i_F\Omega^{\bullet}_{X/{\C}} = \Omega^i_{X/{\C}}[-i]$.
Note that this is a finite filtration as $X$ is quasi-projective.
The morphism $X_{\bullet} \xrightarrow{\pi} X$ induces the natural
map of filtered complexes 
\begin{equation}\label{eqn:Db1} 
\left(\Omega^{\bullet}_{X/{\C}}, F\right) \to 
\left({\un{\Omega}}^{\bullet}_X, F\right),
\end{equation}
which gives rise to the map of filtered complexes 
\begin{equation}\label{eqn:Db2} 
\left(a^*\Omega^{\bullet}_{X/{\C}}, F\right) \xrightarrow{\theta_X} 
\left(a^*{\un{\Omega}}^{\bullet}_X, F\right).
\end{equation}
These maps are known ({\sl cf.} \cite{DB}) to be weak equivalences if $X$ is 
smooth. For singular schemes, they are related by the following. 
\begin{prop}{({\sl cf.} \cite[Lemma~7.1]{BassNK})}\label{prop:DB3}
The above map induces the weak equivalence
\[
{\H_{cdh}\left(X, F^i\Omega^{\bullet}_{X/{\C}}\right)} \xrightarrow{\cong}
{\H_{Zar}\left(X, F^{i}{\un{\Omega}}^{\bullet}_X\right)}  
\]
for every $i \ge 0$.
\end{prop}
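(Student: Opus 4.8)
The plan is to use Lemma~\ref{lem:DB2} to replace the Zariski hypercohomology of $F^i\un{\Omega}^{\bullet}_X$ by its $cdh$ hypercohomology, and then to reduce the comparison to the associated graded pieces of the Betti filtration. Concretely, the filtration $F^{\bullet}\Omega^{\bullet}_{X/{\C}}$ is finite (as noted in the excerpt, since $X$ is quasi-projective) and $F^{\bullet}\un{\Omega}^{\bullet}_X$ is finite by Lemma~\ref{lem:DB1}, with $F^{d+1}\un{\Omega}^{\bullet}_X = 0$ for $d = \dim X$; moreover $\theta_X$ of \eqref{eqn:Db2} is a morphism of filtered complexes of $cdh$-sheaves. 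By Lemma~\ref{lem:DB2} it suffices to show that $\theta_X$ induces a weak equivalence $\H_{cdh}(X, F^i\Omega^{\bullet}_{X/{\C}}) \xrightarrow{\cong} \H_{cdh}(X, F^i\un{\Omega}^{\bullet}_X)$ for all $i$. Comparing the exact triangles $F^{i+1} \to F^i \to Gr^i_F$ for the two filtered complexes, and running a descending induction on $i$ (starting from the range where both $F^i$ vanish) together with the five lemma on the resulting long exact sequences, I would reduce to proving that $\theta_X$ induces a weak equivalence on graded pieces, i.e.
\[
\H_{cdh}\left(X, a^*\Omega^i_{X/{\C}}\right) \xrightarrow{\cong} \H_{cdh}\left(X, a^*\un{\Omega}^i_X\right) \qquad (i \ge 0),
\]
where on $Gr^i_F$ the map $\theta_X$ is the one induced by the pullback $\pi^*$ along $X_{\bullet} \xrightarrow{\pi} X$.

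Next I would identify the target. Since $\un{\Omega}^i_X = R{\pi}_*\Omega^i_{{X_{\bullet}}/{\C}}$, one has, exactly as in the proof of Lemma~\ref{lem:DB2}, $\H_{cdh}(X, a^*\un{\Omega}^i_X) = \H_{cdh}(X_{\bullet}, a^*\Omega^i_{{X_{\bullet}}/{\C}}) = \holim_{\Delta}\H_{cdh}(X_n, a^*\Omega^i_{{X_n}/{\C}})$, and since each $X_n$ is smooth, \cite[Corollary~2.5]{CJams} identifies each $\H_{cdh}(X_n, a^*\Omega^i_{{X_n}/{\C}})$ with $\H_{Zar}(X_n, \Omega^i_{{X_n}/{\C}})$, so that this totalization is $\H_{Zar}(X_{\bullet}, \Omega^i_{{X_{\bullet}}/{\C}}) = \H_{Zar}(X, \un{\Omega}^i_X)$, recovering Lemma~\ref{lem:DB2} on graded level. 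Under these identifications the map to be proven an equivalence becomes the canonical comparison map
\[
\H_{cdh}\left(X, a^*\Omega^i_{X/{\C}}\right) \longrightarrow \holim_{\Delta}\H_{cdh}\left(X_n, a^*\Omega^i_{{X_n}/{\C}}\right)
\]
induced by $\pi^*$; tracking the filtration shows it agrees with the map coming from $\theta_X$ on $Gr^i_F$.

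The heart of the matter is then to see that this comparison map is a weak equivalence, and this I expect to be the main obstacle. It holds because the presheaf of spectra $U \mapsto \H_{cdh}(U, a^*\Omega^i_{U/{\C}})$, being by construction the $cdh$-fibrant replacement of $a^*\Omega^i$, satisfies descent for $cdh$-hypercoverings, and the smooth proper hypercovering $X_{\bullet} \to X$ is a $cdh$-hypercovering. This last point is where Hironaka's resolution of singularities over $\C$ is used: every proper surjective morphism in $\Sch/\C$ can be refined by a composition of abstract blow-ups together with an isomorphism over a dense open, so that proper $cdh$-hypercoverings --- in particular smooth proper hypercoverings --- are refinable by hypercoverings assembled from abstract blow-up squares and Nisnevich squares, and hence yield $cdh$-descent ({\sl cf.} \cite{SuslinV}, \cite{Haes}). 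Granting this, the comparison map above is an equivalence, so $\theta_X$ is a weak equivalence on each $Gr^i_F$, and the reduction of the first paragraph then gives the assertion; the only genuinely topological input is the identification of a smooth proper hypercovering with a $cdh$-hypercovering, everything else being the formal manipulation of finite filtrations and the already-established Lemmas~\ref{lem:direct-image}, \ref{lem:DB1} and \ref{lem:DB2}.
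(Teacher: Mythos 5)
Your first reduction (use Lemma~\ref{lem:DB2} to pass to $cdh$-hypercohomology on the right, then descend through the finite filtrations with the exact triangles $F^{i+1}\to F^i\to Gr^i_F$ and the five lemma, so that everything rests on the graded pieces) is exactly the paper's strategy. The gap is in your treatment of the graded pieces. You claim that the smooth proper hypercovering $X_\bullet\xrightarrow{\pi}X$ is a $cdh$-hypercovering because ``every proper surjective morphism in $\Sch/\C$ can be refined by a composition of abstract blow-ups,'' and that $cdh$-fibrant presheaves therefore satisfy descent along it. That refinement claim is false: $cdh$-covers are completely decomposed, i.e.\ over every point of the base they must contain a point with trivial residue field extension. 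A connected finite {\'e}tale double cover $Y\to X$ of smooth projective varieties is proper and surjective, yet no $cdh$-cover of $X$ can factor through it, since any such cover has a point over the generic point of $X$ with residue field $\C(X)$, while every point of $Y$ over the generic point has residue field $\C(Y)\supsetneq\C(X)$. Such maps are perfectly admissible in (and do occur in) smooth proper hypercoverings --- e.g.\ the \v{C}ech nerve of $Y\to X$ --- so proper hypercoverings are $h$-hypercoverings but not in general $cdh$-hypercoverings; the two topologies differ by exactly these finite covers. Hence the descent of $\H_{cdh}\left(-,a^*\Omega^i\right)$ along the given $X_\bullet\to X$ is not a formal consequence of $cdh$-fibrancy; it is essentially the content of the proposition. (Hypercover descent for fibrant objects is fine; what is missing is either that the fixed hypercovering of Section~\ref{subsection:DBois} is a $cdh$-hypercovering --- false in general --- or the construction of a special smooth proper hypercovering with completely decomposed structure maps together with an appeal to the independence of $\left({\un{\Omega}}^{\bullet}_X,F\right)$ from the choice, neither of which you supply.)

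The paper closes exactly this step by a different, more elementary induction: to prove $\H_{cdh}\left(X,a^*\Omega^i_{X/\C}\right)\xrightarrow{\cong}\H_{cdh}\left(X,a^*{\un{\Omega}}^i_X\right)$ it reduces to $X$ reduced and inducts on $\dim X$, using only the single abstract blow-up square formed by a resolution of singularities $\wt{X}\to X$ with $S=X_{\rm sing}$ and $\wt{S}=f^{-1}(S)$. Both sides satisfy Mayer--Vietoris for this square (this \emph{is} a formal property of $\H_{cdh}$, since abstract blow-ups are $cdh$-covers), the terms over $\wt{X}$ agree because $\Omega^{\bullet}\to{\un{\Omega}}^{\bullet}$ is a (filtered) quasi-isomorphism on smooth schemes, and the terms over $S$ and $\wt{S}$ agree by induction on dimension; the five lemma then gives the claim, and the descending induction on $i$ that you already set up finishes the proof. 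If you want to keep your descent-along-the-hypercovering picture, you would have to restrict to hypercoverings built from iterated resolutions whose structure maps are completely decomposed and verify both their existence and that they compute the Du Bois complex --- at which point the blow-up-square induction is shorter and already available.
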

\begin{proof}
By Lemma~\ref{lem:DB2}, we can replace the right hand side by the corresponding
$cdh$ hypercohomology. As in Lemma~\ref{lem:DB1}, we prove by
a descending induction on $i$. We first show the statement of the proposition
for the graded pieces. Since we are now working with the $cdh$ cohomology,
we can assume that $X$ is reduced. 
We use an induction on the dimension $d$ of $X$. If $d= 0$, then $X$ is
smooth and hence the statement is obvious. So suppose that $d \ge 1$.
Let $\wt{X} \xrightarrow{f} X$ be
a resolution of singularities of $X$. Let $S \inj X$ denote the singular locus
of $X$ and let $\wt{S} \inj \wt{X}$ be its inverse image. Then descent
property of the $cdh$ cohomology gives a commutative diagram of
exact triangles
\[
\xymatrix@C.5pc{
{\H_{cdh}\left(\wt{S}, \Omega^{i}_{{\wt{S}}/{\C}}\right)}[-1] \ar[r] \ar[d] &
{\H_{cdh}\left(X, \Omega^{i}_{X/{\C}}\right)} \ar[r] \ar[d] & 
{\H_{cdh}\left(\wt{X}, \Omega^{i}_{{\wt{X}}/{\C}}\right) \oplus
\H_{cdh}\left(S, \Omega^{i}_{S/{\C}}\right)} \ar[d] \\
{\H_{cdh}\left(\wt{S}, {\un{\Omega}}^i_{\wt{S}}\right)}[-1] \ar[r] &
{\H_{cdh}\left(X, {\un{\Omega}}^i_X\right)} \ar[r] &
{\H_{cdh}\left(\wt{X}, {\un{\Omega}}^i_{\wt{X}}\right) \oplus
\H_{cdh}\left(S, {\un{\Omega}}^i_S\right)},}
\]
where the left and the right vertical maps are weak equivalence by induction
on dimension and by the smoothness of $\wt{X}$. Hence the middle one is also
a weak equivalence.
Now suppose the proposition is proven for $F^{\ge i+1}$ and use the same 
argument as in the proof of Lemma~\ref{lem:DB2} to get the proof for $F^i$.
\end{proof}

We now collect some important consequences of the above comparison results.
\begin{cor}\label{cor:CDB1}
For any $\C$-scheme $X$ and any $n \ge 0$, there is a natural isomorphism
\[
{\H^n_{cdh}\left(X, \Omega^{\bullet}_{X/{\C}}\right)}
\xrightarrow{\cong} H^n\left(X, \C\right).
\]
If $X$ is projective, there is a spectral sequence 
\[
E^{p,q}_1 = H^q_{cdh}\left(X, \Omega^p_{X/{\C}}\right) \Rightarrow
H^{p+q}(X, \C).\]
Moreover, this spectral sequence degenerates and the induced
filtration on $H^*(X, \C)$ coincides with its Hodge filtration. 
\end{cor}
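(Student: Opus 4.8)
The plan is to deduce both statements from Proposition~\ref{prop:DB3} together with the foundational properties of the Du Bois complex, leaving the genuinely Hodge-theoretic input to be cited.

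First I would establish the isomorphism $\H^n_{cdh}(X, \Omega^\bullet_{X/\C}) \xrightarrow{\cong} H^n(X, \C)$. Since $F^0\Omega^\bullet_{X/\C} = \Omega^\bullet_{X/\C}$ and $F^0{\un{\Omega}}^\bullet_X = {\un{\Omega}}^\bullet_X$, the case $i = 0$ of Proposition~\ref{prop:DB3} gives a weak equivalence $\H_{cdh}(X, \Omega^\bullet_{X/\C}) \xrightarrow{\cong} \H_{Zar}(X, {\un{\Omega}}^\bullet_X)$. On the other hand, Du Bois \cite{DB} showed that $\H^n_{Zar}(X, {\un{\Omega}}^\bullet_X) \cong H^n(X_{\an}, \C) = H^n(X, \C)$; this follows from cohomological descent for a smooth proper hypercovering $\pi : X_\bullet \to X$ (so that $R\pi_*{\C}_{X_\bullet} \cong {\C}_X$) combined with the algebraic de Rham comparison theorem applied to the smooth schemes $X_n$, using ~\eqref{eqn:Db0}. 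Composing the two isomorphisms gives the assertion, and naturality is immediate as every map used is functorial in $X$. This step does not require $X$ to be projective.

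Next I would produce the spectral sequence. As $X$ is quasi-projective, the Betti filtration $F^\bullet$ on $\Omega^\bullet_{X/\C}$ is finite, so applying $\H_{cdh}(X, -)$ yields a convergent spectral sequence with
\[
E^{p,q}_1 = \H^{p+q}_{cdh}\big(X, Gr^p_F\Omega^\bullet_{X/\C}\big)
= \H^{p+q}_{cdh}\big(X, \Omega^p_{X/\C}[-p]\big)
= H^q_{cdh}\big(X, \Omega^p_{X/\C}\big),
\]
abutting to $\H^{p+q}_{cdh}(X, \Omega^\bullet_{X/\C})$, which by the first part equals $H^{p+q}(X, \C)$ when $X$ is projective. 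Using Proposition~\ref{prop:DB3} for every $i \ge 0$, compatibly with the inclusions $F^{i+1} \hookrightarrow F^i$, identifies this spectral sequence from the $E_1$-page on with the spectral sequence of the filtered complex $({\un{\Omega}}^\bullet_X, F)$ on $X_{Zar}$ (equivalently, via Lemma~\ref{lem:DB2}, on $X_{cdh}$).

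Finally, I would invoke degeneration and the description of the abutment filtration. For $X$ proper over $\C$ (in particular $X$ projective), Du Bois \cite{DB}, extending Deligne \cite{Hodge3}, proved that the spectral sequence of $({\un{\Omega}}^\bullet_X, F)$ degenerates at $E_1$ and that the induced filtration on $H^n(X, \C)$ is exactly the Hodge filtration $F^\bullet H^n(X, \C)$ of the canonical mixed Hodge structure. Transporting this along the isomorphism of spectral sequences from the previous paragraph proves the remaining two claims. The step I expect to need the most care is precisely this transport: one must know that Proposition~\ref{prop:DB3} yields not merely abutment-wise isomorphisms but an isomorphism of the filtered objects $F^i$ for all $i$, compatibly with the transition maps, hence an isomorphism of associated graded pieces and therefore of the two spectral sequences. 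This is, however, exactly the content of Proposition~\ref{prop:DB3}, so the passage is formal; the nontrivial inputs --- $E_1$-degeneration and the interpretation of the abutment filtration as the Hodge filtration --- are imported wholesale from \cite{DB} and \cite{Hodge3}.
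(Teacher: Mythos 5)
Your proof is correct and follows essentially the same route as the paper, which deduces the corollary directly from Proposition~\ref{prop:DB3} together with Du Bois's degeneration theorem \cite[Theorem~4.5]{DB}; your write-up simply makes explicit the filtered comparison and the formal transport of the spectral sequence that the paper leaves implicit.
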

\begin{proof}
It follows directly from Proposition~\ref{prop:DB3} and
\cite[Theorem~4.5]{DB}.
\end{proof}
\begin{cor}\label{cor:CDB2}
For a projective $\C$-scheme $X$, the Hodge filtration on $H^n(X, \C)$
is given by
\[
H^n(X, \C) = F^0H^n(X, \C) \supseteq \cdots F^nH^n(X, \C) \supseteq
F^{n+1}H^n(X, \C) = 0, \ {\rm where}
\]
\[
\frac{F^{i}H^n(X, \C)}{F^{i+1}H^n(X, \C)} \cong 
H^{n-i}_{cdh}\left(X, \Omega^i_{X/{\C}}\right).
\] 
Moreover, there is a natural isomorphism
\[
{\H^n_{cdh}\left(X, \Omega^{\ge i}_{X/{\C}}[-i]\right)}
\xrightarrow{\cong} \H_{cdh}\left(X, F^i{\un{\Omega}}^{\bullet}_X\right)
\xrightarrow{\cong} F^iH^n\left(X, \C\right)
\]
for all $n, i \ge 0$.
\end{cor}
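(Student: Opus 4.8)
The plan is to deduce everything from the comparison results already established, in particular Proposition~\ref{prop:DB3}, Lemma~\ref{lem:DB2}, and Corollary~\ref{cor:CDB1}. For the first assertion (the description of the Hodge filtration by graded pieces), I would start from the degeneration of the Hodge-de Rham spectral sequence $E^{p,q}_1 = H^q_{cdh}(X, \Omega^p_{X/{\C}}) \Rightarrow H^{p+q}(X, \C)$ stated in Corollary~\ref{cor:CDB1}. Degeneration at $E_1$ means the induced filtration $F^{\bullet}$ on $H^n(X, \C)$ has graded pieces $Gr^i_F H^n(X, \C) \cong E^{i, n-i}_1 = H^{n-i}_{cdh}(X, \Omega^i_{X/{\C}})$, which is exactly the claimed formula. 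The fact that the filtration is finite, starting at $F^0 H^n = H^n$ and ending at $F^{n+1}H^n = 0$, follows because $\Omega^{\ge i}_{X/{\C}} = 0$ for $i$ larger than $\dim X$ and, more precisely, because the contributing terms $E^{i,n-i}_1$ vanish for $i < 0$ or $i > n$ (the latter since $\Omega^p$ sits in cohomological degree $p$ so $E^{p,q}_1$ needs $q \ge 0$). So the finiteness and the endpoints are immediate from the shape of the filtered complex $\Omega^{\bullet}_{X/{\C}}$ with its Betti filtration.

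For the second assertion, the chain of isomorphisms, I would read it off the filtered comparison directly. The middle isomorphism $\H_{cdh}(X, F^i\Omega^{\bullet}_{X/{\C}}) \xrightarrow{\cong} \H_{Zar}(X, F^i{\un{\Omega}}^{\bullet}_X)$ is precisely Proposition~\ref{prop:DB3} (recalling that $F^i\Omega^{\bullet}_{X/{\C}} = \Omega^{\ge i}_{X/{\C}}[-i]$ by definition of the Betti filtration), and combined with Lemma~\ref{lem:DB2} one also gets $\H_{Zar}(X, F^i{\un{\Omega}}^{\bullet}_X) \cong \H_{cdh}(X, F^i{\un{\Omega}}^{\bullet}_X)$. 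For the last isomorphism $\H^n_{cdh}(X, F^i{\un{\Omega}}^{\bullet}_X) \cong F^i H^n(X, \C)$, I would invoke Du Bois's theorem \cite[Theorem~4.5]{DB}, which identifies $\H_{Zar}(X, F^i{\un{\Omega}}^{\bullet}_X)$ — the hypercohomology of the $i$-th piece of the Du Bois complex — with the $i$-th piece $F^i H^n(X, \C)$ of the Hodge filtration on singular cohomology; this is exactly what Du Bois's construction of the mixed Hodge structure via smooth proper hypercoverings delivers. The compatibility with the first assertion is then automatic: the graded pieces of $F^i{\un{\Omega}}^{\bullet}_X$ are $R\pi_*(\Omega^i_{X_{\bullet}/{\C}})[-i] = {\un{\Omega}}^i_X[-i]$, whose $cdh$ (equivalently Zariski) hypercohomology in degree $n$ is $H^{n-i}_{cdh}(X, \Omega^i_{X/{\C}})$ by the computation in the proof of Lemma~\ref{lem:DB2}.

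Concretely, the write-up would be short: invoke Corollary~\ref{cor:CDB1} for the degeneration and the consequent identification of graded pieces; note the endpoints from the finiteness of the Betti filtration; then string together Proposition~\ref{prop:DB3}, Lemma~\ref{lem:DB2}, and \cite[Theorem~4.5]{DB} for the three-term chain of isomorphisms, observing that the induced filtrations match up because both sides have the same associated graded. I do not anticipate a real obstacle here — the corollary is essentially a repackaging of the preceding two results plus Du Bois's theorem — but the one point requiring a little care is making sure the filtration induced by the $cdh$ hypercohomology spectral sequence literally agrees with Du Bois's Hodge filtration, and not merely that they have isomorphic graded pieces; this is handled by the fact that the comparison map in \eqref{eqn:Db2} is a map of \emph{filtered} complexes, so it induces a filtered quasi-isomorphism after applying $\H_{cdh}$, hence an isomorphism of filtered objects on cohomology, which is exactly the content of the last displayed chain.
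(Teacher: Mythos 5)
Your argument is correct and essentially the paper's own: the first assertion is read off the degeneration in Corollary~\ref{cor:CDB1}, and the filtered chain is obtained from Proposition~\ref{prop:DB3} (plus Lemma~\ref{lem:DB2} for the Zariski/$cdh$ middle term) together with the identification of $\H^n\bigl(X, F^i{\un{\Omega}}^{\bullet}_X\bigr)$ with $F^iH^n(X,\C)$. The only cosmetic difference is that you cite \cite[Theorem~4.5]{DB} for that last identification, while the paper unwinds $F^i{\un{\Omega}}^{\bullet}_X \cong R\pi_*\bigl(\Omega^{\ge i}_{X_\bullet/\C}[-i]\bigr)$ and cites \cite{BPW}; both rest on the same hypercovering description of the Hodge filtration.
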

\begin{proof}
The first assertion follows at once from Corollary~\ref{cor:CDB1}.
For the second, we use the isomorphism $F^i{\un{\Omega}}^{\bullet}_X \cong
R{\pi}_*\left(\Omega^{\ge i}_{{X_{\bullet}}/{\C}}[-i]\right)$ from 
~\eqref{eqn:Db00}, which gives
\[
\begin{array}{lll}
\H^n_{cdh}\left(X, \Omega^{\ge i}_{X/{\C}}[-i]\right) & 
\cong & \H^{n}_{Zar}\left(X, F^i{\un{\Omega}}^{\bullet}_X\right) \\
& \cong & 
\H^{n}_{Zar}\left(X, R{\pi}_*\left(\Omega^{\ge i}_{{X_{\bullet}}/{\C}}[-i]
\right)\right) \\
& \cong & 
\H^{n}_{Zar}\left(X_{\bullet}, \Omega^{\ge i}_{{X_{\bullet}}/{\C}}[-i]\right)
\\
& \cong & F^iH^n\left(X, \C\right),
\end{array}
\]
where the first isomorphism follows from Proposition~\ref{prop:DB3}
and the last isomorphism follows from \cite[(1),(3)]{BPW}. This proves the
second assertion.
\end{proof}

Another consequence of Proposition~\ref{prop:DB3} is the following
simple criterion for the Du Bois singularities of a variety. 

\begin{cor}\label{cor:DBSing}
Let $X$ be a quasi-projective variety over a field $k$ of characteristic
zero. Then $X$ has {\sl Du Bois} singularities if and only if
\[
H^i_{Zar}\left(X, \sO_X\right) \xrightarrow{\cong}
H^i_{cdh}\left(X, \sO_X\right)
\]
for all $i \ge 0$.
\end{cor}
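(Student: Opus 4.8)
The plan is to deduce this from the fundamental definition of Du Bois singularities together with the comparison results just established. Recall that $X$ has Du Bois singularities precisely when the natural map $\sO_X \to \un{\Omega}^0_X$ is a quasi-isomorphism in $D(qc/X)$, equivalently when $\sO_X \to Gr^0_F\un{\Omega}^{\bullet}_X = \un{\Omega}^0_X$ is an isomorphism. (This is the standard definition of Du Bois singularities; I would cite \cite{DB}.) So the whole statement reduces to showing that the condition ``$H^i_{Zar}(X,\sO_X) \xrightarrow{\cong} H^i_{cdh}(X,\sO_X)$ for all $i$'' is equivalent to ``$\sO_X \to \un{\Omega}^0_X$ is a quasi-isomorphism of complexes of Zariski sheaves''.

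First I would record that by Lemma~\ref{lem:DB2} (applied with $i=0$, noting $F^0\un{\Omega}^{\bullet}_X = \un{\Omega}^{\bullet}_X$) and by the identification $\H_{cdh}(X,\un{\Omega}^0_X) \cong \H_{Zar}(X,\un{\Omega}^0_X)$ established inside the proof of that lemma, together with the chain of equalities $\H_{cdh}(X,\un{\Omega}^0_X) = \H_{cdh}(X,R\pi_*\sO_{X_\bullet}) = \H_{cdh}(X_\bullet,\sO_{X_\bullet}) = H^*_{cdh}(X,\sO_X)$ — the last step using $cdh$-descent for $\sO$ along the smooth proper hypercovering plus \cite[Corollary~2.5]{CJams} on the smooth terms $X_\bullet$ — we get a natural isomorphism $\H^i_{Zar}(X,\un{\Omega}^0_X) \cong H^i_{cdh}(X,\sO_X)$ for all $i$. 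Actually the cleanest route: I claim $\H_{cdh}(X,\un{\Omega}^0_X) \simeq \H_{cdh}(X,\sO_X)$ because on each smooth $X_\bullet$ the map $\sO \to \un{\Omega}^0$ is an equivalence, hence so is $\sO_X \to \un{\Omega}^0_X = R\pi_*\sO_{X_\bullet}$ after applying $a^*$ (the $cdh$-sheafification of $R\pi_*$ of a thing that is $cdh$-locally $\sO$). Under this dictionary, the map $H^i_{Zar}(X,\sO_X) \to H^i_{cdh}(X,\sO_X)$ factors (naturally) as $H^i_{Zar}(X,\sO_X) \to \H^i_{Zar}(X,\un{\Omega}^0_X) \xrightarrow{\cong} H^i_{cdh}(X,\sO_X)$, where the first arrow is induced by $\sO_X \to \un{\Omega}^0_X$.

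The remaining point is then purely a matter of hypercohomology: a map of complexes of quasi-coherent sheaves $\sO_X \to \un{\Omega}^0_X$ on the (noetherian, finite-dimensional, quasi-projective hence separated) scheme $X$ is a quasi-isomorphism if and only if it induces isomorphisms on all Zariski hypercohomology groups $\H^i_{Zar}(X,-)$. The ``only if'' direction is trivial. For ``if'', let $C$ be the cone of $\sO_X \to \un{\Omega}^0_X$; it has quasi-coherent (indeed coherent if $X$ is projective) cohomology sheaves, supported on $X_{\mathrm{sing}}$ since the map is an isomorphism on the smooth locus. If $C$ were not acyclic, pick the lowest degree $n$ with $\sH^n(C) \neq 0$; then for a point $x$ in the support of $\sH^n(C)$, a standard argument (e.g. using that $R\Gamma$ on quasi-coherent complexes over an affine open $\Spec A$ computes ordinary cohomology, or using hypercohomology spectral sequences and affine-local vanishing) shows $\H^n_{Zar}(U, C) \neq 0$ for a suitable affine neighborhood $U$; then one bootstraps to $X$ — the cleanest version is to invoke that for a bounded-below complex with quasi-coherent cohomology, $\H^i_{Zar}(X,C)=0$ for all $i$ forces $C$ to be acyclic. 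I would probably just cite this as standard or give the one-line spectral-sequence argument $E_2^{p,q} = H^p_{Zar}(X,\sH^q(C)) \Rightarrow \H^{p+q}_{Zar}(X,C)$, choosing the bottom nonzero $\sH^q(C)$ whose $H^0$ is then a subquotient (in fact a corner) that cannot be killed.

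The main obstacle, such as it is, is keeping the naturality straight: one must check that the concrete map ``$H^i_{Zar}(X,\sO_X) \to H^i_{cdh}(X,\sO_X)$ induced by $a^*$'' genuinely agrees with the composite through $\H^i_{Zar}(X,\un{\Omega}^0_X)$ described above, i.e. that the triangle of natural transformations commutes; this is where one has to be a little careful about which identifications from Lemma~\ref{lem:DB2} and Proposition~\ref{prop:DB3} are being used and in which direction. Everything else is soft. I do not expect any genuinely hard computation here — this corollary is really just a repackaging of Lemma~\ref{lem:DB2} at $i=0$ combined with the definition of Du Bois singularities.
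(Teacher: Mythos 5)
Your overall route is the paper's route: take the definition that $X$ is Du Bois when $\sO_X \to \un{\Omega}^0_X$ is a quasi-isomorphism, and use Lemma~\ref{lem:DB2}/Proposition~\ref{prop:DB3} to identify $H^i_{cdh}(X,\sO_X)$ with $\H^i_{Zar}(X,\un{\Omega}^0_X)$ compatibly with the natural maps; the paper then declares the corollary immediate. The place where your write-up has a genuine gap is precisely the extra step you supply to finish the converse: the claim that a bounded-below complex $C$ with quasi-coherent cohomology on a quasi-projective scheme with $\H^i_{Zar}(X,C)=0$ for all $i$ must be acyclic is false when $X$ is not affine. Your spectral-sequence corner argument only yields $H^0_{Zar}\left(X,\sH^{q_0}(C)\right)=0$ for the lowest nonvanishing cohomology sheaf, and a nonzero quasi-coherent sheaf can have vanishing $H^0$, indeed vanishing cohomology in all degrees: for instance $\sO_{\P^1}(-1)$, or $i_*\sO_{\P^1}(-1)$ for a curve $\P^1\subseteq X_{\rm sing}$, which is exactly the kind of object that could a priori occur as cohomology of the cone of $\sO_X\to\un{\Omega}^0_X$, since support on $X_{\rm sing}$ is no obstruction. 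The alternative you sketch, producing $\H^n_{Zar}(U,C)\neq 0$ on an affine open $U$ and then ``bootstrapping to $X$'', also does not close, because nonvanishing of hypercohomology on an open does not propagate to $X$.

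The repair is to use that the comparison you quote is natural in the scheme (the identifications of Lemma~\ref{lem:DB2} and Proposition~\ref{prop:DB3} are equivalences of presheaves of spectra, so they hold over every open $U\subseteq X$, equivalently one can invoke the sheaf-level form of \cite[Lemma~7.1]{BassNK}): being Du Bois is the Zariski-local assertion that $\sO_U\to\un{\Omega}^0_U$ is a quasi-isomorphism for all affine opens $U$, and on an affine your corner argument is correct, since a quasi-coherent sheaf on an affine scheme with no global sections is zero. With only the global hypothesis on $X$ itself, as you have set things up, the implication is not established. Two smaller points of divergence from the paper: the corollary is stated over an arbitrary field of characteristic zero, whereas Proposition~\ref{prop:DB3} is stated for $\C$-schemes; the paper inserts a reduction to $k=\C$ via the Lefschetz-pencil argument of \cite{Schwede}, which your proposal omits (alternatively one should remark that the $cdh$-descent arguments used in Lemma~\ref{lem:DB2} and Proposition~\ref{prop:DB3} are purely algebraic and valid over any characteristic-zero field admitting resolution of singularities). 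Finally, the naturality check you flag as ``the main obstacle'' is indeed needed, but it is the easy part; the real issue is the acyclicity step above.
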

\begin{proof} Recall from \cite{Steenbrink} that $X$ is said to have
Du Bois singularity if the natural map $\sO_X \to {\un{\Omega}}^0_X$
is a quasi-isomorphism. By the Lefschetz pencil argument 
({\sl cf.} \cite{Schwede}), we can assume the base field to be $\C$.
But it is then an immediate consequence of Proposition~\ref{prop:DB3}.
\end{proof}
The following well known fact is a simple consequence of 
Proposition~\ref{prop:SNC-local} and Corollary~\ref{cor:DBSing}.
\begin{cor}\label{cor:DBSNC}
A normal crossing singularity is Du Bois.
\end{cor}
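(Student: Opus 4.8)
The plan is to deduce this directly from the combination of Corollary~\ref{cor:DBSing} and Proposition~\ref{prop:SNC-local}. By Corollary~\ref{cor:DBSing}, a quasi-projective variety $X$ over a characteristic-zero field has Du Bois singularities precisely when the natural maps $H^i_{Zar}(X, \sO_X) \to H^i_{cdh}(X, \sO_X)$ are isomorphisms for all $i \ge 0$. Since the Du Bois property is local on $X$ (the condition $\sO_X \to \un{\Omega}^0_X$ being a quasi-isomorphism can be checked on an affine cover), and since a normal crossing singularity is, after passing to a suitable affine neighborhood, realized as a strict normal crossing divisor $E \inj X'$ inside a smooth affine scheme $X'$ essentially of finite type over the field, it suffices to verify the cohomological criterion in that local situation.

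First I would reduce to the strictly local (affine, essentially-of-finite-type) setting: Du Bois is a local condition, so we may assume $X = E$ is a strict normal crossing divisor on a smooth affine $k$-scheme of finite type — this is exactly the hypothesis of Proposition~\ref{prop:SNC-local}. Next I would invoke Proposition~\ref{prop:SNC-local} with $i = 0$: it gives the isomorphism $H^0_{Zar}(E, \sO_E) \xrightarrow{\cong} H^0_{cdh}(E, \sO_E)$ together with the vanishing $H^j_{cdh}(E, \sO_E) = 0$ for $j > 0$. But $E$ is affine, so $H^j_{Zar}(E, \sO_E) = 0$ for $j > 0$ as well (Serre vanishing for the quasi-coherent sheaf $\sO_E$ on an affine scheme). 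Hence $H^j_{Zar}(E, \sO_E) \to H^j_{cdh}(E, \sO_E)$ is an isomorphism in every degree $j \ge 0$. Then Corollary~\ref{cor:DBSing} applies and shows $E$ has Du Bois singularities locally, which is what we wanted.

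There is no serious obstacle here: the statement is essentially a formal corollary, and the only thing to be slightly careful about is the passage between the global Du Bois property and the purely local normal crossing model, together with remembering that on an affine scheme the higher Zariski cohomology of $\sO$ vanishes so that the $cdh$-vanishing in Proposition~\ref{prop:SNC-local} can be matched against it. One should also note, as the excerpt implicitly does via Corollary~\ref{cor:DBSing}, that the base field can be taken to be $\C$ by the standard Lefschetz pencil / base change argument, so that the results of Section~\ref{section:SNC*} (stated over a general characteristic-zero $k$) and the Du Bois machinery (developed over $\C$) may be combined without friction.
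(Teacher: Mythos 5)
Your proof is correct and follows exactly the route the paper intends: the paper offers no further argument beyond declaring the corollary a ``simple consequence of Proposition~\ref{prop:SNC-local} and Corollary~\ref{cor:DBSing}'', and your filling-in (locality of the Du Bois condition, the $i=0$ case of Proposition~\ref{prop:SNC-local} giving $H^0_{Zar}\cong H^0_{cdh}$ and $H^j_{cdh}=0$ for $j>0$, matched against affine vanishing of higher Zariski cohomology of $\sO_E$, then Corollary~\ref{cor:DBSing}) is precisely the intended argument. The only point you share with the paper's terse treatment is the implicit identification of a normal crossing singularity with a Zariski-local strict normal crossing model, which strictly speaking requires an \'etale-local reduction, but this is glossed over in the paper as well.
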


\begin{remk}\label{remk:canonical}
There have been a lot of work by various people on Du Bois singularities,
partially due to the complicated nature of the Du Bois complex 
${\un{\Omega}}^0_X$. One hopes that the above criterion of such a 
singularity in terms of the $cdh$ cohomology of the structure sheaf might
play a useful role in the study of Du Bois singularity. For example,
it was conjectured by Kollar (proven now by Kollar and Kov{\'a}cs \cite{KK})
that the log canonical singularity is Du Bois. One could ask if the above 
criterion would help in simplifying the proof of \cite{KK}.
\end{remk}      

\subsection{Deligne complexes}\label{subsection:D-complex}
Recall from \cite{EV} and \cite[Section~1]{BPW} that for a projective
$\C$-scheme $X$ and for the morphism of analytic sites $X_{{\bullet} an}
\xrightarrow{\pi} X_{an}$, the Deligne complex of $X$ is defined as the
complex $\Z_{\sD}(q) = R{\pi}_*\left(\Z_{{\sD}, X_{\bullet}}(q)\right)$,
where
\begin{equation}\label{eqn:DeligneC0}
\Z_{{\sD}, X_{\bullet}}(q) : =
\left(\Z(q) \xrightarrow{(2{\pi}i)^q} \sO_{X_{\bullet}} \to 
\Omega^1_{X_{\bullet}} \to \cdots \to \Omega^{q-1}_{X_{\bullet}}\right)
\end{equation} 
is the Deligne complex of the smooth simplicial scheme $X_{\bullet}$.
In particular, there is an exact triangle
\begin{equation}\label{eqn:DeligneC1} 
 R{\pi}_*\left(\Omega^{<q}_{X_{\bullet}}\right)[-1] \to \Z_{\sD}(q) \to
\Z(q).
\end{equation}
It follows at once (using GAGA) from ~\eqref{eqn:Db02} and 
Proposition~\ref{prop:DB3} that there is an exact triangle 
\begin{equation}\label{eqn:DeligneC2}
\left(Ra_* \Omega^{<q}_{X/{\C}}\right)[-1] \to 
Rb_*\Z_{\sD}(q) \to Rb_*\Z(q),
\end{equation}
where $X_{an} \xrightarrow{b} X_{Zar}$ is the usual morphism of sites.
The Deligne cohomology groups of $X$ are defined by 
$H^n_{\sD}\left(X, \Z(q)\right) := \H^n\left(X_{an}, \Z_{\sD}(q)\right)$.
 
For singular $\C$-schemes, Levine \cite{Levine2} had introduced a modified
version of the classical Deligne cohomology. It turns out that there are
Chern class maps from the algebraic $K$-theory to this 
modified Deligne cohomology which detect more nontrivial elements in
the $K$-groups of the singular schemes than the above classical Deligne
cohomology. We refer to \cite{Krishna1} for some applications of the
Chern classes into the modified Deligne cohomology. For a projective
$\C$-scheme $X$, the {\sl modified Deligne cohomology} groups 
$H^*_{\sD^*}\left(X, \Z(q)\right)$ are defined as the analytic hypercohomology
of the truncated complex
\begin{equation}\label{eqn:MDeligneC1}
\Z_{\sD^*}(q) : =
\left(\Z(q) \xrightarrow{(2{\pi}i)^q} \sO_{X} \to 
\Omega^1_{X/{\C}} \to \cdots \to \Omega^{q-1}_{X/{\C}}\right)
\end{equation} 
For a morphism $Y \xrightarrow{f} X$ of schemes, the relative modified
Deligne cohomology groups $\H^n_{\sD^*}\left((X,Y), \Z(q)\right)$
are defined as the hypercohomology of the
complex $Cone\left(\Z_{\sD^*, X}(q) \to Rf_*\Z_{\sD^*, Y}(q)\right)[-1]$. 

It is clear from the definition that the modified Deligne cohomology agrees
with the classical one for smooth projective varieties. In particular,
a smooth proper hypercovering $X_{\bullet} \xrightarrow{\pi} X$ defines
a natural map $\Z_{\sD^*}(q) \to \Z_{\sD}(q)$. Moreover, it follows from
~\eqref{eqn:DeligneC2} that there is a commutative diagram of exact
triangles
\begin{equation}\label{eqn:MDeligneC2}
\xymatrix@C.5pc{
\Omega^{<q}_{X/{\C}}[-1] \ar[r] \ar[d] & Rb_*\Z_{\sD^*}(q) \ar[r] \ar[d] &
Rb_*\Z(q) \ar@{=}[d] \\
\left(Ra_* \Omega^{<q}_{X/{\C}}\right)[-1] \ar[r] & 
Rb_*\Z_{\sD}(q) \ar[r] & Rb_*\Z(q).}
\end{equation}

\begin{lem}\label{lem:MDeligneC3}
There is a commutative diagram of long exact sequences 
\[
\xymatrix@C.4pc{
\cdots \ar[r] & \H^n_{Zar}\left(X, \Omega^{<q}_{X/{\C}}\right) 
\ar[d]^{\alpha^q_n} \ar[r] &
\H^{n+1}_{\sD^*}\left(X, \Z(q)\right) \ar[r] \ar[d] &
H^{n+1}\left(X, \Z\right) \ar[r] \ar[d] &
H^{n+1}_{Zar}\left(X, \Omega^{<q}_{X/{\C}}\right) \ar[d]^{\alpha^{q}_{n+1}}
& \cdots \\
\cdots \ar[r] & {\frac{H^{n}\left(X, \C\right)}{F^qH^{n}\left(X, \C\right)}}
\ar[r] & H^{n+1}_{\sD}\left(X, \Z(q)\right) \ar[r] &
H^{n+1}\left(X, \Z\right) \ar[r] &
{\frac{H^{n+1}\left(X, \C\right)}{F^qH^{n+1}\left(X, \C\right)}} \ar[r] &
\cdots}
\]
where the $\alpha^q_n$ are all surjective.
\end{lem}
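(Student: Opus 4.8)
The plan is to obtain both long exact sequences by applying the hypercohomology functor $\H_{Zar}(X,-)$ to the two horizontal exact triangles of ~\eqref{eqn:MDeligneC2}, and to get the vertical ladder connecting them from the fact that ~\eqref{eqn:MDeligneC2} is a morphism of exact triangles. The terms on the right are identified by the definitions of Section~\ref{section:HODGET}: $\H^{n+1}_{Zar}(X, Rb_*\Z_{\sD^*}(q)) = H^{n+1}_{\sD^*}(X,\Z(q))$, $\H^{n+1}_{Zar}(X, Rb_*\Z_{\sD}(q)) = H^{n+1}_{\sD}(X,\Z(q))$ and $\H^{n+1}_{Zar}(X, Rb_*\Z(q)) = H^{n+1}(X_{\rm an},\Z) = H^{n+1}(X,\Z)$, the middle vertical arrow being the canonical comparison of the two Deligne cohomologies and the right vertical arrow the identity. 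After the shift $[-1]$ the left-hand terms become $\H^{n}_{Zar}(X, \Omega^{<q}_{X/\C})$ on top and $\H^{n}_{Zar}(X, Ra_*a^*\Omega^{<q}_{X/\C}) = \H^{n}_{cdh}(X, \Omega^{<q}_{X/\C})$ on the bottom, and the left vertical arrow is the canonical comparison map, which will be $\alpha^q_n$.

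Two things then remain. First, one must identify $\H^{n}_{cdh}(X, \Omega^{<q}_{X/\C})$ with $H^n(X,\C)/F^qH^n(X,\C)$. For this I would apply $\H_{cdh}(X,-)$ to the brutal-truncation triangle $F^q\Omega^\bullet_{X/\C} \to \Omega^\bullet_{X/\C} \to \Omega^{<q}_{X/\C}$: by Corollary~\ref{cor:CDB1} the middle term has hypercohomology $H^n(X,\C)$, by Corollary~\ref{cor:CDB2} the left term has hypercohomology $F^qH^n(X,\C)$ and, since $X$ is projective, the Hodge--de Rham spectral sequence degenerates so that $F^qH^{n+1}(X,\C) \inj H^{n+1}(X,\C)$; the resulting long exact sequence then collapses to $\H^{n}_{cdh}(X, \Omega^{<q}_{X/\C}) \cong H^n(X,\C)/F^qH^n(X,\C)$. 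Under these identifications $\alpha^q_n$ is the comparison map $\H^{n}_{Zar}(X, \Omega^{<q}_{X/\C}) \to \H^{n}_{cdh}(X, \Omega^{<q}_{X/\C})$ induced by the morphism $a$ of sites.

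Second, and this is the substantive part, one must show that $\alpha^q_n$ is surjective. I would argue by induction on $q$. For $q=1$ the complex $\Omega^{<1}_{X/\C}$ is $\sO_X$ and the claim is that $H^n_{Zar}(X,\sO_X) \surj H^n_{cdh}(X,\sO_X)$, the case $i=0$ of the comparison statement for differential forms. For the inductive step I would use the exact triangle $\Omega^{q-1}_{X/\C}[-(q-1)] \to \Omega^{<q}_{X/\C} \to \Omega^{<q-1}_{X/\C}$, valid in both the Zariski and the $cdh$ topology, pass to the associated long exact hypercohomology sequences, and run a four-lemma comparison. The inductive hypothesis supplies surjectivity for $\Omega^{<q-1}_{X/\C}$; the extra inputs needed are that $H^j_{Zar}(X,\Omega^{q-1}_{X/\C}) \to H^j_{cdh}(X,\Omega^{q-1}_{X/\C})$ is surjective for all $j$ and is an isomorphism for $j$ large. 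The latter follows from Lemma~\ref{lem:direct-image} together with Grothendieck vanishing on $X_{\rm sing}$ as in Lemma~\ref{lem:folklore1}: the cone of $\Omega^{q-1}_{X/\C} \to Ra_*a^*\Omega^{q-1}_{X/\C}$ has coherent cohomology supported on $X_{\rm sing}$, so the comparison is an isomorphism in cohomological degrees exceeding $\dim X_{\rm sing}+1$. The surjectivity of the comparison for the sheaves of differential forms in all degrees is \cite[Proposition~2.6]{CJams}, whose top-degree case was recalled in ~\eqref{eqn:ZariskiC1}.

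The hard part is exactly this last input and its propagation through the truncation filtration; everything else is formal manipulation of ~\eqref{eqn:MDeligneC2} together with the identifications already established in Section~\ref{section:HODGET}. It is also the reason the lemma asserts only surjectivity of $\alpha^q_n$ and not bijectivity: in low cohomological degrees the Zariski de Rham complex of a singular variety does not compute Betti cohomology, so $\H^{n}_{cdh}(X, \Omega^{<q}_{X/\C})$ is in general a proper quotient of $\H^{n}_{Zar}(X, \Omega^{<q}_{X/\C})$. In a careful write-up I would also pay attention to the four-lemma bookkeeping in the very lowest degrees, where the injectivity needed at one step must come from the description of the global sections of the sheaves of forms (using $X$ projective and connected) rather than from the ``isomorphism for $j$ large'' reduction.
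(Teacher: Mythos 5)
The formal half of your argument matches the paper: both long exact sequences and the ladder come from applying hypercohomology to the morphism of triangles ~\eqref{eqn:MDeligneC2}, with the outer terms identified via Corollaries~\ref{cor:CDB1} and ~\ref{cor:CDB2}, so that $\alpha^q_n$ is the comparison map $\H^n_{Zar}(X,\Omega^{<q}_{X/\C}) \to \H^n_{cdh}(X,\Omega^{<q}_{X/\C}) \cong H^n(X,\C)/F^qH^n(X,\C)$. The gap is in the only substantive point, the surjectivity of $\alpha^q_n$. Your induction on $q$ requires, at each step, that the termwise comparison $H^j_{Zar}(X,\Omega^{q-1}_{X/\C}) \to H^j_{cdh}(X,\Omega^{q-1}_{X/\C})$ be surjective in \emph{every} cohomological degree $j$, and (for the four-lemma at the fourth spot) injective in the next degree. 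Neither input is available: \cite[Proposition~2.6]{CJams}, as it is used in this paper (see ~\eqref{eqn:ZariskiC1}), is a statement about the top degree $j=\dim X$ only, and the mechanism of Lemmas~\ref{lem:direct-image} and ~\ref{lem:folklore1} yields an isomorphism only in degrees above $\dim X_{\rm sing}+1$. Since the terms $H^{n-q+2}(X,\Omega^{q-1}_{X/\C})$ sweep through all intermediate degrees as $n$ varies, you would need surjectivity and injectivity of the Zariski-to-$cdh$ comparison for a single sheaf $\Omega^i$ in arbitrary intermediate degrees of a singular projective variety, which is much stronger than anything proved or quoted in the paper and has no reason to hold; your closing remark about fixing the bookkeeping "in the very lowest degrees" does not address this, because the missing cases are not confined to degree zero. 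So the induction does not close.

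The paper's proof of surjectivity avoids any termwise comparison. It factors $\alpha^q_n$ against the full de Rham complex: the composite $\C_{X_{an}} \to \Omega^{\bullet}_{X_{an}} \to R\pi_*\Omega^{\bullet}_{X_{\bullet}an}$ is an isomorphism by cohomological descent and the holomorphic Poincar\'e lemma, so the map $\H^n_{Zar}(X,\Omega^{\bullet}_{X/\C}) \to H^n(X,\C)$ is split surjective; composing with the projection $H^n(X,\C) \surj H^n(X,\C)/F^qH^n(X,\C) \cong \H^n_{cdh}(X,\Omega^{<q}_{X/\C})$ of Corollary~\ref{cor:CDB2} and noting that this composite factors as $\H^n_{Zar}(X,\Omega^{\bullet}_{X/\C}) \to \H^n_{Zar}(X,\Omega^{<q}_{X/\C}) \xrightarrow{\alpha^q_n} \H^n_{cdh}(X,\Omega^{<q}_{X/\C})$ gives the surjectivity of $\alpha^q_n$ in one stroke, with no induction on $q$ and no sheaf-level statements in intermediate degrees. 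If you want to salvage your write-up, replace the inductive step by this global argument.
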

\begin{proof} 
The exact sequences and commutative diagram follow from ~\eqref{eqn:MDeligneC2}
and Corollary~\ref{cor:CDB2}. To prove the required surjectivity, we consider
the commutative diagram
\[
\xymatrix@C.4pc{
& & \H^n_{Zar}\left(X, \Omega^{\bullet}_{X/{\C}}\right) \ar[r] \ar[d] &
\H^n_{Zar}\left(X, \Omega^{<q}_{X/{\C}}\right) \ar[d]^{\alpha^q_n} & \\
0 \ar[r] & F^qH^{n}\left(X, \C\right) \ar[r] & H^{n}\left(X, \C\right) \ar[r] &
{\frac{H^{n}\left(X, \C\right)}{F^qH^{n}\left(X, \C\right)}} \ar[r] & 0.}
\]
The bottom sequence is exact and 
${\frac{H^{n}\left(X, \C\right)}{F^qH^{n}\left(X, \C\right)}} \cong
H^{n}_{cdh}\left(X, \Omega^{<q}_{X/{\C}}\right)$
by Corollary~\ref{cor:CDB2}. 
On the other hand, the composite map ${\C}_{X_{an}} \to 
\Omega^{\bullet}_{X_{an}}
\to R{\pi}_*\Omega^{\bullet}_{X_{\bullet} an}$ is an isomorphism by the
cohomological descent of analytic cohomology and the de Rham theorem. In 
particular, the left
vertical map is split surjective. Hence the right vertical map is 
surjective too. 
\end{proof}

\subsection{Intermediate Jacobians and Abel-Jacobi maps}\label{subsection:IJ}
For a projective $\C$-scheme $X$ of dimension $d$, the classical $p$th 
{\sl intermediate Jacobian} is defined as 
\begin{equation}\label{eqn:CIJ}
J^p(X) = \frac{H^{2p-1}\left(X, \C(p)\right)}
{F^pH^{2p-1}\left(X, \C(p)\right) +
H^{2p-1}\left(X, \Z(p)\right)} \cong 
\frac{H^{2p-1}_{cdh}\left(X, \Omega^{<p}_{X/{\C}}\right)}
{H^{2p-1}\left(X, \Z(p)\right)},
\end{equation}
where the second isomorphism follows from Corollary~\ref{cor:CDB2}.
If $X$ is smooth, the intermediate Jacobian can also be written as
$J^p(X) = \frac{H^{2p-1}(X, \R)}{H^{2p-1}(X, \Z)}$. In particular,
this is a real torus. In fact, one knows that this has a complex structure
which makes it a complex torus. In general, $J^p(X)$ does not admit
a polarization, i.e., it is not an abelian variety. However,
in case $H^{i,j}(X) = 0$ for $|i-j| > 1$ and $i+j = 2p-1$, then, $J^p(X)$
is indeed an abelian variety ({\sl cf.} \cite[p.171, 172]{Lewis}).
We conclude in particular that $J^{d-1}(X)$ is an abelian variety if 
$H^d(X, \Omega^i_X) = 0$ for $0 \le i \le d-2$.    

For a smooth and projective $\C$-scheme $X$ of dimension $d$ and $p \ge 0$, let
$A^p(X) = {CH^p(X)}_{alg}$ be the subgroup of $CH^i(X)$ consisting of
algebraic cycles which are algebraically equivalent to zero.
Let ${CH^p(X)}_{hom}$ be the subgroup of homologically trivial cycles in
$CH^p(X)$, i.e., this is simply the kernel of the topological cycle class map
$CH^p(X) \to H^{2p}(X, \Z)$. One knows that $A^p(X) \subseteq 
{CH^p(X)}_{hom} \subseteq CH^p(X)$. There are Abel-Jacobi maps
\begin{equation}\label{eqn::Alg0*}
{CH^p(X)}_{hom} \xrightarrow{AJ^p} J^p(X).
\end{equation}

Let $J^p_a(X)$ denote the image of $A^p(X)$ under the Abel-Jacobi map.
Then $J^p_a(X)$ is an abelian subvariety of $J^p(X)$. 
Recall that the famous Hodge conjecture says that the topological
cycle class map ${CH^p(X)}_{\Q} \to H^{2p}(X, \Q) \cap H^p(X, \Omega^p_X)$
is surjective. This conjecture is known for $p \in \{0, 1, d-1, d\}$.  
We now recall the general Hodge conjecture. 

For any $l \ge 1$ and $p \ge 0$, let 
\[
F^p_aH^l(X, \Q) = 
{\underset{{{\rm codim}_X(Y) \ge \ p}}{\bigcup}}
{\rm Ker}\left(H^l(X, \Q) \to H^l(X-Y, \Q)\right).
\]
It is known that $F^p_aH^l(X, \Q)$ has a Hodge structure and 
\[
F^p_aH^l(X, \Q) \subseteq F^pH^l(X, \C) \cap H^l(X, \Q).
\]
The general Hodge conjecture says the following.
\begin{conj}{$\bf{(GHC(p,l,X))}$}\label{conj:GHC*} 
$F^p_aH^l(X, \Q)$ is the largest sub-Hodge structure of $F^pH^l(X, \C) \cap
H^l(X, \Q)$.
\end{conj}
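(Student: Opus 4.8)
The plan is to split the asserted equality into its two inclusions and to observe that only one of them has any content. As recalled immediately before the statement, $F^p_a H^l(X,\Q)$ is itself a $\Q$-Hodge structure and satisfies $F^p_a H^l(X,\Q) \subseteq F^p H^l(X,\C) \cap H^l(X,\Q)$; hence it is automatically \emph{a} sub-Hodge structure of the right-hand side and is therefore contained in the largest one. So the whole of the conjecture is the reverse inclusion: every $\Q$-sub-Hodge structure $V \subseteq H^l(X,\Q)$ with $V_{\C} \subseteq F^p H^l(X,\C)$ is already ``supported in codimension $p$'', i.e. $V \subseteq \mathrm{Ker}\bigl(H^l(X,\Q) \to H^l(X\setminus Y,\Q)\bigr)$ for some closed $Y \subseteq X$ with $\mathrm{codim}_X Y \ge p$.

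Granting that reduction, I would next translate the target condition into geometry. By the long exact sequence of the pair $(X, X\setminus Y)$ and purity, $\mathrm{Ker}\bigl(H^l(X,\Q) \to H^l(X\setminus Y,\Q)\bigr)$ is (after passing to a resolution $\widetilde{Y} \to Y$ and stratifying if $Y$ is not equidimensional) the image of the Gysin map $H^{l-2p}(\widetilde{Y},\Q)(-p) \to H^l(X,\Q)$. Thus the reverse inclusion says precisely that $V$ lies in the image of such a Gysin map for a suitable $Y$. In the ranges $p \le 1$ or $l-2p \in \{0,1\}$ this can be assembled from classical inputs — the Lefschetz $(1,1)$-theorem, the known cases of the ordinary Hodge conjecture, hard Lefschetz, and the Hodge-theoretic comparison of the coniveau (arithmetic) filtration with the Hodge filtration — so in those cases a genuine proof is available by putting these pieces together.

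For general $(p,l)$, however, this is exactly the general Hodge conjecture in Grothendieck's corrected form, and here the plan stops: Hodge's original formulation (without passing to the largest sub-Hodge structure) was disproved by Grothendieck on abelian varieties, and even the amended version recorded here is known only in scattered cases. The main obstacle is therefore not technical bookkeeping but the complete absence of a general mechanism for manufacturing algebraic subvarieties of prescribed codimension out of Hodge-theoretic data — the same difficulty that makes the ordinary Hodge conjecture open. Consistent with this, the statement is recorded in the paper as a conjecture and used downstream only as motivation or as a hypothesis, not proved.
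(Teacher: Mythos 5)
This statement is recorded in the paper purely as a conjecture --- the general Hodge conjecture in Grothendieck's amended form --- and the paper offers no proof of it, invoking it later only as a hypothesis (e.g.\ $GHC(1,3,\widetilde{X})$ in Lemma~\ref{lem:GHC0}, Theorem~\ref{thm:3fold-main} and Theorem~\ref{thm:GENERAL}, and via its equivalence with $J^p_a(X)=J^p(X)$ in Corollary~\ref{cor:GHC*1}). Your analysis is therefore correct and matches the paper's treatment: the containment $F^p_aH^l(X,\Q)\subseteq F^pH^l(X,\C)\cap H^l(X,\Q)$ is the only part with a proof, the reverse maximality assertion is open in general, and there is no proof in the paper to compare against.
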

There are few cases when this conjecture is known. On case which interests
us is when $X$ is a hypersurface in $\P^4$ of degree $\le 4$.
In this case, $GHC(1,3,X)$ is known ({\sl cf.} \cite[Chapter~7.IX]{Lewis}).
It is also known for any smooth projective variety $X$ that
$GHC(p-1, 2p-1, X)$ is equivalent to saying that $J^p_a(X)$ is the
largest abelian subvariety of $J^p(X)$. We conclude:
\begin{cor}\label{cor:GHC*1}
Let $X$ be a smooth projective variety of dimension $d$ such that
$H^d(X, \Omega^i_X) = 0$ for $0 \le i \le d-2$. Then
\[
GHC(d-2, 2d-3, X) \Leftrightarrow J^{d-1}_a(X) = J^{d-1}(X).
\]
\end{cor}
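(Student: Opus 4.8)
The plan is to deduce this directly from the general reinterpretation of the Hodge conjecture recalled immediately above the statement: for any smooth projective variety $X$ and any $p\ge 0$, $GHC(p-1,2p-1,X)$ holds if and only if $J^p_a(X)$ is the largest abelian subvariety of $J^p(X)$. Specializing to $p=d-1$ gives $p-1=d-2$ and $2p-1=2d-3$, so $GHC(d-2,2d-3,X)$ is equivalent to the assertion that $J^{d-1}_a(X)$ is the largest abelian subvariety of $J^{d-1}(X)$. It therefore remains only to show that, under the given cohomological vanishing, this largest abelian subvariety is all of $J^{d-1}(X)$.

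First I would record that the hypothesis $H^d(X,\Omega^i_X)=0$ for $0\le i\le d-2$ forces the Hodge components $H^{i,j}(X)$ with $i+j=2d-3$ to vanish whenever $|i-j|>1$. Indeed, the only bidegrees $(i,j)$ with $i+j=2d-3$ and $0\le i,j\le d$ are $(d-3,d)$, $(d-2,d-1)$, $(d-1,d-2)$, $(d,d-3)$, of which only the first and last have $|i-j|>1$; and $H^{d-3,d}(X)=H^d(X,\Omega^{d-3}_X)=0$ by hypothesis (for $d\ge 3$; the cases $d\le 2$ being vacuous or trivial since then $H^{2d-3}(X,\C)$ carries no components of type $|i-j|\ge 2$), whence $H^{d,d-3}(X)=\overline{H^{d-3,d}(X)}=0$ by Hodge symmetry. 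By the standard criterion (cf. \cite[p.~171--172]{Lewis}) that $J^p(X)$ is an abelian variety whenever $H^{i,j}(X)=0$ for $|i-j|>1$, $i+j=2p-1$ — exactly the conclusion already noted in the discussion preceding \eqref{eqn::Alg0*} for this range — we get that $J^{d-1}(X)$ is itself an abelian variety.

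Now, since $J^{d-1}(X)$ is an abelian variety, its largest abelian subvariety is $J^{d-1}(X)$ itself; and $J^{d-1}_a(X)$ is always an abelian subvariety of $J^{d-1}(X)$, as recorded after \eqref{eqn::Alg0*}. Hence the condition ``$J^{d-1}_a(X)$ is the largest abelian subvariety of $J^{d-1}(X)$'' collapses to the equality $J^{d-1}_a(X)=J^{d-1}(X)$. Combining this with the specialized equivalence of the first paragraph yields $GHC(d-2,2d-3,X)\Leftrightarrow J^{d-1}_a(X)=J^{d-1}(X)$, as claimed.

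There is essentially no serious obstacle here: the corollary is a formal consequence of two facts both stated in the text — the abelian-subvariety reformulation of $GHC(p-1,2p-1,X)$ and the abelian-variety property of $J^{d-1}(X)$ under the vanishing hypothesis. The only point requiring a line of care is the elementary Hodge-theoretic bookkeeping above, whose sole input beyond the stated vanishing is the symmetry $h^{i,j}(X)=h^{j,i}(X)$.
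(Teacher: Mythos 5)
Your proof is correct and follows exactly the route the paper intends: the corollary is deduced from the two facts stated immediately before it, namely the equivalence of $GHC(p-1,2p-1,X)$ with ``$J^p_a(X)$ is the largest abelian subvariety of $J^p(X)$'' (taken at $p=d-1$), and the observation that the vanishing $H^d(X,\Omega^i_X)=0$ for $i\le d-2$ kills the Hodge components of $H^{2d-3}(X,\C)$ with $|i-j|>1$, so that $J^{d-1}(X)$ is itself an abelian variety and the maximality condition collapses to the equality $J^{d-1}_a(X)=J^{d-1}(X)$. Your Hodge-symmetry bookkeeping, including the trivial low-dimensional cases, is the same argument the paper leaves implicit.
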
 

Now we come back to the general case of singular projective schemes.
Suppose $X$ is a singular projective $\C$-scheme of dimension $d$.
If $\wt{X} \to X$ is a resolution of singularities of $X$, then the
isomorphism $H^{2d}\left(X, \Z(d)\right) \xrightarrow{\cong} 
H^{2d}(\wt{X}, \Z(d))$ and Lemma~\ref{lem:MDeligneC3}
yields an exact sequence 
\begin{equation}\label{eqn:InterJ}
0 \to J^d(X) \to H^{2d}_{\sD}\left(X, \Z(q)\right) \to 
H^{2d}\left(X, \Z(d)\right) \to 0.
\end{equation} 

We define the $p$th {\sl generalized intermediate Jacobian} as
\begin{equation}\label{eqn:GIJ}
J^p_*(X) = \frac{H^{2p-1}_{Zar}\left(X, \Omega^{<p}_{X/{\C}}\right)}
{H^{2p-1}\left(X, \Z(p)\right)}.
\end{equation} 
It follows from Lemma~\ref{lem:MDeligneC3} that the natural map
$J^p_*(X) \to J^p(X)$ is surjective. $J^d_*(X)$ and $J^d(X)$ are
also called the {\sl generalized albanese} and the {\sl albanese}
varieties of $X$. In fact, one knows from \cite[Theorem~2]{ESV} that 
$J^d_*(X)$ is a commutative algebraic group over $\C$ and $J^d(X)$ is 
its universal semi-abelian quotient variety. Recall from \cite{ESV}
that the Chow group of zero-cycles $CH^d(X)$ is defined as the 
free abelian group of smooth closed points of $X$ modulo the subgroup
generated by the cycles defined by the rational functions on all the 
{\sl Cartier} curves on $X$. We refer to {\sl loc. cit.} for the complete
definition. Let $A^d(X) = {CH^d(X)}_{{\rm deg} 0}$ denote the kernel
of the map $CH^d(X) \xrightarrow{deg} H^{2d}(X, \Z)$. We have seen in
~\eqref{eqn:SALBS} that there a regular map $A^d(X) \xrightarrow{c_{0,X}}  
J^d_*(X)$ which is a universal regular quotient.

\section{Chern Classes on $KH$-theory}\label{section:CH-KH}
Recall from \cite{Beilinson} (see also \cite{Gillet}, \cite{BPW}) that for a 
$\C$-scheme $X$, there are natural Chern classes
\begin{equation}\label{eqn:Chern-C}
c_{q,p} : K_{2q-p}(X) \to \H^p_{\sD}\left(X, \Z(q)\right),
\end{equation}
which have all the functorial properties with respect to the pull-back
maps on $K$-theory and Deligne cohomology. These functorial properties
also define such Chern class maps from the relative $K$-groups to the
relative Deligne cohomology. Gillet \cite{Gillet} has constructed
universal Chern classes into generalized cohomology theories of which the
above is a special case.  
It was shown in \cite{Levine2} that the
modified Deligne cohomology also satisfies Gillet's conditions for being
a generalized cohomology theory and hence there are functorially
defined Chern classes 
\begin{equation}\label{eqn:Chern-G}
c^*_{q,p} : K_{2q-p}(X) \to \H^p_{\sD^*}\left(X, \Z(q)\right)
\end{equation}
such that the diagram

\begin{equation}\label{eqn:Chern1}
\xymatrix{
K_{2q-p}(X) \ar[r]^{c^*_{q,p}} \ar@/^1cm/[rr]^{c_{q,p}} 
\ar@/_1cm/[rrr]_{c^{top}_{q,p}} & \H^p_{\sD^*}\left(X, \Z(q)\right) \ar[r] &
\H^p_{\sD}\left(X, \Z(q)\right) \ar[r] & H^p\left(X, \Z(q)\right)}
\end{equation}
commutes, where $c^{top}_{q,p}$ is the topological Chern class.

\begin{prop}\label{prop:CKH}
For any $\C$-scheme $X$, there are Chern class maps 
\[
c^h_{q,p}: KH_{2q-p}(X) \to \H^p_{\sD}\left(X, \Z(q)\right)
\]
such that the composite $K_{2q-p}(X) \to KH_{2q-p}(X) \to 
\H^p_{\sD^*}\left(X, \Z(q)\right)$ is the Chern class map of 
~\eqref{eqn:Chern-C}. Furthermore, these Chern classes are functorial
with the pull-back maps of (relative) $KH$-theory and the Deligne
cohomology.
\end{prop}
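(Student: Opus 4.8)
The plan is to obtain $c^h_{q,p}$ by $cdh$-sheafifying the Chern classes on algebraic $K$-theory, the crucial point being that the \emph{classical} Deligne cohomology of \eqref{eqn:DeligneC0} satisfies $cdh$-descent, whereas the modified Deligne complex $\Z_{\sD^*}(q)$ of \eqref{eqn:MDeligneC1} does not --- which is exactly why the target here is $\H^p_{\sD}$ rather than $\H^p_{\sD^*}$. Write $\sD(q)$ for the presheaf of Eilenberg--Mac Lane spectra on $\C$-schemes with $\pi_{-n}\,\sD(q)(X)=\H^n_{\sD}(X,\Z(q))$. The first step is to show that $\sD(q)\to\H_{cdh}(-,\sD(q))$ is a weak equivalence, i.e.\ that $\sD(q)$ carries elementary Nisnevich squares and abstract blow-up squares to homotopy Cartesian squares. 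The Nisnevich case is clear after analytification. For an abstract blow-up square one uses the exact triangle \eqref{eqn:DeligneC2}
\[
\left(Ra_*\Omega^{<q}_{X/{\C}}\right)[-1]\to Rb_*\Z_{\sD}(q)\to Rb_*\Z(q)
\]
together with two inputs: $Rb_*\Z(q)$ computes analytic singular cohomology, which sends abstract blow-up squares to homotopy Cartesian squares by Deligne's cohomological descent (the input behind Corollary~\ref{cor:CDB1}); and $X\mapsto\H_{cdh}\!\left(X,\Omega^{<q}_{X/\C}\right)$ satisfies $cdh$-descent tautologically, being a $cdh$-cohomology (here one invokes the Mayer--Vietoris sequence for $cdh$-cohomology). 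Hence $Rb_*\Z_{\sD}(q)$, and thus $\sD(q)$, is $cdh$-local.

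Next I would compare with the descent $K$-theory $\sK^{\mathrm{desc}}$ of \cite{PP}: it comes with a natural map of presheaves of spectra $\sK\to\sK^{\mathrm{desc}}$ and with Chern classes $\sK^{\mathrm{desc}}\to\prod_q\sD(q)$ compatible with those on $\sK$, since Gillet's universal Chern classes are functorial for the simplicial machinery out of which $\sK^{\mathrm{desc}}$ is built. The key identification is $\sK^{\mathrm{desc}}\simeq\sK\sH$: using resolution of singularities one checks that $\sK^{\mathrm{desc}}$ computes the $cdh$-fibrant replacement of $\sK$, which is weakly equivalent to $\sK\sH$ by \cite[Theorem~6.4]{Haes} (as already used in the proof of Corollary~\ref{cor:fiber1}). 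Transporting the Chern classes along this equivalence produces $c^h_{q,p}\colon KH_{2q-p}(X)\to\H^p_{\sD}(X,\Z(q))$; the statement about the composite $K_{2q-p}(X)\to KH_{2q-p}(X)\to\H^p_{\sD}(X,\Z(q))$ then falls out of the commuting triangle $\sK\to\sK^{\mathrm{desc}}\to\sD(q)$ compared with the Chern class of \eqref{eqn:Chern-C} on $\sK$ (and \eqref{eqn:Chern1}), and functoriality for (relative) $KH$-theory and Deligne cohomology is inherited from that of $\H_{cdh}(-,-)$ and of the descent construction. Alternatively, and more directly, one may apply $\H_{cdh}(-,-)$ to Gillet's presheaf-level total Chern class $\sK\to\prod_q\sD(q)[2q]$, identify the source of the resulting map with $\sK\sH$ by \cite[Theorem~6.4]{Haes} and the target with $\prod_q\sD(q)[2q]$ by the first step, and read off $c^h_{q,p}$.

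The main obstacle is foundational rather than conceptual: one must organise Gillet's Chern classes into data natural enough to be $cdh$-sheafified --- the individual $c_q$ are only multiplicatively, not additively, natural, so one works either through the descent $K$-theory formalism of \cite{PP} or through the total Chern class and the $\lambda$-ring structure --- and one must verify that the hypercovering-based descent $K$-theory really coincides with $\H_{cdh}(-,\sK)$, so that the Chern classes induced on $KH$ are genuinely compatible with those on $K$. Once these identifications are in place, all the claimed properties, including the behaviour for relative $KH$-groups, follow formally from the functoriality built into $\H_{cdh}$ and Gillet's construction.
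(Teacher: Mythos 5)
Your main route is essentially the paper's own proof: identify $KH$ with the descent $K$-theory of \cite{PP} (via the $cdh$-descent of $KH$ and uniqueness of the Guill{\'e}n--Navarro extension) and then define $c^h_{q,p}$ by evaluating Gillet's Chern classes on a smooth proper hypercubical resolution $X_\bullet \to X$, using that $\H^p_{\sD}\left(X, \Z(q)\right) \cong \H^p_{\sD}\left(X_\bullet, \Z(q)\right)$ by the very definition of Deligne cohomology for singular schemes, with compatibility and functoriality inherited from the smooth case. Your preliminary step proving $cdh$-locality of the Deligne cohomology spectrum (and the alternative direct $cdh$-sheafification of the total Chern class) is additional machinery the paper does not need, but it is sound and does not change the substance of the argument.
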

\begin{proof}
The Chern classes from the homotopy invariant $K$-theory are obtained 
by comparing it with the descent $K$-theory of \cite{PP}.
It was shown in \cite[Theorem~4.1]{PP} that the algebraic $K$-theory
functor $\sK : (Sm/{\C})^{op} \to HoSp$ from the category of smooth
schemes over $\C$ to the homotopy category of spectra is a functor to a
{\sl descent} category in the sense of Guill{\'e}n-Navarro \cite{GuillenN}.
Hence it uniquely
extends to a functor ${\sK}{\sD} : (Sch/{\C})^{op} \to HoSp$ which
satisfies the descent property in the sense that if $X_{\bullet} 
\xrightarrow{\pi} X$ is a Guill{\'e}n-Navarro smooth proper hypercubical 
resolution of $X$, then ${\sK}{\sD}(X)
\xrightarrow{\cong} \sK(X_{\bullet})$. One defines $KD_i(X) =
{\pi}_i\left({\sK}{\sD}(X)\right)$. 

Using the $cdh$-descent property
of $KH$-theory ({\sl cf.} \cite{Haes}) and the uniqueness of the
descent $K$-theory, it was shown in {\sl loc. cit.} that there is a 
weak equivalence $KH(X) \cong KD(X)$ of spectra. To see that the Chern
class maps of ~\eqref{eqn:Chern-C} descend to $KD(X)$, we choose a proper 
smooth hypercubical resolution $X_{\bullet} \xrightarrow{\pi} X$ as above.
This gives natural maps 
\[
\xymatrix@C.9pc{
KD_{2q-p}(X) \ar[r]^{\cong} \ar[d]_{c^h_{2q-p}} &
KD_{2q-p}(X_{\bullet}) \ar[r]^{\cong} &
K_{2q-p}(X_{\bullet}) \ar[d]^{c_{q,p}} \\
\H^p_{\sD}\left(X, \Z(q)\right) \ar[rr]_{\cong} & & 
\H^p_{\sD}\left(X_{\bullet}, \Z(q)\right)} 
\]
giving the desired factorization.
The functoriality of $c^h$ now easily follows from the above
using similar properties the usual Chern classes of smooth schemes. 
\end{proof}  

Recall that the descent spectral sequence ~\eqref{eqn:SSC} induces
a functorial Brown filtration $F^{\bullet}_B$ on the $KH$-theory.
Since we shall be considering only this filtration on $KH_*(X)$, we
shall drop the subscript and simply write $F^{\bullet}KH_*(X)$.

\subsection{Milnor and Quillen $K$-theory}
Recall that for a $\C$-algebra $A$, the Milnor $K$-theory $K^M_*(A)$ is the
quotient of the tensor algebra $T(A^*)$ of units in $A$ over $\Z$ by
the two-sided ideal generated by homogeneous elements $\{a \otimes (1-a)|
a, 1-a \in A^*\}$.
For any variety $X$ over $\C$, let ${\sK}^M_{m, X}$
denote the sheaf of Milnor $K$-groups on $X$. This is the sheaf
associated to the presheaf which on affine open subsets of $X$ is given by 
above. In particular,
its stalk at any point $x$ of $X$ is the Milnor 
$K$-group of the local ring ${\sO}_{X, x}$. For any closed embedding 
$i : Y \inj X$, let ${\sK}^M_{m, (X, Y)}$ 
be the sheaf of relative Milnor $K$-groups defined so that the sequence
of sheaves
\begin{equation}\label{eqn:M1} 
0 \to {\sK}^M_{m, (X, Y)} \to {\sK}^M_{m, X} \to
i_*({\sK}^M_{m, Y}) \to 0
\end{equation}
is exact. Note that the map
${\sK}^M_{m, X} \to i_*({\sK}^M_{m, Y})$ is always surjective.

There is a natural map of $K$-theory sheaves $\sK^M_{*, X} \to
\sK_{*,X}$ for any variety $X$, and it is known ({\sl cf.} \cite{Soule})
that this map is injective up to torsion and the Milnor $K$-sheaves
are the smallest piece of the gamma filtration on the corresponding
Quillen $K$-sheaves.   
It is well known ({\sl loc. cit.}) that the Chow groups of algebraic cycles on 
smooth varieties can also be described as the cohomology of Milnor 
$\sK^M$-sheaves.
The results of \cite{Krishna3} and \cite{Krishna2} suggest that
similar identifications should be valid for singular varieties as well.
In fact, we strongly suspect the following which seems to be believed
by experts.

\begin{conj}\label{conj:MQ}
For a quasi-projective scheme $X$ of dimension $d$ over an algebraically 
closed field $k$ of characteristic zero, there is an isomorphism
\[
CH^d(X) \cong H^d_{Zar}\left(X, \sK^M_d\right).
\]
\end{conj}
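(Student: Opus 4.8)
The plan is to reduce the conjecture, via the results already developed in this paper, to a statement that can be checked after resolution of singularities, and then to invoke the known description of Chow groups of zero-cycles on smooth varieties together with a Levine--Weibel type moving lemma. First I would observe that by Corollary~\ref{cor:compare*} and the fact that the natural map $\sK^M_{*,X} \to \sK_{*,X}$ is injective up to torsion and identifies the Milnor $K$-sheaf with the lowest piece of the gamma filtration on the Quillen $K$-sheaf (Soul\'e), one has a surjection $CH^d(X) \surj H^d_{Zar}(X, \sK^M_{d})$ up to torsion in great generality, so the real content is injectivity, or equivalently, promoting this to an isomorphism on the nose. For $d = 1$ this is classical (\cite{LevineW}), and for normal $X$ with isolated singularities Corollary~\ref{cor:compare} already gives the isomorphism $H^d_{Zar}(X, \sK_{X,d}) \cong CH^d(X)$ up to torsion, so the Milnor-versus-Quillen comparison on the sheaf level closes the case; the general case is where work is needed.

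The key steps, in order, would be: (1) Set up the comparison map $\phi \colon CH^d(X) \to H^d_{Zar}(X, \sK^M_d)$ directly on cycles, sending a smooth closed point to its local class, and check it is well-defined by verifying that cycles of rational functions on Cartier curves map to zero --- this uses the Gersten-type resolution of the Milnor $K$-sheaf on the smooth locus and a tame-symbol computation along each Cartier curve, essentially as in \cite{ESV} and \cite{Levine2}. (2) Establish surjectivity: a class in $H^d_{Zar}(X, \sK^M_d)$ is represented, after refining a cover, by data supported near finitely many points; using the fact that $\sK^M_d$ is generated in the top degree by symbols and a moving argument à la Levine--Weibel one pushes the support into the smooth locus, realizing the class as $\phi$ of a zero-cycle. (3) Establish injectivity: given a zero-cycle $z$ supported on smooth points with $\phi(z) = 0$, one must produce Cartier curves and rational functions witnessing $z \sim 0$; here I would pass to a resolution $f \colon \wt X \to X$ with exceptional divisor $E$, use the known isomorphism $CH^d(\wt X) \cong H^d_{Zar}(\wt X, \sK^M_{d})$ for the smooth variety $\wt X$, and compare the two via the Mayer--Vietoris / blow-up exact sequences for $\sK^M_d$ attached to the abstract blow-up square \eqref{eqn:ABLP}, controlling the relative term $H^d(X, \sK^M_{d,(X,X')})$ by the analysis of relative K\"ahler differentials in Section~\ref{section:SNC*} (this is exactly the kind of local computation carried out in Proposition~\ref{prop:local} and Proposition~\ref{prop:SNC-local}).

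The main obstacle I expect is step (3), the injectivity, and more precisely the control of the relative Milnor $K$-cohomology $H^d\bigl(X, \ker(\sK^M_{d,X} \to f_*\sK^M_{d,\wt X})\bigr)$ --- equivalently, showing that the difference between $CH^d(X)$ and $CH^d(\wt X)$ is accounted for entirely by curves on $X$ in the sense of \cite{ESV}, with no extra relations lost or gained. This is the Milnor-$K$ analogue of the Bloch-formula comparison and is genuinely delicate because $\sK^M_d$ does not satisfy cdh-descent, so one cannot simply import the clean $KH$-theoretic statements of Theorem~\ref{thm:main1}; one needs a Gersten-type resolution for $\sK^M_d$ on the (possibly singular) scheme $X$, which is only partially known. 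In the isolated-singularity case this can be finessed using the explicit conductor calculations of Section~\ref{section:SNC*}; for arbitrary singularities one would need the refinements announced for the sequel \cite{Krishna4}, so a complete proof of the conjecture in full generality is likely to remain out of reach by the methods of this paper alone, and I would state and prove it under the standing hypothesis of isolated singularities, flagging the general case as open.
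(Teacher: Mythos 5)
There is a fundamental mismatch here: the statement you are trying to prove is stated in the paper as a \emph{conjecture} (Conjecture~\ref{conj:MQ}), and the paper offers no proof of it. The only case the paper records is Proposition~\ref{prop:MQC} (affine or projective varieties over $\C$ with isolated singularities), and even that is not proved in this paper but cited from \cite[Corollary~4.2]{Krishna3}. Your proposal, by your own admission in the final paragraph, does not close the general case either; it is a research outline whose decisive steps are exactly the open content. In particular, step (3) --- controlling $H^d\bigl(X,\ker(\sK^M_{d,X}\to f_*\sK^M_{d,\wt X})\bigr)$ so that the Levine--Weibel relations on $X$ account precisely for the difference with $CH^d(\wt X)$ --- is not carried out, and nothing in Sections~\ref{section:SNC*} or \ref{section:KKH} supplies it: Proposition~\ref{prop:local} and Proposition~\ref{prop:SNC-local} are statements about K\"ahler differentials and their $cdh$ cohomology, not about relative Milnor $K$-sheaves, and there is no Gersten-type resolution of $\sK^M_d$ on a singular $X$ available in the paper. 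Likewise step (2), the moving argument realizing an arbitrary class in $H^d_{Zar}(X,\sK^M_d)$ by a zero-cycle on the smooth locus, is asserted, not proved.

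Two of your intermediate claims are also not supported by the results you invoke. First, Corollary~\ref{cor:compare*} gives $CH^d(X)\surj F^dK_0(X)\inj F^d_\gamma K_0(X)\inj F^d_BK_0(X)$ up to torsion; it does not produce any map, let alone a surjection, from $CH^d(X)$ to $H^d_{Zar}(X,\sK^M_d)$ --- constructing such a cycle class map and checking it kills the Cartier-curve relations is part of what the conjecture asks for, and the paper never does it in general. Second, the assertion that the isolated-singularity case ``closes'' by combining Corollary~\ref{cor:compare} with a ``Milnor-versus-Quillen comparison on the sheaf level'' is not available from the paper: the comparison Lemma~\ref{lem:MQC-top} is proved only for smooth schemes via the Gersten resolution, and for singular $X$ the identification $H^d_{Zar}(X,\sK^M_d)\cong H^d_{Zar}(X,\sK_d)$ is exactly the nontrivial content of Proposition~\ref{prop:MQC}, imported from \cite{Krishna3}. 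So the correct assessment is: the statement remains a conjecture, your outline identifies the right difficulties but does not resolve them, and the partial case you propose to fall back on is already known and is simply cited, not reproved, in the paper.
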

One also believes that $H^d_{Zar}\left(X, \sK^M_d\right)$ is in general
smaller than $H^d_{Zar}\left(X, \sK_d\right)$. However, there is one case  
where they coincide. We refer to \cite[Corollary~4.2]{Krishna3} for a 
proof.

\begin{prop}\label{prop:MQC}
Let $X$ be an affine or projective variety of dimension $d$ over $\C$ with
only isolated singularities. Then 
\[
CH^d(X) \cong H^d_{Zar}\left(X, \sK^M_d\right) \cong 
H^d_{Zar}\left(X, \sK_d\right).
\]
\end{prop}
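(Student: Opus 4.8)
The plan is to prove Proposition~\ref{prop:MQC} by establishing the two isomorphisms separately, starting from the known comparison results already recorded in this section. The content to be shown is: for $X$ affine or projective of dimension $d$ over $\C$ with only isolated singularities, $CH^d(X) \cong H^d_{Zar}(X, \sK^M_d) \cong H^d_{Zar}(X, \sK_d)$. The first isomorphism, $CH^d(X) \cong H^d_{Zar}(X, \sK^M_d)$, is Conjecture~\ref{conj:MQ} specialized to this case; it is known here and one can cite \cite{Krishna3}. So the heart of the matter is the second isomorphism $H^d_{Zar}(X, \sK^M_d) \xrightarrow{\cong} H^d_{Zar}(X, \sK_d)$, induced by the natural map of sheaves $\sK^M_{d,X} \to \sK_{d,X}$.

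First I would reduce to comparing the two sheaves in high cohomological degree. Consider the map of Zariski sheaves $\phi : \sK^M_{d,X} \to \sK_{d,X}$. On the smooth locus $U = X \setminus X_{\sing}$, which is the complement of the finite set of isolated singular points, $\phi$ is an isomorphism up to torsion by the classical Nesterenko--Suslin--Totaro comparison together with the Gersten resolution on regular local rings (as recalled from \cite{Soule}). Thus $\ker\phi$ and $\cok\phi$ are supported on the zero-dimensional closed subset $X_{\sing}$, modulo torsion. By Lemma~\ref{lem:folklore1} (applied with $Z = X_{\sing}$ of dimension $0$), any sheaf supported on $X_{\sing}$ has vanishing cohomology in degrees $\ge 1$; hence $H^d_{Zar}(X, \ker\phi) = H^d_{Zar}(X, \cok\phi) = 0$ for $d \ge 1$, and the long exact cohomology sequences force $H^d_{Zar}(X, \sK^M_d) \xrightarrow{\cong} H^d_{Zar}(X, \sK_d)$ up to torsion. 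For $d=1$ one argues directly since $\sK^M_{1,X} = \sK_{1,X} = \sO_X^*$. Finally, I would note that the statement as written drops the ``up to torsion'' qualifier, so one must invoke that in this geometric situation the relevant top cohomology groups are torsion-free or that the isomorphism can be upgraded; here I would lean on \cite[Corollary~4.2]{Krishna3}, which is exactly cited for this proposition, to conclude the integral statement.

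The main obstacle I anticipate is twofold. The first is the passage from the ``up to torsion'' isomorphism of sheaves on $U$ to a genuine statement on $X$: one needs the torsion in $\ker\phi$ and $\cok\phi$ to be supported on $X_{\sing}$ as well, or else to absorb torsion into the final comparison; the cleanest route is to observe that the torsion subsheaves are again supported in dimension $\le 0$ (since on $U$ the Gersten complex gives an exact resolution with no torsion issues in the relevant range), so Lemma~\ref{lem:folklore1} still applies. The second, more structural point, is that $H^d_{Zar}(X, \sK_d) \cong CH^d(X)$ is itself a nontrivial input for singular $X$ — but this is precisely the content of Corollary~\ref{cor:compare} together with the identifications in \cite{Viale} and \cite{Levine1}, which are already available. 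So the proof is essentially an assembly: cite \cite{Krishna3} for the Milnor--Chow comparison and the integral upgrade, cite Corollary~\ref{cor:compare} (or its sources) for the Quillen--Chow comparison, and fill in the short sheaf-theoretic argument via Lemma~\ref{lem:folklore1} that the natural map $\sK^M_d \to \sK_d$ induces an isomorphism on $H^d_{Zar}$ because it is an isomorphism away from the isolated singularities.
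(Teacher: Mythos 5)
Your reduction to Lemma~\ref{lem:folklore1} rests on the claim that the sheaf map $\phi:\sK^M_{d,X}\to\sK_{d,X}$ is an isomorphism up to torsion away from $X_{\rm sing}$, and that claim is false once $d\ge 3$. Injectivity up to torsion does hold (\cite{Soule}), but the cokernel is not torsion and is not supported on the singular points: rationally $\sK_{d}$ decomposes into Adams eigenspaces $\sK^{(i)}_d$ with $i\le d$, and the Milnor sheaf only accounts for the top weight $i=d$; the lower-weight pieces (for $d=3$, already the indecomposable part of $K_3$ of the local rings) are nonzero at every point of the smooth locus. So $\ker\phi$ and $\cok\phi$ are not concentrated on $X_{\rm sing}$ even modulo torsion, Lemma~\ref{lem:folklore1} does not apply, and the proposed proof of $H^d_{Zar}(X,\sK^M_d)\cong H^d_{Zar}(X,\sK_d)$ collapses for $d\ge 3$ (for $d\le 2$ the statement is trivial since $\sK^M_2\cong\sK_2$). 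The reason the comparison works in top degrees on a \emph{smooth} variety is not a support argument at all but Lemma~\ref{lem:MQC-top}: the Gersten resolutions of $\sK^M_d$ and $\sK_d$ agree in cohomological degrees $\ge d-1$ because $K^M_i=K_i$ for $i\le 2$ on the residue fields, so the (huge, generically nonzero) cokernel has vanishing $H^{d-1}$ and $H^d$. On a singular variety no Gersten resolution is available, and that is exactly the point where extra input is required; the paper itself supplies none and simply refers to \cite[Corollary~4.2]{Krishna3} for the proof.

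Two further problems with the assembly: your final step ``lean on \cite[Corollary~4.2]{Krishna3} to conclude the integral statement'' is circular, since that corollary \emph{is} the proposition being proved (and is the only proof the paper offers); and Corollary~\ref{cor:compare} cannot substitute for the identification $CH^d(X)\cong H^d_{Zar}(X,\sK_d)$ here, because it is stated only up to torsion and only for normal varieties with isolated singularities, whereas the proposition is integral and assumes only that $X$ is affine or projective with isolated singularities. So the genuine content of the proposition — the integral comparison of $CH^d$, the Milnor $K$-sheaf cohomology, and the Quillen $K$-sheaf cohomology in the presence of isolated singularities — is not recovered by your argument; it requires the specific analysis of \cite{Krishna3} (or an argument replacing the missing Gersten input), not a support/dimension count.
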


We shall also need the following comparison result for the cohomology
of Milnor and Quillen $K$-sheaves on smooth schemes.

\begin{lem}\label{lem:MQC-top}
Let $X$ be a smooth $\C$-scheme of dimension $d$. Then for any $i \ge 0$,
\[
H^j_{Zar}\left(X, \sK^M_{i}\right) \xrightarrow{\cong}
H^j_{Zar}\left(X, \sK_{i}\right) \ {\rm for} \ j \ge i-1.
\]
\end{lem}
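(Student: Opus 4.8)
The plan is to prove this by reducing, via the Gersten resolutions that exist for both Milnor and Quillen $K$-theory on smooth varieties, to a statement about Zariski cohomology in high degrees, and then to invoke the comparison between Milnor and Quillen $K$-theory of fields together with the fact that the Gersten complexes agree above a certain spot.

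First I would recall the Gersten resolution. For a smooth $\C$-scheme $X$ and the sheaf $\sK_i$, the Gersten resolution expresses $H^j_{Zar}(X, \sK_i)$ as the $j$-th cohomology of the complex
\[
\bigoplus_{x \in X^{(0)}} K_i(\C(x)) \to \bigoplus_{x \in X^{(1)}} K_{i-1}(\C(x)) \to \cdots \to \bigoplus_{x \in X^{(i)}} K_{i-i}(\C(x)) \to \cdots ,
\]
where $X^{(p)}$ denotes points of codimension $p$; the term in homological position $p$ involves $K_{i-p}$ of the residue fields. The same holds for $\sK^M_i$ with $K$ replaced by $K^M$ throughout. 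For $j \ge i$, the only contributing terms sit in codimension $p \ge i$, hence involve $K_{i-p}$ (resp. $K^M_{i-p}$) with $i - p \le 0$. For a field $F$, one has $K^M_0(F) = K_0(F) = \Z$ and $K^M_n(F) = K_n(F) = 0$ for $n < 0$ by definition (negative Milnor $K$-theory is zero, and negative Quillen $K$-theory of a field vanishes). So in codimensions $p > i$ both complexes have trivial terms, and in codimension $p = i$ both have $\bigoplus_{x \in X^{(i)}} \Z$, and the natural map $\sK^M_i \to \sK_i$ induces the identity there. This immediately gives the isomorphism for $j = i$ and $j > i$ (both sides vanish for $j > i$ anyway, since codimension is at most $d$ and the relevant terms already vanish; more precisely the complex is concentrated in the right range).

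The remaining and only genuinely nontrivial case is $j = i - 1$. Here $H^{i-1}_{Zar}(X, \sK^M_i)$ is computed from the Gersten complex in homological positions $i-2, i-1, i$, i.e.\ the tail
\[
\bigoplus_{x \in X^{(i-2)}} K^M_2(\C(x)) \to \bigoplus_{x \in X^{(i-1)}} K^M_1(\C(x)) \to \bigoplus_{x \in X^{(i)}} K^M_0(\C(x)),
\]
and similarly with Quillen $K$-theory. The comparison map $K^M_1(F) \to K_1(F)$ is an isomorphism ($= F^*$), $K^M_0 \to K_0$ is an isomorphism ($= \Z$), and the map $K^M_2(F) \to K_2(F)$ is an isomorphism by Matsumoto's theorem. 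Thus the truncated Gersten complexes of $\sK^M_i$ and $\sK_i$ in positions $\ge i-2$ are isomorphic, and since $H^{i-1}$ only sees positions $i-2, i-1, i$, the induced map on $H^{i-1}$ is an isomorphism. \textbf{The main obstacle} is making sure the Gersten resolution is available and compatible in the Milnor setting — for this I would cite Kerz's proof of the Gersten conjecture for Milnor $K$-theory over infinite fields (which $\C$ is), together with the standard Quillen Gersten resolution, and check that the residue/specialization maps in the two complexes are compatible with the forgetful map $\sK^M_* \to \sK_*$; once that compatibility and Matsumoto's theorem are in hand the degree-wise comparison in positions $\ge i-2$ is immediate and the lemma follows.
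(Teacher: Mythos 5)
Your proof is correct and follows essentially the same route as the paper: both compare the Gersten resolutions of $\sK^M_i$ and $\sK_i$ (citing Kerz for the Milnor case) and use the agreement of Milnor and Quillen $K$-theory in degrees $\le 2$, which is exactly what controls the tail of the complexes relevant for $H^j$ with $j \ge i-1$. The paper states this in one line, while you spell out the truncation argument and the compatibility of residue maps, but the underlying idea is identical.
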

\begin{proof}
This follows directly by comparing the Gersten resolutions for appropriate
$\sK^M$ and $\sK$-sheaves and using the fact that $K^M_i(R) \cong K_i(R)$
for any local ring $R$ over $\C$ and for $0 \le i \le 2$. The
Gersten resolution for the Milnor $K$-sheaves is proven in \cite{Kerz}.
\end{proof}   

For a quasi-projective scheme $X$ of dimension $d$ over $\C$ and $i \ge 0$, let
\begin{equation}\label{eqn:MQC*}
F^dK^M_i(X) := {\rm Image}\left(H^d_{Zar}\left(X, \sK^M_{d+i}\right) 
\to K_i(X)\right), \ {\rm and}
\end{equation}
\[
F^dKH^M_i(X) := {\rm Image}\left(H^d_{cdh}\left(X, \sK^M_{d+i}\right) 
\to KH_i(X)\right).
\]
Note that these maps are induced by the natural maps from Milnor 
sheaves to Quillen sheaves followed by the maps induced by the
Brown-Gersten spectral sequences.

For the rest of this paper, we choose and fix the following resolution
of singularities diagram for a projective $\C$-scheme $X$ of dimension $d$.
\begin{equation}\label{eqn:RSD}
\xymatrix@C.9pc{
E \ar[r]^{\wt{i}} \ar[d]_{\wt{f}} & \wt{X} \ar[d]^{f} \\
S \ar[r]_{i} & X,}
\end{equation}
where $S = X_{\rm sing}$ and $E = f^{-1}(S)$ is the reduced exceptional
divisor which is assumed to be strict normal crossing. We recall the
following Mayer-Vietoris property of the Deligne cohomology from
\cite[Variant~3.2]{BPW}.
\begin{lem}\label{lem:MVD}{({\cite[Variant~3.2]{BPW}})} The above resolution 
diagram induces the following long exact sequence of Deligne cohomology.
\[
\cdots \to \H^i_{\sD}\left(X, \Z(q)\right) \to  
\H^i_{\sD}(\wt{X}, \Z(q)) \oplus  \H^i_{\sD}\left(S, \Z(q)\right)  
\to \H^i_{\sD}\left(E, \Z(q)\right) \to
\H^{i+1}_{\sD}\left(X, \Z(q)\right).
\]
\end{lem}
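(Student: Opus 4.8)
The plan is to deduce the asserted long exact sequence from the fact that the functor $X \mapsto R\Gamma_{Zar}\left(X, Rb_*\Z_{\sD}(q)\right)$ sends the abstract blow-up square ~\eqref{eqn:RSD} to a homotopy cartesian square of spectra. Concretely, I would use the exact triangle ~\eqref{eqn:DeligneC2}, which presents $Rb_*\Z_{\sD}(q)$ as sitting in a natural triangle between the ``Betti part'' $Rb_*\Z(q)$ and the ``de Rham part'' $\left(Ra_*\Omega^{<q}_{X/{\C}}\right)[-1]$. Since a cone of a morphism of homotopy cartesian squares is again homotopy cartesian (equivalently, a triangle of triangles is a triangle), it suffices to establish the Mayer--Vietoris property of ~\eqref{eqn:RSD} separately for these two ingredients; the Mayer--Vietoris sequence for $\Z_{\sD}(q)$ then results from the octahedral axiom, and passing to cohomology yields the statement of the lemma.

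For the de Rham part, the Zariski hypercohomology of $\left(Ra_*\Omega^{<q}_{X/{\C}}\right)[-1]$ computes, up to the shift, the $cdh$ hypercohomology $\H^*_{cdh}\left(X, \Omega^{<q}_{X/{\C}}\right)$: indeed, by Lemma~\ref{lem:direct-image} the complex $Ra_*a^*\Omega^i_{X/{\C}}$ has quasi-coherent cohomology sheaves, so its Zariski and $cdh$ hypercohomology agree, and the brutal truncation $\Omega^{<q}$ is built from finitely many of these. The $cdh$ topology satisfies descent for abstract blow-up squares by construction (see \cite[Theorem~2.7]{CJams}), which gives the desired Mayer--Vietoris long exact sequence for $\Omega^{<q}_{-/{\C}}$ along ~\eqref{eqn:RSD}. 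For the Betti part, the required sequence $\cdots \to H^i(X,\Z(q)) \to H^i(\wt X,\Z(q)) \oplus H^i(S,\Z(q)) \to H^i(E,\Z(q)) \to \cdots$ is exactly proper (cohomological) descent for the singular cohomology of complex varieties, applied to the proper modification $f$ with center $S$ and exceptional locus $E$ --- this is one of the basic inputs of Deligne's construction of mixed Hodge structures (\cite{Hodge3}), and it also follows from the existence of cubical hyperresolutions.

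The point that requires care --- and which is really what \cite[Variant~3.2]{BPW} supplies --- is the \emph{naturality} of the triangle ~\eqref{eqn:DeligneC2} in $X$ with respect to the four morphisms of the square ~\eqref{eqn:RSD}, so that the Mayer--Vietoris sequences for $\Z(q)$ and for $\Omega^{<q}_{-/{\C}}$ assemble into a morphism of long exact sequences whose connecting ``cone'' is the sought sequence for $\Z_{\sD}(q)$; since the Deligne complex of a singular scheme is itself defined through a smooth proper hypercovering, this reduces to choosing such hypercoverings compatibly over $\wt X$, $S$, $E$ and $X$ (a cubical hyperresolution refining ~\eqref{eqn:RSD}). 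Once this compatibility is granted the argument is formal, so in the body of the paper I would simply invoke \cite[Variant~3.2]{BPW} rather than reproduce the bookkeeping. The main, and essentially only, obstacle is thus organisational: arranging all the hypercoverings and triangles to be simultaneously functorial in the square.
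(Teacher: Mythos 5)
Your proposal is correct and in the end coincides with what the paper does: the lemma is simply quoted from \cite[Variant~3.2]{BPW}, with no independent proof given, and you likewise conclude by invoking that reference. Your preliminary sketch --- splitting $Rb_*\Z_{\sD}(q)$ via the triangle ~\eqref{eqn:DeligneC2} into the Betti and de Rham parts, using proper descent for singular cohomology and $cdh$ descent for $\Omega^{<q}$, and noting that the real content is arranging compatible hyperresolutions so the triangles are natural in the square --- is a faithful account of how the cited result is established, so there is nothing to correct.
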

Such a Mayer-Vietoris exact sequence also holds for the singular cohomology.
The functoriality property of the Chern classes with the pull-back maps of
(relative) $K$-theory and Deligne cohomology gives such maps
between the Mayer-Vietoris sequences of $KH$-theory and Deligne cohomology. 

\section{Chern classes for normal crossing schemes}
\label{section:KDC} 
In this section, we prove some results about the $cdh$
cohomology of $\sK$-sheaves and the Chern classes from them into the
Deligne cohomology of normal crossing schemes.

\begin{lem}\label{lem:SNCD}
Let $E$ be a strict normal crossing divisor of dimension $d$ on a smooth
scheme. Then the cup product map 
\[
\H^{2d}_{\sD}\left(E, \Z(d)\right) \otimes \H^{1}_{\sD}\left(E, \Z(1)\right)
\to \H^{2d+1}_{\sD}\left(E, \Z(d+1)\right)
\]
is surjective.
\end{lem}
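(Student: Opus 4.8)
The plan is to deduce the statement from the corresponding (classical) fact for the smooth components of $E$, transporting everything through the normalization. Let $\nu\colon \bar E=\coprod_{i=1}^{r}E_{i}\to E$ be the normalization; since $E$ is a strict normal crossing divisor on a smooth scheme, each $E_{i}$ is smooth projective of dimension $d$, $\bar E$ is their disjoint union, and every multiple intersection $E_{I}=\bigcap_{i\in I}E_{i}$ with $|I|\ge 2$ is smooth of dimension $<d$ (we may assume, as in our applications, that $E$ is projective). The first task is to understand the three Deligne cohomology groups in the statement. From the exact triangle~\eqref{eqn:DeligneC1} together with Corollary~\ref{cor:CDB2} one obtains, for $\bullet\in\{E,\bar E,E_{i}\}$, a long exact sequence linking $\H^{*}_{\sD}(\bullet,\Z(q))$, $H^{*}(\bullet,\Z(q))$ and $H^{*}(\bullet,\C)/F^{q}H^{*}$. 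Since $\dim E=d$ we have $H^{2d+1}(E,\Z)=0$, and $F^{d+1}H^{2d}(E,\C)=0$ because by Corollary~\ref{cor:CDB2} its graded pieces are the groups $H^{2d-p}_{cdh}(E,\Omega^{p}_{E/\C})$ with $p\ge d+1$, which vanish by Proposition~\ref{prop:SNC-local} when $2d-p\ge 1$ and by the vanishing of $\Omega^{>d}$ on the smooth normalization when $2d-p=0$. Hence $\H^{2d+1}_{\sD}(E,\Z(d+1))\cong\cok\!\big(H^{2d}(E,\Z(d+1))\to H^{2d}(E,\C)\big)$, and similarly for $\bar E$. The Mayer--Vietoris (weight) spectral sequence of the closed cover $E=\bigcup_{i}E_{i}$, whose $E_{1}$-page is $\bigoplus_{|I|=p+1}H^{q}(E_{I},-)$, then has all terms with $p\ge 1$ vanishing in total degrees $2d$ and $2d+1$ (because $\dim E_{I}<d$), so $\nu^{*}\colon H^{2d}(E,-)\to H^{2d}(\bar E,-)$ is an isomorphism for $\Z$- and for $\C$-coefficients; consequently $\nu^{*}$ induces an isomorphism $\H^{2d+1}_{\sD}(E,\Z(d+1))\xrightarrow{\ \cong\ }\bigoplus_{i}\H^{2d+1}_{\sD}(E_{i},\Z(d+1))$. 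The same bookkeeping with $q=d$ (where now $H^{2d}(E,\C)/F^{d}H^{2d}=0$, since $H^{2d-p}_{cdh}(E,\Omega^{p}_{E/\C})=0$ for $p<d$ by the $cdh$-cohomological dimension bound) shows that $\H^{2d}_{\sD}(E,\Z(d))\to H^{2d}(E,\Z)=\bigoplus_{i}\Z$ is surjective.

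With this in hand I would produce the required cup products. Choose $\alpha_{i}\in\H^{2d}_{\sD}(E,\Z(d))$ mapping to the $i$-th standard generator of $H^{2d}(E,\Z)$, so that the $j$-th component $\beta_{ij}$ of $\nu^{*}\alpha_{i}$ has degree $\delta_{ij}$ in $H^{2d}(E_{j},\Z)$. For $\lambda\in\C^{*}=\H^{1}_{\sD}(\Spec\C,\Z(1))$ let $\tilde\lambda\in\H^{1}_{\sD}(E,\Z(1))$ be its pull-back along $E\to\Spec\C$. Because cup product commutes with $\nu^{*}$ and $\nu^{*}\tilde\lambda$ is the analogous constant on each $E_{j}$, the $j$-th component of $\nu^{*}(\alpha_{i}\cup\tilde\lambda)$ is $\beta_{ij}\cup\tilde\lambda|_{E_{j}}$. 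On the smooth connected projective variety $E_{j}$, cupping with the constant unit $\tilde\lambda|_{E_{j}}$ kills $J^{d}(E_{j})=\ker(\deg)$: this group is the image of $CH_{0}(E_{j})_{\deg 0}$ under the cycle class map (the albanese map of~\eqref{eqn:SALB} is surjective), hence is generated by classes $\cl(y)-\cl(x)$, and by the projection formula $\cl(x)\cup\tilde\lambda|_{E_{j}}=(\iota_{x})_{*}(\tilde\lambda|_{x})=(\iota_{x})_{*}(\lambda)$ for the regular embedding $\iota_{x}\colon\{x\}\hookrightarrow E_{j}$, which is independent of $x$ since all closed points of $E_{j}$ have the same class in $H^{2d}(E_{j},\Z)$. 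Consequently $\beta_{ij}\cup\tilde\lambda|_{E_{j}}=(\deg\beta_{ij})\,(\iota_{x_{j}})_{*}(\lambda)$, so $\nu^{*}(\alpha_{i}\cup\tilde\lambda)$ is supported in the $i$-th summand, where it equals $(\iota_{x_{i}})_{*}(\lambda)$.

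Finally I would note that the Gysin map $(\iota_{x_{i}})_{*}\colon\H^{1}_{\sD}(\{x_{i}\},\Z(1))\to\H^{2d+1}_{\sD}(E_{i},\Z(d+1))$ is an isomorphism: both groups are canonically $\C/\Z$ (the relevant pieces of the Hodge filtration vanish exactly as in the first step), and the map is multiplication by the Betti class of a point, which generates $H^{2d}(E_{i},\Z)$. Hence, as $\lambda$ runs over $\C^{*}$, the classes $\nu^{*}(\alpha_{i}\cup\tilde\lambda)$ exhaust the $i$-th summand of $\bigoplus_{j}\H^{2d+1}_{\sD}(E_{j},\Z(d+1))$; letting $i$ vary they generate the whole group, and since $\nu^{*}$ is an isomorphism the cup products $\alpha_{i}\cup\tilde\lambda$ already generate $\H^{2d+1}_{\sD}(E,\Z(d+1))$. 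This gives the asserted surjectivity.

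The step I expect to be the real work is the first one: determining $\H^{2d+1}_{\sD}$ and $\H^{2d}_{\sD}$ of the singular scheme $E$ and showing $\nu^{*}$ is an isomorphism on the former and surjective onto $H^{2d}(E,\Z)$. This rests on the Hodge-theoretic comparison of Section~\ref{section:HODGET} (chiefly Corollary~\ref{cor:CDB2}), the vanishing results of Proposition~\ref{prop:SNC-local}, and a careful use of the Mayer--Vietoris spectral sequence of the normal crossing scheme. A point worth stressing is that no Gysin maps into the singular $E$ are needed: the only Gysin maps and projection formulas used are those for the smooth $E_{i}$, where they are classical (\cite{EV}), and everything is transported to $E$ through the ring homomorphism $\nu^{*}$.
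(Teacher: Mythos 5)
Your proposal is correct in substance, and it shares the paper's two essential reductions: the identification $\H^{2d+1}_{\sD}\left(E,\Z(d+1)\right)\cong H^{2d}(E,\Z)\otimes\C^*$, which rests on $F^{d+1}H^{2d}(E,\C)=0$ obtained through the normalization, and the surjectivity of the Betti realization $\H^{2d}_{\sD}\left(E,\Z(d)\right)\to H^{2d}(E,\Z)$. Where you diverge is in how the cup product is matched with these identifications. The paper simply asserts the commutativity of the square relating $\H^{2d}_{\sD}\left(E,\Z(d)\right)\otimes\C^*$, $H^{2d}(E,\Z)\otimes\C^*$ and $\H^{2d+1}_{\sD}\left(E,\Z(d+1)\right)$ (i.e.\ that cupping with a global unit realizes the topological class map tensored with $\C^*$), and concludes at once; you instead verify this compatibility by hand: pass to the normalization, reduce to the smooth components, and there use surjectivity of the cycle class map on zero-cycles (Albanese surjectivity), the projection formula $\cl(x)\cup\tilde\lambda=(\iota_x)_*\lambda$, and the compatibility of the Deligne Gysin map for a point with its Betti counterpart. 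This buys explicit generators (point classes cupped with constants) at the cost of importing Gysin/projection-formula machinery on the smooth pieces, which the paper's more formal diagram argument avoids; conversely, the paper's version leaves exactly the step you spell out as an unproved compatibility. Two small remarks: your justification of $F^{d+1}H^{2d}(E,\C)=0$ leans on Proposition~\ref{prop:SNC-local}, which is stated for affine $E$, so it does not directly give the global vanishing of the relevant $cdh$ groups; but this is harmless, since the isomorphism $H^{2d}(E,-)\cong H^{2d}(\ov{E},-)$ you prove anyway yields the vanishing exactly as in the paper (isomorphism of Hodge structures with the smooth normalization, for which $F^{d+1}H^{2d}=0$), which is the cleaner route. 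Also, the stated reason for the independence of $(\iota_x)_*\lambda$ from $x$ ("all points have the same Betti class") is only complete once combined with your later observation that the Deligne Gysin map is computed by the Betti Gysin map on the $H^{2d}(E_j,\C)$-quotient; as written in the middle paragraph it reads circular, so that compatibility should be invoked there explicitly.
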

\begin{proof}
We can assume that $E$ is connected. 
It is easy to see that $\H^{1}_{\sD}\left(E, \Z(1)\right) \cong 
H^0(E, \C^*) \cong \C^*$ ({\sl cf.} \cite[Section~1]{BPW}).
It follows from Lemma~\ref{lem:MDeligneC3} that 
\[
\H^{2d+1}_{\sD}\left(E, \Z(d+1)\right) = \frac{{H^{2d}(E, \C)}/{H^{2d}(E, \Z)}}
{F^{d+1}H^{2d}(E, \C)} \cong \frac{H^{2d}(E, \C^*)}{F^{d+1}H^{2d}(E, \C)}\]
\[
\hspace*{8cm} \cong 
\frac{H^{2d}(E, \Z) \otimes \C^*}{F^{d+1}H^{2d}(E, \C)}.
\]
On the other hand, if $E^N \to E$ is the smooth normalization of $E$,
then the map $H^{2d}(E, \Z) \to H^{2d}(E^N, \Z)$ is an
isomorphism of Hodge structures. Moreover, $F^{d+1}H^{2d}(E^N, \C) = 0$
by Hodge theory. In particular, we get $F^{d+1}H^{2d}(E, \C) = 0$.
We conclude that
\begin{equation}\label{eqn:strictN}
H^{2d}(E, \Z(d)) \otimes \C^* \xrightarrow{\cong}
\H^{2d+1}_{\sD}\left(E, \Z(d+1)\right).
\end{equation}   
In particular, we get 
\[
\xymatrix{
\H^{2d}_{\sD}\left(E, \Z(d)\right) \otimes \H^{1}_{\sD}\left(E, \Z(1)\right)
\ar[r]^{\ \ \ \ \cong} \ar[d] & \H^{2d}_{\sD}\left(E, \Z(d)\right) \otimes \C^*
\ar@{->>}[d] \\
\H^{2d+1}_{\sD}\left(E, \Z(d+1)\right) &
H^{2d}(E, \Z) \otimes \C^* \ar[l]^{\cong},}
\] 
which shows that the left vertical arrow is surjective.
\end{proof}

We next recall the following result which signifies the importance of
using Milnor $K$-sheaves in place of Quillen $K$-sheaves to study 
algebraic cycles on singular varieties. We refer to 
\cite[Proposition~8.2]{Krishna2} for a proof. It is not clear if such a 
result is true for the cohomology of Quillen $K$-sheaves.

\begin{prop}\label{prop:Milnor}
Let $E$ be as in Lemma~\ref{lem:SNCD}. Then the natural cup product map
\[H^d(E, {\sK}^M_{i, E}) \otimes \C^* \to H^d(E, {\sK}^M_{{i+1}, E})\]
is surjective for all $i \ge d$ in either of Zariski and $cdh$ topology. 
\end{prop}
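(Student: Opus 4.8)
The plan is to reduce the statement for a general strict normal crossing divisor $E = E_1 \cup \cdots \cup E_r$ to the case of a smooth variety by induction on the number of irreducible components, using the Mayer--Vietoris (descent) spectral sequence for $cdh$-cohomology of the Milnor $\sK$-sheaves. For the base case when $E$ is smooth, the surjectivity of $H^d(E, \sK^M_{i,E}) \otimes \C^* \to H^d(E, \sK^M_{i+1,E})$ for $i \ge d$ is classical: one compares Gersten resolutions of Milnor $K$-sheaves, and on a smooth variety $H^d$ is the top cohomology, so by a Bloch--Quillen type argument together with the surjectivity $K^M_{i}(F) \otimes F^* \twoheadrightarrow K^M_{i+1}(F)$ at the generic point (and at all residue fields), the claim follows; alternatively this is subsumed in \cite[Proposition~8.2]{Krishna2} / \cite{Kerz}. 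First I would set up this base case carefully, noting that for smooth $E$ the Zariski and $cdh$ statements agree by \cite[Corollary~2.5]{CJams} (or rather the Gersten-resolution argument works identically in both topologies since $cdh$-cohomology of these sheaves on smooth schemes agrees with Zariski).

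Next I would run the inductive step. Write $E' = E_2 \cup \cdots \cup E_r$ and $D = E_1 \cap E'$, so that $D$ is again a strict normal crossing divisor, now inside the smooth variety $E_1$, with strictly fewer components than $E$; and $E'$ has $r-1$ components. The abstract blow-up / descent square with $E = E_1 \cup E'$, intersection $D$, gives, for any $cdh$-sheaf $\sF$, a long exact Mayer--Vietoris sequence
\[
\cdots \to H^{j}(E, \sF) \to H^{j}(E_1, \sF) \oplus H^{j}(E', \sF) \to H^{j}(D, \sF) \to H^{j+1}(E, \sF) \to \cdots
\]
(\emph{cf.} \cite[Theorem~2.7]{CJams}). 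Since $E_1, E', D$ all have dimension $\le d$ and in fact $D$ has dimension $\le d-1$, the group $H^{d+1}$ of all of them vanishes, so for the top degree we get a right-exact sequence
\[
H^{d}(E_1, \sK^M_{i,E_1}) \oplus H^{d}(E', \sK^M_{i,E'}) \to H^{d}(D, \sK^M_{i,D}) \to H^{d}(E, \sK^M_{i,E}) \to 0,
\]
and similarly with $i$ replaced by $i+1$. Note $H^d(D, \sK^M_{i,D}) = 0$ for $\dim D < d$, so in fact $H^d(E_1,\sK^M_{i,E_1}) \oplus H^d(E',\sK^M_{i,E'}) \twoheadrightarrow H^d(E,\sK^M_{i,E})$; but one must keep track of $D$ for the cup-product compatibility. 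Now cup product with $\C^* = H^0(E,\sO^*_E) = \H^1_{\sD}(E,\Z(1))$ (pulled back compatibly to $E_1, E', D$) gives a map of these right-exact sequences. By the inductive hypothesis, $H^d(E_1, \sK^M_{i,E_1}) \otimes \C^* \twoheadrightarrow H^d(E_1, \sK^M_{i+1,E_1})$ (smooth case), $H^d(E', \sK^M_{i,E'}) \otimes \C^* \twoheadrightarrow H^d(E', \sK^M_{i+1,E'})$ (fewer components), and likewise for $D$. A diagram chase on the two right-exact rows, using surjectivity on the $E_1 \oplus E'$ term, then yields surjectivity of $H^d(E, \sK^M_{i,E}) \otimes \C^* \to H^d(E, \sK^M_{i+1,E})$, completing the induction.

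The main technical point to be careful about is the precise bookkeeping of the Milnor $K$-sheaf module structure over $\sO^*$ and the compatibility of the cup product with the Mayer--Vietoris boundary maps — i.e.\ that cup product with a \emph{global} unit is a map of complexes of sheaves on the descent diagram, so it genuinely induces a morphism of the long exact sequences. This is where one uses that the relevant $\C^*$ is pulled back from the base and restricts compatibly to each stratum. A secondary point is that one needs $i \ge d$ (not merely $i \ge 0$) to invoke the smooth base case in each of $E_1, E'$: since $\dim E_1 = \dim E' = d$, the hypothesis $i \ge d$ is exactly what is needed at each stage, and it is preserved under the induction. The hard part will really be only the first sentence of this last paragraph; the rest is routine homological algebra once the base case is in hand. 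I expect the whole argument occupies well under a page.

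\begin{proof}
We argue by induction on the number $r$ of irreducible components of $E$. When $r = 1$, $E$ is smooth of dimension $d$, and the assertion is standard: comparing the Gersten resolutions of $\sK^M_{i,E}$ and $\sK^M_{i+1,E}$ and using that $K^M_i(F) \otimes_{\Z} F^{*} \surj K^M_{i+1}(F)$ for every field $F/\C$ (applied at the generic point of $E$ and at all residue fields of codimension $< d$ points), one sees that cup product with $H^0(E, \sO^{*}_E) = \C^{*}$ surjects onto the top cohomology $H^d(E, \sK^M_{i+1,E})$; \emph{cf.} \cite[Proposition~8.2]{Krishna2} and \cite{Kerz}. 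In the $cdh$ topology the same Gersten argument applies verbatim, the two topologies agreeing on smooth schemes for these sheaves by \cite[Corollary~2.5]{CJams}.

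Now let $r \ge 2$ and write $E = E_1 \cup E'$ with $E' = E_2 \cup \cdots \cup E_r$, and $D = E_1 \cap E'$. Then $E'$ is a strict normal crossing divisor with $r-1$ components, $E_1$ is smooth of dimension $d$, and $D$ is a strict normal crossing divisor in $E_1$ of dimension $\le d-1$. The abstract blow-up square attached to $E = E_1 \cup E'$ yields, for any $cdh$-sheaf $\sF$, a Mayer--Vietoris long exact sequence
\[
\cdots \to H^{j}_{cdh}(E, \sF) \to H^{j}_{cdh}(E_1, \sF) \oplus H^{j}_{cdh}(E', \sF) \to H^{j}_{cdh}(D, \sF) \to H^{j+1}_{cdh}(E, \sF) \to \cdots
\]
by \cite[Theorem~2.7]{CJams}, and similarly in the Zariski topology. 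Since $E_1, E', D, E$ all have dimension $\le d$ (and $D$ even $\le d-1$), their cohomology vanishes above degree $d$, so taking $\sF = \sK^M_{m}$ for $m \in \{i, i+1\}$ we obtain right-exact sequences
\[
H^{d}(E_1, \sK^M_{m}) \oplus H^{d}(E', \sK^M_{m}) \to H^{d}(D, \sK^M_{m}) \to H^{d}(E, \sK^M_{m}) \to 0
\]
(with $H^d(D, \sK^M_m) = 0$ if $\dim D < d$, which only strengthens the conclusion). Cup product with the global unit class in $H^0(E, \sO^{*}_E) = \C^{*}$, which restricts compatibly to $E_1$, $E'$, $D$, is a morphism of complexes of sheaves on the descent diagram, hence induces a morphism from the row for $m = i$ tensored with $\C^{*}$ to the row for $m = i+1$. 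By the smooth case applied to $E_1$ and by the inductive hypothesis applied to $E'$ and to $D$ (all of which have at most $r-1$ components, and for which $i \ge d \ge \dim D$ holds), the vertical maps on the $E_1 \oplus E'$ term and on the $D$ term are surjective. A diagram chase on the two right-exact rows now gives surjectivity of
\[
H^{d}(E, \sK^M_{i,E}) \otimes \C^{*} \to H^{d}(E, \sK^M_{i+1,E}),
\]
completing the induction and the proof. The argument is the same in the Zariski topology, using the Zariski Mayer--Vietoris sequence for the normalization and the Zariski form of the smooth case.
\end{proof}
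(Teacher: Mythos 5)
Your base case (smooth $E$, via the Gersten resolution of \cite{Kerz} and the surjectivity of $K^M_n(\C)\otimes\C^*\to K^M_{n+1}(\C)$ at the closed points) is essentially fine, but the inductive step rests on a misquoted Mayer--Vietoris sequence and collapses. For the closed decomposition $E=E_1\cup E'$ with $D=E_1\cap E'$, the sequence of \cite[Theorem~2.7]{CJams} reads
\[
\cdots \to H^d(E,\sF)\to H^d(E_1,\sF)\oplus H^d(E',\sF)\to H^d(D,\sF)\to H^{d+1}(E,\sF)\to\cdots,
\]
so in top degree (using $H^{d+1}=0$ and $H^d(D,\sF)=0$ because $\dim D=d-1$) what one actually gets is a surjection $H^d(E,\sK^M_m)\surj H^d(E_1,\sK^M_m)\oplus H^d(E',\sK^M_m)$ whose kernel is the image of the connecting map $\partial\colon H^{d-1}(D,\sK^M_m)\to H^d(E,\sK^M_m)$ --- not the sequence you display, in which $H^d(E)$ appears as a quotient of $H^d(D)$; there is not even a natural map $H^d(E_1)\oplus H^d(E')\to H^d(E)$ in this setting, and for $\sF=\sO^*$ and two lines meeting in a point your sequence would force $\Pic(E)=0$, which is false. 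With the correct sequence, surjectivity of the cup product on the $E_1\oplus E'$ term only shows that every class in $H^d(E,\sK^M_{i+1})$ is a cup product \emph{modulo} the image of $\partial$; the real content of the proposition is to hit $\partial\bigl(H^{d-1}(D,\sK^M_{i+1})\bigr)$ as well, and your ``likewise for $D$'' addresses $H^d(D)$, which is zero and irrelevant.

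To repair the step one needs the proposition itself for the $(d-1)$-dimensional normal crossing scheme $D$ in degree $d-1$ (note $i\ge d>d-1$), together with the compatibility $\partial(\delta\cup\lambda)=\pm\,\partial(\delta)\cup\lambda$ for a global unit $\lambda\in\C^*$; hence the induction must be on the dimension (with the count of components inside), exactly as in the proof of Lemma~\ref{lem:SNCD1} here and in the cited argument of \cite[Proposition~8.2]{Krishna2} --- which is in fact all the paper itself invokes: it does not reprove the statement, and only remarks that the Zariski proof of {\sl loc.\ cit.}\ carries over (more easily) to the $cdh$ topology. Finally, your Zariski half is also not free of charge: a closed cover does not give a Zariski Mayer--Vietoris sequence automatically, so one must first establish the right-exactness of $\sK^M_{m,E}\to \sK^M_{m,E_1}\oplus\sK^M_{m,E'}\to\sK^M_{m,D}\to 0$ (equivalently the surjectivity of the relative sheaf map, the $K$-theoretic analogue of ~\eqref{eqn:SNC-local1}), which you assert but do not prove.
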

We remark here that the proof of the above proposition in 
\cite[Proposition~8.2]{Krishna2} is given for the Zariski cohomology,
but the same (in fact easier) proof also works for the $cdh$ cohomology.

\begin{cor}\label{cor:KtheoryCup}
Let $E$ be as in Lemma~\ref{lem:SNCD}. Then the cup product maps in $K$-theory
induce the following diagram
\begin{equation}\label{eqn:KtheoryCup1}
\xymatrix@C.6pc{
H^d_{cdh}\left(E, \sK^M_d\right) \otimes \C^* \ar[r] \ar[d] &
H^d_{cdh}\left(E, \sK^M_{d+1}\right) \ar[d] \\
F^dKH^M_0(E) \otimes \C^* \ar[r] & F^dKH^M_1(E),}   
\end{equation}
where all the arrows are surjective.
\end{cor}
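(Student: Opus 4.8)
The plan is to realize \eqref{eqn:KtheoryCup1} as the image, under the comparison of Milnor $K$-sheaf cohomology with $KH$-theory, of the corresponding square of $cdh$-cohomology of Milnor $K$-sheaves, and then to read off surjectivity of all four arrows. Concretely: by \eqref{eqn:MQC*}, the left vertical arrow is the surjection of $H^d_{cdh}(E, \sK^M_d)$ onto its image $F^dKH^M_0(E)$ under the composite of $\sK^M_d \to \sK_d$ with the edge homomorphism of the descent spectral sequence \eqref{eqn:SSC}, and the right vertical arrow is the analogous surjection for $\sK^M_{d+1}$ and $KH_1(E)$. The top horizontal arrow is cup product with $\sK^M_1$ on $cdh$-cohomology, with $\C^*$ playing the role of $H^0_{cdh}(E, \sK^M_1)$ exactly as in Proposition~\ref{prop:Milnor}, and it is surjective by the case $i = d$ of that proposition. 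For the bottom arrow, $\C^* = KH_1(\Spec\C)$ maps to $KH_1(E)$ through the structure morphism, so the ring structure of the spectrum ${\sK}{\sH}$ gives a product $KH_0(E) \otimes \C^* \to KH_1(E)$, and I would take the bottom arrow to be the restriction of this product to $F^dKH^M_0(E) \otimes \C^*$.

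The one substantive point is the commutativity of the square: for $\tilde\alpha \in H^d_{cdh}(E, \sK^M_d)$ and a global unit $u$, the image in $KH_1(E)$ of the Milnor cup product $\tilde\alpha \cup u \in H^d_{cdh}(E, \sK^M_{d+1})$ must equal the $KH$-product of the image of $\tilde\alpha$ with the image of $u$. This follows from the multiplicativity of the descent spectral sequence \eqref{eqn:SSC} over the ring structure of ${\sK}{\sH}$, the fact that its edge homomorphisms respect products, and the compatibility of the graded-ring map $\sK^M_\bullet \to \sK_\bullet$ with products; all of this is standard (compare \cite{Soule}, \cite{GilletS}). In particular, once this is known, the image of the restricted product lies in $F^dKH^M_1(E)$ by the very definition \eqref{eqn:MQC*}, so the bottom arrow is well defined and the square commutes. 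I expect this verification to be the only place requiring care; the rest is formal.

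Granting the commutativity, surjectivity of the bottom arrow is a diagram chase. The two vertical arrows are surjective by definition of $F^dKH^M_0(E)$ and $F^dKH^M_1(E)$, and the top arrow is surjective by Proposition~\ref{prop:Milnor}; hence the composite $H^d_{cdh}(E, \sK^M_d) \otimes \C^* \to H^d_{cdh}(E, \sK^M_{d+1}) \to F^dKH^M_1(E)$ is surjective. By commutativity this composite factors as $H^d_{cdh}(E, \sK^M_d) \otimes \C^* \surj F^dKH^M_0(E) \otimes \C^* \to F^dKH^M_1(E)$, which forces the last map --- the bottom arrow of \eqref{eqn:KtheoryCup1} --- to be surjective too.
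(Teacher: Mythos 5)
Your proof is correct and follows essentially the same route as the paper, whose proof of this corollary is simply that it ``follows immediately from Proposition~\ref{prop:Milnor} and ~\eqref{eqn:MQC*}.'' The extra care you take in defining the bottom arrow via the ring structure on ${\sK}{\sH}$ and in checking compatibility of the Milnor cup product with the $KH$-product through the multiplicative descent spectral sequence is exactly the (standard) content the paper leaves implicit.
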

\begin{proof}
Follows immediately from Proposition~\ref{prop:Milnor} and ~\eqref{eqn:MQC*}.
\end{proof}

\begin{lem}\label{lem:SNCD1}
Let $E$ be as in Lemma~\ref{lem:SNCD}. Then the Chern class maps
\[
H^d_{cdh}\left(E, {\sK}^M_{d}\right) \to \H^{2d}_{\sD}\left(E, \Z(d)\right) 
\]
\[
H^d_{cdh}\left(E, {\sK}^M_{d+1}\right) \to 
\H^{2d+1}_{\sD}\left(E, \Z(d+1)\right) 
\]
are surjective.
\end{lem}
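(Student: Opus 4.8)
The plan is to prove the two surjectivity statements together, by induction on $d=\dim E$, deducing the statement for $\sK^M_{d+1}$ from the one for $\sK^M_d$ by a cup-product argument and proving the latter by a secondary induction on the number of irreducible components of $E$ via Mayer-Vietoris. Throughout I take $E$ to be projective, as it is in the applications, where it arises as the exceptional divisor in the resolution diagram~\eqref{eqn:RSD}. The base case $d=0$ is immediate: then $E$ is a disjoint union of reduced points, the first map is the identity on a finite direct sum of copies of $\Z$, and the second is the first Chern class $\bigoplus\C^{*}\xrightarrow{\cong}\H^{1}_{\sD}(E,\Z(1))$ on global units.

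Assume $d\ge 1$ and that both statements hold in dimensions $<d$; I first reduce the second map to the first. By Corollary~\ref{cor:KtheoryCup} the map $H^{d}_{cdh}(E,\sK^M_{d+1})\to\H^{2d+1}_{\sD}(E,\Z(d+1))$ factors through $F^{d}KH^{M}_{1}(E)$, and, treating connected components separately, $F^{d}KH^{M}_{1}(E)$ is spanned by cup products $\alpha\cup u$ with $\alpha\in F^{d}KH^{M}_{0}(E)$ and $u\in H^{0}_{cdh}(E,\sK^M_1)=\C^{*}$. The Chern classes $c^h$ are compatible with products via the standard multiplicativity of the associated Chern character; elements of $F^{d}KH^{M}_{0}(E)$ have vanishing rank and vanishing Chern classes $c^{h}_{j,2j}$ for $1\le j<d$ (compatibility of the Brown filtration with Chern classes; see Corollary~\ref{cor:compare*} and \cite{GilletS}), and a global unit, viewed in $K_1$, has vanishing Chern classes $c^{h}_{q,2q-1}$ for $q\ge 2$. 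Feeding this into the Newton identities yields, for a fixed nonzero integer $\kappa=\kappa(d)$, the identity $c^{h}_{d+1,2d+1}(\alpha\cup u)=\kappa\, c^{h}_{d,2d}(\alpha)\cup c^{h}_{1,1}(u)$ in $\H^{2d+1}_{\sD}(E,\Z(d+1))$. Now $c^{h}_{1,1}$ identifies $\C^{*}$ with $\H^{1}_{\sD}(E,\Z(1))$ (see the proof of Lemma~\ref{lem:SNCD}), the cup product $\H^{2d}_{\sD}(E,\Z(d))\otimes\H^{1}_{\sD}(E,\Z(1))\to\H^{2d+1}_{\sD}(E,\Z(d+1))$ is surjective by Lemma~\ref{lem:SNCD}, and $\H^{2d+1}_{\sD}(E,\Z(d+1))\cong H^{2d}(E,\Z)\otimes\C^{*}$ is divisible; so once the first map is surjective, the image of $c^{h}_{d+1,2d+1}$ on $F^{d}KH^{M}_{1}(E)$ contains $\kappa\cdot\H^{2d+1}_{\sD}(E,\Z(d+1))=\H^{2d+1}_{\sD}(E,\Z(d+1))$, as wanted.

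For the first map I induct on the number $r$ of irreducible components of $E$. If $r=1$, then $E$ is smooth projective of dimension $d$ and the map factors through the cycle class map $CH^{d}(E)=H^{d}_{cdh}(E,\sK_d)\to\H^{2d}_{\sD}(E,\Z(d))$, the factorization $H^{d}_{cdh}(E,\sK^M_d)\to H^{d}_{cdh}(E,\sK_d)$ being surjective by Lemma~\ref{lem:MQC-top}; composed with $\H^{2d}_{\sD}(E,\Z(d))\to H^{2d}(E,\Z)$ the cycle class map is the degree map, and on $CH^{d}(E)_{\deg 0}$ it is the albanese map of~\eqref{eqn:SALB}, both of which are surjective, so it is surjective. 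If $r\ge 2$, write $E=E_{1}\cup E'$ with $E'=E_{2}\cup\cdots\cup E_{r}$ and $D=E_{1}\cap E'$ (reduced), a strict normal crossing divisor of dimension $d-1$ with at most $r-1$ components; the abstract blow-up $E_{1}\sqcup E'\to E$ with center $D$ gives compatible Mayer-Vietoris long exact sequences in the $cdh$-cohomology of $\sK^M_d$ and in Deligne cohomology (Lemma~\ref{lem:MVD}), linked by the Chern classes, boundary maps included. Since $\dim D=d-1$, one has $H^{d}_{cdh}(D,\sK^M_d)=0$, and by Lemma~\ref{lem:folklore2} applied to $\Omega^{<d}_{D/\C}$ together with $H^{2d}(D,\Z)=0$, also $\H^{2d}_{\sD}(D,\Z(d))=0$; hence both $H^{d}_{cdh}(E,\sK^M_d)\to H^{d}_{cdh}(E_{1},\sK^M_d)\oplus H^{d}_{cdh}(E',\sK^M_d)$ and $\H^{2d}_{\sD}(E,\Z(d))\to\H^{2d}_{\sD}(E_{1},\Z(d))\oplus\H^{2d}_{\sD}(E',\Z(d))$ are surjective. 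Writing $c$ for the Chern class map and taking $\beta\in\H^{2d}_{\sD}(E,\Z(d))$, surjectivity of the first map for $E_{1}$ and for $E'$ (induction on $r$ together with the case $r=1$) produces $\tilde\alpha\in H^{d}_{cdh}(E,\sK^M_d)$ with $\beta-c(\tilde\alpha)$ dying in $\H^{2d}_{\sD}(E_{1},\Z(d))\oplus\H^{2d}_{\sD}(E',\Z(d))$, hence $\beta-c(\tilde\alpha)=\partial\gamma$ for some $\gamma\in\H^{2d-1}_{\sD}(D,\Z(d))$; by the outer inductive hypothesis in dimension $d-1$ (the second statement, applied to $D$) $\gamma$ lifts to $H^{d-1}_{cdh}(D,\sK^M_d)$, so $\partial\gamma$, and hence $\beta$, lies in the image of $c$. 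This closes both inductions.

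The main obstacle, I expect, is organizing this interlocking induction correctly: surjectivity of the $\sK^M_d$-Chern class in dimension $d$ genuinely needs the $\sK^M_{d+1}$-statement in dimension $d-1$ to handle the Mayer-Vietoris boundary term, so the two halves of the lemma must be carried along together rather than proved in sequence. A secondary technical point is nailing down the product-compatibility of the $KH$-theoretic Chern classes (the precise constant $\kappa$, and the vanishing of the lower Chern classes on $F^{d}KH^{M}_{\bullet}(E)$) and the normalization of the cycle class map in the smooth base case; these facts are standard, but must be invoked carefully, after which everything comes down to the diagram chases above.
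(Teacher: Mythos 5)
Your argument is correct in substance and has the same interlocking structure as the paper's proof: surjectivity for $\sK^M_d$ in dimension $d$ consumes the $\sK^M_{d+1}$-statement in dimension $d-1$ through a Mayer--Vietoris boundary term, and the $\sK^M_{d+1}$-statement in dimension $d$ is then recovered from the first by cupping with units, using Lemma~\ref{lem:SNCD}. The decompositions, however, are genuinely different. The paper inducts on dimension (and number of components only inside Proposition~\ref{prop:SNC-local}); here it compares $E$ with its normalization $E^N$ via the conductor square ($S=E_{\rm sing}$, $\wt{S}$ its preimage), so the key input is the surjectivity of $H^{d-1}_{cdh}(\wt{S},\sK^M_{d})\to\H^{2d-1}_{\sD}(\wt{S},\Z(d))$ for the snc divisor $\wt{S}\subset E^N$, obtained from the inductive hypothesis by the cup-product square ~\eqref{eqn:KH*13}, followed by a chase in ~\eqref{eqn:KH*12}. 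You instead peel off one component at a time, $E=E_1\cup E'$, $D=E_1\cap E'$, add a secondary induction on the number of components, and use the vanishing of $H^d_{cdh}(D,\sK^M_d)$ and of $\H^{2d}_{\sD}(D,\Z(d))$ (both correct, by cohomological dimension and by Lemma~\ref{lem:folklore2} plus $H^{2d-1}(D,\C)=H^{2d}(D,\Z)=0$) to make the restriction maps surjective, handling the boundary class over $D$ by the outer hypothesis; your smooth base case (degree map plus Albanese) matches the paper's ``standard'' case. Both routes work, and both rely on the same unproven-but-standard compatibility of the Chern class maps with the Mayer--Vietoris ladders, including boundary maps, which the paper justifies by the remark after Lemma~\ref{lem:MVD}; note only that your appeal to Lemma~\ref{lem:MVD} is for an abstract blow-up square other than ~\eqref{eqn:RSD}, which is harmless since the Mayer--Vietoris property holds for any abstract blow-up.

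The one place where you carry a real debt is the reduction of the second statement: your formula $c^h_{d+1,2d+1}(\alpha\cup u)=\kappa\,c^h_{d,2d}(\alpha)\cup c^h_{1,1}(u)$ needs the integral (not just rational) vanishing of the lower classes $c^h_{j,2j}$, $j<d$, on classes coming from $H^d_{cdh}(E,\sK^M_d)$; otherwise the expansion has extra terms $c^h_j(\alpha)\cup c^h_q(u)$ with $j+q=d+1$, $q\ge 2$, which the divisibility of $H^{2d}(E,\Z)\otimes\C^*$ does not dispose of. This is the same kind of multiplicativity the paper leaves implicit in ~\eqref{eqn:KH*13}, so it is not a gap relative to the paper's own standard, but the paper's formulation is cleaner: state a single commutative square whose top row is the sheaf-level cup product of Proposition~\ref{prop:Milnor} and whose bottom row is the Deligne cup product of Lemma~\ref{lem:SNCD}; then surjectivity of the left vertical (your first statement) and of the bottom row already gives the second statement, with no constant $\kappa$, no Newton identities, and no vanishing of lower Chern classes required.
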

\begin{proof}
We prove the first assertion by induction on dimension of $E$. 
If $E$ is smooth, then we can replace $\sK^M$ by $\sK$ using 
Lemma~\ref{lem:MQC-top}.
The assertion is then standard. If $E$ has dimension zero, then it is smooth. 
So assume that
the result holds for all strict normal crossing divisors of dimension
less than $d$ which is at least one. 
Let $E^N \xrightarrow{f} E$ be the normalization map and 
let $\wt{S} = f^{-1}(S= E_{\rm sing})$. Then we have the commutative diagram 
of exact sequences
\begin{equation}\label{eqn:KH*12}
\xymatrix@C.6pc{
H^{d-1}_{cdh}\left(\wt{S}, {\sK}^M_{d}\right) \ar[d] \ar[r] &  
H^{d}_{cdh}\left(E, {\sK}^M_{d}\right) \ar[r] \ar[d] &
H^{d}_{cdh}\left(E^N, {\sK}^M_{d}\right) \ar[r] \ar[d] & 0 \\
\H^{2d-1}_{\sD}\left(\wt{S}, \Z(d)\right) \ar[r] &  
\H^{2d}_{\sD}\left(E, \Z(d)\right) \ar[r] &
\H^{2d}_{\sD}\left(E^N, \Z(d)\right) \ar[r] & 0.}
\end{equation}
Since $\wt{S}$ is a strict normal crossing divisor on $E^N$ (which is
smooth), the map $H^{d-1}_{cdh}\left(\wt{S}, {\sK}^M_{d-1}\right) \to
\H^{2d-2}_{\sD}\left(\wt{S}, \Z(d-1)\right)$ is surjective by induction.
Now the commutative diagram
\begin{equation}\label{eqn:KH*13}
\xymatrix@C.6pc{
H^{d-1}_{cdh}\left(\wt{S}, {\sK}^M_{d-1}\right) \otimes \C^*
\ar[r] \ar@{->>}[d] &
H^{d-1}_{cdh}\left(\wt{S}, {\sK}^M_{d}\right) \ar[d] \\
\H^{2d-2}_{\sD}\left(\wt{S}, \Z(d-1)\right) \otimes \C^* \ar[r] &
\H^{2d-1}_{\sD}\left(\wt{S}, \Z(d)\right)}
\end{equation}
and Lemma~\ref{lem:SNCD} imply that the left vertical map 
in ~\eqref{eqn:KH*12} is surjective. Hence so is the middle vertical map.
This proves the first surjectivity of the lemma. The second surjectivity
now follows from the first, Lemma~\ref{lem:SNCD} and the commutative
diagram ~\eqref{eqn:KH*13} for $E$.
\end{proof}

It follows from \cite[Theorem~2]{ESV} and \cite[Lemma~2.2]{Krishna1}
that the Chern class map in ~\ref{eqn:Chern-G} from the algebraic 
$K$-theory to the modified Deligne cohomology gives rise to a
commutative diagram of exact sequences
\begin{equation}\label{eqn:AlbKA1}
\xymatrix@C.6pc{
0 \ar[r] & F^dK_0(X)_{{\rm deg} 0} \ar[r] \ar@{->>}[d]^{Alb_{X}} & F^dK_0(X) 
\ar[r]^{c^{top}_0} \ar@{->>}[d]^{c_0} & H^{2d}(X, \Z) \ar@{=}[d] \ar[r] & 0 \\
0 \ar[r] & J^d_*(X) \ar[r] & H^{2d}_{\sD^*}\left(X, \Z(d)\right) \ar[r] &
H^{2d}(X, \Z) \ar[r] & 0.}
\end{equation}
In fact, one knows from \cite[Theorem~2]{ESV} that $J^d_*(X)$ is a
smooth and commutative algebraic group which is a universal regular
quotient of $A^d(X) = F^dK_0(X)_{{\rm deg} 0}$ in the category of
smooth commutative group algebraic groups over $\C$. The following is the
$cdh$ analogue of this albanese map.

For a smooth and projective $\C$-scheme $X$ of dimension $d$ and $i \ge 0$, let
\begin{equation}\label{eqn:Alg0}
{H^i_{cdh}\left(X, \sK^M_i\right)}_{hom}
= {\rm Ker}\left(H^i_{cdh}\left(X, \sK^M_i\right) \to 
H^{2i}(X, \Z(i))\right).
\end{equation}
Note that this group is same as the subgroup of ${CH^i(X)}_{hom}$ by
Lemma~\ref{lem:MQC-top}.
We shall also write ${H^d_{cdh}\left(X, \sK^M_d\right)}_{hom}$
as ${H^d_{cdh}\left(X, \sK^M_d\right)}_{{\rm deg} 0}$. This makes sense even
if $X$ is singular.

\begin{prop}\label{prop:AlbKH}
For a projective $\C$-scheme $X$ of dimension $d$, the Chern class maps
induce the following commutative diagram with exact rows.
\begin{equation}\label{eqn:AlbKH1}
\xymatrix@C.6pc{
0 \ar[r] & {H^d_{cdh}\left(X, \sK^M_d\right)}_{{\rm deg} 0} \ar[r] 
\ar@{->>}[d] &
H^d_{cdh}\left(X, \sK^M_d\right) \ar@{->>}[d] \ar[r] &
 H^{2d}(X, \Z) \ar@{=}[d] \ar[r] & 0 \\
0 \ar[r] & F^dKH^M_0(X)_{{\rm deg} 0} \ar[r] \ar@{->>}[d]^{Alb^H_X} & 
F^dKH^M_0(X) 
\ar[r]^{c^{top}_0} \ar@{->>}[d]^{c_{0,X}} & H^{2d}(X, \Z) \ar@{=}[d] \ar[r] 
& 0 \\
0 \ar[r] & J^d(X) \ar[r] & H^{2d}_{\sD}\left(X, \Z(d)\right) \ar[r] &
H^{2d}(X, \Z) \ar[r] & 0}
\end{equation}
\end{prop}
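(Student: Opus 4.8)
\noindent The plan is as follows. The bottom row is exactly the exact sequence~\eqref{eqn:InterJ}, and in the first two rows the subgroups ${H^d_{cdh}\left(X,\sK^M_d\right)}_{{\rm deg}\,0}$ and $F^dKH^M_0(X)_{{\rm deg}\,0}$ are, by definition, the kernels of the maps to $H^{2d}(X,\Z)$; so those two rows are exact the moment the relevant horizontal maps onto $H^{2d}(X,\Z)$ are shown to be surjective, and the surjectivity of the two Albanese maps $Alb^H_X$ and $c_{0,X}$ together with that of the leftmost vertical will then follow from the other verticals by the snake lemma. Since $H^d_{cdh}\left(X,\sK^M_d\right)\surj F^dKH^M_0(X)$ by the very definition of the latter (the map of Milnor to Quillen $\sK$-sheaves followed by the edge map of the spectral sequence~\eqref{eqn:SSC}), and since the Chern class map of Proposition~\ref{prop:CKH} respects this factorization, the whole statement reduces to the single assertion that
\[
H^d_{cdh}\left(X,\sK^M_d\right)\longrightarrow \H^{2d}_{\sD}\left(X,\Z(d)\right)
\]
is surjective.

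\noindent To prove that surjectivity I would feed the fixed resolution square~\eqref{eqn:RSD} into two Mayer--Vietoris sequences: one for the $cdh$-cohomology of $\sK^M_d$ (the abstract blow-up square being a $cdh$-cover) and one for Deligne cohomology~(Lemma~\ref{lem:MVD}), the functoriality of the Chern classes from Section~\ref{section:CH-KH} making the ladder between them commute. After replacing $X$ by $X_{\rm red}$ we have $\dim S\le d-1$ and $\dim E=d-1$, whence $H^d_{cdh}(S,\sK^M_d)=H^d_{cdh}(E,\sK^M_d)=0$ and $\H^{2d}_{\sD}(S,\Z(d))=\H^{2d}_{\sD}(E,\Z(d))=0$ by cohomological dimension. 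Both Mayer--Vietoris sequences therefore collapse to four-term exact sequences and give a commutative diagram
\[
\xymatrix@C.5pc{
H^{d-1}_{cdh}\left(E,\sK^M_d\right) \ar[r]\ar[d] & H^d_{cdh}\left(X,\sK^M_d\right)\ar[r]\ar[d] & H^d_{cdh}\left(\wt X,\sK^M_d\right)\ar[r]\ar[d] & 0 \\
\H^{2d-1}_{\sD}\left(E,\Z(d)\right)\ar[r] & \H^{2d}_{\sD}\left(X,\Z(d)\right)\ar[r] & \H^{2d}_{\sD}\left(\wt X,\Z(d)\right)\ar[r] & 0.}
\]
The left vertical map is surjective by the second surjectivity in Lemma~\ref{lem:SNCD1}, applied to the strict normal crossing divisor $E$ (of dimension $d-1$, so one reads that lemma with its ``$d$'' replaced by $d-1$). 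The right vertical map is surjective by the classical smooth case: by Lemma~\ref{lem:MQC-top} and~\eqref{eqn:filt} the group $H^d_{cdh}\left(\wt X,\sK^M_d\right)$ is, up to torsion, $F^dK_0(\wt X)\cong CH^d(\wt X)$, and for a smooth projective variety the cycle-class / Abel--Jacobi map $CH^d(\wt X)\to \H^{2d}_{\sD}(\wt X,\Z(d))$ is surjective --- this is exactly~\eqref{eqn:AlbKA1} for $\wt X$ combined with the surjectivity of the Albanese map on zero-cycles. A four-lemma chase then gives surjectivity of the middle vertical, as wanted.

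\noindent Granting the displayed surjectivity, the rest is bookkeeping. Then $c_{0,X}\colon F^dKH^M_0(X)\to\H^{2d}_{\sD}(X,\Z(d))$ is surjective, hence so is $c^{top}_0\colon F^dKH^M_0(X)\to H^{2d}(X,\Z)$ (post-compose with $\H^{2d}_{\sD}(X,\Z(d))\to H^{2d}(X,\Z)$, surjective by~\eqref{eqn:InterJ}); this makes the second row exact, and applying the snake lemma to the map of short exact sequences formed by the second and third rows --- identity on the right, $c_{0,X}$ surjective in the middle --- yields the surjectivity of $Alb^H_X$. The same snake-lemma argument for the first and second rows --- identity on the right, $H^d_{cdh}(X,\sK^M_d)\surj F^dKH^M_0(X)$ in the middle --- gives the exactness of the first row and the surjectivity of ${H^d_{cdh}(X,\sK^M_d)}_{{\rm deg}\,0}\to F^dKH^M_0(X)_{{\rm deg}\,0}$.

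\noindent The genuinely hard part is the displayed surjectivity onto $\H^{2d}_{\sD}(X,\Z(d))$, and within it two points need care. First, one must verify that the Chern class maps --- which live on $KH$-theory and detect the Milnor $\sK$-cohomology only through the Brown filtration --- are honestly compatible with the abstract-blow-up Mayer--Vietoris sequences, so that the ladder above really commutes; this is where the functoriality set up in Section~\ref{section:CH-KH} is used in earnest. Second, one must pin down the identification (up to torsion) of $H^d_{cdh}(\wt X,\sK^M_d)$ with $CH^d(\wt X)$ and thereby reduce the right-hand vertical to the classical surjectivity of the cycle-class map on a smooth projective variety. Everything else --- the vanishing of the $S$- and $E$-terms in top degree, and the various snake- and four-lemma chases --- is routine; the one new geometric ingredient is Lemma~\ref{lem:SNCD1}, which itself rests on Proposition~\ref{prop:Milnor} and Lemma~\ref{lem:SNCD}.
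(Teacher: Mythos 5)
Your proposal is correct and follows essentially the same route as the paper: reduce to the surjectivity of the composite Chern class $H^d_{cdh}\left(X,\sK^M_d\right)\to \H^{2d}_{\sD}\left(X,\Z(d)\right)$, then run the Mayer--Vietoris ladder for the blow-up square between the $cdh$ cohomology of $\sK^M_d$ and Deligne cohomology, with the left vertical surjective by Lemma~\ref{lem:SNCD1}, the right vertical surjective by the smooth case $H^d_{cdh}(\wt{X},\sK^M_d)\cong CH^d(\wt{X})$, and a four-lemma chase. The only (cosmetic) differences are that you obtain the surjectivity of $c^{top}_0$ by post-composing with $\H^{2d}_{\sD}\left(X,\Z(d)\right)\surj H^{2d}(X,\Z)$ instead of the paper's separate diagram ~\eqref{eqn:KH*10}, and you compress the paper's three-row diagram ~\eqref{eqn:KH*11} into a two-row ladder, leaving implicit the intermediate row $F^{d-1}KH^M_1(E)\to F^dKH^M_0(X)$ (coming from the map of descent spectral sequences) by which the paper makes the commutativity of that ladder precise.
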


\begin{remk}\label{remk:AbKH}
The reader should be warned that this proposition can not be deduced
from the diagram ~\eqref{eqn:AlbKA1} because there is {\'a} priori no
map from $F^dK_0(X)$ (or from $CH^d(X)$) to $F^d_BKH_0(X)$ which factors the 
classical albanese map $F^dK_0(X) \to  H^{2d}_{\sD}\left(X, \Z(d)\right)$. 
This is because of the lack of the current knowledge on the question whether
$F^dK_0(X)$ is isomorphic to $F^d_BK_0(X)$. 
\end{remk}

\begin{proof} 
The bottom row is exact by ~\eqref{eqn:InterJ}. To show that $c^{top}_0$ is
surjective, it suffices to show that the composite map
$H^d_{cdh}\left(X, {\sK}^M_{d}\right) \to F^dKH_0(X) 
\to H^{2d}(X, \Z)$ is surjective.
But this follows from the commutative diagram
\begin{equation}\label{eqn:KH*10}
\xymatrix@C.6pc{
H^d_{cdh}\left(X, {\sK}^M_{d}\right) \ar[d] \ar[r] &
H^d_{cdh}\left(\wt{X}, {\sK}^M_{d}\right) \ar[d] \\
H^{2d}(X, \Z) \ar[r]^{\cong} & H^{2d}(\wt{X}, \Z),}
\end{equation}
where the top horizontal arrow is surjective by Mayer-Vietoris, the right
vertical arrow is surjective by the smoothness of $\wt{X}$ plus
Lemma~\ref{lem:MQC-top}, and the
bottom horizontal arrow is an isomorphism, as can be checked by the
Mayer-Vietoris sequence for the singular cohomology. 
${H^d_{cdh}\left(X, \sK^M_d\right)}_{{\rm deg} 0}$ and 
${F^dKH^M_0(X)}_{{\rm deg} 0}$ are defined to make the top and the middle
rows exact. This makes the above diagram commutative.
We only need to show that the middle vertical arrow is surjective to
complete the proof.

We first observe that the functorial property of the 
descent spectral sequence ~\eqref{eqn:SSC}
and the Mayer-Vietoris property of the $cdh$
cohomology gives the map of spectral sequences 
$E^{p,q}_2 (E) = H^p_{cdh}\left(E, \sK_q\right)
\to H^{p+1}_{cdh}\left(X, \sK_q\right) = E^{p+1,q}_2$. In particular,
this induces the natural map $F^{d-1}KH_1(E) = E^{d-1, d}_{\infty}(E)
\to E^{d,d}_{\infty}(X) = F^dKH_0(X)$. This restricts to the map
$F^{d-1}KH^M_1(E) \to F^dKH^M_0(X)$.
We get a commutative diagram    
\begin{equation}\label{eqn:KH*11}
\xymatrix@C.6pc{
H^{d-1}_{cdh}\left(E, {\sK}^M_{d}\right) \ar[r] \ar@{->>}[d] & 
H^d_{cdh}\left(X, {\sK}^M_{d}\right) \ar@{->>}[d] \ar[r] &
H^d_{cdh}\left(\wt{X}, {\sK}^M_{d}\right) \ar[d]^{\cong} \ar[r] & 0 \\
F^{d-1}KH^M_1(E) \ar[r] \ar[d]^{c_{1,E}} & 
F^dKH^M_0(X) \ar[r] \ar[d]^{c_{0,X}} & 
F^dKH^M_0(\wt{X}) \ar[r] \ar[d]^{c_{0, \wt{X}}} & 0 \\
\H^{2d-1}_{\sD}\left(E, \Z(d)\right) \ar[r] &
\H^{2d}_{\sD}\left(X, \Z(d)\right) \ar[r] & 
\H^{2d}_{\sD}\left(\wt{X}, \Z(q)\right) \ar[r] & 0.}
\end{equation}
The top and the bottom rows are exact. The left and the middle vertical maps
on the top are surjective by ~\eqref{eqn:MQC*}.
The top right vertical map is isomorphism by the
Bloch's formula for smooth varieties and Lemma~\ref{lem:MQC-top}. This shows 
in particular that the middle
row is also exact. The right vertical map on the bottom is known to be
surjective as $\wt{X}$ is smooth. We now only have to show that the composite 
vertical map on the left is surjective to finish the proof. But this is
shown in Lemma~\ref{lem:SNCD1}.
\end{proof}

\begin{cor}\label{cor:AlbKH*}
Let $E$ be as in Lemma~\ref{lem:SNCD}. Then the Chern class 
$c_1 = c_{d+1, 2d+1} : KH_1(E) \to \H^{2d+1}_{\sD}\left(E, \Z(d+1)\right)$
induces the exact sequences
\begin{equation}\label{eqn:ONE}
{F^dKH^M_0(X)_{{\rm deg} 0}} \otimes \C^* \to F^dKH^M_1(E) \xrightarrow{c_1}  
\H^{2d+1}_{\sD}\left(E, \Z(d+1)\right) \to 0.
\end{equation}
\begin{equation}\label{eqn:TWO}
{H^d_{cdh}\left(E, \sK^M_d\right)}_{{\rm deg} 0} \otimes \C^* \to
H^d_{cdh}\left(E, \sK^M_{d+1}\right) \xrightarrow{c_1}
\H^{2d+1}_{\sD}\left(E, \Z(d+1)\right)
\to 0.
\end{equation}
\end{cor}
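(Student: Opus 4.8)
The plan is to prove \eqref{eqn:TWO} first, at the level of the $cdh$-cohomology of Milnor $\sK$-sheaves, and then transport it to \eqref{eqn:ONE} along the canonical surjections onto the $F^{d}$-filtration pieces of $KH_{*}$. The three inputs that do all the work are already in hand: the cup product $H^{d}_{cdh}(E,\sK^{M}_{d})\otimes\C^{*}\to H^{d}_{cdh}(E,\sK^{M}_{d+1})$ is surjective (\propref{prop:Milnor}, equivalently the top arrow of \corref{cor:KtheoryCup}); the Chern class $c_{1}\colon H^{d}_{cdh}(E,\sK^{M}_{d+1})\to\H^{2d+1}_{\sD}(E,\Z(d+1))$ is surjective (\lemref{lem:SNCD1}); and inside the proof of \lemref{lem:SNCD} one already has the isomorphism $H^{2d}(E,\Z(d))\otimes\C^{*}\xrightarrow{\cong}\H^{2d+1}_{\sD}(E,\Z(d+1))$ of \eqref{eqn:strictN}, which is the one induced by the cycle class map followed by cup product with $\C^{*}\cong\H^{1}_{\sD}(E,\Z(1))$.

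For \eqref{eqn:TWO}: applying \propref{prop:AlbKH} to $E$ (a projective $\C$-scheme of dimension $d$) gives the short exact sequence $0\to H^{d}_{cdh}(E,\sK^{M}_{d})_{{\rm deg} 0}\to H^{d}_{cdh}(E,\sK^{M}_{d})\to H^{2d}(E,\Z(d))\to 0$, so after tensoring with $\C^{*}$ and using right exactness of $\otimes\C^{*}$ the image of $H^{d}_{cdh}(E,\sK^{M}_{d})_{{\rm deg} 0}\otimes\C^{*}$ in $H^{d}_{cdh}(E,\sK^{M}_{d})\otimes\C^{*}$ is exactly the kernel of the surjection onto $H^{2d}(E,\Z(d))\otimes\C^{*}$. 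Next I would assemble a commutative square whose top row is the cup product $H^{d}_{cdh}(E,\sK^{M}_{d})\otimes\C^{*}\to H^{d}_{cdh}(E,\sK^{M}_{d+1})$, whose bottom row is \eqref{eqn:strictN}, whose left vertical is the cycle class map tensored with $\C^{*}$, and whose right vertical is $c_{1}$; its commutativity is the multiplicativity of the Chern classes constructed in \secref{section:CH-KH} (\propref{prop:CKH}, after \cite{Gillet,BPW}). Since the top arrow and $c_{1}$ are surjective and the bottom arrow is an isomorphism, a diagram chase yields exactness of \eqref{eqn:TWO}: $c_{1}$ kills the image of $H^{d}_{cdh}(E,\sK^{M}_{d})_{{\rm deg} 0}\otimes\C^{*}$; and a class in $\ker c_{1}$, written as $\cup(\beta)$ by surjectivity of the cup product, has $\beta$ mapping to $0$ in $H^{2d}(E,\Z(d))\otimes\C^{*}$, hence $\beta$ lies in the image of the degree-zero part.

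To get \eqref{eqn:ONE} I would push the whole picture forward along the defining surjections $H^{d}_{cdh}(E,\sK^{M}_{d})\surj F^{d}KH^{M}_{0}(E)$ and $H^{d}_{cdh}(E,\sK^{M}_{d+1})\surj F^{d}KH^{M}_{1}(E)$ of \eqref{eqn:MQC*}; the cup products match up by \corref{cor:KtheoryCup}, and $c_{1}$ on $F^{d}KH^{M}_{1}(E)$ is the factorization of $c_{1}$ on $H^{d}_{cdh}(E,\sK^{M}_{d+1})$ through this surjection, so $c_{1}$ stays surjective on $F^{d}KH^{M}_{1}(E)$. The only extra point is that the surjection restricts to $H^{d}_{cdh}(E,\sK^{M}_{d})_{{\rm deg} 0}\surj F^{d}KH^{M}_{0}(E)_{{\rm deg} 0}$, which follows by the snake lemma from the top two rows of \eqref{eqn:AlbKH1}, whose common third term $H^{2d}(E,\Z)$ and surjective middle vertical map force surjectivity on the kernels. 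A final diagram chase then transfers exactness of \eqref{eqn:TWO} to \eqref{eqn:ONE}.

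The hard part is the commutativity of the square in the second paragraph: one has to verify that $c_{1}=c_{d+1,2d+1}$, restricted to the image in $KH_{1}(E)$ of $H^{d}_{cdh}(E,\sK^{M}_{d+1})$, is multiplicative with respect to cup product by a unit in $\H^{1}_{\sD}(E,\Z(1))=\C^{*}$, so that its composite with \eqref{eqn:strictN} is the ordinary cycle class map; in other words, that the ``transcendental'' part of $c_{1}$ on such classes is computed by cup product with $\C^{*}$. This is exactly where the multiplicativity of Gillet's universal Chern classes, together with the comparison of $KH$-theory with the descent $K$-theory underlying \propref{prop:CKH}, enters. Once that square is available, everything else is formal diagram chasing powered by \propref{prop:Milnor} and \lemref{lem:SNCD1}.
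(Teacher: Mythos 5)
Your proposal is correct and takes essentially the same route as the paper: both arguments tensor the exact sequences of Proposition~\ref{prop:AlbKH} (applied to $X=E$) with $\C^*$, invoke the identification ~\eqref{eqn:strictN}, the cup-product surjectivities of Proposition~\ref{prop:Milnor}/Corollary~\ref{cor:KtheoryCup}, the surjectivity of $c_1$ from Lemma~\ref{lem:SNCD1}, and the multiplicativity of the Chern classes for the key commutative square (a compatibility the paper asserts silently in its diagram, which you rightly flag as the point needing Gillet-type multiplicativity). The only cosmetic difference is the order: you prove ~\eqref{eqn:TWO} first and push it forward along the surjections of ~\eqref{eqn:MQC*} to get ~\eqref{eqn:ONE}, whereas the paper establishes ~\eqref{eqn:ONE} directly and remarks that ~\eqref{eqn:TWO} follows in the same way.
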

\begin{proof}
Tensoring the diagram ~\eqref{eqn:AlbKH1} of Proposition~\ref{prop:AlbKH}
(for $X = E$) with $\C^*$ and using the isomorphism in ~\eqref{eqn:strictN},
we get a commutative diagram 
\begin{equation}\label{eqn:AlbKH*1}
\xymatrix@C.6pc{
0 \ar[r] & {F^dKH^M_0(X)_{{\rm deg} 0}} \otimes \C^* \ar[r] \ar[d] &
F^dKH^M_0(X) \otimes \C^* \ar[r] \ar[d] & 
\H^{2d+1}_{\sD}\left(E, \Z(d+1)\right) \ar[r] \ar@{=}[d] & 0 \\
0 \ar[r] & {\rm Ker}(c_1) \ar[r] & F^dKH_1(E) \ar[r]^{c_1} &
\H^{2d+1}_{\sD}\left(E, \Z(d+1)\right) \ar[r] & 0,}
\end{equation}
where the first map on the top row is injective because $H^{2d}(E, \Z)$
is a free abelian group of finite rank. The corollary now follows from 
Corollary~\ref{cor:KtheoryCup}.
The second exact sequence follows exactly in the same way using
Corollary~\ref{cor:KtheoryCup} again.
\end{proof} 

\begin{defn}\label{defn:FDC}
Let $X$ be a projective $\C$-scheme of dimension $d$ over $\C$. We say that
$F^dKH^M_0(X)$ is {\sl finite-dimensional}, if the map 
$F^dKH^M_0(X) \xrightarrow{c_{0,X}} H^{2d}_{\sD}\left(X, \Z(d)\right)$ is
an isomorphism. 
\end{defn}

One could now formulate the following $cdh$ version of
the Roitman torsion theorem and the finite-dimensionality problem for the 
Chow group of zero-cycles on singular projective schemes.

\begin{conj}\label{conj:FDCCR}
For a projective $\C$-scheme $X$ of dimension $d$, the map
\[
F^dKH^M_0(X) \xrightarrow{c_{0,X}} H^{2d}_{\sD}\left(X, \Z(d)\right)
\]
is isomorphism on torsion subgroups.
\end{conj}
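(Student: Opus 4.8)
The plan is to reduce Conjecture~\ref{conj:FDCCR} to a ``$cdh$-theoretic Roitman torsion theorem'' for the $cdh$-albanese map, and then to prove that statement by descending along a resolution of singularities, with the classical Roitman theorem on the smooth model as geometric input. First I would use the commutative diagram with exact rows in Proposition~\ref{prop:AlbKH}. Its two rightmost columns are identities on $H^{2d}(X,\Z)$, which is finitely generated and torsion-free (top cohomology of a $d$-dimensional projective variety), so every element of finite order in $F^dKH^M_0(X)$ or in $H^{2d}_{\sD}(X,\Z(d))$ already lies in the degree-zero part. Hence $c_{0,X}$ is an isomorphism on torsion subgroups if and only if
\[
Alb^H_X : {F^dKH^M_0(X)}_{{\rm deg}\, 0} \longrightarrow J^d(X)
\]
is. Since $Alb^H_X$ is surjective by Proposition~\ref{prop:AlbKH} and $J^d(X)$ is a semi-abelian variety, the content is: (a) $Alb^H_X$ is injective on torsion, and (b) every torsion point of $J^d(X)$ lifts to a torsion element of ${F^dKH^M_0(X)}_{{\rm deg}\, 0}$. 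This is exactly the $cdh$ analogue of the singular Roitman theorem of \cite{ESV}, \cite{BiswasS}.

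I would prove (a) and (b) by induction on $d = \dim X$, the case $d=1$ being classical (the albanese of a curve is its generalized Jacobian). For $d \ge 2$ fix the resolution diagram ~\eqref{eqn:RSD}, with $\dim E = d-1$ and $\dim S < d$, and assemble the Mayer--Vietoris ladder of ~\eqref{eqn:KH*11}, restricted to the degree-zero parts,
\[
F^{d-1}KH^M_1(E) \longrightarrow {F^dKH^M_0(X)}_{{\rm deg}\, 0} \longrightarrow {CH^d(\wt X)}_{{\rm deg}\, 0} \longrightarrow 0,
\]
lying over the corresponding Mayer--Vietoris sequence of intermediate Jacobians, with vertical Chern-class maps $c_{1,E}$, $Alb^H_X$, $Alb_{\wt X}$. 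The right-hand map $Alb_{\wt X}$ is an isomorphism on torsion by the classical Roitman theorem on the smooth projective variety $\wt X$. Passing to $n$-torsion (where one must argue through the multiplication-by-$n$ sequences, the torsion functor not being exact), the assertions (a),(b) for $X$ follow from the analogous assertions for the boundary term $F^{d-1}KH^M_1(E)$ together with a vanishing statement for the torsion in the images of the connecting maps.

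The heart of the matter is therefore the ``$E$-level'' input: for a strict normal crossing divisor $E$, the Chern-class map $c_{1,E}$ out of $F^{d-1}KH^M_1(E)$ (and, in the bookkeeping, out of $H^{d-1}_{cdh}(E,\sK^M_{d})$) into $\H^{2d-1}_{\sD}(E,\Z(d))$ should be an isomorphism on torsion. Here I would induct on the number of components and on the dimension of $E$, again reducing via Mayer--Vietoris to smooth strata (classical Roitman) and to lower strata; the essential new phenomenon --- the toric part $\G_m^r$ of the generalized Jacobian of $E$, which contributes torsion $(\Q/\Z)^r$ coming from units --- is handled using the cup-product surjectivities of Corollary~\ref{cor:KtheoryCup}, Lemma~\ref{lem:SNCD1} and Corollary~\ref{cor:AlbKH*}, which factor the ``extra $K_1$'' through ${F^{d-1}KH^M_0(E)}_{{\rm deg}\, 0}\otimes \C^*$ and thereby reduce its torsion to that of ${F^{d-1}KH^M_0(E)}_{{\rm deg}\,0}$ and of the divisible group $\C^*$.

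The main obstacle I expect is precisely this torsion bookkeeping through the several nested Mayer--Vietoris sequences: because $(-)_{{\rm tors}}$ and $(-)[n]$ are not exact, one must track carefully the torsion in the images of all connecting homomorphisms (e.g.\ via Bockstein arguments), and one must verify that the toric contributions detected by the $\C^*$-factors of Corollary~\ref{cor:AlbKH*} match precisely the torus part of $J^d(X)$. For $d=2$ all strata are curves and smooth surfaces, for which (a) and (b) are classical, which is why the surface case can be established unconditionally (Section~\ref{section:Surf}); for $d=3$ one additionally needs the Roitman theorem for the smooth threefold $\wt X$, so that case is only proved conditionally.
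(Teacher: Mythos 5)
The statement you are proving is stated in the paper as a \emph{conjecture}: the paper itself establishes it only for $d=2$ (Theorem~\ref{thm:RTC}), so a correct argument in arbitrary dimension would settle Conjecture~\ref{conj:FDCCR}, not reproduce a proof from the paper. Your proposal does not do this, because its inductive heart has a genuine gap. After the (correct) reduction via Proposition~\ref{prop:AlbKH} and the Mayer--Vietoris ladder ~\eqref{eqn:KH*11}, everything hinges on the ``$E$-level'' claim that for a strict normal crossing divisor $E$ of dimension $d-1$ the Chern class $c_{1,E}\colon F^{d-1}KH^M_1(E)\to \H^{2d-1}_{\sD}(E,\Z(d))$ (equivalently, the map out of $H^{d-1}_{cdh}(E,\sK^M_d)$) is an isomorphism on torsion. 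The results you invoke to supply this --- Corollary~\ref{cor:KtheoryCup}, Lemma~\ref{lem:SNCD1}, Corollary~\ref{cor:AlbKH*} --- are purely \emph{surjectivity} statements, mostly formulated after tensoring with $\C^*$, and they give no control whatsoever on injectivity on torsion subgroups; factoring the ``extra $K_1$'' through ${F^{d-1}KH^M_0(E)}_{{\rm deg}\,0}\otimes\C^*$ does not reduce its torsion to anything computable, since $-\otimes\C^*$ neither preserves nor detects torsion in the needed way. What you actually need is a Roitman-type theorem one cohomological degree down (an ``$SK_1$-analogue'' of the conjecture for snc schemes of dimension $d-1$), which is at least as deep as the statement being proved, so the induction is essentially circular beyond the base case.

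This is exactly why the paper can only handle $d=2$: there $E$ is a seminormal curve, and the required torsion input is Corollary~\ref{cor:SK1-curve*1}, which rests on Proposition~\ref{prop:SK1-curve*}, the Geller--Weibel theorem \cite{GellerW}, and the Merkurjev--Suslin-based results of \cite{BPW} (Theorem~5.3 and Proposition~8.4 there); no higher-dimensional counterpart for the torsion of $H^{d-1}(E,\sK^M_d)$ is available, and your induction on components and strata cannot manufacture one from Mayer--Vietoris alone (torsion-in-the-image bookkeeping is not the obstacle --- the absence of the stratum-level theorem is). Two smaller corrections: your $d=2$ specialization is essentially the paper's proof of Theorem~\ref{thm:RTC}, so that part is fine; but your closing remark about $d=3$ is off --- Roitman's torsion theorem is known for smooth projective varieties of every dimension, and the conditional hypotheses in the paper's threefold results concern the finite-dimensionality Conjecture~\ref{conj:FDCC} (via $GBC$ and $GHC$), not the torsion statement of Conjecture~\ref{conj:FDCCR}.
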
 

\begin{conj}\label{conj:FDCC}
Let $X$ be projective $\C$-scheme of dimension $d$.
Then $F^dKH^M_0(X)$ is finite-dimensional if and only if 
$H^d_{cdh}\left(X, \Omega^i_{X/{\C}}\right) = 0$ for $0 \le i \le d-2$.
\end{conj}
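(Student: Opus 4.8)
The plan is to prove Conjecture~\ref{conj:FDCC} by reducing it, via resolution of singularities, to the generalized Bloch conjecture $GBC(\wt X)$ for a smooth model together with an induction on dimension, carried out entirely at the level of $cdh$ cohomology of $\sK^M$-sheaves and of K\"ahler differentials. Fix the resolution diagram ~\eqref{eqn:RSD}, with $\wt X$ smooth projective of dimension $d$, $S = X_{\rm sing}$, and reduced strict normal crossing exceptional divisor $E = f^{-1}(S)$ of dimension $\le d-1$. The first step is to rephrase finite-dimensionality: by the exact rows of ~\eqref{eqn:AlbKH1}, the Chern class $c_{0,X}\colon F^dKH^M_0(X)\to \H^{2d}_{\sD}(X,\Z(d))$ is an isomorphism if and only if the surjection $Alb^H_X\colon F^dKH^M_0(X)_{{\rm deg}\,0}\surj J^d(X)$ is injective, so the whole question concerns the kernel of this $cdh$-albanese map.

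The second step is to compare $F^dKH^M_0(X)$ and $J^d(X)$ with their counterparts on $\wt X$ and $E$ through the three-row Mayer--Vietoris diagram ~\eqref{eqn:KH*11} and the analogous diagram for $J^d$ obtained from Lemma~\ref{lem:MVD}. Here $F^dKH^M_0(\wt X) = CH^d(\wt X)$ and $J^d(\wt X) = \Alb(\wt X)$ by the smoothness of $\wt X$ and Lemma~\ref{lem:MQC-top}, while the correction terms are $F^{d-1}KH^M_1(E)$ and $\H^{2d-1}_{\sD}(E,\Z(d))$. By Lemma~\ref{lem:SNCD1}, Corollary~\ref{cor:KtheoryCup} and Corollary~\ref{cor:AlbKH*} applied with $d$ replaced by $d-1$, the map $F^{d-1}KH^M_1(E)\to \H^{2d-1}_{\sD}(E,\Z(d))$ is surjective with kernel generated by the divisible part $F^{d-1}KH^M_0(E)_{{\rm deg}\,0}\otimes \C^*$; Proposition~\ref{prop:Milnor} and the strictness of the normal crossings are essential precisely here, in controlling these tensor-$\C^*$ pieces. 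A diagram chase combining five-lemma arguments across the three rows then gives: if $GBC(\wt X)$ holds, so that $CH^d(\wt X)_{{\rm deg}\,0}\xrightarrow{\cong}\Alb(\wt X)$, and if $F^{d-1}KH^M_0(E)$ is finite-dimensional, then $F^dKH^M_0(X)$ is finite-dimensional. Finite-dimensionality for $E$, a reducible strict normal crossing scheme, is handled by a further induction on dimension, descending again to the smooth components of $E$ and their intersection loci via the Mayer--Vietoris along the normalization and the same machinery; the base case is a smooth variety, where the statement is $GBC$.

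The third step is to match the cohomological hypotheses. Using the $cdh$-descent Mayer--Vietoris sequence for $\Omega^i_{X/{\C}}$ along ~\eqref{eqn:RSD}, the vanishing of $H^d_{cdh}$ on $S$ and on $E$ (both of dimension $<d$), the identification $H^d_{cdh}(\wt X,\Omega^i_{\wt X}) = H^d_{Zar}(\wt X,\Omega^i_{\wt X})$ for smooth $\wt X$, and the comparison results of Sections~\ref{section:SNC*} and \ref{section:ZCSS} (Corollary~\ref{cor:SNC-main1}, Corollary~\ref{cor:SNC-main1*}, Lemma~\ref{lem:ZariskiC}, Corollary~\ref{cor:ZariskiC*}), one shows that $H^d_{cdh}(X,\Omega^i_{X/{\C}})=0$ for $0\le i\le d-2$ translates into the vanishing of $H^d_{Zar}(\wt X,\Omega^i_{\wt X})$ for $0\le i\le d-2$ (exactly the hypothesis of $GBC(\wt X)$) together with the vanishing of $H^{d-1}_{cdh}(E,\Omega^i_{E/{\C}})$ for $0\le i\le d-3$ (the inductive hypothesis for $E$). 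Feeding this into the chase of the second step proves the forward implication. For the converse, finite-dimensionality of $F^dKH^M_0(X)$ forces, via push-forward along $f$ and the same diagram, finite-dimensionality of $CH^d(\wt X)$ and of the $E$-piece; a Mumford--Roitman-type obstruction argument (the classically available direction of $GBC$, which in this language computes the obstruction through the Hochschild and cyclic homology of Theorem~\ref{thm:fiber0}) then yields $H^d_{Zar}(\wt X,\Omega^i_{\wt X})=0$ for $i\le d-2$ and inductively $H^{d-1}_{cdh}(E,\Omega^i_{E/{\C}})=0$ for $i\le d-3$, whence $H^d_{cdh}(X,\Omega^i_{X/{\C}})=0$ by the same comparison.

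The main obstacle is that $GBC$ itself is open, so this argument is genuinely conditional on $GBC(d)$; it becomes unconditional only in ranges where the smooth case is known, such as $d=2$, where the hypothesis degenerates to $H^2_{cdh}(X,\sO_X)=0$, forcing the geometric genus of the resolution to vanish, and one combines the above with the known cases of Bloch's conjecture for surfaces. A secondary difficulty, independent of $GBC$, is the fine bookkeeping with the divisible $\otimes\,\C^*$ correction terms supported on $E$ -- showing they map isomorphically onto the unipotent part of $\H^{2d-1}_{\sD}(E,\Z(d))$ -- and making the nested inductions over the strata of $E$ compatible, for which Proposition~\ref{prop:Milnor} and the strictness of the normal crossings are indispensable.
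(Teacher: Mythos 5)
There is a fundamental gap: the statement you are asked about is a \emph{conjecture} in the paper, and your argument does not prove it --- it is, at best, a conditional reduction to the generalized Bloch conjecture $GBC(d)$, which you yourself acknowledge is open. That is precisely why the paper leaves the statement as Conjecture~\ref{conj:FDCC}: it establishes it only for surfaces (Section~\ref{section:Surf}), in the sense of reducing the singular case to the resolution, and in higher dimension it proves only conditional comparison results (Theorem~\ref{thm:Main22}, Theorem~\ref{thm:3fold-main}, Theorem~\ref{thm:GENERAL}) under extra hypotheses. A ``proof'' whose every implication passes through an open conjecture is not a proof of the conjecture.

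Even judged as a conditional reduction, your argument is missing essential inputs that the paper's own treatment shows cannot be dispensed with. First, the key step in your diagram chase --- surjectivity of the correction term $CH^{d-1}(\wt{X})_{hom}\otimes\C^*\to CH^{d-1}(E)_{hom}\otimes\C^*$ (the hypothesis of Theorem~\ref{thm:Main22}) --- does not follow from $GBC$ alone: one needs the general Hodge conjecture $GHC(d-2,2d-3,\wt{X})$ to know $J^{d-1}_a(\wt{X})=J^{d-1}(\wt{X})$ (Corollary~\ref{cor:GHC*1}), so that algebraically trivial codimension-$(d-1)$ cycles on $\wt{X}$ surject onto $J^{d-1}(E)$; this is exactly how Lemma~\ref{lem:GHC0} and Theorems~\ref{thm:3fold-main}, \ref{thm:GENERAL} proceed, and your proposal never invokes $GHC$. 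Second, the exactness of the Mayer--Vietoris rows you chase through is used in the paper only under normality of $X$ (so that $\dim X_{\rm sing}\le d-2$ kills the $S$-terms in top degree), and with restrictions on $E$ (smooth exceptional divisor in Theorem~\ref{thm:GENERAL}, snc only in low dimension); the conjecture allows an arbitrary projective $\C$-scheme, and your nested induction over the strata of a general $E$ relies on finite-dimensionality statements for snc schemes that the paper only has conditionally and in dimension $\le 2$ (Corollary~\ref{cor:BAH}). Third, your converse direction rests on a ``Mumford--Roitman-type obstruction'' in the $cdh$/$KH$ setting which is nowhere established: Theorem~\ref{thm:fiber0} measures the difference between $\sK$ and $\sK\sH$ via cyclic homology, it does not compare $F^dKH^M_0(X)$ with $H^{2d}_{\sD}(X,\Z(d))$, and no result in the paper (even for surfaces) proves that finite-dimensionality of $F^dKH^M_0(X)$ forces $H^d_{cdh}(X,\Omega^i_{X/\C})=0$ for $i\le d-2$. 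So both directions of the biconditional remain unproved in your proposal.
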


We shall discuss the cases of these two conjectures for surfaces in
Section~\ref{section:Surf}.

\section{$cdh$ cohomology of lower $\sK$-sheaves on curves}
\label{section:K-curves}

In Section~\ref{section:KDC}, we proved some results
regarding the Chern class maps on the $KH$-theory of normal crossing schemes.
For schemes which have worse singularities than normal crossings, we 
prove the following results for curves in this section. 

\begin{lem}\label{lem:curvePic}
Let $E$ be a (possibly nonreduced) curve over $\C$. Then 
the map $H^1_{Zar}\left(E, \sO^*_E\right) \to 
H^1_{cdh}\left(E, \sO^*_E\right)$ is surjective. This map is an
isomorphism if $E$ is seminormal. 
\end{lem}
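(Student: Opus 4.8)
The plan is to reduce both assertions to the case of a reduced curve and then to compare the Zariski and $cdh$ cohomology of $\sO^*$ across the normalization. First I would dispose of nilpotents. Let $\mathcal N\subset\sO_E$ be the nilradical; since $k$ has characteristic zero and $\mathcal N$ is nilpotent, the logarithm identifies the multiplicative sheaf $1+\mathcal N$ with the coherent sheaf $\mathcal N$, and there is a short exact sequence of Zariski sheaves $1\to 1+\mathcal N\to\sO_E^*\to\sO_{E_{\red}}^*\to 1$. As $\dim E=1$, Grothendieck vanishing gives $H^2_{Zar}(E,\mathcal N)=0$, so $H^1_{Zar}(E,\sO_E^*)\surj H^1_{Zar}(E_{\red},\sO_{E_{\red}}^*)$; on the other hand $E_{\red}\to E$ is a $cdh$-cover which is a monomorphism of schemes, so its \v{C}ech nerve is constant and $H^1_{cdh}(E,\sO_E^*)\xrightarrow{\ \cong\ }H^1_{cdh}(E_{\red},\sO_{E_{\red}}^*)$. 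Since a seminormal scheme is in particular reduced, it suffices to treat both statements for $E$ reduced.

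So assume $E$ reduced, let $\pi\colon\ov E\to E$ be the normalization (a smooth curve), let $\mathfrak c\subset\sO_E$ be the conductor, and set $C=V(\mathfrak c)$ and $\ov C=V(\mathfrak c\sO_{\ov E})=\pi^{-1}(C)$. The conductor square realizes $\sO_E$ as the fibre product $\pi_*\sO_{\ov E}\times_{\pi_*\sO_{\ov C}}\sO_C$; passing to units gives a short exact sequence of Zariski sheaves on $E$, namely $1\to\sO_E^*\to\pi_*\sO_{\ov E}^*\oplus\sO_C^*\to\pi_*\sO_{\ov C}^*\to 1$, where surjectivity on the right is checked stalkwise from the fact that $\sO_{\ov E}\to\sO_{\ov C}$ is a surjection of semilocal rings. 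Since $C$ and $\ov C$ are $0$-dimensional and $H^1_{Zar}(\ov E,\sO^*)=\Pic(\ov E)$, the associated long exact sequence collapses to
\[
0\to\cok\!\big(\sO_{\ov E}(\ov E)^*\oplus\sO_C(C)^*\to\sO_{\ov C}(\ov C)^*\big)\to H^1_{Zar}(E,\sO_E^*)\to\Pic(\ov E)\to 0 .
\]

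Next, the conductor square is an abstract blow-up square, so the Mayer--Vietoris sequence for $cdh$-cohomology (\cite[Theorem~2.7]{CJams}) applies. Using that $H^j_{cdh}(\ov E,\sO^*)=H^j_{Zar}(\ov E,\sO^*)$ for the smooth curve $\ov E$ (blow-ups of a smooth curve are isomorphisms, so every $cdh$-cover of $\ov E$ is refined by a Nisnevich one), that $cdh$-cohomology is insensitive to nilpotents, and that $H^{\ge 1}_{cdh}$ of $\sO^*$ vanishes on the reduced $0$-dimensional schemes $C_{\red}$, $\ov C_{\red}$, this collapses to
\[
0\to\cok\!\big(\sO_{\ov E}(\ov E)^*\oplus\sO_{C_{\red}}(C_{\red})^*\to\sO_{\ov C_{\red}}(\ov C_{\red})^*\big)\to H^1_{cdh}(E,\sO_E^*)\to\Pic(\ov E)\to 0 .
\]
The natural map from the first four-term sequence to the second is the identity on $\Pic(\ov E)$, and on the left-hand cokernels it is induced by the restriction maps $\sO_C(C)^*\surj\sO_{C_{\red}}(C_{\red})^*$ and $\sO_{\ov C}(\ov C)^*\surj\sO_{\ov C_{\red}}(\ov C_{\red})^*$, which are surjective since a surjection of Artinian rings is surjective on units; hence the map on cokernels is surjective, and the four lemma yields the surjectivity of $H^1_{Zar}(E,\sO_E^*)\to H^1_{cdh}(E,\sO_E^*)$.

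If in addition $E$ is seminormal, then the conductor $\mathfrak c$ is a radical ideal of both $\sO_E$ and $\sO_{\ov E}$ (a standard characterization of seminormality), so $C=C_{\red}$ and $\ov C=\ov C_{\red}$; the two displayed sequences then coincide and the comparison map is an isomorphism by the five lemma. The part of this requiring the most care is the construction and matching of the two four-term exact sequences — in particular, verifying that the conductor square produces a genuine short exact sequence of unit sheaves on the Zariski side, and isolating the non-reducedness of the conductor locus on $\ov E$ as precisely the discrepancy that makes the comparison map merely surjective in general while forcing it to be bijective under seminormality.
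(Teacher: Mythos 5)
Your proof is correct and follows essentially the same route as the paper: reduce to the reduced case, compare the Zariski and $cdh$ long exact sequences arising from the conductor/normalization square, use smoothness of $\ov{E}$ and zero-dimensionality of the conductor loci, and conclude by a diagram chase, with seminormality entering via radicality of the conductor. The only differences are cosmetic — you use the Mayer--Vietoris (direct-sum) form of the unit-sheaf sequence instead of the paper's quotient-sheaf form, and you spell out the nilpotent reduction and the unit-lifting steps that the paper leaves implicit.
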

\begin{proof}
Since $H^1_{Zar}\left(E, \sO^*_E\right) \surj
H^1_{Zar}\left(E_{\rm red}, \sO^*_E\right)$
and is an isomorphism in the $cdh$ topology, we can assume that $E$ is reduced.
This result is well known when $E$ is smooth. In general,
let $E^N \xrightarrow{f} E$ be the normalization of $E$ and let
$S \inj E$ be a conducting subscheme for the normalization.
Put $\wt{S} = S {\times}_E E^N$. 
The smoothness of $E^N$ and the Leray spectral sequence
imply that $H^*_{cdh}\left(E^N, \sO^*\right) \cong
H^*_{cdh}\left(E, \pi_*\sO^*\right)$. Now, the exact sequence
\[
0 \to \sO^*_E \to \sO^*_{E^N} \to \frac{\sO^*_{\wt{S}}}{\sO^*_S} \to 0
\]
gives the commutative diagram of exact sequences
\[
\xymatrix@C.5pc{
\sO^*(E^N) \oplus \sO^*(S) \ar[r] \ar[d] & \sO^*(\wt{S}) \ar[r] \ar[d] &
H^1_{Zar}\left(E, \sO^*\right) \ar[r] \ar[d] &
H^1_{Zar}\left(E^N, \sO^*\right) \ar[r] \ar[d]^{\cong} & 0 \\
\sO^*_{cdh}(E^N) \oplus \sO^*_{cdh}(S_{\rm red}) \ar[r] & 
\sO^*_{cdh}({\wt{S}}_{\rm red}) 
\ar[r] & H^1_{cdh}\left(E, \sO^*\right) \ar[r] &
H^1_{cdh}\left(E^N, \sO^*\right) \ar[r] & 0.} 
\]
Now we observe that $S_{\rm red}$ and ${\wt{S}}_{\rm red}$ are 
zero-dimensional and hence smooth. In particular,
$\sO^*(S) \surj  \sO^*_{cdh}(S)$ and is an isomorphism when $S$ is
reduced, which is the case when $E$ is seminormal. The same holds for
$\wt{S}$. We always have $\sO^*(E^N) \cong \sO^*_{cdh}(E^N)$.
A diagram chase now proves the result.
\end{proof} 

\begin{lem}\label{lem:curveSK1}
For any curve $E$ over $\C$, there is an exact sequence
\[
{\rm Pic}^0(E) \otimes \C^* \to
H^1_{cdh}\left(E, \sK_{2}\right) \xrightarrow{c_1}
\H^{3}_{\sD}\left(E, \Z(2)\right)
\to 0.
\]
\end{lem}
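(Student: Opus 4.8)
The strategy is to reduce the statement for a general curve $E$ to the strict normal crossing case handled by Corollary~\ref{cor:AlbKH*}, using a seminormalization together with the fact that on curves the $cdh$ cohomology of the relevant $\sK$-sheaves depends only on the seminormalization. First I would observe that the relevant Chern class $c_1 = c_{2,3} : KH_1(E) \to \H^3_{\sD}(E,\Z(2))$ factors through $H^1_{cdh}(E,\sK_2)$ via the descent spectral sequence \eqref{eqn:SSC}, since on a curve the Brown filtration on $KH_1$ has $F^1KH_1(E) = E^{1,2}_\infty = H^1_{cdh}(E,\sK_2)$ (there are no higher differentials for dimension reasons), and this is all of the image of $H^1_{cdh}(E,\sK_2)$ in $KH_1(E)$. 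Thus the assertion is really a statement about the cohomology of $\sK$-sheaves and the Chern class into Deligne cohomology, and in particular I may replace $E$ by $E_{\rm red}$ since these sheaves are insensitive to nilpotents in the $cdh$ topology.

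The next and main step is to relate $H^1_{cdh}(E,\sK_2)$ and $H^1_{cdh}(E,\sK_1) = H^1_{cdh}(E,\sO_E^*)$ for a reduced curve $E$ to the corresponding groups for its seminormalization $E^{sn}$, and then to the strict normal crossing model sitting on a smooth surface. The point is that seminormalization is a $cdh$-equivalence on the structure sheaf (this is essentially Lemma~\ref{lem:curvePic}) and, more generally, for the $\sK$-sheaves: using the conductor square for $E^{sn} \to E$ and the vanishing of the relevant $\sK$-sheaf cohomology in codimension $\geq 1$ on the (zero-dimensional, hence smooth) conductor loci, one gets $H^1_{cdh}(E,\sK_i) \cong H^1_{cdh}(E^{sn},\sK_i)$ for $i = 1, 2$, and similarly $\mathrm{Pic}^0(E) \to \mathrm{Pic}^0(E^{sn})$ behaves compatibly (note $\mathrm{Pic}^0 \otimes \C^*$ is the image of $H^1_{cdh}(E,\sO^*_E)_{\deg 0}\otimes\C^*$). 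A seminormal curve embeds, Zariski-locally on its smooth locus trivially and at each singular point as the coordinate cross (the seminormalization of a node-type singularity), so up to a further $cdh$-local argument I may assume $E$ is a strict normal crossing curve on a smooth surface — precisely the hypothesis of Lemma~\ref{lem:SNCD} with $d = 1$. Then \eqref{eqn:TWO} of Corollary~\ref{cor:AlbKH*} gives exactly the exact sequence
\[
{H^1_{cdh}\left(E, \sK^M_1\right)}_{\deg 0} \otimes \C^* \to
H^1_{cdh}\left(E, \sK^M_{2}\right) \xrightarrow{c_1}
\H^{3}_{\sD}\left(E, \Z(2)\right) \to 0,
\]
and since $\sK^M_1 = \sK_1 = \sO^*$ and $\sK^M_2 \to \sK_2$ is an isomorphism on curves up to torsion (indeed $\sK^M_2 \cong \sK_2$ for local rings over $\C$ in degree $\leq 2$ by Lemma~\ref{lem:MQC-top} applied to the smooth strata and the conductor argument on the singular locus), together with the identification of $H^1_{cdh}(E,\sO^*)_{\deg 0}$ with $\mathrm{Pic}^0(E)$, this becomes the claimed sequence.

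Finally I would assemble the pieces: the seminormalization isomorphisms $H^1_{cdh}(E,\sK_i) \cong H^1_{cdh}(E^{sn},\sK_i)$ are compatible with the Chern classes $c_1$ (by functoriality in Proposition~\ref{prop:CKH}) and with the cup-product maps from $\mathrm{Pic}^0 \otimes \C^*$, and $\H^3_{\sD}(E,\Z(2)) \cong \H^3_{\sD}(E^{sn},\Z(2))$ since Deligne cohomology of a curve in these degrees also only sees the seminormalization (via the weight filtration / the Mayer--Vietoris sequence of Lemma~\ref{lem:MVD}, the conductor contributions being cohomology of points). Transporting the exact sequence of Corollary~\ref{cor:AlbKH*} across these isomorphisms yields the desired exact sequence for the arbitrary curve $E$. \textbf{The main obstacle} I anticipate is justifying the reduction of a general seminormal curve to a strict normal crossing curve on a smooth surface: a seminormal curve singularity need not be analytically isomorphic to a coordinate cross in $\A^2$ (it can be, e.g., three or more coordinate axes in $\A^3$), so the reduction to Lemma~\ref{lem:SNCD} is not literal. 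The honest route is to re-run the induction-on-the-number-of-branches argument of Lemma~\ref{lem:SNCD1} and Corollary~\ref{cor:AlbKH*} directly for seminormal curves — the only inputs being the conductor (Mayer--Vietoris) square in $cdh$ cohomology for $\sK$-sheaves, the surjectivity of cup product $H^0 \otimes \C^* \to H^1$ of Milnor $\sK$-sheaves (Proposition~\ref{prop:Milnor}, whose proof works for seminormal curves), and the Chern-class surjectivity onto $\H^*_{\sD}$ of points and smooth curves — all of which are available. So the plan is to state the result as a consequence of Corollary~\ref{cor:AlbKH*} extended verbatim to seminormal curves via the seminormalization reduction, with the branch-induction supplying the one genuinely new estimate.
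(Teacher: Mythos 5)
Your primary route has exactly the gap you flag yourself: a seminormal curve singularity is analytically a union of coordinate axes in $\A^n$ with $n$ possibly at least $3$, so it is not a strict normal crossing divisor on a smooth surface, and Lemma~\ref{lem:SNCD}, Lemma~\ref{lem:SNCD1} and Corollary~\ref{cor:AlbKH*} cannot simply be quoted after passing to $E^{sn}$. Your fallback (rerun the conductor-square argument) is the right mechanism, but as stated it is a plan rather than a proof, and it leans on an extension of Proposition~\ref{prop:Milnor} to seminormal curves that you assert without justification (that result is imported from \cite{Krishna2} only for strict normal crossing divisors). Moreover both the detour through the seminormalization and the branch induction are unnecessary for this lemma.

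The paper's proof runs the comparison once, directly for the normalization $E^N \to E$ of the reduced curve: one compares the $cdh$ Mayer--Vietoris sequence for ${H^1_{cdh}\left(-, \sO^*\right)}_{{\rm deg}\ 0} \otimes \C^*$ with the one for $H^1_{cdh}\left(-, \sK_2\right)$ attached to the abstract blow-up square of the normalization, and the only genuinely new input is that the map $\sO^*_{cdh}\left({\wt{S}}_{\rm red}\right) \otimes \C^* \to \sK_{2, cdh}\left({\wt{S}}_{\rm red}\right)$ is surjective because the conductor locus is zero-dimensional and $K^M_2(\C) \cong K_2(\C)$, i.e.\ symbols generate. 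A diagram chase, the sequence ~\eqref{eqn:TWO} for the smooth curve $E^N$ (Corollary~\ref{cor:AlbKH*}), and the isomorphism $\H^{3}_{\sD}\left(E, \Z(2)\right) \cong \H^{3}_{\sD}\left(E^N, \Z(2)\right)$ then give the statement with ${H^1_{cdh}\left(E, \sO^*\right)}_{{\rm deg}\ 0} \otimes \C^*$, and Lemma~\ref{lem:curvePic} (whose surjectivity holds for every curve, not just seminormal ones) converts this into the stated sequence with ${\rm Pic}^0(E) \otimes \C^*$. So the observation you are missing is that seminormality plays no role here: since the conductor of any reduced curve is a finite set of points, the cup-product surjectivity is only ever needed at points and on the smooth model, and your fallback, once specialized to curves, collapses to essentially this one-step argument.
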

\begin{proof}
By Lemma~\ref{lem:curvePic}, it suffices to prove ~\eqref{eqn:TWO} for $E$.
We can assume that $E$ is reduced.
As in the proof of Lemma~\ref{lem:curvePic}, we consider the commutative
diagram of exact sequences
\[
\xymatrix@C.5pc{
\sO^*_{cdh}({\wt{S}}_{\rm red}) \otimes \C^* 
\ar[r] \ar[d] & {H^1_{cdh}\left(E, \sO^*\right)}_{{\rm deg} 0} 
\otimes \C^* \ar[r] \ar[d] &
{H^1_{cdh}\left(E^N, \sO^*\right)}_{{\rm deg} 0} \otimes \C^* 
\ar[r] \ar[d] & 0 \\
\sK_{2, cdh}({\wt{S}}_{\rm red})  
\ar[r] & H^1_{cdh}\left(E, \sK_2\right) \ar[r] &
H^1_{cdh}\left(E^N, \sK_2\right) \ar[r] & 0.}
\]
Since $\wt{S}$ is zero-dimensional, the left vertical map is surjective 
using the isomorphism $K^M_2(\C) \cong K_2(\C)$. The lemma now follows
easily by a diagram chase, Corollary~\ref{cor:AlbKH*} for $E^N$ and
the isomorphism $\H^{3}_{\sD}\left(E, \Z(2)\right) \xrightarrow{\cong}
\H^{3}_{\sD}\left(E^N, \Z(2)\right)$.
\end{proof}

\begin{prop}\label{prop:SK1-curve*}
Let $E$ be a curve over $\C$. Then the map $H^1_{Zar}\left(E, \sK_2\right) \to
H^1_{cdh}\left(E, \sK_2\right)$ is surjective.
This map is an isomorphism if $E$ is seminormal.
\end{prop}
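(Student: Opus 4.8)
The plan is to follow the strategy of Lemma~\ref{lem:curvePic}: reduce to reduced $E$, and then compare the Zariski and $cdh$ cohomology through the normalization. \emph{Reduction to reduced $E$.} The local rings $\sO_{E,x}$ of the curve $E$ have vanishing $SK_1$, so for the nilradical $I\subset\sO_{E,x}$ the relative group $K_1(\sO_{E,x},I)$ is identified with $1+I\subset\sO_{E,x}^*$; exactness of the relative $K$-theory sequence then forces $K_2(\sO_{E,x})\surj K_2(\sO_{E_{\rm red},x})$. Hence $\sK_{2,E}\to i_*\sK_{2,E_{\rm red}}$ is an epimorphism of sheaves, and as its kernel is an abelian sheaf on the one-dimensional space $E$ we get $H^1_{Zar}(E,\sK_2)\surj H^1_{Zar}(E_{\rm red},\sK_2)$. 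Since $E_{\rm red}\inj E$ is a $cdh$-cover, $H^1_{cdh}(E,\sK_2)=H^1_{cdh}(E_{\rm red},\sK_2)$, so the surjectivity for $E$ reduces to that for $E_{\rm red}$; and a seminormal curve is already reduced, so nothing is lost there.

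\emph{The normalization complex.} Let $\pi\colon E^N\to E$ be the normalization, $J$ the conductor ideal, $S=V(J)$, and $\wt S=S\times_E E^N$; both $S$ and $\wt S$ are finite, hence $0$-dimensional. Put $D^\bullet=[\,\pi_*\sK_{2,E^N}\oplus i_{S*}\sK_{2,S}\to i_{\wt S*}\sK_{2,\wt S}\,]$, concentrated in degrees $0,1$. Away from $S$ the natural map $\phi\colon\sK_{2,E}\to D^\bullet$ is a quasi-isomorphism, and the surjection $K_2(\sO_{E^N,y})\surj K_2(\sO_{\wt S,y})$ (here $\sO_{E^N,y}$ is regular, so $K_2=K^M_2$, and units and hence Milnor symbols lift) shows $\phi$ is surjective in degree $0$; so the cone $\sC=\cone(\phi)$ has all cohomology sheaves supported on the $0$-dimensional set $S$. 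Consequently $\H^{\ge 1}_{Zar}(E,\sC)=0$ and $\H^0_{Zar}(E,\sC)=H^0_{Zar}(E,\mathcal{H}^0(\sC))$, a finite sum of skyscraper groups at the closed points of $S$; the analogous statements hold for the $cdh$ topology, and — crucially — because neither topology refines a $\C$-rational point, the natural map $\H^0_{Zar}(E,\sC)\to\H^0_{cdh}(E,\sC)$ is an isomorphism. The hypercohomology long exact sequences of $\sK_{2,E}\to D^\bullet\to\sC$ then give, in both topologies, a surjection $H^1(E,\sK_2)\surj H^1(E,D^\bullet)$, while the stupid truncation of $D^\bullet$, using $H^j(S,\sK_2)=H^j(\wt S,\sK_2)=0$ for $j\ge 1$, yields
\[
H^0(E^N,\sK_2)\oplus H^0(S,\sK_2)\to H^0(\wt S,\sK_2)\to H^1(E,D^\bullet)\to H^1(E^N,\sK_2)\to 0.
\]

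\emph{The comparison.} Now form the ladder comparing the Zariski and $cdh$ versions of the last displayed sequence. The $E^N$-terms have equal Zariski and $cdh$ cohomology since $E^N$ is smooth (\cite[Corollary~2.5]{CJams}), and the $S$- and $\wt S$-terms sit in degree $0$, where $K_2$ of an Artinian ring surjects onto $K_2$ of its reduction, which is $K_2$ of a product of copies of $\C$. A diagram chase through this ladder shows $H^1_{Zar}(E,D^\bullet)\surj H^1_{cdh}(E,D^\bullet)$; combined with the isomorphism $\H^0_{Zar}(E,\sC)\cong\H^0_{cdh}(E,\sC)$ and a second chase through the cone sequence, this gives $H^1_{Zar}(E,\sK_2)\surj H^1_{cdh}(E,\sK_2)$. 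If $E$ is seminormal, then $J$ is radical in $\sO_{E^N}$, so $\wt S$ and $S$ are reduced, hence smooth; the $S$- and $\wt S$-verticals become isomorphisms, the five-lemma upgrades the comparison for $H^1(E,D^\bullet)$ to an isomorphism, and the cone sequence then forces $H^1_{Zar}(E,\sK_2)\to H^1_{cdh}(E,\sK_2)$ to be injective as well. I expect the main obstacle to be precisely that Zariski cohomology of $\sK_2$ does \emph{not} satisfy Mayer--Vietoris for the conductor square (excision fails for $K_2$), which is why one must route the argument through the two-term complex $D^\bullet$ and its cone and confine the discrepancy to the $0$-dimensional singular locus.
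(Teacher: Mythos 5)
Your reduction to reduced $E$ and the idea of confining the failure of excision to a complex supported on the conductor locus are fine on the Zariski side, but the argument breaks precisely at the comparison step, which is where the whole content of the proposition sits. The claim that $\H^0_{Zar}(E,\sC)\to\H^0_{cdh}(E,\sC)$ is an isomorphism ``because the cohomology sheaves are skyscrapers at $\C$-points'' is unjustified: $cdh$ cohomology means cohomology of the $cdh$-sheafification $a^*\sC$, and $a^*$ neither commutes with the finite pushforwards $\pi_*$, $i_{S*}$, $i_{\wt{S}*}$ used to build $D^\bullet$ nor preserves the values of the point-supported kernels and cokernels you extract. Indeed $E_{\rm red}\to E$ and $E^N\sqcup S_{\rm red}\to E$ are $cdh$-covers, so after sheafification the nilpotent and excision-failure information is destroyed; for a cuspidal curve the obstruction classes coming from the double relative group $\sK_{1,(E,E^N,S)}\cong \sI_S\otimes\Omega^1_{E^N/E}\neq 0$ (Geller--Weibel \cite{GellerW}) live exactly in $\H^0_{Zar}(E,\sC)$, and nothing in your proposal shows they map onto, let alone isomorphically to, the $cdh$ side. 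The same defect infects your ladder for $D^\bullet$: on the $cdh$ side you tacitly identify $\H^*_{cdh}(E,a^*\pi_*\sK_{2,E^N})$ with $H^*_{cdh}(E^N,\sK_2)$ (and likewise for $S$, $\wt{S}$), which again requires commuting $a^*$ past $\pi_*$; the legitimate $cdh$-side tool is the Mayer--Vietoris sequence of the abstract blow-up square \cite[Theorem~2.7]{CJams}, which involves $H^*_{cdh}$ of $E^N$, $S_{\rm red}$, $\wt{S}_{\rm red}$ directly and is not the hypercohomology of your pushforward complex.

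This also explains why your seminormal case is incomplete: apart from reducedness of the conductor you use no $K$-theoretic input, whereas the isomorphism genuinely requires the surjectivity of the relative sheaf map $\sK_{2,(E,S)}\to f_*\bigl(\sK_{2,(E^N,\wt{S})}\bigr)$, which the paper deduces from the vanishing $\sK_{1,(E,E^N,S)}\cong\sI_S\otimes\Omega^1_{E^N/E}=0$ for seminormal $E$; that surjectivity is what makes the Zariski sequence Mayer--Vietoris exact and allows the term-by-term comparison with the $cdh$ MV sequence using smoothness of $E^N$, $S$, $\wt{S}$. For a general reduced curve the paper abandons sheaf-level comparison altogether: it proves surjectivity by presenting both $H^1_{Zar}(E,\sK_2)$ and $H^1_{cdh}(E,\sK_2)$ as extensions of $\H^3_{\sD}\left(E,\Z(2)\right)$ by the image of ${\rm Pic}^0(E)\otimes\C^*$ (the Zariski presentation is \cite[Lemma~3.1]{Krishna1}, the $cdh$ one is Lemma~\ref{lem:curveSK1}) and chasing that diagram. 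Your cone construction relocates, but does not resolve, this difficulty; as it stands the key isomorphism for $\sC$ is an assertion at least as strong as what you are trying to prove.
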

\begin{proof} We can assume $E$ to be reduced. For smooth $E$, there is 
nothing to prove. We now assume that $E$ is seminormal.
Let $E^N \xrightarrow{f} E$ be the normalization of $E$ as
above. 


We consider the commutative diagram of exact sequences
\[
\xymatrix@C.5pc{
\sK_{{2, (E, S)}} \ar[r] \ar[d] & \sK_{2, E} \ar[r] \ar[d] &
\sK_{2, S} \ar[r] \ar[d] & 0 \\
f_*\left(\sK_{{2, (E^N, \wt{S})}}\right) \ar[r] & 
f_*\left(\sK_{2, E^N}\right) \ar[r] & 
f_*\left(\sK_{2, \wt{S}}\right) \ar[r] & 0.}
\]
The double relative $K$-theory exact sequence tells us that the cokernel
of the left vertical map is contained in  $\sK_{1, (E, E^N, S)}$, which
in turn is isomorphic to $\sI_{S} \otimes \Omega^1_{E^N/E}$ by the main 
result of Geller-Weibel \cite{GellerW}.   
But this last term is zero because $E$ is seminormal.
In particular, the left vertical map in the above diagram is surjective. 
Hence we get exact sequence
\begin{equation}\label{SK1-curve*1} 
\sK_{2, E} \to f_*\left(\sK_{2, E^N}\right) \oplus 
\sK_{2, S} \to f_*\left(\sK_{2, \wt{S}}\right) \to 0.
\end{equation}
Taking the associated long exact sequences of the Zariski and $cdh$
cohomology, we get the following commutative diagram.
\[
\xymatrix@C.5pc{
{\left\{ \begin{array}{l} 
H^0_{Zar}\left(E^N, \sK_2\right) \\ \ \ \ \ \ \ \ \oplus \\
H^0_{Zar}\left(S, \sK_2\right)\end{array}\right\}} \ar[r] \ar[d] &
H^0_{Zar}\left(\wt{S}, \sK_2\right) \ar[r] \ar[d] &
H^1_{Zar}\left(E, \sK_2\right) \ar[r] \ar[d] &
H^1_{Zar}\left(E^N, \sK_2\right) \ar[r] \ar[d] & 0 \\
{\left\{ \begin{array}{l} 
H^0_{cdh}\left(E^N, \sK_2\right) \\ \ \ \ \ \ \ \ \ \oplus \\
H^0_{cdh}\left(S, \sK_2\right)\end{array}\right\}} \ar[r] &
H^0_{cdh}\left(\wt{S}, \sK_2\right) \ar[r] &
H^1_{cdh}\left(E, \sK_2\right) \ar[r] &
H^1_{cdh}\left(E^N, \sK_2\right) \ar[r] & 0.} 
\]
The smoothness of $E, S$ and $\wt{S}$ implies that the first two
vertical maps from the left and the last vertical map on the right are
isomorphisms. Hence the remaining vertical map is also an isomorphism.
This proves the case of seminormal curves.
For a general reduced curve, we compare the commutative diagram
\[
\xymatrix@C.5pc{
{\rm Pic}^0(E) \otimes \C^* \ar[r] \ar@{=}[d] & 
H^1_{Zar}\left(E, \sK_{2}\right) \ar[r]^{c_1} \ar[d] &
\H^{3}_{\sD}\left(E, \Z(2)\right) \ar[r] \ar@{=}[d] & 0 \\
{\rm Pic}^0(E) \otimes \C^* \ar[r] & 
H^1_{cdh}\left(E, \sK_{2}\right) \ar[r]^{c_1} &
\H^{3}_{\sD}\left(E, \Z(2)\right) \ar[r] & 0,}
\]
where the top row is exact by \cite[Lemma~3.1]{Krishna1}
and the bottom row is exact by Lemma~\ref{lem:curveSK1}.
This proves the result.
\end{proof}

\begin{cor}\label{cor:SK1-curve*1}
Let $E$ be a curve over $\C$. Then \\
$(i)$ The map ${H^1_{cdh}\left(E, \sK_{2}\right)}_{tors}
\to {\H^{3}_{\sD}\left(E, \Z(2)\right)}_{tors}$ is split surjective  
and is an isomorphism if $E$ is seminormal. \\
$(ii)$ $H^1_{cdh}\left(E, \sK_{2}\right) \otimes {\Q}/{\Z} = 0$.
\end{cor}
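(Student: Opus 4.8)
The plan is to extract both statements from the three-term exact sequence
\[
\Pic^0(E)\otimes_{\Z}\C^* \to H^1_{cdh}(E,\sK_2) \xrightarrow{c_1} \H^3_{\sD}(E,\Z(2)) \to 0
\]
of Lemma~\ref{lem:curveSK1}, using only elementary properties of divisible abelian groups. First I would record two divisibility facts. Since $\C^*$ is divisible, $M\otimes_{\Z}\C^*$ is divisible for \emph{every} abelian group $M$ (one has $(M\otimes_{\Z}\C^*)/n\cdot(M\otimes_{\Z}\C^*)\cong M\otimes_{\Z}(\C^*/n\C^*)=0$); hence the subgroup $D:=\im(\Pic^0(E)\otimes_{\Z}\C^*\to H^1_{cdh}(E,\sK_2))$ is divisible, being a quotient of the divisible group $\Pic^0(E)\otimes_{\Z}\C^*$. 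Likewise $\H^3_{\sD}(E,\Z(2))$ is divisible: as in the proof of Lemma~\ref{lem:curveSK1} it is isomorphic to $\H^3_{\sD}(E^N,\Z(2))$, and for the smooth curve $E^N$ this is $H^2(E^N,\Z(1))\otimes_{\Z}\C^*$ by \eqref{eqn:strictN}. Thus Lemma~\ref{lem:curveSK1} provides a short exact sequence $0\to D\to H^1_{cdh}(E,\sK_2)\xrightarrow{c_1}\H^3_{\sD}(E,\Z(2))\to 0$ with divisible outer terms. Part $(ii)$ follows at once: an extension of a divisible group by a divisible group is divisible, so $H^1_{cdh}(E,\sK_2)$ is divisible, and every divisible abelian group $G$ satisfies $G\otimes_{\Z}\Q/\Z=\colim_n G/nG=0$.

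For part $(i)$ I would apply the snake lemma to multiplication by $n$ on the short exact sequence just obtained. Because $D$ is divisible, $D/nD=0$, so the snake sequence collapses to $0\to D[n]\to H^1_{cdh}(E,\sK_2)[n]\xrightarrow{c_1}\H^3_{\sD}(E,\Z(2))[n]\to 0$; passing to the colimit over all $n$, ordered by divisibility, yields the exact sequence of torsion subgroups
\[
0\to D_{tors}\to {H^1_{cdh}(E,\sK_2)}_{tors}\xrightarrow{c_1}{\H^3_{\sD}(E,\Z(2))}_{tors}\to 0 .
\]
This shows that $c_1$ is surjective on torsion. Since $D_{tors}$, as the torsion subgroup of a divisible group, is again divisible, hence an injective $\Z$-module, the displayed sequence splits; therefore $c_1$ is \emph{split} surjective on torsion subgroups, as claimed.

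For the refinement when $E$ is seminormal it suffices to prove $D_{tors}=0$. Here I would first use Proposition~\ref{prop:SK1-curve*} to identify $H^1_{cdh}(E,\sK_2)$ with $H^1_{Zar}(E,\sK_2)$, and then use that for a reduced curve the sequence $0\to\Pic^0(E)\otimes_{\Z}\C^*\to H^1_{Zar}(E,\sK_2)\xrightarrow{c_1}\H^3_{\sD}(E,\Z(2))\to 0$ is \emph{left} exact as well (the finer form of \cite[Lemma~3.1]{Krishna1}); equivalently, one can use the exact sequence $0\to N\to H^1_{cdh}(E,\sK_2)\to H^1_{cdh}(E^N,\sK_2)\to 0$ extracted from \eqref{SK1-curve*1}, in which $N$ is a quotient of $H^0_{cdh}(\wt{S},\sK_2)=\bigoplus K_2(\C)$ — a $\Q$-vector space by Suslin's theorem that $K_2$ of an algebraically closed field is uniquely divisible — so as to reduce to the case of the smooth curve $E^N$. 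In either route the crucial point is that the kernel of $c_1$ is torsion-free, which holds because $\Pic^0(E)\otimes_{\Z}\C^*$ is torsion-free: $\Pic^0(E)(\C)$, being the group of $\C$-points of a commutative algebraic group over the algebraically closed field $\C$, is divisible, so writing $\C^*\cong\Q/\Z\oplus W$ with $W$ a $\Q$-vector space we get $\Pic^0(E)\otimes_{\Z}\C^*\cong\Pic^0(E)\otimes_{\Z}W$, a $\Q$-vector space. Hence $D_{tors}=0$ and $c_1$ restricts to an isomorphism on torsion, finishing $(i)$.

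The formal half of the argument — divisibility of the relevant groups and the resulting split surjection — is routine. The real obstacle is the seminormal refinement: Lemma~\ref{lem:curveSK1} is only right exact as stated, so pinning down $D_{tors}$ requires the injectivity of $\Pic^0(E)\otimes_{\Z}\C^*\to H^1(E,\sK_2)$ for a reduced curve, which is a Roitman-type input resting on the structure theory of $H^1(\cdot,\sK_2)$ of (possibly non-normal) curves — ultimately Bloch's theorem in the smooth case — rather than on formal properties of divisible groups.
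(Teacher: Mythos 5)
Your treatment of the unconditional statements is correct, and it takes a genuinely different route from the paper. For the split surjectivity on torsion and for part $(ii)$ you work directly with the right-exact sequence of Lemma~\ref{lem:curveSK1} and purely formal divisibility arguments (the image $D$ of $\Pic^0(E)\otimes\C^*$ is divisible, hence the torsion sequence is exact and splits; $H^1_{cdh}(E,\sK_2)$ is an extension of divisible by divisible, hence divisible, hence killed by $\otimes\,\Q/\Z$). The paper instead combines Proposition~\ref{prop:SK1-curve*} (surjectivity of $H^1_{Zar}(E,\sK_2)\to H^1_{cdh}(E,\sK_2)$) with the Zariski-theoretic results \cite[Theorem~5.3]{BPW} and \cite[Proposition~8.4]{BPW}; your version is more self-contained for these two parts, the paper's is shorter and delegates the content to BPW.

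The seminormal refinement, however, has a genuine gap, and it sits exactly where you suspect. Both of your proposed routes rest on the claim that a certain kernel is torsion-free because it is a quotient of a torsion-free (indeed uniquely divisible) group: in route one, $\ker(c_1)$ as a quotient of the $\Q$-vector space $\Pic^0(E)\otimes\C^*$; in route two, $N$ as a quotient of $\bigoplus K_2(\C)$. This inference is false as group theory: a quotient of a $\Q$-vector space by an arbitrary subgroup can be all torsion (e.g. $\Q\surj\Q/\Z$), and the subgroups you are dividing by (the image of $H^0_{cdh}(E^N,\sK_2)\oplus H^0_{cdh}(S,\sK_2)$, resp. the kernel of $\Pic^0(E)\otimes\C^*\to H^1$) are not known to be divisible. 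The alternative crutch you offer, a left-exact ``finer form'' of \cite[Lemma~3.1]{Krishna1}, is not available: that lemma asserts only right exactness, and injectivity of $\Pic^0(C)\otimes\C^*\to H^1(C,\sK_2)$ is not known even for a smooth projective curve (Bloch's theorem gives surjectivity onto the kernel of the regulator, nothing more). So as written you do not obtain $D_{tors}=0$ (equivalently $N_{tors}=0$), which is precisely what the isomorphism on torsion requires.

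The paper closes this point differently: for seminormal $E$, Proposition~\ref{prop:SK1-curve*} gives $H^1_{Zar}(E,\sK_2)\xrightarrow{\cong}H^1_{cdh}(E,\sK_2)$, and the isomorphism on torsion is then exactly \cite[Theorem~5.3]{BPW}, whose proof for (semi)normal curves uses $K$-theory with finite coefficients, Suslin rigidity and Merkurjev--Suslin rather than divisibility formalities. To repair your argument you would need to import that kind of input, e.g. prove $N_{tors}=0$ by comparing $\sK_2/n$ with \'etale cohomology along the conductor square; it cannot be extracted from the shape of the exact sequences alone.
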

\begin{proof}
The first part follows directly from Proposition~\ref{prop:SK1-curve*} and
\cite[Theorem~5.3]{BPW}. The second part follows from 
Proposition~\ref{prop:SK1-curve*} and \cite[Proposition~8.4]{BPW}.
\end{proof}

\section{Chern classes for smooth schemes}\label{section:CHERNS}
Let $X$ be a smooth projective $\C$-scheme of dimension $d$. In this section,
we prove some results about the Chern classes from the $K$-theory of $X$ 
into its Deligne cohomology. We shall assume all abelian groups in this
section to be tensored with $\Q$, i. e., the abelian group $A$ will actually
mean $A \otimes_{\Z} \Q$. Let
$\sH^i_{\sD}(q)$ denote the sheaf on $X_{Zar}$ associated to the presheaf
$U \mapsto \H^i_{\sD}(U_{an}, \Q(q))$, where 
\begin{equation}\label{AffineD}
\Q_{\sD, U}(q) : = \left(\Q(q) \to \sO_U \to \cdots \to \Omega^{q-1}_U\right).
\end{equation} 
\begin{lem}\label{lem:HDvanish}
Let $X$ be a smooth projective $\C$-scheme of dimension $d$. Then 
$H^d_{Zar}\left(X, \sH^{d-1}_{\sD}(d)\right) = 0$.
\end{lem}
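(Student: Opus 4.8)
The strategy is to realise $\sH^{d-1}_{\sD}(d)$, rationally, as a quotient of an algebraic de Rham cohomology sheaf, and then to invoke the Bloch--Ogus vanishing together with Grothendieck's vanishing theorem. One may assume $d\ge 2$: for $d=1$ the complex $\Q_{\sD,U}(1)=\bigl(\Q(1)\hookrightarrow\sO_U\bigr)$ has $\sH^0_{\sD}(1)=0$, so there is nothing to prove.

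First I would invoke the distinguished triangle of complexes of Zariski sheaves given by the top row of ~\eqref{eqn:MDeligneC2} (for the smooth projective $X$ the modified and the classical Deligne complexes agree), tensored with $\Q$: for the morphism of sites $b\colon X_{an}\to X_{Zar}$,
\[
\Omega^{<d}_{X/{\C}}[-1]\longrightarrow Rb_*\Q_{\sD}(d)\longrightarrow Rb_*\Q(d)\xrightarrow{\ w\ }\Omega^{<d}_{X/{\C}},
\]
where $\sH^i_{\sD}(d)=R^ib_*\Q_{\sD}(d)=\mathcal{H}^i\bigl(Rb_*\Q_{\sD}(d)\bigr)$ by the very definition of the sheaves $\sH^i_{\sD}(d)$. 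Since $\Q_{\sD}(d)$ is by construction the fibre of the natural map $\Q(d)\to\Omega^{<d}$ (inclusion of constants into the de Rham complex, followed by the brutal truncation), the connecting map $w$ is the composite $Rb_*\Q(d)\to Rb_*\C_{X_{an}}\cong Rb_*\Omega^{\bullet}_{X_{an}}\cong\Omega^{\bullet}_{X/{\C}}\to\Omega^{<d}_{X/{\C}}$, the two isomorphisms being the analytic Poincar{\'e} lemma and Grothendieck's de Rham comparison (the ``GAGA'' input already used in ~\eqref{eqn:DeligneC2}). Passing to cohomology sheaves and using $\mathcal{H}^i(\Omega^{<d}_{X/{\C}}[-1])=\mathcal{H}^{i-1}(\Omega^{<d}_{X/{\C}})$, the portion around degree $d-1$ reads
\[
\mathcal{H}^{d-2}(\Omega^{<d}_{X/{\C}})\to\sH^{d-1}_{\sD}(d)\to R^{d-1}b_*\Q(d)\xrightarrow{\ w^{d-1}\ }\mathcal{H}^{d-1}(\Omega^{<d}_{X/{\C}}).
\]
The key point is that $w^{d-1}$ is injective: on $\mathcal{H}^{d-1}$ it is the composite of $R^{d-1}b_*\Q(d)=(R^{d-1}b_*\Q)\otimes_{\Q}\Q(d)\hookrightarrow(R^{d-1}b_*\Q)\otimes_{\Q}\C\cong\mathcal{H}^{d-1}_{dR}$ (the sheaf-level de Rham comparison, together with $\Q\hookrightarrow\C$) and of the inclusion $\mathcal{H}^{d-1}_{dR}\hookrightarrow\mathcal{H}^{d-1}(\Omega^{<d}_{X/{\C}})$ coming from the inclusion of closed $(d-1)$-forms into all $(d-1)$-forms. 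Hence $\sH^{d-1}_{\sD}(d)\to R^{d-1}b_*\Q(d)$ vanishes, so $\mathcal{H}^{d-2}(\Omega^{<d}_{X/{\C}})\twoheadrightarrow\sH^{d-1}_{\sD}(d)$; and since $\Omega^{<d}_{X/{\C}}$ coincides with $\Omega^{\bullet}_{X/{\C}}$ in degrees $\le d-1$, one has $\mathcal{H}^{d-2}(\Omega^{<d}_{X/{\C}})=\mathcal{H}^{d-2}_{dR}$, the algebraic de Rham cohomology sheaf in degree $d-2$. Thus there is a short exact sequence $0\to\mathcal{K}\to\mathcal{H}^{d-2}_{dR}\to\sH^{d-1}_{\sD}(d)\to 0$ of Zariski sheaves.

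To conclude, note $H^{d+1}_{Zar}(X,\mathcal{K})=0$ by Grothendieck vanishing, since $\dim X=d$; and $H^d_{Zar}(X,\mathcal{H}^{d-2}_{dR})=0$, because algebraic de Rham cohomology is a Bloch--Ogus theory, so the Zariski sheaf $\mathcal{H}^{q}_{dR}$ admits a Gersten resolution of length $q$ and $H^p_{Zar}(X,\mathcal{H}^q_{dR})=0$ for $p>q$ (here $q=d-2<d=p$). Equivalently, $\mathcal{H}^{d-2}_{dR}\cong(R^{d-2}b_*\Q)\otimes_{\Q}\C$, Bloch--Ogus applies to the Betti sheaf $R^{d-2}b_*\Q$, and Zariski cohomology on a Noetherian scheme commutes with the filtered colimit computing $-\otimes_{\Q}\C$. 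The long exact cohomology sequence of the short exact sequence above then yields $H^d_{Zar}(X,\sH^{d-1}_{\sD}(d))=0$.

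I expect the only genuinely delicate step to be the identification of the connecting map $w$ with the truncated comparison map, hence the injectivity of $w^{d-1}$ on $\mathcal{H}^{d-1}$; once that is in place, everything else is a formal consequence of the distinguished triangle, the standard comparison theorems, and Bloch--Ogus/Grothendieck vanishing.
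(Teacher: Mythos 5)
Your proof is correct and takes essentially the same route as the paper: both exhibit $\sH^{d-1}_{\sD}(d)$ as a quotient of the Betti sheaf $\sH^{d-2}_{\C,X}$ --- the paper locally, using the Poincar\'e lemma on affine opens to identify $\H^{d-1}_{\sD}(U_{an},\Q(d))$ with $H^{d-2}(U_{an},\C/\Q(d))$, so that the kernel is $\sH^{d-2}_{\Q,X}$, while you argue globally from the cohomology-sheaf long exact sequence of the defining triangle, your injectivity of $w^{d-1}$ being precisely the paper's injectivity of $H^{d-1}(U_{an},\Q)\to H^{d-1}(U_{an},\C)$ combined with the inclusion of closed forms into all forms --- and both conclude with Grothendieck vanishing in degree $d+1$ for the kernel and the Bloch--Ogus vanishing $H^{p}_{Zar}(X,\sH^{q}_{\C,X})=0$ for $p>q$. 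The only (harmless) caveat is that, strictly at the sheaf level, the cone of $Rb_*\Q_{\sD}(d)\to Rb_*\Q(d)$ is $Rb_*\Omega^{<d}_{X_{an}}$ rather than the algebraic complex $\Omega^{<d}_{X/{\C}}$ displayed in ~\eqref{eqn:MDeligneC2}, but the two facts you use about it --- that its $\mathcal{H}^{d-2}$ is $\sH^{d-2}_{\C,X}$ and that the comparison map is injective on $\mathcal{H}^{d-1}$ --- hold verbatim for the analytic complex by the holomorphic Poincar\'e lemma on the Stein (affine) opens, so your argument goes through unchanged.
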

\begin{proof} 
Let $U$ be an affine neighbourood of a closed point on $X$. Then the
map $\C_U \to \left(\sO_{U_{an}} \to \cdots \to \Omega^d_{U_{an}}\right)$
is a quasi-isomorphism of complexes by the Poincar{\'e} lemma. In other
words, there is exact sequence of complexes on $U_{an}$:
\[
0 \to \Omega^d_{U_{an}}[-d-1] \to {\C}/{\Q}(d)[-1] \to \Q(d) \to 0,
\]
which gives the isomorphism $H^{d-2}\left(U_{an}, {\C}/{\Q}(d)\right) 
\cong \H^{d-1}_{\sD}\left(U, \Q(d)\right)$. Since the map
$H^*(U_{an}, \Q) \to H^*(U_{an}, \C)$ is injective, we get exact sequence 
of Zariski sheaves 
\[
0 \to \sH^{d-2}_{\Q , X} \to \sH^{d-2}_{\C , X} \to  \sH^{d-1}_{\sD}(d) \to 0.
\]
This in particular gives a surjection $
H^d_{Zar}\left(X, \sH^{d-2}_{\C , X}\right) \surj 
H^d_{Zar}\left(X, \sH^{d-1}_{\sD}(d)\right)$. But the Bloch-Ogus-Gersten
sequence ({\sl cf.} \cite[(0.3)]{BlochO}) implies that 
$H^p_{Zar}\left(X, \sH^{q}_{\C , X}\right) = 0 $ for $p >q$.
\end{proof} 

\begin{cor}\label{cor:SmmothCH}
Let $X$ be as in Lemma~\ref{lem:HDvanish}. Then the Chern class map 
$K_1(X) \xrightarrow{c_1} 
\H^{2d-1}_{\sD}\left(X, \Q(d)\right)$ gives rise to the Chern class map
\[
H^{d-1}_{\Zar}\left(X, \sK^M_d\right) \surj \frac{F^{d-1}K_1(X)}{F^dK_1(X)}
\to \H^{2d-1}_{\sD}\left(X, \Q(d)\right).
\]
\end{cor}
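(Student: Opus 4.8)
The plan is to factor the asserted composite as the surjection $H^{d-1}_{Zar}\left(X,\sK^M_d\right)\surj F^{d-1}K_1(X)/F^dK_1(X)$ coming from the Brown--Gersten spectral sequence for $K$-theory, followed by a map $F^{d-1}K_1(X)/F^dK_1(X)\to\H^{2d-1}_{\sD}\left(X,\Q(d)\right)$ built from the Chern class $c_1=c_{d,2d-1}$ and Lemma~\ref{lem:HDvanish}. Since all abelian groups in this section are tensored with $\Q$, the additivity issues for Chern characters will not get in the way.

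For the first arrow: as $X$ is smooth of dimension $d$, Lemma~\ref{lem:MQC-top} with $i=d$ (and $j=d-1\ge i-1$) gives $H^{d-1}_{Zar}\left(X,\sK^M_d\right)\xrightarrow{\cong}H^{d-1}_{Zar}\left(X,\sK_d\right)$. In the Brown--Gersten spectral sequence ~\eqref{eqn:SSZ}, the group $H^{d-1}_{Zar}\left(X,\sK_d\right)$ occupies the $E_2$-spot computing the weight-$(d-1)$ graded piece of the Brown filtration on $K_1(X)$; every differential out of this spot has target a Zariski cohomology group in degree $\ge d+1$, which vanishes because the Gersten resolution of the $\sK$-sheaves on a smooth $d$-fold has length $d$. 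Hence the associated $E_\infty$-term, namely $F^{d-1}K_1(X)/F^dK_1(X)$, is a quotient of $H^{d-1}_{Zar}\left(X,\sK_d\right)$, which yields the first (surjective) map.

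For the second arrow: I would use the hypercohomology spectral sequence of the Deligne complex regarded as a complex of Zariski sheaves via $Rb_*$, $b\colon X_{an}\to X_{Zar}$, namely $E_2^{p,q}=H^p_{Zar}\left(X,\sH^q_{\sD}(d)\right)\Rightarrow \H^{p+q}_{\sD}\left(X,\Q(d)\right)$, with its induced coniveau filtration $F^{\bullet}$; here $\sH^q_{\sD}(d)$ is the sheaf introduced just before Lemma~\ref{lem:HDvanish}. Since $H^p_{Zar}(X,-)=0$ for $p>d$ and $H^d_{Zar}\left(X,\sH^{d-1}_{\sD}(d)\right)=0$ by Lemma~\ref{lem:HDvanish}, the graded pieces of $\H^{2d-1}_{\sD}\left(X,\Q(d)\right)$ in filtration levels $\ge d$ all vanish, so $F^d\H^{2d-1}_{\sD}\left(X,\Q(d)\right)=0$ and $F^{d-1}\H^{2d-1}_{\sD}\left(X,\Q(d)\right)$ is an honest subgroup. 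By Gillet's construction of Chern classes into generalized cohomology theories \cite{Gillet} (compatibly with Beilinson's Chern classes \cite{Beilinson}), the weight-$d$ Chern class is realized by a map of presheaves of spectra and is therefore compatible with the two coniveau towers; in particular $c_1$ carries $F^pK_1(X)$ into $F^p\H^{2d-1}_{\sD}\left(X,\Q(d)\right)$ for all $p$. Thus $c_1\bigl(F^dK_1(X)\bigr)\subseteq F^d\H^{2d-1}_{\sD}\left(X,\Q(d)\right)=0$, so $c_1$ restricted to $F^{d-1}K_1(X)$ descends to $F^{d-1}K_1(X)/F^dK_1(X)\to F^{d-1}\H^{2d-1}_{\sD}\left(X,\Q(d)\right)\inj\H^{2d-1}_{\sD}\left(X,\Q(d)\right)$. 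Composing with the first arrow gives the displayed map.

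The main obstacle is the filtration compatibility invoked above. Individual Chern classes $c_q$ are not additive, so one cannot naively sheafify $c_1$; the honest route is to work with the degree-$d$ component $\mathrm{ch}_d$ of the Chern character, an additive natural transformation satisfying Gillet's axioms, which therefore induces a morphism from the Brown--Gersten spectral sequence of $K$-theory to that of Deligne cohomology, and then to check that on the $E_2$- and $E_\infty$-spots in play it agrees, up to a nonzero rational scalar, with $c_1$ (no lower Chern classes contribute in this weight and dimension range). Granting this, the remaining steps are formal: the dimension count for the Gersten complex gives the surjectivity of the first arrow, and the vanishing required of $F^d\H^{2d-1}_{\sD}\left(X,\Q(d)\right)$ is precisely what Lemma~\ref{lem:HDvanish} provides.
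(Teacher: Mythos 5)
Your proposal is correct and follows essentially the same route as the paper: reduce to Quillen $K$-sheaves via Lemma~\ref{lem:MQC-top}, get the surjection from the Brown--Gersten spectral sequence, and kill $c_1$ on $F^dK_1(X)$ using Gillet-type functoriality of the Chern classes together with Lemma~\ref{lem:HDvanish}. Your formulation via filtration compatibility and the vanishing of $F^d\H^{2d-1}_{\sD}\left(X,\Q(d)\right)$ (with the additivity issue handled by $\mathrm{ch}_d$, legitimate since everything is rational here) is just a repackaging of the paper's commutative square relating $H^d_{Zar}\left(X,\sK_{d+1}\right)$, the sheaf-level class $c^*_{d,d-1}$, and the edge map of the Deligne hypercohomology spectral sequence.
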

\begin{proof}
We can replace $H^{d-1}_{\Zar}\left(X, \sK^M_d\right)$ with
$H^{d-1}_{\Zar}\left(X, \sK_d\right)$ by Lemma~\ref{lem:MQC-top}.
The first surjection then follows at once from the Brown-Gersten spectral 
sequence ~\eqref{eqn:SSZ}. This spectral sequence also implies that
there is a surjection \\
$H^{d}_{\Zar}\left(X, \sK_{d+1}\right) \surj
F^dK_1(X)$. Thus we only need to show that the composite
$H^{d}_{\Zar}\left(X, \sK_{d+1}\right) \surj F^dK_1(X) \to
\H^{2d-1}_{\sD}\left(X, \Q(d)\right)$ is zero.
Now, the functoriality of the Chern classes gives a commutative diagram
\[
\xymatrix@C.6pc{
H^{d}_{\Zar}\left(X, \sK_{d+1}\right) \ar@{->>}[r] \ar[d]_{c^*_{d, d-1}} & 
F^dK_1(X) \ar[d]^{c_1} \\
H^d_{Zar}\left(X, \sH^{d-1}_{\sD}(d)\right) \ar[r] & 
\H^{2d-1}_{\sD}\left(X, \Q(d)\right),}
\]
where the bottom horizontal arrow is the edge map in the spectral
sequence
\[
E^{i,j}_2 = H^i_{Zar}\left(X, \sH^{j}_{\sD}(q)\right) \Rightarrow
\H^{i+j}_{\sD}\left(X, \Q(d)\right).
\]
The corollary now follows immediately from Lemma~\ref{lem:HDvanish}.
\end{proof}
 
\begin{lem}\label{lem:C-class}
Let $X$ be as in Lemma~\ref{lem:HDvanish}. Then
$H^{2d-1}\left(X, \Q\right) \cap F^dH^{2d-1}\left(X, \C\right) = 0$ 
and hence 
\[
H^{2d-1}\left(X, \Q\right) \inj \frac{H^{2d-1}\left(X, \C\right)}
{F^dH^{2d-1}\left(X, \C\right)}.
\]
\end{lem}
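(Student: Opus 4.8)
The plan is to reduce the statement to a standard fact about the Hodge structure on $H^{2d-1}(X,\C)$. First I would invoke the Hodge decomposition for the smooth projective variety $X$,
\[
H^{2d-1}(X,\C) = \bigoplus_{p+q = 2d-1} H^{p,q}(X), \qquad H^{p,q}(X) = H^q\left(X, \Omega^p_{X/\C}\right),
\]
with Hodge filtration $F^kH^{2d-1}(X,\C) = \bigoplus_{p \ge k} H^{p,q}(X)$. Since $\Omega^p_{X/\C} = 0$ for $p > d$ and all coherent cohomology of $X$ vanishes in degrees $> d$ by Grothendieck vanishing, the only Hodge types $(p,q)$ with $p+q = 2d-1$ are $(d,d-1)$ and $(d-1,d)$; hence $F^dH^{2d-1}(X,\C) = H^{d,d-1}(X) = H^{d-1}\left(X, \Omega^d_{X/\C}\right)$ is pure of type $(d,d-1)$. (Only $\dim X = d$ is used here; the vanishing hypothesis $H^d_{Zar}(X,\Omega^i_X) = 0$ for $i \le d-2$ is not needed for this lemma.)

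Next I would run the usual complex-conjugation argument. The subspace $H^{2d-1}(X,\Q)$ lies inside $H^{2d-1}(X,\R)$ and is therefore invariant under the complex conjugation $c$ on $H^{2d-1}(X,\C)$ attached to this real structure, and $c$ interchanges $H^{p,q}(X)$ with $H^{q,p}(X)$. So if $\alpha \in H^{2d-1}(X,\Q) \cap F^dH^{2d-1}(X,\C)$, then $\alpha = c(\alpha)$, while $\alpha \in H^{d,d-1}(X)$ forces $c(\alpha) \in H^{d-1,d}(X)$; thus $\alpha$ lies in $H^{d,d-1}(X) \cap H^{d-1,d}(X) = 0$, these being distinct summands of the Hodge decomposition. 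This shows $H^{2d-1}(X,\Q) \cap F^dH^{2d-1}(X,\C) = 0$.

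Finally, to deduce the displayed injection I would note that $H^{2d-1}(X,\Q) \to H^{2d-1}(X,\C)$ is injective (base change along $\Q \hookrightarrow \C$) and that the kernel of its composite with the quotient map $H^{2d-1}(X,\C) \to H^{2d-1}(X,\C)/F^dH^{2d-1}(X,\C)$ is exactly the intersection $H^{2d-1}(X,\Q) \cap F^dH^{2d-1}(X,\C)$, which vanishes by the previous step. There is no genuine obstacle in this argument; the only point deserving attention is the dimension bookkeeping that pins $F^dH^{2d-1}(X,\C)$ down to the single Hodge piece $H^{d,d-1}(X)$, and one should also keep in mind that the assertion concerns the untwisted cohomology — the Tate twist appearing elsewhere as $\Q(d)$ only shifts the filtrations and does not affect this purely linear-algebraic conclusion.
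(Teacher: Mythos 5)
Your proof is correct and follows essentially the same route as the paper: both rest on the conjugation-invariance of $H^{2d-1}(X,\Q)$ together with the Hodge-theoretic fact that $F^dH^{2d-1}(X,\C)$ meets its complex conjugate trivially in weight $2d-1$ (you make this explicit by identifying $F^dH^{2d-1}$ with the single piece $H^{d,d-1}$, which the paper leaves as ``by Hodge theory''). The only cosmetic remark is that Lemma~\ref{lem:HDvanish} imposes no vanishing hypothesis beyond $X$ being smooth, projective and of dimension $d$, so there is nothing extra to discard.
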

\begin{proof}
We observe that $H^{2d-1}\left(X, \Q\right)$ is invariant under the
complex conjugation on $H^{2d-1}\left(X, \C\right)$. On the other hand,
\[
F^dH^{2d-1}\left(X, \C\right) \cap \ov{F^dH^{2d-1}\left(X, \C\right)}
= 0
\]
by the Hodge theory. The lemma now follows.
\end{proof}

Let $X$ be a smooth and projective $\C$-scheme of dimension $d$. 
For any $i \ge 0$, let 
${H^{2i}(X, \Q)}_{alg} = H^{i}_{Zar}(X, \Omega^{i}_{X/{\C}}) 
\cap H^{2i}(X, \Q)$. 
Note that 
\begin{equation}\label{eqn:HDG**1}
H^{2i}(X, \Q) \inj H^{2i}(X, \Q) {\otimes}_{\Q} \C \cong
H^{2i}(X, \C)
\end{equation}
and the Hodge decomposition shows that 
${H^{2i}(X, \Q)}_{alg} {\otimes}_{\Q} \C \cong  
H^{i}_{Zar}(X, \Omega^{i}_{X/{\C}})$ under this isomorphism.

Denote the 
image of the map $\frac{H^{d-1}_{Zar}(X, \Omega^{d-1}_{X/{\C}})}
{{H^{2d-2}(X, {\Q})}_{alg}} \to
\H^{2d-1}_{\sD}\left(X, \Q(d)\right)$ by the subgroup
${\H^{2d-1}_{\sD}\left(X, \Q(d)\right)}_{alg}$. 

\begin{prop}\label{prop:Smooth-main}
Let $X$ be as in Lemma~\ref{lem:HDvanish}.
Then 
\[
{\H^{2d-1}_{\sD}\left(X, \Q(d)\right)}_{alg} \subseteq
{\rm Image}\left(H^{d-1}_{\Zar}\left(X, \sK^M_{d}\right) \xrightarrow{c_1}
\H^{2d-1}_{\sD}\left(X, \Q(d)\right)\right).
\]
\end{prop}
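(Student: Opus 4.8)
The plan is to identify the algebraic part of the Deligne cohomology with a quotient of an intermediate Jacobian and then show that this quotient is hit by the image of the cohomology of the Milnor $\sK$-sheaf. First I would observe that Corollary~\ref{cor:SmmothCH} already gives a Chern class map
\[
c_1 : H^{d-1}_{\Zar}\left(X, \sK^M_d\right) \to \H^{2d-1}_{\sD}\left(X, \Q(d)\right),
\]
so the statement is about surjectivity onto the subgroup ${\H^{2d-1}_{\sD}\left(X, \Q(d)\right)}_{alg}$. Using Lemma~\ref{lem:MDeligneC3} (for $X$ smooth, so $\Omega^{<q}_{X/\C} = Ra_*a^*\Omega^{<q}_{X/\C}$ and the modified Deligne cohomology agrees with the classical one), the long exact sequence reads
\[
H^{2d-2}(X,\Q) \to \frac{H^{2d-2}(X,\C)}{F^dH^{2d-2}(X,\C)} \to \H^{2d-1}_{\sD}\left(X,\Q(d)\right) \to H^{2d-1}(X,\Q) \to \frac{H^{2d-1}(X,\C)}{F^dH^{2d-1}(X,\C)},
\]
and by Lemma~\ref{lem:C-class} the last map is injective. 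Hence there is a short exact sequence
\[
0 \to J^d(X) \to \H^{2d-1}_{\sD}\left(X,\Q(d)\right) \to {H^{2d-2}(X,\Q)}_{alg} \to 0
\]
(rationally), where $J^d(X)$ is the intermediate Jacobian and the cokernel is exactly ${H^{2d-2}(X,\Q)}_{alg}$; moreover the decomposition ${\H^{2d-1}_{\sD}(X,\Q(d))}_{alg}$ maps onto $J^d_a(X)$ with kernel (a piece of) ${H^{2d-2}(X,\Q)}_{alg}$, by the very definition of ${\H^{2d-1}_{\sD}(X,\Q(d))}_{alg}$ as the image of $\frac{H^{d-1}_{Zar}(X,\Omega^{d-1}_{X/\C})}{{H^{2d-2}(X,\Q)}_{alg}}$.

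Next I would handle the two pieces separately. For the ``codimension $d-2$'' part mapping to $J^d_a(X)$: since $\dim X = d$, the group $H^{d-1}_{\Zar}(X,\sK^M_d) \cong H^{d-1}_{\Zar}(X,\sK_d)$ (Lemma~\ref{lem:MQC-top}) is, up to torsion, the usual codimension-$(d-1)$ cycle cohomology $CH^{d-1}(X)$ via Bloch's formula, and the Chern class $c_1$ restricted to cycles algebraically equivalent to zero coincides with the classical Abel--Jacobi map $AJ^{d-1}: A^{d-1}(X) \to J^{d-1}(X) = J^d(X)$. By the theory recalled around ~\eqref{eqn::Alg0*}, the image of $AJ^{d-1}$ on $A^{d-1}(X)$ is precisely the abelian subvariety $J^{d-1}_a(X) = J^d_a(X)$, which surjects onto the image of ${\H^{2d-1}_{\sD}(X,\Q(d))}_{alg}$ in $J^d(X)$. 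For the ``cycle class'' part mapping to ${H^{2d-2}(X,\Q)}_{alg}$: since the Hodge conjecture is known for codimension-one cycles ($p=1$) and hence, via hard Lefschetz / intersection with a hyperplane class, for codimension $d-1$, the classical cycle class map $CH^{d-1}(X)_\Q \to {H^{2d-2}(X,\Q)}_{alg}$ is surjective; combined with the compatibility diagram ~\eqref{eqn:Chern1} between $c_1^*$, $c_1$, and the topological Chern class, this lifts any element of ${H^{2d-2}(X,\Q)}_{alg}$ into $H^{d-1}_{\Zar}(X,\sK^M_d)$ mapping appropriately. Combining the two pieces via the five lemma applied to the short exact sequence above and its analogue for the images gives the claimed inclusion.

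The main obstacle, I expect, is the careful bookkeeping of \emph{which} Deligne cohomology and \emph{which} filtration is being used, and verifying that the Chern class $c_1$ of Corollary~\ref{cor:SmmothCH} genuinely restricts to the classical Abel--Jacobi map on the algebraically-trivial part with no stray sign or twist — this rests on the functoriality diagram ~\eqref{eqn:Chern1} and on matching the edge map of the coniveau/Brown spectral sequence $H^{d-1}_{\Zar}(X,\sK_d) \surj F^{d-1}K_1(X)/F^dK_1(X)$ with the geometric Abel--Jacobi construction. A secondary technical point is that everything here is stated rationally (the paper has fixed $A := A\otimes_\Z\Q$ for this section), so torsion ambiguities in $H^{d-1}_{\Zar}(X,\sK^M_d) \cong H^{d-1}_{\Zar}(X,\sK_d)$ and in Bloch's formula are harmless, but one must say so explicitly. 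Once those compatibilities are pinned down, the surjectivity onto ${\H^{2d-1}_{\sD}(X,\Q(d))}_{alg}$ is a formal diagram chase between the exact sequence for Deligne cohomology and the corresponding sequence for Milnor $\sK$-cohomology, exactly in the spirit of the diagram chases in Section~\ref{section:KDC}.
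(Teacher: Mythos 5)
Your reduction to Quillen sheaves and your appeal to the Hodge conjecture in codimension $d-1$ (via hard Lefschetz from the divisor case) are correct ingredients, but the structural skeleton of your argument is wrong in two places. First, $H^{d-1}_{\Zar}\left(X, \sK^M_{d}\right)$ is not $CH^{d-1}(X)$: Bloch's formula identifies $CH^{d-1}(X)$ with $H^{d-1}_{\Zar}\left(X, \sK_{d-1}\right)$, whereas here the sheaf index exceeds the cohomological degree by one, so $H^{d-1}_{\Zar}\left(X, \sK_{d}\right)$ is a weight-$d$ piece of $K_1(X)$ (this is exactly why Corollary~\ref{cor:SmmothCH} feeds it into $F^{d-1}K_1(X)/F^dK_1(X)$), and its Chern class $c_1$ is a regulator on $K_1$-type classes, not the Abel--Jacobi map on cycles. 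Second, the claimed exact sequence $0 \to J^d(X) \to \H^{2d-1}_{\sD}\left(X,\Q(d)\right) \to {H^{2d-2}(X,\Q)}_{alg} \to 0$ is false: Lemma~\ref{lem:MDeligneC3} together with Lemma~\ref{lem:C-class} gives, rationally, $\H^{2d-1}_{\sD}\left(X,\Q(d)\right) \cong {H^{2d-2}(X,\C)}/\left(F^dH^{2d-2}(X,\C) + H^{2d-2}(X,\Q)\right)$; the intermediate Jacobian $J^d(X)$ is built from $H^{2d-1}$ and occurs inside $\H^{2d}_{\sD}\left(X,\Z(d)\right)$ (see ~\eqref{eqn:InterJ}), not here, and rational cycle classes die in this quotient, so there is no ``cycle class part'' of $\H^{2d-1}_{\sD}$ for your topological argument to hit. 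Likewise $J^{d-1}(X) \neq J^d(X)$ in general, the Abel--Jacobi map on $A^{d-1}(X)$ takes values in the weight-$(d-1)$, degree-$(2d-2)$ Deligne cohomology, and the statement $J^{d-1}_a(X) = J^{d-1}(X)$ is a general Hodge conjecture assumption in this paper (Corollary~\ref{cor:GHC*1}), not a known fact you may invoke.

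The missing idea is multiplication by units. After replacing $\sK^M_d$ by $\sK_d$ (Lemma~\ref{lem:MQC-top}), the paper identifies, rationally, ${H^{2d-2}(X,\Q)}_{alg}\otimes_{\Q}\C^*$ with the group ${H^{d-1}_{Zar}(X,\Omega^{d-1}_{X/{\C}})}/{{H^{2d-2}(X,\Q)}_{alg}}$ whose image defines ${\H^{2d-1}_{\sD}\left(X,\Q(d)\right)}_{alg}$; the Hodge conjecture in codimension $d-1$ gives a surjection $H^{d-1}_{\Zar}\left(X,\sK_{d-1}\right) \cong CH^{d-1}(X) \surj {H^{2d-2}(X,\Q)}_{alg}$ under the topological Chern class; and the cup product $H^{d-1}_{\Zar}\left(X,\sK_{d-1}\right)\otimes \C^* \to H^{d-1}_{\Zar}\left(X,\sK_{d}\right)$, compatible through $c_1$ with the product in Deligne cohomology, then shows that the image of $c_1$ contains the image of ${H^{2d-2}(X,\Q)}_{alg}\otimes\C^*$, i.e.\ all of ${\H^{2d-1}_{\sD}\left(X,\Q(d)\right)}_{alg}$. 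Your proposal never constructs elements of $H^{d-1}_{\Zar}\left(X,\sK_{d}\right)$ mapping onto this continuous part, and no diagram chase with the Abel--Jacobi map can substitute for that step.
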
 

\begin{proof}
We can replace $H^{d-1}_{\Zar}\left(X, \sK^M_{d}\right)$
with $H^{d-1}_{\Zar}\left(X, \sK_{d}\right)$ by Lemma~\ref{lem:MQC-top}. 
The injectivity of maps $H^*(X, \Q) \to H^*(X, \C)$
and ~\ref{eqn:HDG**1} together imply that

\begin{equation}\label{eqn:S-main2}
{H^{2d-2}(X, \Q)}_{alg} \otimes_{\Q} \C^*_{\Q} 
\cong \frac{{H^{2d-2}(X, {\Q})}_{alg} \otimes_{\Q} \C}
{{H^{2d-2}(X, {\Q})}_{alg}} \cong
\frac{H^{d-1}_{Zar}(X, \Omega^{d-1}_{X/{\C}})}{{H^{2d-2}(X, {\Q})}_{alg}}.
\end{equation}


The validity of Hodge conjecture for codimension $(d-1)$ cycles on $X$
({\sl cf.} \cite[p. 91]{Lewis}) implies that 
$H^{d-1}_{\Zar}\left(X, \sK_{d-1}\right) \surj {H^{2d-2}(X, \Q)}_{alg}$
under the topological Chern class maps
\begin{equation}\label{eqn:S-main3}
\xymatrix@C.8pc{
H^{d-1}_{\Zar}\left(X, \sK_{d-1}\right) \ar[r]^{\cong} &
CH^{d-1}(X) \ar[r]  \ar[dr] \ar[d] 
& \H^{2d-2}_{\sD}\left(X, \Q(d-1)\right) \ar[d] \\
& H^{2d-2}(X, \C) & H^{2d-2}(X, \Q) \ar@{^{(}->}[l] \cdot }
\end{equation}
The proposition now follows by using the commutative
diagram
\begin{equation}\label{eqn:S-main4}
\xymatrix@C.6pc{
& 
H^{d-1}_{\Zar}\left(X, \sK_{d-1}\right) \otimes \C^* \ar@{->>}[d] \ar[r] &
H^{d-1}_{\Zar}\left(X, \sK_{d}\right)  \ar[d]^{c_1} \\
\frac{H^{d-1}_{Zar}(X, \Omega^{d-1}_{X/{\C}})}{{H^{2d-2}(X, {\Q})}_{alg}}
\ar[r]^{\cong \ \ \ \ \ \ \ } &
{H^{2d-2}(X, \Q)}_{alg} \otimes_{\Q} \C^* \ar[r] &
\H^{2d-1}_{\sD}\left(X, \Q(d)\right),}
\end{equation}
where the first map on the bottom row is the isomorphism of 
~\eqref{eqn:S-main2}. 
\end{proof}  

\section{Zero-cycles on singular varieties and their resolutions}
In this section, we prove our main result about the comparison
between the $cdh$ analogue of the Chow group of zero-cycles on a
normal projective variety $X$ and that of a resolution of singularities
of $X$. We need the following intermediate result.

\begin{lem}\label{lem:Extra-Chern}
Consider the resolution diagram ~\eqref{eqn:RSD} for a projective
(not necessarily normal) $\C$-scheme $X$ of dimension $d$.
Then the map 
\[
H^{d-1}_{\Zar}\left(\wt{X}, \sK_{d}\right) \xrightarrow{c_1}
\frac{\H^{2d-1}_{\sD}\left(\wt{X}, \Q(d)\right)}
{\H^{2d-1}_{\sD}\left(X, \Q(d)\right)}
\]
is surjective.
\end{lem}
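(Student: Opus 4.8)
The statement compares the Chern class map on $K$-theory of the resolution $\wt{X}$ with a quotient of Deligne cohomology groups, where the denominator $\H^{2d-1}_{\sD}(X,\Q(d))$ is regarded as sitting inside $\H^{2d-1}_{\sD}(\wt{X},\Q(d))$ via the pull-back $f^*$. The plan is to reduce the problem, via Proposition~\ref{prop:Smooth-main}, to a statement purely about Hodge structures, and then to use the general Hodge conjecture for the codimension $(d-1)$ situation --- or rather only the part of it which is classically known --- together with the Mayer-Vietoris exact sequence of Deligne cohomology attached to the resolution diagram ~\eqref{eqn:RSD}.

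First I would observe that, working rationally throughout (as in Section~\ref{section:CHERNS}), Proposition~\ref{prop:Smooth-main} identifies the ``algebraic'' part ${\H^{2d-1}_{\sD}(\wt{X},\Q(d))}_{alg}$ --- the image of $\frac{H^{d-1}_{Zar}(\wt{X},\Omega^{d-1}_{{\wt{X}}/{\C}})}{{H^{2d-2}(\wt{X},\Q)}_{alg}}$ --- as being contained in the image of the Chern class map from $H^{d-1}_{\Zar}(\wt{X},\sK_d)$. So it suffices to show that the composite
\[
\H^{2d-1}_{\sD}\left(\wt{X}, \Q(d)\right)_{alg} \to
\frac{\H^{2d-1}_{\sD}\left(\wt{X}, \Q(d)\right)}
{\H^{2d-1}_{\sD}\left(X, \Q(d)\right)}
\]
is surjective; equivalently, that $\H^{2d-1}_{\sD}(\wt{X},\Q(d)) = \H^{2d-1}_{\sD}(\wt{X},\Q(d))_{alg} + \im\!\left(f^*: \H^{2d-1}_{\sD}(X,\Q(d)) \to \H^{2d-1}_{\sD}(\wt{X},\Q(d))\right)$. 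Next I would use the structure of $\H^{2d-1}_{\sD}(\wt{X},\Q(d))$ coming from Lemma~\ref{lem:MDeligneC3}: since $\wt{X}$ is smooth projective, it is an extension of the ``Hodge-trivial'' part sitting in $\mathrm{Hg}$-type classes by the intermediate Jacobian type piece $\frac{H^{2d-1}(\wt{X},\C)}{F^dH^{2d-1}(\wt{X},\C)+H^{2d-1}(\wt{X},\Q(d))}$ (and the degree $2d$ integral cohomology term maps isomorphically under $f^*$ by the Mayer-Vietoris argument used in ~\eqref{eqn:KH*10}, which reduces us to the intermediate-Jacobian piece). The key geometric input is that $J^{d-1}_a(\wt{X})$, the image of cycles algebraically equivalent to zero, together with the image of $J^{d-1}(X) \to J^{d-1}(\wt{X})$, exhausts $J^{d-1}(\wt{X})$ --- and here the part of $\H^{2d-1}_{\sD}(\wt{X},\Q(d))_{alg}$ coming from cycles supported on the exceptional locus (i.e.\ the kernel of $f_*$ on cycles, equivalently the part killed by restriction to $\wt{X}\setminus E$) accounts for exactly the cokernel of $f^*$ on $\H^{2d-1}_{\sD}$. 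This cokernel is controlled by $\H^{2d-1}_{\sD}(E,\Q(d))$ and $\H^{2d-1}_{\sD}(S,\Q(d))$ via Lemma~\ref{lem:MVD} (where $S = X_{\mathrm{sing}}$ has dimension $< d-1$ for the relevant normal case, or at worst dimension $d-1$), so it is a quotient of the Deligne cohomology of lower-dimensional schemes, which is ``algebraic'' by the known cases of the Hodge conjecture in low codimension.

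The main obstacle I expect is the bookkeeping in the last step: one must verify carefully that the part of $\H^{2d-1}_{\sD}(\wt{X},\Q(d))$ that fails to be hit by $f^*\H^{2d-1}_{\sD}(X,\Q(d))$ --- which by the Mayer-Vietoris sequence of Lemma~\ref{lem:MVD} is a subquotient of $\H^{2d-1}_{\sD}(E,\Q(d))\oplus\H^{2d-1}_{\sD}(S,\Q(d))$ --- lands inside the algebraic part ${\H^{2d-1}_{\sD}(\wt{X},\Q(d))}_{alg}$, i.e.\ is in the image of the Chern class map from $H^{d-1}_{\Zar}(\wt{X},\sK_d)$. For the $E$-contribution this follows from Lemma~\ref{lem:SNCD1} (surjectivity of the Chern class $H^d_{cdh}(E,\sK^M_d) \to \H^{2d}_{\sD}(E,\Q(d))$, after a degree shift via the Gysin/pushforward $\H^{2d-1}_{\sD}(E,\Q(d)) \to \H^{2d+1}_{\sD}(\wt{X},\Q(d+1))$ dual to restriction, or more directly from the fact that algebraic cycles on the components of $E$ push forward to codimension $(d-1)$ cycles on $\wt{X}$); for the $S$-contribution one uses that $S$ has dimension at most $d-1$ so the relevant Deligne cohomology is of a variety of dimension $\le d-1$ where $GHC$ in the needed degree is classical (as recorded in Corollary~\ref{cor:GHC*1} and the surrounding discussion). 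Assembling these into the commutative diagram comparing the Mayer-Vietoris sequences of $\sK$-cohomology and Deligne cohomology --- the same diagram as ~\eqref{eqn:KH*11} but one cohomological degree down --- and chasing, yields the desired surjectivity.
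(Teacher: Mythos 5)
Your first reduction (split off the algebraic part via Proposition~\ref{prop:Smooth-main}) is the same as the paper's, but the second half of your argument has a genuine gap, and the tools you invoke do not apply to the group in question. The non-algebraic complement of ${\H^{2d-1}_{\sD}(\wt{X},\Q(d))}_{alg}$ inside $\H^{2d-1}_{\sD}(\wt{X},\Q(d))$ is, by Lemmas~\ref{lem:MDeligneC3} and ~\ref{lem:C-class}, the quotient ${\H^{2d-2}_{Zar}(\wt{X},\Omega^{<d-1}_{{\wt{X}}/{\C}})}/{H^{2d-2}(\wt{X},\Q)}$, i.e.\ it involves $H^{2d-2}$ modulo $F^{d-1}$ — it is not $J^{d-1}(\wt{X})$ (which lives in $H^{2d-3}$), nor does the degree-$2d$ cohomology or the piece ${H^{2d-1}(\wt{X},\C)}/{(F^d+H^{2d-1}(\wt{X},\Q))}$ enter at this stage; your appeal to ~\eqref{eqn:KH*10} and to Corollary~\ref{cor:GHC*1} is therefore aimed at the wrong groups. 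Worse, the "key geometric input" you posit — that $J^{d-1}_a(\wt{X})$ together with the image of $J^{d-1}(X)$ exhausts $J^{d-1}(\wt{X})$ — is exactly a $GHC(d-2,2d-3)$-type statement; it is not classically known, and it is assumed as a hypothesis in Theorems~\ref{thm:3fold-main} and ~\ref{thm:GENERAL} precisely because it is not available. The lemma, however, is unconditional and is used unconditionally (e.g.\ in Theorems~\ref{thm:Main22} and ~\ref{thm:Surface*}), so any proof routed through GHC cannot be correct. Finally, your mechanism for showing that the cokernel of $f^*$ is "algebraic" does not work as described: the Gysin map $\H^{2d-1}_{\sD}(E,\Q(d))\to\H^{2d+1}_{\sD}(\wt{X},\Q(d+1))$ lands in a group of the wrong degree and twist, while the cokernel of $f^*$ is detected by \emph{restriction} to $E$, not pushforward from $E$; and the statement that classes restricted from $\wt{X}$ to $E$ are, modulo $S$, restrictions of Chern classes from $\wt{X}$ is essentially the nontrivial hypothesis of Theorem~\ref{thm:Main22}, not something you may cite.

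The paper's actual argument for the complementary piece needs no cycle-theoretic or Hodge-conjectural input at all: after the identification $\H^{2d-1}_{\sD}(\wt{X},\Q(d))\cong{\H^{2d-2}_{Zar}(\wt{X},\Omega^{<d}_{{\wt{X}}/{\C}})}/{H^{2d-2}(\wt{X},\Q)}$ and the splitting off of the algebraic subobject, it suffices to show
\[
\H^{2d-2}_{cdh}\left(X, \Omega^{<d-1}_{X/{\C}}\right)
\surj \H^{2d-2}_{cdh}\left(\wt{X}, \Omega^{<d-1}_{{\wt{X}}/{\C}}\right),
\]
which maps this piece into the image of $\H^{2d-1}_{\sD}(X,\Q(d))$ by Corollary~\ref{cor:CDB2} and functoriality. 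This surjectivity follows from the $cdh$ Mayer--Vietoris sequence for the square ~\eqref{eqn:RSD} together with the vanishing $\H^{2d-2}_{cdh}(E,\Omega^{<d-1}_{E/{\C}})=\H^{2d-2}_{cdh}(S,\Omega^{<d-1}_{S/{\C}})=0$, which is pure cohomological-dimension counting (Lemma~\ref{lem:folklore2}, since $E$ and $S$ have dimension at most $d-1$ and the complex is concentrated in degrees $\le d-2$). If you want to rescue your route, you would have to replace the $J^{d-1}$/GHC input by an honest analysis of the restriction map on ${H^{2d-2}(\wt X,\Q)}$ and its interplay with the known case of the Hodge conjecture for $(d-1,d-1)$-classes — but the paper's dimension-counting argument makes all of that unnecessary.
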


\begin{proof}
It follows from Lemmas~\ref{lem:MDeligneC3} and ~\ref{lem:C-class} that
\begin{equation}\label{eqn:EXCh-1}
\frac{\H^{2d-2}_{Zar}\left(\wt{X}, \Omega^{<d}_{{\wt{X}}/{\C}}\right)}
{H^{2d-2}(\wt{X}, \Q)} \xrightarrow{\cong}
H^{2d-1}_{\sD}\left(\wt{X}, \Q(d)\right).
\end{equation}

Using the exact sequence
\[
0 \to \frac{H^{d-1}_{Zar}(X, \Omega^{d-1}_{X/{\C}})}{{H^{2d-2}(X, {\Q})}_{alg}}
\to \frac{\H^{2d-2}_{Zar}\left(\wt{X}, \Omega^{<d}_{{\wt{X}}/{\C}}\right)}
{H^{2d-2}(\wt{X}, \Q)} \to
\frac{\H^{2d-2}_{Zar}\left(\wt{X}, \Omega^{<d-1}_{{\wt{X}}/{\C}}\right)}
{H^{2d-2}(\wt{X}, \Q)} \to 0
\]
and Proposition~\ref{prop:Smooth-main}, it suffices to show that
\begin{equation}\label{eqn:EXCh-2}
\H^{2d-2}_{cdh}\left(X, \Omega^{<d-1}_{X/{\C}}\right)
\surj \H^{2d-2}_{cdh}\left(\wt{X}, \Omega^{<d-1}_{{\wt{X}}/{\C}}\right).
\end{equation}

But this follows from the Mayer-Vietoris exact sequence
\[
\H^{2d-2}_{cdh}\left(X, \Omega^{<d-1}_{X/{\C}}\right) \to
\H^{2d-2}_{cdh}\left(\wt{X}, \Omega^{<d-1}_{{\wt{X}}/{\C}}\right)
\oplus \H^{2d-2}_{cdh}\left(S, \Omega^{<d-1}_{S/{\C}}\right) \to
\H^{2d-2}_{cdh}\left(E, \Omega^{<d-1}_{E/{\C}}\right)
\]
together with the fact that 
$\H^{2d-2}_{cdh}\left(S, \Omega^{<d-1}_{S/{\C}}\right) = 0
= \H^{2d-2}_{cdh}\left(E, \Omega^{<d-1}_{E/{\C}}\right)$,
which follows from Lemma~\ref{lem:folklore2}.
\end{proof} 

The following is our main result about the $cdh$ version of 
the Chow group of zero-cycles on singular schemes.
\begin{thm}\label{thm:Main22}
Let $X$ be a normal and projective $\C$-scheme of dimension 
$d \ge 2$. Suppose that for a resolution diagram ~\eqref{eqn:RSD} for $X$,
\[
{H^{d-1}_{cdh}\left(\wt{X}, \sK^M_{d-1}\right)}_{hom} \otimes \C^*  
\surj {H^{d-1}_{cdh}\left(E, \sK^M_{d-1}\right)}_{hom} \otimes \C^*.
\]
Then the map 
\[
F^{d}KH^M_0(X) \to F^{d}KH^M_0(\wt{X}) \cong 
CH^{d}(\wt{X})
\]
is an isomorphism.
\end{thm}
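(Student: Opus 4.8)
The plan is to reduce everything to the vanishing of a single Mayer--Vietoris connecting map attached to the resolution square ~\eqref{eqn:RSD}, and to kill that map using the hypothesis. Since $X$ is normal, $S = X_{\rm sing}$ has codimension $\ge 2$ and the exceptional divisor $E$ has dimension $d-1$; by the bound on the $cdh$-cohomological dimension this gives $H^j_{cdh}(S,\sK^M_i) = 0$ for $j \ge d-1$, $H^j_{cdh}(E,\sK^M_i) = 0$ for $j \ge d$ (all $i$), and $H^{2d-2}(S,\Z) = 0$. I would first write down the diagram ~\eqref{eqn:KH*11} for $X$, which is valid for any projective $\C$-scheme; its middle row is the exact sequence
\[
F^{d-1}KH^M_1(E) \xrightarrow{\ \partial\ } F^dKH^M_0(X) \to F^dKH^M_0(\wt X) \to 0 ,
\]
so the surjectivity of $F^dKH^M_0(X) \to F^dKH^M_0(\wt X)$ is immediate and its kernel is $\partial\bigl(F^{d-1}KH^M_1(E)\bigr)$; hence the theorem comes down to showing $\partial = 0$. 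Through the descent spectral sequence ~\eqref{eqn:SSC}, the map $\partial$ is compatible with the Mayer--Vietoris connecting map $\partial_{MV}\colon H^{d-1}_{cdh}(E,\sK^M_d) \to H^d_{cdh}(X,\sK^M_d)$ (this compatibility is exactly what makes ~\eqref{eqn:KH*11} commute), so it suffices to prove that the restriction $\wt i^*\colon H^{d-1}_{cdh}(\wt X,\sK^M_d) \to H^{d-1}_{cdh}(E,\sK^M_d)$ is surjective: then $\partial_{MV} = 0$ by exactness of Mayer--Vietoris, whence $\partial = 0$.

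To prove this surjectivity I would exploit that cup product with $\C^* = H^0(E,\sK^M_1)$ gives a surjection $H^{d-1}_{cdh}(E,\sK^M_{d-1}) \otimes \C^* \surj H^{d-1}_{cdh}(E,\sK^M_d)$ (Proposition~\ref{prop:Milnor}, for the strict normal crossing scheme $E$ of dimension $d-1$), and that cup products commute with $\wt i^*$ and with pullback along $E \to \Spec\C$. Using the exact sequence of Proposition~\ref{prop:AlbKH} for $E$, write an element of $H^{d-1}_{cdh}(E,\sK^M_{d-1})$ as the sum of an element of ${H^{d-1}_{cdh}(E,\sK^M_{d-1})}_{hom}$ and a lift of its image in the top singular cohomology $H^{2d-2}(E,\Z)$. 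The first summand is controlled by the hypothesis, which says precisely that ${H^{d-1}_{cdh}(\wt X,\sK^M_{d-1})}_{hom} \otimes \C^* \surj {H^{d-1}_{cdh}(E,\sK^M_{d-1})}_{hom} \otimes \C^*$; after cupping with $\C^*$ and invoking ~\eqref{eqn:TWO} --- which identifies the cup-product image of ${H^{d-1}_{cdh}(E,\sK^M_{d-1})}_{hom} \otimes \C^*$ with $\ker\bigl(c_1\colon H^{d-1}_{cdh}(E,\sK^M_d) \to \H^{2d-1}_{\sD}(E,\Z(d))\bigr)$ --- one finds that this kernel lies in the image of $\wt i^*$. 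For the second summand, $H^{2d-2}(E,\Z) = \bigoplus_j H^{2d-2}(E_j,\Z) = \bigoplus_j \Z$ is generated by the classes of smooth closed points of the components $E_j$ of $E$; normality forces these classes not to come from $S$ (where $H^{2d-2}$ vanishes), and one produces for each $j$ a curve in $\wt X$ meeting $E$ transversally at a single point of $E_j$ --- for instance the strict transform of a general curve in $X$ through a general point of $f(E_j)$ --- so that $[E_j] \in H^{2d-2}(E,\Z)$ is the restriction of an algebraic $(d-1)$-cycle class from $\wt X$; subtracting this restriction off puts the remaining discrepancy into ${H^{d-1}_{cdh}(E,\sK^M_{d-1})}_{hom}$, already handled. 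Putting the two summands together, every element of $H^{d-1}_{cdh}(E,\sK^M_{d-1})$ becomes, after cup with $\C^*$, a restriction from $H^{d-1}_{cdh}(\wt X,\sK^M_d)$; so $\wt i^*$ is surjective, $\partial = 0$, and $F^dKH^M_0(X) \to F^dKH^M_0(\wt X) \cong CH^d(\wt X)$ is an isomorphism.

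The hard part is the second summand --- controlling the top cohomology of the exceptional divisor $E$ --- together with making the cup-product bookkeeping precise: one must keep track of the splittings of Proposition~\ref{prop:AlbKH} for both $E$ and $\wt X$ and verify their compatibility with $\wt i^*$ and with the multiplicative structure on the $cdh$-cohomology of Milnor $K$-sheaves. Everything else (the exact sequences, the dimension vanishings, and the reduction to surjectivity of $\wt i^*$) is formal given the results recalled above.
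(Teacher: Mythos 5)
Your reduction is the same as the paper's: both arguments come down to showing that the restriction map $H^{d-1}_{cdh}(\wt{X}, \sK^M_d) \to H^{d-1}_{cdh}(E, \sK^M_d)$ is surjective (your ``$\partial=0$'' is the paper's surjectivity of the map $\beta$ in its Mayer--Vietoris diagram), and both handle the algebraically trivial part in the same way, via the cup-product surjectivity ${H^{d-1}_{cdh}(E,\sK^M_{d-1})}\otimes\C^* \surj H^{d-1}_{cdh}(E,\sK^M_d)$, the exact sequence ~\eqref{eqn:TWO} identifying the image of the $hom$-part with $\ker(c_1)$, and the hypothesis of the theorem. Up to that point your proposal is sound.

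The gap is exactly where you flag ``the hard part'': the complementary (degree) summand. You need the composite $H^{d-1}_{cdh}(\wt{X},\sK^M_{d-1}) \cong CH^{d-1}(\wt{X}) \to H^{2d-2}(E,\Z)\cong\Z^r$ to be surjective, and your justification --- for each component $E_j$, take ``the strict transform of a general curve in $X$ through a general point of $f(E_j)$'', meeting $E$ transversally in a single point of $E_j$ --- does not work. Since $f(E_j)\subseteq X_{\rm sing}$, the strict transform of a curve through a point $s\in f(E_j)$ meets the fibre $f^{-1}(s)$, which in general (e.g.\ for an isolated singularity, where it contains \emph{all} components of $E$) is positive dimensional; which components the strict transform hits, and with what multiplicities, is dictated by the resolution (by the coefficients in $f^*C = C' + \sum_i a_iE_i$ and the intersection theory of the exceptional locus), not by general position, so there is no reason to obtain intersection vector $e_j$. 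What is really needed is the surjectivity of the intersection map $CH^{d-1}(\wt{X}) \to \Z^r$; rationally this can be extracted from numerical independence of the exceptional components, but integrally it is delicate, and the theorem as you use it is integral. The paper avoids this construction entirely: it handles the non-$hom$ part through the Deligne cohomology column, namely the exactness of ~\eqref{eqn:TWO}, the surjectivity of $\gamma\colon \ov{\H^{2d-1}_{\sD}(\wt{X},\Q(d))}\to \H^{2d-1}_{\sD}(E,\Q(d))$ deduced from $J^d(X)\cong J^d(\wt{X})$ (the ESV universal albanese plus normality of $X$), and Lemma~\ref{lem:Extra-Chern}, whose proof rests on Proposition~\ref{prop:Smooth-main}, i.e.\ on the known case of the Hodge conjecture for codimension $d-1$ cycles on the smooth $\wt{X}$, all with $\Q$-coefficients. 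Your proposal contains no substitute for these Hodge-theoretic inputs, so as written the treatment of the second summand is a genuine missing step.
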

\begin{proof}
Since $H^{d}_{cdh}\left(\wt{X}, \sK^M_{d}\right) \cong
F^{d}KH^M_0(\wt{X}) \cong F^{d}K_0(\wt{X}) \cong CH^{d}(\wt{X})$ and \\ 
$H^{d}_{cdh}\left(X, \sK^M_{d}\right) \surj F^{d}KH^M_0(X)$,
it suffices to show the stronger assertion that
\begin{equation}\label{eqn:thm:Main22*H} 
H^{d}_{cdh}\left(X, \sK^M_{d}\right) \xrightarrow{\cong}
H^{d}_{cdh}\left(\wt{X}, \sK^M_{d}\right).
\end{equation}
Let us write $\frac{\H^{2d-1}_{\sD}\left(\wt{X}, \Q(d)\right)}
{\H^{2d-1}_{\sD}\left(X, \Q(d)\right)}$ as
$\ov{\H^{2d-1}_{\sD}\left(\wt{X}, \Q(d)\right)}$
and consider the following commutative diagram of Mayer-Vietoris
exact sequences.
\[
\xymatrix@C.3pc{
{H^{d-1}_{cdh}\left(\wt{X}, \sK^M_{d-1}\right)}_{hom} \otimes \C^* \ar[r] 
\ar[d] &
{H^{d-1}_{cdh}\left(E, \sK^M_{d-1}\right)}_{hom} \otimes \C^* \ar[d]^{\alpha} 
& & \\
H^{d-1}_{cdh}\left(\wt{X}, \sK^M_{d}\right) \ar[r]^{\beta} \ar[d] &
H^{d-1}_{cdh}\left(E, \sK^M_{d}\right) \ar[r] \ar[d] &
H^{d}_{cdh}\left(X, \sK^M_{d}\right) \ar@{->>}[r] \ar[d] &
H^{d}_{cdh}\left(\wt{X}, \sK^M_{d}\right) \ar[d] \\
{\ov{\H^{2d-1}_{\sD}\left(\wt{X}, \Q(d)\right)}} \ar[r]^{\gamma} &
\H^{2d-1}_{\sD}\left(E, \Q(d)\right) \ar[r] &
\H^{2d}_{\sD}\left(X, \Q(d)\right) \ar@{->>}[r] &
\H^{2d}_{\sD}\left(\wt{X}, \Q(d)\right).}
\]
The bottom row is exact by Lemma~\ref{lem:MVD} since the normality of $X$ 
implies that ${\rm dim}(X_{\rm sing}) \le d-2$ and hence
$\H^{i}_{\sD}\left(X_{\rm sing}, \Q(d)\right) = 0$ for $i \ge 2d-1$.  
The middle row is exact also because of the normality of $X$.
Furthermore, the universal property of the albanese variety of $X$ 
({\sl cf.} \cite{ESV}) shows that $J^d(X) \xrightarrow{\cong} J^d(\wt{X})$ and 
hence it follows from ~\eqref{eqn:InterJ} that 
$\H^{2d}_{\sD}\left(X, \Q(d)\right) \xrightarrow{\cong}
\H^{2d}_{\sD}\left(\wt{X}, \Q(d)\right)$. In particular, the
map $\gamma$ is surjective.

The left lower vertical map is surjective by Lemma~\ref{lem:Extra-Chern}.
The second lower vertical map from the left is surjective by 
Lemma~\ref{lem:SNCD1}.
The second column from the left is exact by
Corollary~\ref{cor:AlbKH*}. A diagram chase shows that we only need to show
that ${\rm Image}(\alpha) \subseteq {\rm Image}(\beta)$. But this follows from
the assumption of the theorem.
\end{proof}

\section{Chow groups of zero-cycles on surfaces}\label{section:Surf}
In this section, we deduce some consequences of Theorem~\ref{thm:Main22}
for the Chow group of zero-cycles on surfaces. In particular, we prove
the $cdh$ version of the Roitman torsion theorem and compare the Chow 
group of a surface with arbitrary singularity with the Chow group
of a resolution of singularities.

\subsection{Roitman torsion for surfaces}\label{subsection:RTTS}

The following is a version of the Roitman torsion theorem for singular
surfaces in the $cdh$ topology. It proves Conjecture~\ref{conj:FDCCR}
for surfaces.

\begin{thm}\label{thm:RTC}
Let $X$ be a projective surface over $\C$. Then the Chern class map
$H^2_{cdh}\left(X, \sK_2\right) \xrightarrow{c_{0,X}}
\H^{4}_{\sD}\left(X, \Z(2)\right)$ is isomorphism on torsion subgroups.
\end{thm}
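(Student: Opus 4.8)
The plan is to reduce the torsion statement for $H^2_{cdh}(X,\sK_2)$ to the known Roitman torsion theorem on a resolution of singularities, using the comparison machinery of Theorem~\ref{thm:Main22} together with the long exact Mayer--Vietoris sequences in $cdh$ cohomology of $\sK$-sheaves and in Deligne cohomology. First I would fix a resolution diagram~\eqref{eqn:RSD} for $X$, where now $d=2$, so $E \hookrightarrow \wt{X}$ is a strict normal crossing divisor which is a curve, $S = X_{\rm sing}$ is a finite set of points, and $\wt{X}$ is a smooth projective surface. The key preliminary step is to verify the hypothesis of Theorem~\ref{thm:Main22} in this low-dimensional case: it asks that ${H^{1}_{cdh}(\wt{X},\sK^M_{1})}_{hom}\otimes\C^* \surj {H^{1}_{cdh}(E,\sK^M_{1})}_{hom}\otimes\C^*$. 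Since $\sK^M_1 = \sO^*$, this is the statement that ${\rm Pic}^0(\wt{X})\otimes\C^* \to {\rm Pic}^0(E)\otimes\C^*$ is surjective, which follows from the surjectivity ${\rm Pic}^0(\wt{X}) \surj {\rm Pic}^0(E)$ --- a standard fact for a resolution of a normal surface, since the singularities are rational-like enough or, more elementarily, because $H^1_{cdh}(\wt{X},\sO^*) \to H^1_{cdh}(E,\sO^*)$ is surjective by the Mayer--Vietoris/$cdh$-descent sequence once one observes $H^2_{cdh}(X,\sO^*)$ controls the cokernel and $X$ normal forces vanishing of the relevant term on $S$.

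Granting this, Theorem~\ref{thm:Main22} gives $F^2KH^M_0(X) \cong F^2KH^M_0(\wt{X}) \cong CH^2(\wt{X})$; and by Proposition~\ref{prop:MQC} (applied both to $X$, which is projective with isolated --- here zero-dimensional --- singularities, and to $\wt{X}$, which is smooth) combined with Corollary~\ref{cor:compare}, we have $H^2_{cdh}(X,\sK_2) \cong H^2_{cdh}(X,\sK^M_2) \cong CH^2(X)$ up to torsion, and similarly $H^2_{cdh}(\wt{X},\sK_2)\cong CH^2(\wt{X})$. Actually, for torsion one must be careful, so instead I would argue directly with the commutative diagram of Mayer--Vietoris sequences appearing in the proof of Theorem~\ref{thm:Main22} (with $\sK_2$ in place of $\sK^M_2$, which is legitimate for surfaces since $\sK_2 \cong \sK^M_2$ as sheaves by Matsumoto's theorem), comparing
\[
H^1_{cdh}(\wt{X},\sK_2) \to H^1_{cdh}(E,\sK_2) \to H^2_{cdh}(X,\sK_2) \to H^2_{cdh}(\wt{X},\sK_2) \to 0
\]
with the Deligne cohomology sequence from Lemma~\ref{lem:MVD}, and tracking torsion throughout. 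The torsion of $H^2_{cdh}(\wt{X},\sK_2) = CH^2(\wt{X}) = CH_0(\wt{X})$ maps isomorphically to the torsion of $\H^4_{\sD}(\wt{X},\Z(2))$ by the classical Roitman theorem (in its Deligne-cohomology formulation, e.g. via the albanese of $\wt{X}$). The torsion of $H^1_{cdh}(E,\sK_2)$ maps isomorphically (for $E$ seminormal) or split-surjectively (in general) onto the torsion of $\H^3_{\sD}(E,\Z(2))$ by Corollary~\ref{cor:SK1-curve*1}(i); and the curve $E$ arising as a strict normal crossing exceptional divisor is in particular seminormal (normal crossings are seminormal), so we get an isomorphism on torsion there. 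Likewise $H^1_{cdh}(\wt{X},\sK_2)_{tors}$ is handled by the smooth case. Applying the five-lemma-on-torsion (or rather a snake-lemma argument, since torsion is only left-exact) to the map of the two Mayer--Vietoris sequences then yields that $H^2_{cdh}(X,\sK_2)_{tors} \to \H^4_{\sD}(X,\Z(2))_{tors}$ is an isomorphism.

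\textbf{The main obstacle.} I expect the delicate point to be the torsion bookkeeping: torsion is not an exact functor, so one cannot simply invoke the five lemma on the torsion subgroups of the two long exact sequences. The careful way is to work instead with the divisible-quotient or with $\otimes\,\Q/\Z$ (using that $H^1_{cdh}(E,\sK_2)\otimes\Q/\Z = 0$ from Corollary~\ref{cor:SK1-curve*1}(ii), and similarly the vanishing of $\H^3_{\sD}(E,\Q/\Z(2))$-type terms), turn the torsion statement into a statement about the ${}\otimes\Q/\Z$ and the uniquely-divisible parts separately, and then chase. Concretely I would tensor both Mayer--Vietoris sequences with $\Q/\Z$, use the vanishing results to collapse the terms coming from $E$ and $S$, and deduce $H^2_{cdh}(X,\sK_2)\otimes\Q/\Z \xrightarrow{\cong} \H^4_{\sD}(X,\Q/\Z(2))$; combined with the surjectivity of $c_{0,X}$ on the torsion (which one gets from the surjectivity statements already in the diagram) this forces the isomorphism on torsion subgroups. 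A secondary, more routine obstacle is confirming that the hypothesis of Theorem~\ref{thm:Main22} really does hold unconditionally for every normal projective surface over $\C$ --- i.e.\ that no extra cohomological vanishing is needed when $d=2$ --- which I believe follows because the relevant obstruction lives in $H^1$ of a curve where $\sK_1 = \sO^*$ and Picard varieties of a resolution always surject onto those of the exceptional curve.
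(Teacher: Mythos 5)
Your fallback argument --- comparing the $cdh$ Mayer--Vietoris sequence for $\sK_2$ with the Deligne-cohomology sequence of Lemma~\ref{lem:MVD}, using Corollary~\ref{cor:SK1-curve*1} for the curve terms (divisibility, i.e. $H^1_{cdh}(E,\sK_2)\otimes{\Q}/{\Z}=0$, to make the torsion sequence exact where needed, and the torsion isomorphism for the seminormal curve $E$), the classical Roitman theorem for the smooth surface $\wt{X}$, and a torsion diagram chase --- is exactly the paper's proof, which imitates the Zariski-version argument of \cite{BPW}. So the operative part of your proposal is correct and takes the same route.

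Two corrections to the surrounding setup. First, the preliminary detour through Theorem~\ref{thm:Main22} is both unnecessary and unjustified: its hypothesis for $d=2$ amounts to the surjectivity of ${\rm Pic}(\wt{X})_{hom}\otimes\C^* \to {\rm Pic}(E)_{hom}\otimes\C^*$, and your supporting belief that the Picard variety of a resolution always surjects onto that of the exceptional curve is false --- contract a smooth curve of genus $\ge 1$ with negative self-intersection on a rational surface, so ${\rm Pic}^0(\wt{X})=0$ while ${\rm Pic}^0(E)\otimes\C^*\neq 0$. This is precisely why the paper's Theorem~\ref{thm:Surface*} needs the extra hypothesis $H^2_{cdh}(X,\sO_X)\cong H^2_{cdh}(\wt{X},\sO_{\wt{X}})$, which is what produces the surjectivity ~\eqref{eqn:Surface*21}; the torsion statement of Theorem~\ref{thm:RTC}, by contrast, uses no such input, so drop that step entirely. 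Second, the theorem carries no normality assumption, so $S=X_{\rm sing}$ may be a curve rather than a finite set of points; you must keep the terms $H^1_{cdh}(S,\sK_2)$ and $\H^{3}_{\sD}(S,\Z(2))$ in both long exact sequences (as in the paper's diagram), and they are handled by the same curve-level results, Proposition~\ref{prop:SK1-curve*} and Corollary~\ref{cor:SK1-curve*1}, applied to $S$. With these repairs your chase goes through verbatim.
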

\begin{proof}
This follows by imitating the proof of the Zariski version of the
Roitman torsion theorem in \cite{BPW}. 
We consider the resolution diagram ~\eqref{eqn:RSD} for $X$ and
use the following diagram.
\[
\xymatrix@C.3pc{
{\left\{ \begin{array}{l} 
{H^1_{cdh}\left(\wt{X}, \sK_2\right)}_{tors} \\ \ \ \ \ \ \ \ \ \oplus \\
{H^1_{cdh}\left(S, \sK_2\right)}_{tors}\end{array}\right\}} \ar[r] \ar[d] &
{H^1_{cdh}\left(E, \sK_2\right)}_{tors} \ar[r] \ar[d] &
{H^2_{cdh}\left(X, \sK_2\right)}_{tors} \ar[r] \ar[d]^{c_{0,X}} &
{H^2_{cdh}\left(\wt{X}, \sK_2\right)}_{tors} \ar[r] \ar[d]^{c_{0,\wt{X}}} & 0 
\\
{\left\{ \begin{array}{l} 
{\H^{3}_{\sD}\left(\wt{X}\right)}_{tors} \\ \ \ \ \ \ \ \ \ \oplus \\
{\H^{3}_{\sD}\left(S\right)}_{tors}\end{array}\right\}}
\ar[r] & {\H^{3}_{\sD}\left(E, \Z(2)\right)}_{tors} \ar[r] &
{\H^{4}_{\sD}\left(X, \Z(2)\right)}_{tors} \ar[r] &
{\H^{4}_{\sD}\left(\wt{X}, \Z(2)\right)}_{tors} \ar[r] & 0.}
\]
The exactness of the bottom row is already shown in {\sl loc. cit.}.
The top row is exact without taking the torsion part by the Mayer-Vietoris
property of the $cdh$ cohomology. Corollary~\ref{cor:SK1-curve*1} now shows
that the top row is exact except at ${H^1_{cdh}\left(E, \sK_2\right)}_{tors}$.
The first vertical map from left is surjective by  
Corollary~\ref{cor:SK1-curve*1} and [{\sl loc. cit.}, Proposition~8.4].
The second vertical map from the left is an isomorphism by
Corollary~\ref{cor:SK1-curve*1} since $E$ is seminormal. The last vertical
map on the right is an isomorphism by the Roitman torsion for smooth
surfaces. A diagram chase shows that $c_{0,X}$ is an isomorphism on the
torsion subgroups. 
\end{proof}

\subsection{Chow group of singular surfaces}\label{subsection:FDSU}
The following is our main result 
for the $cdh$ analogue of the Chow group zero-cycles on singular surfaces.

\begin{thm}\label{thm:Surface*}
Let $X$ be a projective surface over $\C$ and let $\wt{X} \to
X$ be a resolution of singularities of $X$ as in ~\eqref{eqn:RSD}.
Then \\
$(1)$ $H^2_{cdh}\left(X, \sO_X\right) \xrightarrow{\cong} 
H^2_{cdh}(\wt{X}, \sO_{\wt{X}}) \Rightarrow
{\rm ker}(c_{0,X}) \xrightarrow{\cong} {\rm ker}(c_{0,\wt{X}})$. \\
$(2)$ $H^2_{cdh}\left(X, \Omega^i_{X/{\C}}\right) \xrightarrow{\cong} 
H^2_{cdh}\left(\wt{X}, \Omega^i_{{\wt{X}}/{\C}}\right) \ {\rm for} \
i \le 1 \Rightarrow F^2KH_0(X) \xrightarrow{\cong} CH^2(\wt{X})$. 
\end{thm}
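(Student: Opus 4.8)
\textbf{Proof proposal for Theorem~\ref{thm:Surface*}.}

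The plan is to deduce both parts from the main comparison result Theorem~\ref{thm:Main22} (in the surface case $d = 2$), together with the Roitman-type statement Theorem~\ref{thm:RTC} and the cohomological comparisons from Section~\ref{section:ZCSS}. First I would check the hypothesis of Theorem~\ref{thm:Main22} for $d=2$: it asks that
\[
{H^{1}_{cdh}\left(\wt{X}, \sK^M_{1}\right)}_{hom} \otimes \C^*
\surj {H^{1}_{cdh}\left(E, \sK^M_{1}\right)}_{hom} \otimes \C^*.
\]
Since $\sK^M_1 = \sO^*$, the groups $H^1_{cdh}(-, \sO^*)$ are Picard groups, and ${H^1_{cdh}(\wt{X},\sO^*)}_{hom}$, ${H^1_{cdh}(E,\sO^*)}_{hom}$ are the degree-zero parts $\Pic^0$. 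The normalization $E^N\to E$ induces a surjection on $\Pic^0$ (the cokernel being a product of tori of the normalization crossings), and $\Pic^0(\wt{X})\surj \Pic^0(E^N)$ surjects because restriction to the exceptional divisor of a resolution is surjective on $\Pic^0$ for surfaces; composing gives the required surjectivity after $\otimes\,\C^*$. Hence for \emph{any} projective normal surface $X$, Theorem~\ref{thm:Main22} already yields
\[
F^2 KH^M_0(X) \xrightarrow{\ \cong\ } F^2 KH^M_0(\wt{X}) \cong CH^2(\wt{X}).
\]
The subtlety is that $X$ in the statement is an \emph{arbitrary} projective surface, not assumed normal; but $KH$ and $cdh$ cohomology are insensitive to seminormalization, and for a surface $F^2KH^M_0$ only sees smooth closed points, so one reduces to the normal (indeed to the normalization) case by the standard conductor-square / Mayer--Vietoris argument, noting that the conductor is at most a curve and contributes only in degrees $\le 1$. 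I should also record that $F^2KH^M_0(X) = F^2KH_0(X)$ here, by Proposition~\ref{prop:MQC} on $\wt X$ combined with Lemma~\ref{lem:MQC-top} and the fact that $K^M_i = K_i$ for $i\le 2$ over $\C$, so the Milnor vs.\ Quillen distinction evaporates on surfaces.

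For part $(1)$, the hypothesis $H^2_{cdh}(X,\sO_X)\xrightarrow{\cong} H^2_{cdh}(\wt X,\sO_{\wt X})$ should be fed into the diagram comparing the Chern classes $c_{0,X}\colon F^2KH_0(X)\to \H^4_{\sD}(X,\Z(2))$ and $c_{0,\wt X}$. From Corollary~\ref{cor:CDB2} (the Hodge/$cdh$ identification) and the defining exact sequence of Deligne cohomology, the kernel of $c_{0,X}$ is controlled by $H^2_{cdh}(X,\sO_X)/H^2(X,\Z)$ — more precisely $\ker(c_{0,X})$ sits in an exact sequence involving $\H^3_{\sD}(X,\Z(2))$ whose $\C/\Z$-part is $H^2_{cdh}(X,\sO_X)\otimes \C^*$ up to the Néron--Severi-type quotient, and these match for $X$ and $\wt X$ under the assumed isomorphism. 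Concretely I would run the Mayer--Vietoris comparison of the two exact sequences
\[
0 \to \ker(c_{0,X}) \to F^2KH_0(X) \xrightarrow{c_{0,X}} \H^4_{\sD}(X,\Z(2)),
\]
use that $F^2KH_0(X)\cong CH^2(\wt X) = F^2KH_0(\wt X)$ by the first paragraph, that $\H^4_{\sD}(X,\Z(2))\xrightarrow{\cong}\H^4_{\sD}(\wt X,\Z(2))$ follows from the surjectivity already noted plus the $\sO$-comparison (via the extension $0\to J^2_*(X)\to \H^4_{\sD}(X,\Z(2))\to H^4(X,\Z)\to 0$ and the $\C/\Z$-part being $H^2_{cdh}(X,\sO_X)\otimes\C^*$), and conclude $\ker(c_{0,X})\xrightarrow{\cong}\ker(c_{0,\wt X})$ by the five lemma. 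Theorem~\ref{thm:RTC} is what guarantees $c_{0,X}$ and $c_{0,\wt X}$ are isomorphisms on torsion, so there is no torsion ambiguity in this comparison.

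For part $(2)$, the extra hypotheses $H^2_{cdh}(X,\Omega^i_{X/\C})\xrightarrow{\cong}H^2_{cdh}(\wt X,\Omega^i_{\wt X/\C})$ for $i\le 1$ (combined with $(1)$, which covers $i=0$, or equivalently follows from $i=0$) force $c_{0,X}$ itself to be an isomorphism: indeed, by Corollary~\ref{cor:CDB2} the graded pieces of the Hodge filtration on $H^3(X,\C)$ are the $H^{3-i}_{cdh}(X,\Omega^i_{X/\C})$, and the vanishing/comparison in degrees $i\le 1$ together with the smooth surface case (Roitman plus the Bloch conjecture input hidden in $CH^2(\wt X)$) makes $J^2_*(X)\to J^2(\wt X)$ an isomorphism, and then $\ker(c_{0,X}) = \ker(c_{0,\wt X}) = 0$ by part $(1)$ since $\wt X$ being smooth has $c_{0,\wt X}$ an isomorphism precisely when the relevant $H^2(\wt X,\Omega^i)$ vanish — which is implied by the comparison plus the analogous statement on $X$... here I should be careful: what is actually needed is just that $c_{0,X}$ is surjective with trivial kernel, and surjectivity is automatic by construction (Proposition~\ref{prop:AlbKH}), so the content is $\ker(c_{0,X}) = 0$, which is part $(1)$ applied once we know $\ker(c_{0,\wt X}) = 0$. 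Thus $F^2KH_0(X)\xrightarrow{\cong}\H^4_{\sD}(X,\Z(2))\xrightarrow{\cong}\H^4_{\sD}(\wt X,\Z(2)) \cong CH^2(\wt X)$, the last isomorphism being the smooth case. \textbf{The main obstacle} I anticipate is verifying the surjectivity hypothesis of Theorem~\ref{thm:Main22} cleanly — i.e., that $\Pic^0(\wt X)\otimes\C^*\surj \Pic^0(E)\otimes\C^*$ — which requires knowing that $H^1_{cdh}(\wt X,\sO^*)\to H^1_{cdh}(E,\sO^*)$ is surjective on degree-zero parts for a strict normal crossing exceptional divisor on a smooth surface; this should follow from Corollary~\ref{cor:AlbKH*}/Lemma~\ref{lem:SNCD1} applied with $d=1$, but pinning down the identification of the relevant Deligne-cohomology quotient with $\Pic^0(E)$ via \eqref{eqn:strictN} is the delicate bookkeeping step, and separately one must handle the passage from general $X$ to its normalization, which is routine but needs the conductor-curve contributions to be confined to cohomological degree $\le 1$.
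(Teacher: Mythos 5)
There is a genuine gap, and it sits at the very first step on which everything else in your proposal rests. You claim that the surjectivity hypothesis of Theorem~\ref{thm:Main22} for $d=2$, namely ${H^{1}_{cdh}(\wt{X},\sK^M_{1})}_{hom}\otimes\C^*\surj{H^{1}_{cdh}(E,\sK^M_{1})}_{hom}\otimes\C^*$, holds \emph{unconditionally}, because ``$\Pic^0(\wt X)\surj\Pic^0(E^N)$ for the exceptional divisor of a resolution of a surface.'' This is false. A concrete counterexample: let $X\subset\P^3$ be a quartic with an ordinary triple point whose tangent cone is a smooth cubic; then $\wt X$ is a rational surface (project from the triple point), so $\Pic^0(\wt X)=0$, while $E$ is a smooth elliptic curve, so $\Pic^0(E)\neq 0$ and no surjection exists. (In this example $H^2_{Zar}(X,\sO_X)\cong\C$ while $H^2(\wt X,\sO_{\wt X})=0$, so the hypothesis of the theorem fails too — which is the whole point: the hypotheses $(1)$ and $(2)$ are exactly what produce the needed surjectivity, they are not decoration.) The paper's proof derives $\Pic^0(\wt X)\oplus\Pic^0(S)\surj\Pic^0(E)$ \emph{from} the assumption $H^2_{cdh}(X,\sO_X)\cong H^2_{cdh}(\wt X,\sO_{\wt X})$, via the $cdh$ Mayer--Vietoris sequence for $\sO$, Corollary~\ref{cor:SNC-main1} on $E$, and the identification of $H^1(-,\sO)$ with the Lie algebra of $\Pic^0$; your proposal has this logical dependency reversed. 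A smaller but symptomatic error in the same step: normalization induces $\Pic^0(E)\to\Pic^0(E^N)$ (surjective with a torus \emph{kernel}), not a map $\Pic^0(E^N)\to\Pic^0(E)$, so the composition you describe does not even go in the required direction.

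Two further points would still need repair even if the surjectivity were granted. First, Theorem~\ref{thm:Main22} assumes $X$ normal, and its proof uses this essentially (it needs $\dim X_{\rm sing}\le d-2$ to kill the $\H^{i}_{\sD}(X_{\rm sing},\Q(d))$ terms and to get $J^d(X)\cong J^d(\wt X)$); the theorem you are proving allows an arbitrary projective surface, where $S=X_{\rm sing}$ can be a curve. The paper does not reduce to the normalization; it reruns the Main22-style diagram keeping the $S$-contributions (quotients by $H^1_{cdh}(S,\sK_2)$ and $\H^3_{\sD}(S,\Z(2))$, and $\Pic^0(S)$ in the surjectivity), using Lemma~\ref{lem:curveSK1} and Proposition~\ref{prop:SK1-curve*} for the curve terms; your ``routine conductor-square reduction'' is not carried out and is not obviously compatible with the filtration-level statements. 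Second, in your argument for part $(1)$ you invoke $\H^4_{\sD}(X,\Z(2))\cong\H^4_{\sD}(\wt X,\Z(2))$ under the $\sO$-hypothesis alone; but $J^2(X)=\H^3_{cdh}(X,\Omega^{<2})/H^3(X,\Z)$ is a quotient of $H^2_{cdh}(X,\Omega^1_{X/\C})$ (Lemma~\ref{lem:folklore2}), which the $\sO$-hypothesis does not control — this comparison is available only in part $(2)$, which is precisely why part $(1)$ asserts only an isomorphism of kernels and the paper proves it by a diagram chase (surjectivity of $\ker(c_{0,X})\to\ker(c_{0,\wt X})$ from exactness of the $E/S$-column via Lemma~\ref{lem:curveSK1}, injectivity rationally using Theorem~\ref{thm:RTC}, Lemma~\ref{lem:Extra-Chern} and the $\Pic^0$-surjectivity). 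Your reductions to rational coefficients via Theorem~\ref{thm:RTC} and the Milnor-versus-Quillen identification on surfaces are fine, but the central mechanism of the proof is missing.
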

\begin{proof}
Since $\sK^M_{2, X} \cong \sK_{2, X}$, we can use $F^2KH^M_0(X)$ and
$F^2KH_0(X)$ interchangeably.

Assume first that $H^2_{cdh}\left(X, \sO_X\right) \xrightarrow{\cong} 
H^2_{cdh}(\wt{X}, \sO_{\wt{X}})$. 
The exact sequence
\[
H^1_{cdh}(\wt{X}, \sO) \oplus H^1_{cdh}\left(S, \sO\right)
\to H^1_{cdh}\left(E, \sO\right) \to H^2_{cdh}\left(X, \sO\right) 
\to H^2_{cdh}(\wt{X}, \sO) \to 0,
\]
Corollary~\ref{cor:SNC-main1} for $E$, \cite[Corollary~2.5, 
Proposition~2.6]{CJams} for $\wt{X}$ and $S$, and our assumption 
together imply that 

\begin{equation}\label{eqn:Surface*2}
H^1_{Zar}(\wt{X}, \sO) \oplus H^1_{Zar}\left(S, \sO\right)
\surj H^1_{Zar}\left(E, \sO\right).
\end{equation}
Since these groups are the Lie algebras of the associated Picard varieties,
we conclude that 
\begin{equation}\label{eqn:Surface*21}
{\rm Pic}^0(\wt{X}) \oplus {\rm Pic}^0(S) \surj {\rm Pic}^0(E).
\end{equation}
We now consider the following commutative diagram of 
exact sequences
\begin{equation}\label{eqn:Surface*213}
\xymatrix@C.5pc{
{{H^1_{cdh}\left(\wt{X}, \sK_1\right)}_{hom}} \otimes \C^*
\ar[r] \ar[d] &
{\frac{{H^1_{cdh}\left(E, \sK_1\right)}_{hom}}
{{H^1_{cdh}\left(S, \sK_1\right)}_{hom}}} \otimes \C^* \ar[d]^{\alpha} &  &  \\
{H^1_{cdh}\left(\wt{X}, \sK_2\right)} \ar[r]^{\beta} \ar[d] &
{\frac{H^1_{cdh}\left(E, \sK_2\right)}{H^1_{cdh}\left(S, \sK_2\right)}}
\ar[r] \ar@{->>}[d] &
{H^2_{cdh}\left(X, \sK_2\right)} \ar@{->>}[r] \ar@{->>}[d]^{c_{0,X}} &
{H^2_{cdh}\left(\wt{X}, \sK_2\right)} \ar@{->>}[d]^{c_{0, \wt{X}}} \\
{\ov{\H^{3}_{\sD}\left(\wt{X}, \Z(2)\right)}} \ar[r]^{\gamma} &
{\frac{\H^{3}_{\sD}\left(E, \Z(2)\right)}{\H^{3}_{\sD}\left(S, \Z(2)\right)}}
\ar[r] & 
\H^{4}_{\sD}\left(X, \Z(2)\right) \ar@{->>}[r] &
\H^{4}_{\sD}\left(\wt{X}, \Z(2)\right).}
\end{equation}
In this diagram, the second column from the left is exact as it is the quotient
of the exact sequences given by Lemma~\ref{lem:curveSK1}. This already
implies that ${\rm ker}(c_{0,X}) \surj {\rm ker}(c_{0,\wt{X}})$.
To show the injectivity, it suffices now to show the same with rational
coefficients because of Theorem~\ref{thm:RTC}. In this case,
the left lower vertical map is surjective by Lemma~\ref{lem:Extra-Chern}.
The injectivity now follows from a simple diagram chase and
~\eqref{eqn:Surface*21}. This proves the first assertion.

To prove the second part of the theorem, it suffices to show that
$c_{0,X}$ is an isomorphism.
Under the given assumption, it follows from Lemma~\ref{lem:folklore2} that 
$\H^3_{cdh}\left(X, \Omega^{<2}_{X/{\C}}\right)$ \\ 
$\xrightarrow{\cong} 
\H^3_{cdh}\left(\wt{X}, \Omega^{<2}_{{\wt{X}}/{\C}}\right)$.
The Mayer-Vietoris exact sequence for the singular cohomology
implies that $H^3(X, \Z) \surj H^3(\wt{X}, \Z)$ and
$H^4(X, \Z) \surj H^4(\wt{X}, \Z)$. It follows now from ~\eqref{eqn:CIJ}
that $\H^{4}_{\sD}\left(X, \Z(2)\right) \xrightarrow{\cong}
\H^{4}_{\sD}\left(\wt{X}, \Z(2)\right)$. In particular,
the map $\gamma$ is surjective. This also shows using Theorem~\ref{thm:RTC}
that we only have to show the isomorphism with rational coefficients.
Now, using a simple diagram chase and arguing as in the proof of 
Theorem~\ref{thm:Main22}, we only have to show that the top horizontal
map is surjective, which follows directly from ~\eqref{eqn:Surface*21}. 
\end{proof}

The following recovers \cite[Theorem~1.3]{KrishnaS} 
and \cite[Theorem~1.3]{Krishna1} by a different
and more conceptual approach.

\begin{cor}\label{cor:N-surface}
Let $X$ be a projective surface over $\C$ with a 
resolution of singularities $\wt{X}$ such that
$H^2_{Zar}\left(X, \sO_X\right) = 0$. Then the finite-dimensionality
of $CH^2(\wt{X})$ implies the same for $CH^2(X)$.
\end{cor}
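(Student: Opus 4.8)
The plan is to combine the $cdh$-transfer of Theorem~\ref{thm:Surface*}(1), the embedding of Theorem~\ref{thm:main1}, and the Roitman torsion theorem for singular surfaces. Recall that the albanese map $c_{0,X} : CH^2(X)_{{\rm deg} 0} \to J^2_*(X)$ is always surjective, and that by the Roitman torsion theorem for projective surfaces over $\C$ (\cite{ESV}, and the references in the introduction) it is an isomorphism on torsion subgroups; hence it will suffice to show that the Albanese kernel $T(X) := \ker(c_{0,X})$ is a torsion group, and I expect in fact to get $T(X) = 0$.

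First I would check that the standing hypothesis $H^2_{Zar}(X, \sO_X) = 0$ forces the hypothesis $H^2_{cdh}(X, \sO_X) \xrightarrow{\cong} H^2_{cdh}(\wt{X}, \sO_{\wt{X}})$ of Theorem~\ref{thm:Surface*}(1). Indeed, the surjection $H^2_{Zar}(X, \sO_X) \surj H^2_{cdh}(X, \sO_X)$ of \cite[Proposition~2.6]{CJams} gives $H^2_{cdh}(X, \sO_X) = 0$; and since $X$ is normal --- so that $f_*\sO_{\wt{X}} = \sO_X$, the sheaves $R^if_*\sO_{\wt{X}}$ for $i \ge 1$ are supported on the finite set $X_{\rm sing}$, and $R^2f_* = 0$ for dimension reasons --- the Leray spectral sequence for $f : \wt{X} \to X$ gives $H^2_{cdh}(\wt{X}, \sO_{\wt{X}}) = H^2_{Zar}(\wt{X}, \sO_{\wt{X}}) = 0$. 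So both sides vanish, Theorem~\ref{thm:Surface*}(1) applies, and $\ker(c_{0,X}) \xrightarrow{\cong} \ker(c_{0,\wt{X}})$ for the Chern class maps of Proposition~\ref{prop:AlbKH} on $H^2_{cdh}(-, \sK_2) = F^2KH_0(-)$. As $\wt{X}$ is smooth, $H^2_{cdh}(\wt{X}, \sK_2) = CH^2(\wt{X})$ and $\ker(c_{0,\wt{X}})$ is precisely the Albanese kernel of $CH^2(\wt{X})$, which is $0$ by the assumed finite-dimensionality; hence $\ker(c_{0,X}) = 0$ on $F^2KH_0(X)$.

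It then remains to carry this conclusion back from $F^2KH_0(X)$ to $CH^2(X)$. Since $X$ is a normal projective surface, its singularities are isolated, and $H^2_{Zar}(X, \sO_X) = 0$, so Theorem~\ref{thm:main1} gives, up to torsion, an inclusion $CH^2(X) \xrightarrow{\cong} F^2_BK_0(X) \inj F^2_BKH_0(X) = F^2KH_0(X)$. By the functoriality of the Chern classes into the modified and the ordinary Deligne cohomology (Proposition~\ref{prop:CKH}, together with the diagrams ~\eqref{eqn:AlbKA1} and ~\eqref{eqn:AlbKH1}), this inclusion sends $CH^2(X)_{{\rm deg} 0}$ into $F^2KH_0(X)_{{\rm deg} 0}$ and is compatible with the various $c_{0,X}$'s and with the natural surjection $J^2_*(X) \surj J^2(X)$. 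Thus if $z \in T(X)$, its image $\bar{z} \in F^2KH_0(X)_{{\rm deg} 0}$ has albanese class equal to the image of $c_{0,X}(z) = 0$ under $J^2_*(X) \to J^2(X)$, so $\bar{z}$ lies in the kernel computed above and $\bar{z} = 0$; since the inclusion is only up to torsion, this shows $z$ is torsion. Hence $T(X)$ is torsion, and the Roitman theorem forces $T(X) = 0$, i.e.\ $CH^2(X)$ is finite-dimensional. The step I expect to be the main obstacle is the compatibility invoked in this last paragraph: a priori, as warned in Remark~\ref{remk:AbKH}, there is no map $CH^2(X) \to F^2_BKH_0(X)$ factoring the classical albanese map, and one must verify that for a surface the three groups $F^2K_0(X)$, $F^2_\gamma K_0(X)$, $F^2_BK_0(X)$ agree up to torsion (Corollary~\ref{cor:compare}), which is exactly what neutralizes that warning and makes the diagram chase valid.
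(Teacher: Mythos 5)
There is a genuine gap: you have silently added the hypothesis that $X$ is normal, which is not part of the statement. The corollary is asserted for an arbitrary projective surface over $\C$ with $H^2_{Zar}(X,\sO_X)=0$ (this is why the paper says it recovers both \cite[Theorem~1.3]{KrishnaS} and \cite[Theorem~1.3]{Krishna1}), whereas your argument invokes normality twice: once in the Leray computation of $H^2(\wt{X},\sO_{\wt{X}})$ (harmless — that surjectivity $H^2_{Zar}(X,\sO_X)\surj H^2_{Zar}(\wt{X},\sO_{\wt{X}})$ holds for any surface resolution, since $R^1f_*\sO_{\wt{X}}$ is supported on finitely many points and the cokernel of $\sO_X\to f_*\sO_{\wt{X}}$ lives on a curve), and once, decisively, in the appeal to Theorem~\ref{thm:main1} and Corollary~\ref{cor:compare}, both of which require $X$ normal with isolated singularities. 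For a general projective surface (singular locus a curve, non-seminormal points, etc.) there is no available rational injection $CH^2(X)\inj F^2KH_0(X)$: by Lemma~\ref{lem:K-fiber**} its obstruction is $\wt{K}^{(2)}_0(X)\cong \ker\bigl(H^2_{Zar}(X,\Omega^1_{X/\C})\to H^2_{cdh}(X,\Omega^1_{X/\C})\bigr)$ (using $H^2(X,\sO_X)=0$), and this group is typically nonzero; it is exactly the "additive part" $\ker\bigl(J^2_*(X)\to J^2(X)\bigr)$. So your strategy — deduce $T(X)=0$ from injectivity of $CH^2(X)_\Q\to F^2KH_0(X)_\Q$ — cannot be carried out beyond the normal case.

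The paper's own proof is designed precisely to avoid this. It never uses Theorem~\ref{thm:main1}: instead it uses Levine's Bloch formula $H^2_{Zar}(X,\sK_2)\cong F^2K_0(X)\cong CH^2(X)$, valid for arbitrary surface singularities, together with Corollary~\ref{cor:GRR*} and ~\eqref{eqn:filt} to see that $\ker\bigl(CH^2(X)\to F^2KH_0(X)\bigr)\subseteq \wt{K}^{(2)}_0(X)$ rationally, and then Lemma~\ref{lem:K-fiber**} plus $H^2(X,\sO_X)=0$ to identify this kernel with $\ker\bigl(\H^4_{\sD^*}(X,\Z(2))\to\H^4_{\sD}(X,\Z(2))\bigr)$; combined with the isomorphism $F^2KH_0(X)\cong\H^4_{\sD}(X,\Z(2))$ coming from Theorem~\ref{thm:Surface*} (this part of your argument agrees with the paper), a diagram chase gives $CH^2(X)_\Q\cong\H^4_{\sD^*}(X,\Q(2))$, i.e.\ finite-dimensionality, with torsion handled by Roitman as you do. Restricted to normal surfaces (where normality does force isolated singularities), your proof is essentially correct — including your careful handling of Remark~\ref{remk:AbKH} via Corollary~\ref{cor:compare} — but it proves a strictly weaker statement than the corollary; to repair it in general you would need to replace the injectivity step by the paper's identification of the two kernels.
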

\begin{proof}
By the Roitman torsion theorem ({\sl cf.} \cite{BPW}, \cite{BiswasS}),
it suffices to prove the result rationally.
So we assume all the groups to be tensored with $\Q$.
The surjectivity $H^2_{Zar}\left(X, \sO_X\right) \surj
H^2_{Zar}(\wt{X}, \sO_{\wt{X}})$ and our assumption imply from
Theorem~\ref{thm:Surface*} that $F^2KH_0(X) \xrightarrow{\cong}
\H^{4}_{\sD}\left(X, \Z(2)\right)$. Thus we need to show that
\begin{equation}\label{eqn:N-surface12}
{\rm ker}\left(CH^2(X) \to F^2KH_0(X)\right) \xrightarrow{\cong}
{\rm ker}\left(\H^{4}_{\sD^*}\left(X, \Z(2)\right) \to
\H^{4}_{\sD}\left(X, \Z(2)\right)\right)
\end{equation}
\[
\hspace*{7cm}
 \cong 
{\rm ker}\left(H^2_{Zar}(X, \Omega^1_{X/{\C}})
\to H^2_{cdh}(X, \Omega^1_{X/{\C}})\right),
\]
where the second isomorphism follows from our assumption, ~\eqref{eqn:CIJ},
~\eqref{eqn:InterJ} and ~\eqref{eqn:GIJ}.

However, the Bloch's formula $H^2_{Zar}(X, \sK_2) \cong F^2K_0(X) \cong
CH^2(X)$ of \cite{Levine3}, Corollary~\ref{cor:GRR*} and ~\eqref{eqn:filt}
imply that ${\rm ker}\left(CH^2(X) \to F^2KH_0(X)\right) \subseteq
\wt{K}^{(2)}_0(X)$. 

The corollary now follows from the exact sequence
\[
\Omega^1_{\C} \otimes H^2(X, \sO_X) \to H^2(X, \Omega^1_X) \to
H^2(X, \Omega^1_{X/{\C}}) \to 0,
\]
in the Zariski and the $cdh$ topology, the surjection
$H^2_{Zar}(X, \sO_X) \surj H^2_{cdh}(X, \sO_X)$ and 
Lemma~\ref{lem:K-fiber**}. 
\end{proof}

\begin{cor}\label{cor:BAH}
Let $X$ be a strict normal crossing divisor on a smooth projective
threefold such that
$H^2_{cdh}\left(X, \sO_X\right) = 0$. Then the finite-dimensionality
of $CH^2(\wt{X})$ implies that the maps 
\begin{equation}\label{eqn:BAH1}
\xymatrix@C.8pc{
H^2_{Zar}\left(X, \sK_2\right) \ar[r] \ar[d] & CH^2(X) \ar[d] \ar[r] &
{\H^{4}_{\sD^*}\left(X, \Z(2)\right)} \ar[d] \\ 
H^2_{cdh}\left(X, \sK_2\right) \ar[r] & F^dKH_0(X) \ar[r] &
{\H^{4}_{\sD}\left(X, \Z(2)\right)}}
\end{equation}
are all isomorphisms.
\end{cor}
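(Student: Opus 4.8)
The plan is to read the corollary off from Theorem~\ref{thm:Surface*}(1), Proposition~\ref{prop:AlbKH}, Corollary~\ref{cor:SNC-main1}, Lemmas~\ref{lem:K-fiber**}, ~\ref{lem:MDeligneC3} and ~\ref{lem:folklore2}, and the curve computations of Section~\ref{section:K-curves}, by checking directly that four of the six arrows of ~\eqref{eqn:BAH1} are isomorphisms and letting the commutativity of the square carry the other two. Throughout, $\dim X = 2$, so $\sK^M_{2,X} = \sK_{2,X}$ and we write $F^2KH_0(X)$ for $F^2KH^M_0(X)$; moreover the Bloch formula for surfaces (Levine; {\sl cf.} the proof of Corollary~\ref{cor:N-surface}) gives unconditionally that the top arrow $H^2_{Zar}(X,\sK_2) \to CH^2(X)$ is an isomorphism. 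Let $f\colon \wt{X} = X^N \to X$ be the normalization, so $\wt{X}$ is the disjoint union of the (smooth projective surface) components of $X$; let $S = X_{\rm sing}$ be the reduced double locus, a normal crossing --- hence seminormal --- curve, and $\wt{S} = S\times_X\wt{X}$, again a disjoint union of seminormal curves. Since $X$ is a strict normal crossing divisor on a smooth threefold it is seminormal, and Corollary~\ref{cor:SNC-main1} applies to $X$ (with ``$E$'' $= X$, $d = 2$): the maps $H^2_{Zar}(X,\Omega^i_{X/F}) \to H^2_{cdh}(X,\Omega^i_{X/F})$ are isomorphisms for every subfield $F\subseteq\C$ and all $i\ge 0$; in particular $H^2_{Zar}(X,\sO_X)\cong H^2_{cdh}(X,\sO_X) = 0$.

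I would first dispose of the bottom-right arrow $c_{0,X}\colon F^2KH_0(X)\to\H^4_{\sD}(X,\Z(2))$. By Mayer--Vietoris for the square ~\eqref{eqn:RSD} in the $cdh$ topology, with $H^2_{cdh}(S,\sO_S) = H^2_{cdh}(\wt{S},\sO_{\wt{S}}) = 0$ (cohomological dimension $1$), the map $H^2_{cdh}(X,\sO_X)\to H^2_{cdh}(\wt{X},\sO_{\wt{X}})$ is onto, so the hypothesis $H^2_{cdh}(X,\sO_X) = 0$ forces $H^2_{cdh}(\wt{X},\sO_{\wt{X}}) = 0$ as well and the hypothesis of Theorem~\ref{thm:Surface*}(1) holds. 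That theorem then gives $\ker(c_{0,X})\xrightarrow{\cong}\ker(c_{0,\wt{X}})$; and $c_{0,\wt{X}}\colon F^2KH_0(\wt{X}) = CH^2(\wt{X})\to\H^4_{\sD}(\wt{X},\Z(2))$ is an isomorphism because $\wt{X}$ is smooth and $CH^2(\wt{X})$ is finite-dimensional, so $c_{0,X}$ is injective; it is surjective by Proposition~\ref{prop:AlbKH}, hence bijective. Next, for the right vertical arrow $\H^4_{\sD^*}(X,\Z(2))\to\H^4_{\sD}(X,\Z(2))$ I would run the ladder of Lemma~\ref{lem:MDeligneC3} with $q = 2$: Lemma~\ref{lem:folklore2}(ii) kills $\H^4(X,\Omega^{<2}_{X/{\C}})$ in both topologies, and Lemma~\ref{lem:folklore2}(i) together with $H^2(X,\sO_X) = 0$ identifies $\H^3(X,\Omega^{<2}_{X/{\C}})$ with $H^2(X,\Omega^1_{X/{\C}})$ in both topologies; since $H^2_{Zar}(X,\Omega^1_{X/{\C}})\to H^2_{cdh}(X,\Omega^1_{X/{\C}})$ is an isomorphism (Corollary~\ref{cor:SNC-main1}), the map $\alpha^2_3$ of Lemma~\ref{lem:MDeligneC3} is an isomorphism and the five lemma gives that the right vertical arrow is an isomorphism.

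The remaining input --- and the step I expect to be the main obstacle, since it is not one of the stated results --- is the left vertical arrow $H^2_{Zar}(X,\sK_2)\to H^2_{cdh}(X,\sK_2)$; I would prove it is an isomorphism by the surface analogue of the argument for Proposition~\ref{prop:SK1-curve*}. As $X$ is seminormal the normalization is unramified ($\Omega^1_{\wt{X}/X} = 0$), so the Geller--Weibel-type description of the relevant double relative $K$-theory of the conductor square makes the conductor sequence of $\sK_2$-sheaves $0\to\sK_{2,X}\to f_*\sK_{2,\wt{X}}\oplus\sK_{2,S}\to f_*\sK_{2,\wt{S}}\to 0$ exact in the Zariski and in the $cdh$ topology; taking cohomology, using $H^2(S,\sK_2) = H^2(\wt{S},\sK_2) = 0$, and comparing, one reduces to $H^1(\wt{X},\sK_2)$ (the same, as $\wt{X}$ is smooth) and to $H^1_{Zar}(S,\sK_2)\cong H^1_{cdh}(S,\sK_2)$, $H^1_{Zar}(\wt{S},\sK_2)\cong H^1_{cdh}(\wt{S},\sK_2)$ (Proposition~\ref{prop:SK1-curve*}, as $S,\wt{S}$ are seminormal curves), which yields the claim. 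To finish: the bottom-left arrow $H^2_{cdh}(X,\sK_2)\surj F^2KH_0(X)$ is surjective by definition, and $\wt{K}^{(2)}_0(X) = 0$ --- this is immediate from Lemma~\ref{lem:K-fiber**} and Corollary~\ref{cor:SNC-main1} taken with $F = \Q$, since then both the numerator and the denominator of the middle term of the sequence in Lemma~\ref{lem:K-fiber**} become the same in the two topologies --- so $\ker\bigl(CH^2(X)\to F^2KH_0(X)\bigr) = 0$ exactly as in the proof of Corollary~\ref{cor:N-surface} (the torsion being absorbed by Theorem~\ref{thm:RTC} and its classical counterpart). Combined with the isomorphisms just established, this makes the bottom-left arrow an isomorphism (equivalently the middle vertical arrow $CH^2(X)\to F^2KH_0(X)$ of ~\eqref{eqn:BAH1}, which does make sense here because of the Bloch formula even though Remark~\ref{remk:AbKH} warns it need not in general); and then the commutativity of ~\eqref{eqn:BAH1} forces the top-right arrow $CH^2(X)\to\H^4_{\sD^*}(X,\Z(2))$ to be an isomorphism too, completing the proof.
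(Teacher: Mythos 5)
Your architecture differs from the paper's in a real way, so let me first record the comparison and then the gap. The paper proves the two top horizontal maps directly (Levine's formula \cite{Levine3}, and \cite[Theorem~1.3]{Krishna1} after converting the hypothesis into $H^2_{Zar}(X,\sO_X)=0$ via Corollary~\ref{cor:SNC-main1}), gets \emph{both} bottom horizontal maps from Theorem~\ref{thm:Surface*} (there $c_{0,X}$ is the map out of $H^2_{cdh}(X,\sK_2)$, so finite-dimensionality of $CH^2(\wt{X})$ makes the composite $H^2_{cdh}(X,\sK_2)\to \H^4_{\sD}(X,\Z(2))$ injective and, with Proposition~\ref{prop:AlbKH}, bijective), quotes \cite[Corollary~6.2]{Krishna1} for the right vertical map, and deduces the left and middle verticals by the commutativity chase. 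Your handling of the bottom-right arrow is essentially the paper's; your proof of the right vertical arrow via the ladder of Lemma~\ref{lem:MDeligneC3} together with Lemma~\ref{lem:folklore2} and Corollary~\ref{cor:SNC-main1} is a correct, purely internal replacement for the citation; and the vanishing $\wt{K}^{(2)}_0(X)=0$ from Lemma~\ref{lem:K-fiber**} and Corollary~\ref{cor:SNC-main1} with $F=\Q$, plus the reduction of torsion via Theorem~\ref{thm:RTC} and the classical Roitman theorem, is fine.

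The genuine gap is in your direct proof that the left vertical map $H^2_{Zar}(X,\sK_2)\to H^2_{cdh}(X,\sK_2)$ is an isomorphism. First, the conductor sequence of sheaves is only \emph{right} exact: Geller--Weibel \cite{GellerW} controls the double relative $\sK_1$, hence the cokernel of $\sK_{2,(X,S)}\to f_*\sK_{2,(\wt{X},\wt{S})}$, but says nothing about its kernel (the double relative $K_2$ of the conductor square need not vanish), so the leading ``$0\to$'' is unjustified; to extract the four-term Zariski cohomology sequence you are chasing, you need a separate argument, e.g.\ that the kernel sheaf is supported on the one-dimensional locus $S$, so that its $H^2$ and $H^3$ vanish. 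Second, and more seriously, the four-lemma chase requires surjectivity of $H^1_{Zar}(\wt{X},\sK_2)\to H^1_{cdh}(\wt{X},\sK_2)$ for the smooth surface $\wt{X}$, which you assert with ``the same, as $\wt{X}$ is smooth''. Nothing in the paper supplies a Zariski-versus-cdh comparison for $\sK$-sheaves on smooth schemes in degree $d-1$ (Lemma~\ref{lem:MQC-top} compares Milnor with Quillen sheaves, not topologies); the statement is true, but proving it needs an external input such as the identification $\sK_2\cong\sK^M_2$ together with the fact that homotopy invariant sheaves with transfers have the same Zariski, Nisnevich and cdh cohomology on smooth schemes, which you neither name nor verify. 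Finally, the detour is unnecessary: reading Theorem~\ref{thm:Surface*}(1) as the paper does already yields the bottom-left isomorphism, and alternatively the unconditional surjectivity of $CH^2(X)\to \H^4_{\sD^*}(X,\Z(2))$ from ~\eqref{eqn:AlbKA1}, combined with your right-vertical and bottom-right isomorphisms and your rational injectivity statement, forces the middle vertical to be an isomorphism; the left vertical and the top-right arrow then follow by exactly the commutativity chase the paper uses, with no conductor-square argument at all.
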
  
\begin{proof}
The first horizontal map on the top row is an isomorphism by the main result 
of \cite{Levine3}. Since $X$ is a strict normal crossing divisor, it
follows from our assumption and Corollary~\ref{cor:SNC-main1} that
$H^2_{Zar}\left(X, \sO_X\right) = H^2_{cdh}\left(X, \sO_X\right) = 0$. 
Hence, the second horizontal map on the top row is an
isomorphism by \cite[Theorem~1.3]{Krishna1}.
It follows from Theorem~\ref{thm:Surface*}
that both the horizontal maps on the bottom row are isomorphisms.
Since $X$ is seminormal, the right vertical map is an isomorphism
by \cite[Corollary~6.2]{Krishna1}. Hence all other vertical maps are 
also isomorphisms.
\end{proof}  

\section{Finite-dimensionality for normal threefolds}\label{section:3fold}
In this section, we use Theorem~\ref{thm:Main22} to deduce a
conditional result on the finite-dimensionality of the Chow group of
zero-cycles on normal threefolds. So let $X$ be a normal and projective  
threefold over $\C$ and consider a resolution of singularities diagram
for $X$ as in ~\eqref{eqn:RSD}.
Since $E$ is a surface, there is an isomorphism
$H^2_{Zar}\left(E, \sK_{2, X}\right) \cong CH^2(X)$ by the main result of
\cite{Levine3}. Since such an isomorphism also holds for $\wt{X}$,
there is a natural map $CH^2(\wt{X}) \to CH^2(E)$ which in turn induces the
restriction map 
\begin{equation}\label{eqn:3fold*}
A^2(\wt{X}) = CH^2(\wt{X})_{alg} 
\inj {CH^2(\wt{X})}_{hom} \to A^2(E) = {CH^2(E)}_{\rm deg 0}.
\end{equation}    
\begin{lem}\label{lem:GHC0}
Assume that $GHC(1,3, \wt{X})$ and $GBC(E^N)$
hold. Suppose that $H^3_{cdh}\left(X, \Omega^i_{X/{\C}}\right) = 0$
for $i \le 1$. Then the map $A^2(\wt{X}) \to A^2(E)$ is surjective.
\end{lem}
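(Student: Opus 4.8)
The plan is to relate the surjectivity of $A^2(\wt{X}) \to A^2(E)$ to the two algebraic-cycle conjectures via the Abel-Jacobi maps into the intermediate Jacobians. First I would recall, from Corollary~\ref{cor:GHC*1} applied to the smooth threefold $\wt{X}$ (which has $H^3_{Zar}(\wt{X},\Omega^i_{\wt{X}}) = 0$ for $i\le 1$ — this follows from the hypothesis $H^3_{cdh}(X,\Omega^i_{X/{\C}})=0$ for $i\le 1$ together with the Mayer-Vietoris sequence for the resolution diagram ~\eqref{eqn:RSD}, since $S$ and $E$ contribute nothing in the relevant degree), that $GHC(1,3,\wt{X})$ is equivalent to $J^2_a(\wt{X}) = J^2(\wt{X})$; that is, the Abel-Jacobi image of $A^2(\wt{X})$ is all of $J^2(\wt{X})$. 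Dually, since $E$ is a projective surface, $GBC(E^N)$ says that the albanese map $c_{0,E^N}\colon CH^2(E^N)_{\deg 0} \to J^2(E^N)$ is an isomorphism, and via the normalization $E^N \to E$ and the descent/Mayer-Vietoris comparisons of Section~\ref{section:KDC} this controls $A^2(E)$ through its image under $c_{0,E}$.

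The key steps, in order, would be: (1) Set up the commutative square relating the restriction map $A^2(\wt{X}) \to A^2(E)$ to the corresponding map on intermediate Jacobians $J^2(\wt{X}) \to J^2_*(E)$ (or $J^2(E)$), using the functoriality of the Chern classes from Proposition~\ref{prop:CKH} and the Bloch-formula identifications $H^2_{cdh}(\wt{X},\sK^M_2)\cong CH^2(\wt{X})$, $H^2_{cdh}(E,\sK^M_2)\cong CH^2(E)$ from \cite{Levine3} and Proposition~\ref{prop:MQC}. (2) Show that the vertical map $J^2(\wt{X}) \to J^2(E)$ (equivalently $J^2(\wt{X}) \to J^2(E^N)$, since $E^N\to E$ induces an isomorphism on the relevant Hodge-theoretic pieces by the weight argument in the proof of Lemma~\ref{lem:SNCD}) is surjective — here I would use the surjectivity $H^3(E,\Z) \surj H^3(\wt{X}\text{-type data})$ from Mayer-Vietoris for singular cohomology and the degeneration in Corollary~\ref{cor:CDB1}, in a manner parallel to the $\gamma$-surjectivity step in the proof of Theorem~\ref{thm:Main22}. (3) Combine: $A^2(\wt{X}) \twoheadrightarrow J^2(\wt{X})$ (by $GHC(1,3,\wt{X})$), $J^2(\wt{X}) \twoheadrightarrow J^2(E^N) \cong J^2(E)$, and on the other side $A^2(E^N) \xrightarrow{\cong} J^2(E^N)$ (by $GBC(E^N)$), so that the cycle-theoretic restriction lands surjectively onto $A^2(E)$ after chasing through the normalization comparison $A^2(E^N) \to A^2(E)$, whose surjectivity reduces to the Mayer-Vietoris sequence for the conductor square as in Theorem~\ref{thm:Main22}.

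The main obstacle I expect is step (3): matching the \emph{algebraically trivial} subgroup $A^2(E) = CH^2(E)_{\deg 0}$ with what is detected by the albanese/Abel-Jacobi maps on the (possibly non-seminormal, non-normal) surface $E$, and in particular passing cleanly between $E$ and its normalization $E^N$. One must verify that a cycle class in $A^2(\wt{X})$ whose image in $J^2(E)$ is prescribed can actually be realized by an \emph{algebraically trivial} zero-cycle on $E$, not merely a homologically trivial one — this is where $GHC(1,3,\wt{X})$ (which gives the abelian subvariety $J^2_a(\wt{X}) = J^2(\wt{X})$, hence genuinely algebraic families of cycles) is essential, and the argument must propagate algebraic triviality along $\wt{X} \to E^N \to E$. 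The seminormality of $E$ (it is a strict normal crossing divisor, hence seminormal, as in Corollary~\ref{cor:DBSNC}) together with Corollary~\ref{cor:N-surface}-type reductions should make the $E$ versus $E^N$ comparison manageable, but the triviality-propagation is the delicate point and likely requires the universal regularity of $J^2_a(\wt{X})$ rather than just the bijectivity of an Abel-Jacobi map on points.
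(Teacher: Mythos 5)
Your overall skeleton---squeeze the restriction map between Abel--Jacobi/albanese maps, using Corollary~\ref{cor:GHC*1} to get $A^2(\wt{X})\surj J^2(\wt{X})$, a Hodge-theoretic surjection $J^2(\wt{X})\surj J^2(E)$, and the Bloch conjecture to identify $A^2(E)$ with a Jacobian---is the paper's strategy, and your derivation of $H^3_{Zar}(\wt{X},\Omega^i_{\wt{X}/\C})=0$ for $i\le 1$ from the $cdh$ Mayer--Vietoris sequence is fine. But two steps are genuinely missing. First, $GBC(E^N)$ is a \emph{conditional} statement: it yields finite-dimensionality of $CH^2(E^N)$ only if $H^2(E^N,\sO_{E^N})=0$, and you never verify this hypothesis. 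The paper extracts it from the assumption of the lemma: $H^3_{Zar}(\wt{X},\Omega^1_{\wt{X}/\C})=0$ forces $H^2_{Zar}(\wt{X},\sO_{\wt{X}})=0$ by Hodge theory, and Corollary~\ref{cor:SNC-main1} (as in the proof of Corollary~\ref{cor:SNC-main1*}) gives $H^2_{Zar}(\wt{X},\sO_{\wt{X}})\surj H^2_{Zar}(E,\sO_E)$, hence $H^2(E,\sO_E)=0$. Without this vanishing neither $GBC(E^N)$ nor the singular-surface results you would need for $E$ are applicable.

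Second, your passage from $E^N$ to the normal crossing surface $E$ does not work as described. The claim $J^2(E)\cong J^2(E^N)$ is not what the weight argument in Lemma~\ref{lem:SNCD} gives: that argument concerns the top cohomology $H^{2d}$, whereas $H^3(E)\to H^3(E^N)$ is in general not an isomorphism for an snc surface (lower-weight classes can occur in $H^3(E)$, giving $J^2(E)$ a torus part), and there is no natural map $A^2(E^N)\to A^2(E)$ to ``chase through''. What is actually needed is an identification $A^2(E)\cong J^2(E)$ for the singular surface $E$ itself, and the paper gets it from its singular-surface theory: \cite[Theorem~1.3]{Krishna1} (in the spirit of Corollary~\ref{cor:N-surface}) shows that $H^2(E,\sO_E)=0$ together with finite-dimensionality of $CH^2(E^N)$ gives $A^2(E)\xrightarrow{\cong}J^2_*(E)$, and \cite[Corollary~6.5]{Krishna1} shows that $J^2_*(E)$ is semi-abelian for snc $E$, so $J^2_*(E)=J^2(E)$. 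With $A^2(E)\cong J^2(E)$ in hand, the surjection $J^2(\wt{X})\to J^2(E)$ (obtained from ~\eqref{eqn:CIJ} and Corollary~\ref{cor:SNC-main1*}, essentially your step (2)) and $J^2_a(\wt{X})=J^2(\wt{X})$ from Corollary~\ref{cor:GHC*1} finish the proof; the delicate ``propagation of algebraic triviality'' you flag in step (3) is thereby bypassed, since surjectivity onto $J^2(E)$ is literally surjectivity onto $A^2(E)$.
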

\begin{proof}
As in the proof of Corollary~\ref{cor:SNC-main1*}, it follows from our
assumption and Corollary~\ref{cor:SNC-main1} that the map 
$H^2_{Zar}(\wt{X}, \sO_{\wt{X}}) \surj 
H^2_{Zar}\left(E, \sO_{E}\right)$. 
On the other hand, the surjectivity
$H^3_{Zar}\left(X, \Omega^1_{X/{\C}}\right) \surj
H^3_{Zar}\left(\wt{X}, \Omega^1_{{\wt{X}}/{\C}}\right)$ 
and the Hodge theory imply that 
$H^2_{Zar}(\wt{X}, \sO_{\wt{X}}) = 0$.
We conclude that $H^2_{Zar}\left(E, \sO_{E}\right) = 0$. Now, the
Bloch's conjecture $GBC(E^N)$ and \cite[Theorem~1.3]{Krishna1} imply
that the albanese map $A^2(E) \to J^2_*(E)$ is an isomorphism. 
Since $E$ is a strict normal crossing divisor, it follows from 
\cite[Corollary~6.5]{Krishna1} that $J^2_*(E)$ is a semi-abelian variety
and hence $J^2_*(E) \cong J^2(E)$. In particular, we get $A^2(E) 
\xrightarrow{\cong} J^2(E)$.

It also follows from ~\eqref{eqn:CIJ} and Corollary~\ref{cor:SNC-main1*}
that the morphism of complex algebraic groups $J^2(\wt{X}) \to J^2(E)$ is 
surjective. The lemma now follows from Corollary~\ref{cor:GHC*1}.
\end{proof}   

\begin{thm}\label{thm:3fold-main}
Let $X$ be a normal and projective threefold over $\C$ such that
$H^3_{cdh}\left(X, \Omega^i_{X/{\C}}\right) = 0$ for $0 \le i \le 1$.
Assume $GBC(2)$ and $GHC(1,3, \wt{X})$ for some resolution
of singularities $\wt{X}$ as in ~\eqref{eqn:RSD}. Then
$F^3KH^M_0(X)_{\Q} \xrightarrow{\cong} CH^3(\wt{X})_{\Q}$.
\end{thm}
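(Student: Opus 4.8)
The plan is to derive Theorem~\ref{thm:3fold-main} from Theorem~\ref{thm:Main22} by verifying its hypothesis, namely that
\[
{H^{2}_{cdh}\left(\wt{X}, \sK^M_{2}\right)}_{hom} \otimes \C^{*}
\surj {H^{2}_{cdh}\left(E, \sK^M_{2}\right)}_{hom} \otimes \C^{*},
\]
under the assumptions $GBC(2)$ and $GHC(1,3,\wt{X})$ together with the cohomological vanishing $H^3_{cdh}(X,\Omega^i_{X/\C})=0$ for $i\le 1$. First I would record that, since $\dim E = 2$, the main theorem of Levine gives $H^2_{cdh}(E,\sK^M_2)\cong H^2_{cdh}(E,\sK_2)\cong CH^2(E)$ (and likewise on $\wt X$), so the ``$\hom$'' subgroup is precisely the degree-zero subgroup $A^2(E)$, and the restriction map of ~\eqref{eqn:3fold*} on ``$\hom$'' parts is exactly the map $A^2(\wt X)\to A^2(E)$ studied in Lemma~\ref{lem:GHC0}. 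By that lemma, our hypotheses force $A^2(\wt X)\twoheadrightarrow A^2(E)$ to be surjective. Tensoring a surjection with $\C^*$ stays surjective, so the hypothesis of Theorem~\ref{thm:Main22} holds, and we conclude $F^{3}KH^M_0(X)\xrightarrow{\cong} F^{3}KH^M_0(\wt X)\cong CH^3(\wt X)$.

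The one subtlety is that Lemma~\ref{lem:GHC0} is stated with the assumptions ``$GHC(1,3,\wt X)$ and $GBC(E^N)$'', whereas the theorem assumes ``$GBC(2)$''. So the key intermediate step is to observe that $GBC(2)$ implies $GBC(E^N)$: indeed $E^N$ is a disjoint union of smooth projective surfaces (the normalization of the strict normal crossing divisor $E$ is the disjoint union of its smooth components $E_1,\dots,E_r$), and $GBC(2)$ is by definition the assertion that the generalized Bloch conjecture holds for all smooth projective varieties of dimension $\le 2$, in particular for each $E_j$. (The vanishing condition in $GBC$ for surfaces is $H^2_{Zar}(E_j,\sO_{E_j})=0$, which is exactly what gets checked in the proof of Lemma~\ref{lem:GHC0} via the surjection $H^2_{Zar}(\wt X,\sO_{\wt X})\twoheadrightarrow H^2_{Zar}(E,\sO_E)$ and the vanishing $H^2_{Zar}(\wt X,\sO_{\wt X})=0$ forced by the hypothesis.) With this replacement, Lemma~\ref{lem:GHC0} applies verbatim.

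Thus the proof is short: (i) unwind the identifications $H^2_{cdh}(-,\sK^M_2)_{\hom}=A^2(-)$ on both $\wt X$ and $E$; (ii) note $GBC(2)\Rightarrow GBC(E^N)$; (iii) invoke Lemma~\ref{lem:GHC0} to get surjectivity of $A^2(\wt X)\to A^2(E)$, hence of $A^2(\wt X)\otimes\C^*\to A^2(E)\otimes\C^*$; (iv) apply Theorem~\ref{thm:Main22} with $d=3$. I do not expect a genuine obstacle here — essentially all the real work has been front-loaded into Theorem~\ref{thm:Main22} and Lemma~\ref{lem:GHC0}. The only thing to be careful about is that Theorem~\ref{thm:Main22} produces an integral isomorphism $F^3KH^M_0(X)\cong CH^3(\wt X)$, but since Lemma~\ref{lem:GHC0}'s conclusion $A^2(\wt X)\twoheadrightarrow A^2(E)$ is only guaranteed integrally under the stated Bloch/Hodge conjectures (which are phrased integrally once one adds Roitman torsion), one states the final result with $\Q$-coefficients as in the theorem, which is harmless. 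I would write the proof as: ``By \cite{Levine3}, $\dim E = 2$ gives $\cdots$; since $GBC(2)$ includes $GBC$ for the smooth surface $E^N$, Lemma~\ref{lem:GHC0} shows $A^2(\wt X)\surj A^2(E)$; tensoring with $\C^*$ and applying Theorem~\ref{thm:Main22} (with $d=3$) yields $F^3KH^M_0(X)_{\Q}\xrightarrow{\cong} CH^3(\wt X)_{\Q}$.''
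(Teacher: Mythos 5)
Your overall strategy is the paper's: verify the hypothesis of Theorem~\ref{thm:Main22} by reducing it to the surjectivity $A^2(\wt{X}) \surj A^2(E)$ of Lemma~\ref{lem:GHC0} (and your observation that $GBC(2)$ supplies $GBC(E^N)$ is exactly how the paper uses that assumption). But there is a genuine gap at your step (i). The hypothesis of Theorem~\ref{thm:Main22} is about the \emph{cdh} groups ${H^{2}_{cdh}(E,\sK^M_{2})}_{hom}\otimes\C^*$, and your identification of these with $A^2(E)\otimes\C^*$ ``by the main theorem of Levine'' is not justified: Levine's Bloch formula for singular surfaces is the statement $H^2_{Zar}(E,\sK_2)\cong CH^2(E)$ in the \emph{Zariski} topology. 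For the singular (snc) surface $E$ there is no a priori comparison between $H^2_{cdh}(E,\sK^M_2)$ and $CH^2(E)$ — not even an obvious surjection from the Zariski side, since the change-of-topology map involves higher direct images $R^ja_*$ that need not vanish. (For the smooth $\wt{X}$ the identification is fine, as the paper itself uses.)

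This missing comparison is precisely what the paper's Corollary~\ref{cor:BAH} provides: under the hypotheses that $H^2_{cdh}(E,\sO_E)=0$ and that $CH^2(E^N)$ is finite-dimensional, all the maps $H^2_{Zar}(E,\sK_2)\to CH^2(E)\to \H^4_{\sD^*}(E,\Z(2))$ and $H^2_{cdh}(E,\sK_2)\to F^2KH_0(E)\to \H^4_{\sD}(E,\Z(2))$ are isomorphisms, which is what lets one replace the cdh group in the hypothesis of Theorem~\ref{thm:Main22} by $CH^2(E)$ and then invoke Lemma~\ref{lem:GHC0}. Moreover, the hypotheses of Corollary~\ref{cor:BAH} have to be checked: the paper uses the vanishing $H^3_{cdh}(X,\Omega^1_{X/\C})=0$, the Mayer--Vietoris sequence for the resolution square, and Hodge theory on $\wt{X}$ to deduce $H^2_{cdh}(\wt{X},\sO_{\wt{X}})=0$ and hence $H^2_{cdh}(E,\sO_E)=0$, and it uses $GBC(2)$ a second time to get finite-dimensionality of $CH^2(E^N)$. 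In your write-up the cohomological vanishing on $X$ enters only through Lemma~\ref{lem:GHC0}, so this whole intermediate step (the cdh-versus-Zariski identification on $E$ and the verification of the hypotheses that make it valid) is absent; without it the appeal to Theorem~\ref{thm:Main22} does not go through.
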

\begin{proof}
In this proof, we assume all groups as tensored with $\Q$ without mentioning it
explicitly.
Since $H^3_{cdh}\left(X, \sK^M_3\right) \surj F^3KH^M_0(X)$, we need to show 
that
\begin{equation}\label{eqn:3fold-main1}
H^3_{cdh}\left(X, \sK^M_3\right) \xrightarrow{\cong}
H^3_{cdh}\left(\wt{X}, \sK^M_3\right).
\end{equation}
We only need to show that the hypothesis of Theorem~\ref{thm:Main22} is
satisfied.
Our assumption and the Mayer-Vietoris exact sequence show that
$H^2_{cdh}(\wt{X}, \sO_{\wt{X}}) \surj 
H^2_{cdh}\left(E, \sO_E\right)$. On the other hand,
our assumption $H^3_{cdh}\left(X, \Omega^1_{X/{\C}}\right) = 0$
implies that the same holds for $\wt{X}$. The Hodge theory now
implies that $H^2_{cdh}(\wt{X}, \sO_{\wt{X}}) = 0$.
We conclude that $H^2_{cdh}\left(E, \sO_E\right) = 0$.

Since $E$ is a strict normal crossing divisor, and since $CH^2(E^N)$ is
finite-dimensional by our assumption,
we can now apply
Corollary~\ref{cor:BAH} to reduce to proving that
${H^2_{Zar}\left(\wt{X}, \sK^M_2\right)}_{hom} \otimes \C^*
\surj {H^2_{Zar}\left(E, \sK^M_2\right)}_{hom} \otimes \C^*$.
For this, it suffices to show that $A^2(\wt{X}) \surj A^2(E)$, using
the isomorphism $H^2_{Zar}\left(\wt{X}, \sK^M_2\right) \cong CH^2(\wt{X})$.
But this is shown in Lemma~\ref{lem:GHC0}.
\end{proof}  

\begin{cor}\label{cor:3fold-main2}
Let $X$ be a normal and projective threefold over $\C$ with only isolated
singularities such that
$H^3_{Zar}\left(X, \Omega^i_{X/{\C}}\right) = 0$ for $0 \le i \le 1$.
Assume $GBC(2)$ and $GHC(1,3, \wt{X})$ for some resolution
of singularities $\wt{X}$ as in ~\eqref{eqn:RSD}. Then
$CH^3(X) \xrightarrow{\cong} CH^3(\wt{X})$.
\end{cor}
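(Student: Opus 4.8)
The plan is to deduce Corollary~\ref{cor:3fold-main2} from Theorem~\ref{thm:3fold-main} by comparing the known Chow group $CH^3$ with its $cdh$ analogue $F^3KH^M_0$ on both $X$ and $\wt{X}$. First I would note that for the smooth projective threefold $\wt{X}$ one has the standard identifications $CH^3(\wt{X}) \cong H^3_{Zar}(\wt{X}, \sK^M_3) \cong H^3_{cdh}(\wt{X}, \sK^M_3) \cong F^3KH^M_0(\wt{X})$ (using Lemma~\ref{lem:MQC-top} to pass between Milnor and Quillen sheaves, the fact that for smooth schemes the Zariski and $cdh$ cohomology of $\sK$-sheaves agree, and Corollary~\ref{cor:compare}). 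So the right-hand side of the corollary is literally $F^3KH^M_0(\wt{X})$ rationally, and Theorem~\ref{thm:3fold-main} already gives $F^3KH^M_0(X)_\Q \xrightarrow{\cong} CH^3(\wt{X})_\Q$ once its hypotheses are verified. Thus the whole content is: (a) check the hypotheses of Theorem~\ref{thm:3fold-main} hold under the assumption $H^3_{Zar}(X,\Omega^i_{X/\C}) = 0$ for $0 \le i \le 1$, and (b) show that for $X$ with isolated singularities the natural map $CH^3(X) \to F^3KH^M_0(X)$ is an isomorphism rationally, so that the known Chow group can be substituted for its $cdh$ avatar on the left.

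For step (a): the theorem requires $H^3_{cdh}(X, \Omega^i_{X/\C}) = 0$ for $0 \le i \le 1$, whereas the corollary only assumes the Zariski version. Since $X$ is normal projective of dimension $3$ with isolated singularities, Corollary~\ref{cor:ZariskiC*} (applied with $d = 2$, so $d+1 = 3$) says precisely that the hypothesis $H^{3}_{Zar}(X, \Omega^i_{X/k}) = 0$ for $0 \le i \le d-1 = 1$ forces $H^{3}_{cdh}(X, \Omega^i_{X}) = 0$ in the same range (and also gives the isomorphism on $\Omega^d$, which I would keep in reserve). Here I should be mildly careful about the coefficient field: Corollary~\ref{cor:ZariskiC*} is stated for $\Omega^i_X := \Omega^i_{X/\Q}$, but the filtration argument in its proof, together with the remark that $X_{\mathrm{smooth}}$ differs from $X$ only in a finite set, lets one pass between $\Omega^i_{X/\Q}$ and $\Omega^i_{X/\C}$ as in the proof of Corollary~\ref{cor:ZariskiC*} itself — so the vanishing of $H^3_{Zar}(X, \Omega^i_{X/\C})$ for $i \le 1$ does imply the vanishing of $H^3_{cdh}(X, \Omega^i_{X/\C})$ for $i \le 1$. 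The remaining hypotheses $GBC(2)$ and $GHC(1,3,\wt{X})$ are simply carried over verbatim.

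For step (b): since $X$ has only isolated singularities, Proposition~\ref{prop:MQC} gives $CH^3(X) \cong H^3_{Zar}(X, \sK^M_3) \cong H^3_{Zar}(X, \sK_3)$, and Corollary~\ref{cor:compare} identifies all of $CH^3(X)$, $F^3K_0(X)$, $F^3_BK_0(X)$, $F^3_\gamma K_0(X)$ up to torsion. Moreover Theorem~\ref{thm:main1}, whose hypothesis $H^3_{Zar}(X,\Omega^i_{X/k}) = 0$ for $i \le d-2 = 1$ is exactly our assumption, yields $CH^3(X) \xrightarrow{\cong} F^3_BK_0(X) \inj F^3_BKH_0(X)$ up to torsion; rationally this is an injection $CH^3(X)_\Q \inj F^3KH_0(X)_\Q$, and since $\sK^M_{3} \to \sK_3$ and the surjection $H^3_{cdh}(X,\sK^M_3) \surj F^3KH^M_0(X)$ identify the image, one gets $CH^3(X)_\Q \cong F^3KH^M_0(X)_\Q$. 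Composing with Theorem~\ref{thm:3fold-main} gives $CH^3(X)_\Q \xrightarrow{\cong} CH^3(\wt{X})_\Q$, which is the assertion (the statement as written is with $\Q$-coefficients suppressed, matching the convention of Theorem~\ref{thm:3fold-main}).

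The main obstacle I anticipate is step (b) — more precisely, pinning down that the composite $CH^3(X) \to F^3_BK_0(X) \to F^3_BKH_0(X) = F^3KH_0(X)$ from Theorem~\ref{thm:main1}, when restricted to (or compared with) the Milnor-sheaf version $F^3KH^M_0(X)$ appearing in Theorem~\ref{thm:3fold-main}, is genuinely the same isomorphism rationally. Remark~\ref{remk:AbKH} explicitly warns that there is a priori no map $F^3K_0(X) \to F^3_BKH_0(X)$ factoring the classical albanese map, so one must route everything through $F^3_B$-filtrations and use that $\sK^M_3 \cong \sK_3$ is not available here (dimension $3$), only the weaker comparison via the gamma filtration. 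The cleanest route is: rationally $F^3KH^M_0(X)$ and $F^3KH_0(X)$ agree because $H^3_{cdh}(X, \sK^M_3)_\Q \to H^3_{cdh}(X, \sK_3)_\Q$ is injective (Milnor $K$-sheaves are the bottom of the gamma filtration, Soulé) and both surject onto the same $F^3$-piece of $KH_0$; so the target of Theorem~\ref{thm:main1} and the source of Theorem~\ref{thm:3fold-main} are canonically identified, and the corollary follows by splicing the two isomorphisms. I would spell this identification out carefully, as it is the only non-formal point.
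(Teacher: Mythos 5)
Your overall route is the paper's: upgrade the Zariski vanishing to $cdh$ vanishing so that Theorem~\ref{thm:3fold-main} applies, and use Theorem~\ref{thm:main1} to carry $CH^3(X)$ into the $cdh$ setting (your detour through Corollary~\ref{cor:ZariskiC*}, with the passage from $\Q$- to $\C$-coefficients, is harmless, though the paper gets the $cdh$ vanishing directly from the surjectivity $H^3_{Zar}(X,\Omega^i_{X/\C}) \surj H^3_{cdh}(X,\Omega^i_{X/\C})$ of ~\eqref{eqn:ZariskiC1}). However, there are two genuine gaps relative to the paper's proof. First, the corollary is an integral statement: the paper's proof begins by reducing to rational coefficients ``as in the proof of Corollary~\ref{cor:N-surface}'', i.e.\ via the Roitman-type torsion theorems, which control the torsion subgroups of $CH^3(X)$ and $CH^3(\wt{X})$ and show they correspond under the comparison map. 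You instead declare that the statement has $\Q$-coefficients suppressed; that is a misreading of the statement (Theorem~\ref{thm:3fold-main} carries explicit $\Q$-subscripts precisely because it is only rational, while the corollary does not), and with it your argument proves only the rational assertion and leaves the torsion comparison entirely unaddressed.

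Second, the splicing of Theorem~\ref{thm:main1} (whose target is the Brown-filtration piece $F^3_BKH_0(X)$ built from Quillen $\sK$-sheaves) with Theorem~\ref{thm:3fold-main} (whose source is the Milnor-defined $F^3KH^M_0(X)$) rests in your write-up on the claim that rationally $H^3_{cdh}(X,\sK^M_3)$ and $H^3_{cdh}(X,\sK_3)$ ``surject onto the same $F^3$-piece of $KH_0$''. Soul\'e's comparison gives injectivity of the Milnor into the Quillen side up to torsion, not surjectivity of the induced map on $H^3_{cdh}$, so the quoted claim is exactly what has to be proved and your argument is circular at this point. What is needed is either that the image of $CH^3(X)_\Q$ under the injection of Theorem~\ref{thm:main1} lies in $F^3KH^M_0(X)_\Q$, or that $F^3KH^M_0(X)_\Q = F^3_BKH_0(X)_\Q$ (for the singular $X$, where Lemma~\ref{lem:MQC-top} is not available), together with the check that the resulting composite $CH^3(X)_\Q \to CH^3(\wt{X})_\Q$ is the cycle pullback appearing in the statement. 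To be fair, the paper's own proof is silent on this identification (``the corollary now follows from Theorems~\ref{thm:main1} and ~\ref{thm:3fold-main}''), so you have correctly isolated the one non-formal point; but the justification you propose does not close it.
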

\begin{proof} As in the proof of Corollary~\ref{cor:N-surface}, it suffices 
to prove the result with the rational coefficients.
It follows from our assumption and \cite[Proposition~2.6]{CAnnals} that
$H^3_{cdh}\left(X, \Omega^i_{X/{\C}}\right) = 0$ for $0 \le i \le 1$.
In particular, Theorem~\ref{thm:3fold-main} applies.
The corollary now follows from  Theorems~\ref{thm:main1} and 
~\ref{thm:3fold-main}. 
\end{proof}

\section{Zero-cycles in arbitrary dimension}
In this last section, we give a weaker form of Theorem~\ref{thm:3fold-main}
in arbitrary dimension using Theorem~\ref{thm:Main22}. This situation 
particularly applies
in case of projective cones over smooth projective varieties.

\begin{thm}\label{thm:GENERAL}
Let $X$ be a normal and projective variety of dimension $d$ over $\C$ such that
$H^j_{cdh}\left(X, \Omega^i_{X/{\C}}\right) = 0$ for $0 \le i \le d-2$ and
$j \ge d-1$. Let $\wt{X} \to X$ be a resolution of singularities such that
the reduced exceptional divisor is smooth. 
Assume that $GBC(d-1)$ and $GHC(d-2,2d-3, \wt{X})$ hold. 
Then $F^dKH^M_0(X)_{\Q} \xrightarrow{\cong} CH^d(\wt{X})_{\Q}$.
\end{thm}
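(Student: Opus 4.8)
The plan is to reduce the statement to Theorem~\ref{thm:Main22} and to verify its single hypothesis. That theorem asserts that $F^dKH^M_0(X)\to F^dKH^M_0(\wt X)\cong CH^d(\wt X)$ is an isomorphism as soon as
\[
{H^{d-1}_{cdh}(\wt X,\sK^M_{d-1})}_{hom}\otimes\C^{*}\ \surj\ {H^{d-1}_{cdh}(E,\sK^M_{d-1})}_{hom}\otimes\C^{*},
\]
so the whole task is to establish this surjectivity (we may assume $d\ge 2$; for $d=2$ the cohomological inputs below are vacuous and the same argument applies). Since $\wt X$ and $E$ are both smooth, $cdh$- and Zariski cohomology of $\sK$-sheaves agree on them, and combining Bloch's formula with Lemma~\ref{lem:MQC-top} gives $H^{d-1}_{cdh}(\wt X,\sK^M_{d-1})\cong CH^{d-1}(\wt X)$ and $H^{d-1}_{cdh}(E,\sK^M_{d-1})\cong CH^{d-1}(E)$, the latter being the Chow group of zero-cycles on the $(d-1)$-fold $E$. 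Under these identifications the subgroups marked $_{hom}$ become $CH^{d-1}(\wt X)_{hom}$ and $CH^{d-1}(E)_{{\rm deg}\,0}=A^{d-1}(E)$ (for zero-cycles, homological, degree-zero and algebraic equivalence all coincide), and the displayed map is the Gysin restriction $i^{*}$ along $E\inj\wt X$. As $A^{d-1}(\wt X)\subseteq CH^{d-1}(\wt X)_{hom}$ and $i^{*}$ preserves algebraic equivalence, it suffices to show that the restriction map $A^{d-1}(\wt X)\surj A^{d-1}(E)$ is surjective, and then tensor with $\C^{*}$.

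This is the higher-dimensional analogue of Lemma~\ref{lem:GHC0}, and I would prove it by the same recipe. Three ingredients are needed: $(1)$ $GHC(d-2,2d-3,\wt X)$ together with Corollary~\ref{cor:GHC*1} give $J^{d-1}_{a}(\wt X)=J^{d-1}(\wt X)$, so the Abel--Jacobi map $A^{d-1}(\wt X)\surj J^{d-1}(\wt X)$ is surjective; $(2)$ Corollary~\ref{cor:SNC-main1*}, applied to the smooth (hence strict normal crossing) exceptional divisor $E$, together with the description \eqref{eqn:CIJ} of the intermediate Jacobian, gives that the restriction $J^{d-1}(\wt X)\surj J^{d-1}(E)$ is surjective; $(3)$ by $GBC(d-1)$ the smooth projective $(d-1)$-fold $E$ satisfies Conjecture~\ref{conj:GBC}, so the albanese map $c_{0,E}\colon A^{d-1}(E)\xrightarrow{\cong}J^{d-1}(E)$ is an isomorphism. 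Granting these, functoriality of the Abel--Jacobi map with respect to $i^{*}$ makes the relevant square commute, and since the composite $A^{d-1}(\wt X)\surj J^{d-1}(\wt X)\surj J^{d-1}(E)$ equals $c_{0,E}\circ i^{*}$ with $c_{0,E}$ an isomorphism, we conclude $i^{*}\colon A^{d-1}(\wt X)\surj A^{d-1}(E)$. Then Theorem~\ref{thm:Main22} applies, giving the isomorphism $F^dKH^M_0(X)\cong CH^d(\wt X)$ and, a fortiori, the rational statement.

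The substance of the proof — and where the strong $cdh$-vanishing hypothesis gets consumed — lies in checking the cohomological hypotheses of $(1)$, $(2)$ and $(3)$. Feeding the vanishing $H^{j}_{cdh}(X,\Omega^{i}_{X/\C})=0$ for $j\ge d-1$ and $0\le i\le d-2$ into the Mayer--Vietoris sequence for the resolution square \eqref{eqn:RSD}, and using that $\dim X_{\rm sing}\le d-2$ (normality) and $\dim E=d-1$ (so these have $cdh$-cohomological dimension $\le d-2$ and $d-1$) and that $cdh$- and Zariski cohomology agree on the smooth schemes $\wt X$ and $E$, one deduces $H^{d}_{Zar}(\wt X,\Omega^{i}_{\wt X/\C})=0$ for $0\le i\le d-2$ — the input of Corollary~\ref{cor:GHC*1}, which by Serre duality just says $H^{0}(\wt X,\Omega^{j}_{\wt X})=0$ for all $j\ge 2$ — and isomorphisms $H^{d-1}_{Zar}(\wt X,\Omega^{i}_{\wt X/\C})\cong H^{d-1}_{Zar}(E,\Omega^{i}_{E/\C})$ for $0\le i\le d-2$, which is what the argument of Corollary~\ref{cor:SNC-main1*} requires.

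The delicate point is $(3)$: one must still deduce $H^{d-1}_{Zar}(E,\Omega^{i}_{E/\C})=0$ for $0\le i\le d-3$, i.e. the hypothesis of Conjecture~\ref{conj:GBC} for $E$. I would obtain this from the conormal sequence $0\to\Omega^{i-1}_{E}\otimes\sO_{E}(-E)\to\Omega^{i}_{\wt X}|_{E}\to\Omega^{i}_{E}\to 0$ and the restriction sequence $0\to\Omega^{i}_{\wt X}(-E)\to\Omega^{i}_{\wt X}\to\Omega^{i}_{\wt X}|_{E}\to 0$, combined with the $f$-negativity of the conormal bundle $\sO_{E}(-E)$ and an Akizuki--Nakano-type vanishing, to convert the already-established vanishing $H^{d}_{Zar}(\wt X,\Omega^{i}_{\wt X/\C})=0$ into the required vanishing on $E$; in the motivating case of a projective cone $X=C(Y)$ over a smooth projective $(d-1)$-fold $Y$, one reads $(3)$ directly off the projective bundle formula for the resolution $\wt X=\P_{Y}(\sO_{Y}\oplus L)$ with $E\cong Y$, which turns the hypothesis precisely into the vanishing of $H^{d-1}(Y,\Omega^{i}_{Y})$ for $i\le d-3$. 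I expect this cohomological bookkeeping, rather than the cycle-theoretic diagram chase, to be the main obstacle; it is also what forces the hypothesis that $E$ be smooth and makes cones the natural supply of examples.
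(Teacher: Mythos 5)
Your overall skeleton is the paper's: reduce via Theorem~\ref{thm:Main22} and Lemma~\ref{lem:MQC-top} to the surjectivity of $CH^{d-1}(\wt{X})_{hom}\otimes\C^{*}\to CH^{d-1}(E)_{hom}\otimes\C^{*}$, and obtain that by composing three facts — $A^{d-1}(\wt{X})\surj J^{d-1}(\wt{X})$ from $GHC(d-2,2d-3,\wt{X})$ and Corollary~\ref{cor:GHC*1}, the surjectivity $J^{d-1}(\wt{X})\surj J^{d-1}(E)$ from the $cdh$ Mayer--Vietoris computation, and $A^{d-1}(E)\xrightarrow{\cong}J^{d-1}(E)$ from $GBC(d-1)$. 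Your verification of the cohomological inputs for the first two (namely $H^{d}_{Zar}(\wt{X},\Omega^{i}_{\wt{X}/\C})=0$ for $i\le d-2$ and the surjection $\H^{2d-3}_{cdh}(\wt{X},\Omega^{<d-1})\surj\H^{2d-3}_{cdh}(E,\Omega^{<d-1})$) is correct and is essentially the paper's.

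The gap is the third ingredient. Before invoking $GBC(d-1)$ for $E$ you must verify its hypothesis, $H^{d-1}(E,\Omega^{i}_{E})=0$ for $0\le i\le d-3$, and this you do not prove: you only say you \emph{would} obtain it from the conormal and restriction sequences plus an Akizuki--Nakano-type vanishing driven by the $f$-negativity of $\sO_{E}(-E)$, and you yourself flag it as the main obstacle. That mechanism is not available in the stated generality: the theorem does not assume isolated singularities, so $\sO_{E}(-E)$ is only $f$-ample and carries no global positivity on $E$; even when $X_{\rm sing}$ is a point, $-E|_{E}$ is at best nef and big, and Nakano-type vanishing fails for nef and big bundles in the range you would need. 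The paper instead extracts this vanishing from the hypothesis itself --- this is exactly where the assumption $H^{j}_{cdh}(X,\Omega^{i}_{X/\C})=0$ for $j\ge d-1$ (and not merely $j=d$) is consumed: the same blow-up sequence you wrote in your bookkeeping step, $0=H^{d-1}_{cdh}(X,\Omega^{i})\to H^{d-1}_{cdh}(\wt{X},\Omega^{i})\to H^{d-1}_{cdh}(E,\Omega^{i})\to H^{d}_{cdh}(X,\Omega^{i})=0$, is the paper's stated source for the vanishing on $E$ (the paper is terse here too, since one must still see that these isomorphic groups actually vanish rather than merely agree). Your proposal derives the isomorphism, then abandons it in favour of an unproved and, as stated, unworkable vanishing theorem; so the proof is not complete at its decisive step.
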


\begin{proof}
Following the proof of Theorem~\ref{thm:3fold-main}, we can use 
Theorem~\ref{thm:Main22} and Lemma~\ref{lem:MQC-top} 
to reduce to showing that the map
\begin{equation}\label{eqn:GEN*}
CH^{d-1}(\wt{X})_{hom} \otimes \C^*  
\to CH^{d-1}(E)_{hom} \otimes \C^*
\end{equation}
is surjective, where $E$ is the reduced exceptional divisor on $\wt{X}$.
The $cdh$ cohomology exact sequence (which uses the normality of $X$)
\[
H^{d-1}_{cdh}\left(\wt{X}, \Omega^i_{{\wt{X}}/{\C}}\right) \to
H^{d-1}_{cdh}\left(E, \Omega^i_{E/{\C}}\right) \to 
H^{d}_{cdh}\left(X, \Omega^i_{X/{\C}}\right)
\]
shows that $H^{d-1}_{cdh}\left(E, \Omega^i_{E/{\C}}\right) = 0$ for
$0 \le i \le d-3$. Together with our assumption, this implies that the
map
\begin{equation}\label{eqn:GEN*1}
CH^{d-1}(E)_{hom} \xrightarrow{\cong} J^{d-1}(E).
\end{equation}
On the other hand, the vanishing assumption on the cohomology of $X$ and its
normality imply that
$\H^{2d-2}_{cdh}\left(X, \Omega^{<d-1}_{X/{\C}}\right) = 0 =
\H^{2d-3}_{cdh}\left(X_{\rm sing}, \Omega^{<d-1}_{{X_{\rm sing}}/{\C}}\right)$.
This in turn implies from ~\eqref{eqn:CIJ} that the map
$J^{d-1}(\wt{X}) \to J^{d-1}(E)$ is surjective. The required surjectivity 
in ~\eqref{eqn:GEN*} now follows from Corollary~\ref{cor:GHC*1}.
\end{proof}

\begin{remk}\label{remk:extra*}
A conscious reader would observe that one does not need the
assumption $H^{d-1}_{cdh}\left(X, \sO_X\right) = 0$. This condition is
needed only for $\wt{X}$, which follows from the vanishing assumption
on the top cohomology groups and Hodge theory.
\end{remk}
  
\begin{cor}\label{cor:GENERAL*}
Let $X$ be a normal and projective variety of dimension $d$ over $\C$ 
with only isolated singularities such that
$H^d_{Zar}\left(X, \Omega^i_{X/{\C}}\right) = 0$ for $0 \le i \le d-2$.
Let $\wt{X} \to X$ be a resolution of singularities such that
the reduced exceptional divisor is smooth and
$H^{d-1}_{Zar}\left(\wt{X}, \Omega^i_{{\wt{X}}/{\C}}\right) = 0$ for 
$1 \le i \le d-2$. 
Assume that $GBC(d-1)$ and $GHC(d-2,2d-3, \wt{X})$ hold. 
Then $CH^d(X) \xrightarrow{\cong} CH^d(\wt{X})$.
\end{cor}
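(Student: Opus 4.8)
The plan is to deduce this from Theorem~\ref{thm:GENERAL} and Theorem~\ref{thm:main1}, exactly as Corollary~\ref{cor:3fold-main2} was obtained from Theorem~\ref{thm:3fold-main} and Theorem~\ref{thm:main1}. As in the proof of Corollary~\ref{cor:3fold-main2} (compare Corollary~\ref{cor:N-surface}), the Roitman torsion theorem reduces us to proving the statement after tensoring with $\Q$, so I assume throughout that all groups are rational; I also assume $d \ge 3$, the case $d = 2$ being known (see Corollary~\ref{cor:N-surface}). The two inputs are: Theorem~\ref{thm:GENERAL}, which yields $F^dKH^M_0(X) \cong CH^d(\wt X)$ once its $cdh$-cohomological hypotheses are verified; and Proposition~\ref{prop:MQC}, Corollary~\ref{cor:compare} and Theorem~\ref{thm:main1}, which identify $CH^d(X)$ with $F^dKH^M_0(X)$ when $X$ has isolated singularities.

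The main step is to extract from the Zariski hypotheses of the corollary the cohomological facts used in the proof of Theorem~\ref{thm:GENERAL}, using crucially that $X_{\rm sing}$ is zero-dimensional and $E$ is smooth. The top-cohomology surjectivity $H^d_{Zar}(X,\Omega^i_{X/\C}) \surj H^d_{cdh}(X,\Omega^i_{X/\C})$ of \cite[Proposition~2.6]{CAnnals} (see \eqref{eqn:ZariskiC1}), together with the hypothesis and cohomological dimension, gives $H^j_{cdh}(X,\Omega^i_{X/\C}) = 0$ for all $j \ge d$ and $0 \le i \le d-2$. Feeding this into the $cdh$ Mayer--Vietoris sequence for the square \eqref{eqn:RSD} in degree $d$ (and using $\dim X_{\rm sing} = 0$, $\dim E = d-1$) gives $H^d(\wt X,\Omega^i_{\wt X}) = 0$ for $0 \le i \le d-2$; Serre duality on the smooth projective $\wt X$ rewrites this as $H^0(\wt X,\Omega^j_{\wt X}) = 0$ for $2 \le j \le d$, and Hodge symmetry then gives $H^{d-1}(\wt X,\sO_{\wt X}) = 0$ (this is the vanishing pointed out in Remark~\ref{remk:extra*}). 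Combining with the hypothesis $H^{d-1}_{Zar}(\wt X,\Omega^i_{\wt X/\C}) = 0$ for $1 \le i \le d-2$ and applying $cdh$ Mayer--Vietoris in degree $d-1$ gives $H^{d-1}_{cdh}(E,\Omega^i_{E/\C}) = 0$ for $0 \le i \le d-2$. These vanishings, together with $\H^{2d-2}_{cdh}(X,\Omega^{<d-1}_{X/\C}) = 0$ (which follows from $H^d_{cdh}(X,\Omega^{d-2}_{X/\C}) = 0$ by the hypercohomology spectral sequence), are exactly the inputs used in the proof of Theorem~\ref{thm:GENERAL}, so that argument goes through and produces $F^dKH^M_0(X) \cong CH^d(\wt X)$.

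It remains to combine this with Theorem~\ref{thm:main1}. Since $X$ has isolated singularities, Proposition~\ref{prop:MQC} gives $CH^d(X) \cong H^d_{Zar}(X,\sK_d) \cong H^d_{Zar}(X,\sK^M_d)$; Corollary~\ref{cor:compare} identifies this with $F^d_BK_0(X)$, which Theorem~\ref{thm:main1} injects into $F^d_BKH_0(X)$, and a five-lemma comparison of the $cdh$ Mayer--Vietoris sequences for $\sK^M_d$ versus $\sK_d$ (via Lemma~\ref{lem:MQC-top} on the smooth $\wt X$ and $E$, and $\dim X_{\rm sing} = 0$) shows $F^d_BKH_0(X) = F^dKH^M_0(X)$. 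Thus $CH^d(X) \hookrightarrow F^dKH^M_0(X) \cong CH^d(\wt X)$. The point I expect to require the most care is seeing that this injection is surjective: it factors as the natural map $H^d_{Zar}(X,\sK^M_d) \to H^d_{cdh}(X,\sK^M_d)$ followed by the isomorphism $H^d_{cdh}(X,\sK^M_d) \xrightarrow{\cong} H^d_{cdh}(\wt X,\sK^M_d) \cong CH^d(\wt X)$ of Theorem~\ref{thm:Main22}, so it suffices that $H^d_{Zar}(X,\sK^M_d) \surj H^d_{cdh}(X,\sK^M_d)$, the Milnor $K$-theory counterpart of \eqref{eqn:ZariskiC1}, which should follow from the identification of the fibre $\widetilde{\sK}$ in Theorem~\ref{thm:fiber0} together with the cyclic-homology computations of Section~\ref{section:ZCSS} (alternatively, one compares the composite with the surjective proper push-forward $f_*\colon CH^d(\wt X) \to CH^d(X)$, of which it is a section, the remaining kernel being rationally annihilated by the same cohomology vanishings). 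Returning to integral coefficients by the Roitman torsion theorem then completes the proof.
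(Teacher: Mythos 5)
Your argument is correct and takes essentially the same route as the paper, whose proof of this corollary is precisely the reduction to rational coefficients, the transfer of the Zariski vanishing hypotheses into the $cdh$-cohomological inputs of Theorem~\ref{thm:GENERAL} (with $H^{d-1}(\wt{X},\sO_{\wt{X}})=0$ supplied by Hodge theory as in Remark~\ref{remk:extra*}), and the combination of Theorems~\ref{thm:GENERAL} and~\ref{thm:main1} exactly as in Corollary~\ref{cor:3fold-main2}. Your closing discussion of surjectivity is more detailed than the paper, which simply regards the surjectivity of $CH^d(X)\to CH^d(\wt{X})$ as known (compare the proof of Corollary~\ref{cor:CONE}), but this does not alter the approach.
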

\begin{proof}
Follows from Theorems~\ref{thm:GENERAL}, ~\ref{thm:main1}
and Remark~\ref{remk:extra*} as in the proof of 
Corollary~\ref{cor:3fold-main2}.
\end{proof} 

The following improves \cite[Theorem~1.5]{Krishna2} for the 
Chow group of zero-cycles on projective cones.
\begin{cor}\label{cor:CONE}
Let $Y \inj {\P}^N_{\C}$ be a smooth projective variety of dimension $d$ and
let $X$ be the projective cone over $Y$. Assume that 
$H^{d+1}_{Zar}\left(X, \Omega^i_{X/{\C}}\right) = 0$ for $0 \le i \le  d-1$. 
Then $CH^{d+1}(X)$ is finite-dimensional if $CH^{d}(Y)$ is so.
\end{cor}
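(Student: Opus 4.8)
The plan is to derive Corollary~\ref{cor:CONE} from Corollary~\ref{cor:GENERAL*}. Let $X$ be the projective cone over $Y \inj \P^N_{\C}$, so $X$ has dimension $d+1$ and has a single isolated singularity at the vertex. Its natural resolution $\wt{X} \to X$ is obtained by blowing up the vertex: $\wt{X} = \P(\sO_Y \oplus \sO_Y(1))$ is a $\P^1$-bundle over $Y$, and the reduced exceptional divisor $E \subset \wt{X}$ is the section at infinity, which is isomorphic to $Y$; in particular $E$ is smooth. Thus the resolution diagram ~\eqref{eqn:RSD} for $X$ has smooth exceptional divisor, and we are in a position to invoke Corollary~\ref{cor:GENERAL*} with $d$ replaced by $d+1$.

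The hypotheses of Corollary~\ref{cor:GENERAL*} that need checking are: (i) $H^{d+1}_{Zar}\left(X, \Omega^i_{X/{\C}}\right) = 0$ for $0 \le i \le d-1$ — this is exactly our assumption; (ii) $H^{d}_{Zar}\left(\wt{X}, \Omega^i_{{\wt{X}}/{\C}}\right) = 0$ for $1 \le i \le d-1$; (iii) $GBC(d)$; and (iv) $GHC(d-1, 2d-1, \wt{X})$. For (iii), since $CH^d(Y)$ is finite-dimensional and $\wt{X}$ is a $\P^1$-bundle over $Y$, the projective bundle formula for Chow groups and for the relevant Hodge cohomology groups shows that $GBC$ holds for $\wt{X}$ as well; combined with the finite-dimensionality hypothesis on $Y$ and knowledge of $GBC$ in lower dimensions, one gets $GBC(d)$ in the form needed. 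For (iv), the projective bundle structure $\wt{X} \to Y$ with $d = \dim Y$ reduces the general Hodge conjecture statement $GHC(d-1, 2d-1, \wt{X})$ to a statement on $Y$: using the decomposition $H^{2d-1}(\wt{X}) \cong H^{2d-1}(Y) \oplus H^{2d-3}(Y)(-1)$ and the corresponding decomposition of the geometric coniveau filtration, $GHC(d-1,2d-1,\wt X)$ follows from the (known for curves/surfaces, or here trivially valid) cases $GHC(d-1,2d-1,Y)$ and $GHC(d-2,2d-3,Y)$, because $\dim Y = d$ forces these to hold. For (ii), the Hodge/Leray computation for the $\P^1$-bundle expresses $H^d_{Zar}(\wt{X},\Omega^i_{\wt{X}})$ in terms of $H^{d}(Y,\Omega^i_Y)$ and $H^{d-1}(Y,\Omega^{i-1}_Y)$, both of which vanish for $1 \le i \le d-1$ by dimension reasons ($\dim Y = d$) unless $i = d$ or the second factor survives; a short case check, together with the hypothesis on $X$ transported to $\wt X$ via the Leray spectral sequence for $f$ exactly as in the proof of Corollary~\ref{cor:3fold-main2}, gives the required vanishing.

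Granting these verifications, Corollary~\ref{cor:GENERAL*} (applied in dimension $d+1$) yields $CH^{d+1}(X) \xrightarrow{\cong} CH^{d+1}(\wt{X})$. It remains to observe that $CH^{d+1}(\wt{X})$ is finite-dimensional: again by the projective bundle formula $CH^{d+1}(\wt X) = CH^{\dim \wt X}(\wt X)$ is built from $CH^d(Y)$ and $CH^{d-1}(Y) = CH^{\dim Y - 1}(Y)$, and the albanese/Abel-Jacobi description together with finite-dimensionality of $CH^d(Y)$ shows the albanese map for $\wt X$ is an isomorphism. Transporting this isomorphism back along $CH^{d+1}(X) \cong CH^{d+1}(\wt{X})$ — and matching up the albanese varieties $J^{d+1}_*(X) \cong J^{d+1}_*(\wt X)$ as in the proof of Theorem~\ref{thm:Main22} — gives that $CH^{d+1}(X)$ is finite-dimensional, as desired.

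The main obstacle is the bookkeeping in step (iv): one must phrase $GHC(d-2,2d-3,\wt X)$ (the hypothesis literally demanded by Corollary~\ref{cor:GENERAL*} in dimension $d+1$) in terms of the base $Y$ and confirm it is automatic because the relevant cohomology of $Y$ lives in degrees where the general Hodge conjecture is either trivial or reduces to the classical (Lefschetz) case; similarly one should double-check that the vanishing hypothesis $H^{d+1}_{Zar}(X,\Omega^i_{X/\C}) = 0$ is genuinely what ports to the needed vanishing on $\wt X$ via the Leray spectral sequence, since the shift in dimension (cone over $Y$ has dimension $\dim Y + 1$) makes the index ranges slightly delicate. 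Everything else is a routine application of the projective bundle formula for Chow groups, Hodge cohomology, and intermediate Jacobians, combined with the already-established Corollary~\ref{cor:GENERAL*}.
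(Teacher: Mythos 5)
Your route through Corollary~\ref{cor:GENERAL*} does not work as stated, because that corollary (applied in dimension $d+1$) carries three hypotheses that Corollary~\ref{cor:CONE} does not have, and none of your verifications of them holds up. First, $GBC(d)$ is, by the paper's definition, the generalized Bloch conjecture for \emph{all} smooth projective varieties of dimension up to $d$; the finite-dimensionality of the single group $CH^d(Y)$, even combined with the projective bundle formula for $\wt{X}$, cannot yield such a universal statement. Second, $GHC(d-1,2d-1,\wt{X})$ (this, not $GHC(d-2,2d-3,\wt X)$, is what Corollary~\ref{cor:GENERAL*} demands in dimension $d+1$) decomposes under $H^{2d-1}(\wt{X},\Q)\cong H^{2d-1}(Y,\Q)\oplus H^{2d-3}(Y,\Q)(-1)$ into the automatic piece $GHC(d-1,2d-1,Y)$ \emph{and} the piece $GHC(d-2,2d-3,Y)$, which is not ``forced by $\dim Y=d$'': for $d=3$ it is exactly the nontrivial conjecture $GHC(1,3,Y)$ that the paper itself treats as a genuine hypothesis and quotes as known only for special hypersurfaces. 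Third, the hypothesis $H^{d}_{Zar}(\wt{X},\Omega^i_{\wt{X}/\C})=0$ for $1\le i\le d-1$ is not a ``dimension reasons'' vanishing: by the bundle formula $H^{d}(\wt{X},\Omega^i_{\wt{X}})\cong H^{d}(Y,\Omega^i_Y)\oplus H^{d-1}(Y,\Omega^{i-1}_Y)$, and the summand $H^{d-1}(Y,\Omega^{i-1}_Y)$ (e.g.\ $H^{d-1}(Y,\sO_Y)$ for $i=1$) has no reason to vanish, nor is it clear how the corollary's hypothesis on $X$ would kill it via Leray. So your argument, taken literally, proves the corollary only under substantial extra assumptions.

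The paper's proof avoids all of this precisely because the cone geometry makes the conjectural inputs unnecessary. Since $E\cong Y$ is a section of the $\P^1$-bundle $p:\wt{X}\to Y$, restriction $H^{d}_{cdh}(\wt{X},\sK_{d+1})\to H^{d}_{cdh}(E,\sK_{d+1})$ is (split) surjective, so the $cdh$ Mayer--Vietoris sequence for the blow-up square gives directly that $H^{d+1}_{cdh}(X,\sK_{d+1})\to H^{d+1}_{cdh}(\wt{X},\sK_{d+1})$ is an isomorphism; Theorem~\ref{thm:main1} (whose hypothesis is exactly the assumed vanishing $H^{d+1}_{Zar}(X,\Omega^i_{X/\C})=0$, $i\le d-1$) together with Corollary~\ref{cor:compare} then transports this to an isomorphism $CH^{d+1}(X)\cong CH^{d+1}(\wt{X})$, and the bundle formula identifies $CH^{d+1}(\wt{X})\cong CH^{d}(Y)$ and $J^{d+1}(\wt{X})\cong J^{d}(Y)$, giving finite-dimensionality. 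If you want to salvage your approach, you cannot cite Corollary~\ref{cor:GENERAL*} as a black box; you would have to re-run the proof of Theorem~\ref{thm:GENERAL} and observe that in the cone case the surjectivity of $CH^{d}(\wt{X})_{hom}\otimes\C^*\to CH^{d}(E)_{hom}\otimes\C^*$ is trivial (the retraction $p$ splits the restriction), which is where $GBC$ and $GHC$ entered — but at that point you have essentially reproduced the paper's direct argument.
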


\begin{proof}
Let $\wt{X} \xrightarrow{f} X$ be the blow-up of $X$ at the vertex. Then 
$\wt{X} \xrightarrow{p} Y$ is a $\P^1$-bundle, and $f$ is a resolution of
singularities of $X$. Moreover, $E \inj \wt{X}$ maps isomorphically to
$Y$ under the map $p$. It is also easy to see that $CH^{d}(Y) \cong
CH^{d+1}(\wt{X})$ and $J^d(Y) \cong J^{d+1}(\wt{X})$.
Hence we only need to show that
$CH^{d+1}(X) \xrightarrow{\cong} CH^{d+1}(\wt{X})$. 
We only have to show the injectivity as the surjectivity is already known.
But this follows from the exact sequence 
\[
H^d_{cdh}\left(\wt{X}, \sK_{d+1}\right) \to
H^d_{cdh}\left(E, \sK_{d+1}\right) \to 
H^{d+1}_{cdh}\left(X, \sK_{d+1}\right)  
\to H^{d+1}_{cdh}\left(\wt{X}, \sK_{d+1}\right) \to 0,
\] 
the fact that $E \inj \wt{X}$ has a section, Corollary~\ref{cor:compare}
and Theorem~\ref{thm:main1}.
\end{proof}

\noindent\emph{Acknowledgments.} 
The final parts of this work were done
while the author was visiting Universit{\'e} Paris-Sud 11 at Orsay
in February 2010. The author would like to thank the department
for the invitation and financial support. The author would also like
to thank C. Soul{\'e} for useful discussion about the gamma filtrations
on higher $K$-theory.

\end{document}